\theoremstyle{plain}
\newtheorem{theorem}{Theorem}[section]
\newtheorem{thm}[theorem]{Theorem}
\newtheorem{corollary}[theorem]{Corollary}
\newtheorem{cor}[theorem]{Corollary}
\newtheorem{proposition}[theorem]{Proposition}
\newtheorem{prop}[theorem]{Proposition}
\newtheorem{lemma}[theorem]{Lemma}
\theoremstyle{remark}
\newtheorem*{conjecture}{Conjecture}
\newtheorem*{question}{Question}
\newtheorem*{remark}{Remark}
\newtheorem{example}[theorem]{Example}
\theoremstyle{definition}
\newtheorem*{definition}{Definition}
\newcommand{\p}{\partial}
\newcommand{\om}{\omega}
\newcommand{\Om}{\Omega}
\newcommand{\eps}{\varepsilon}
\newcommand{\la}{\langle}
\newcommand{\ra}{\rangle}
\newcommand{\Z}{{\mathbb{Z}}}
\newcommand{\R}{{\mathbb{R}}}
\newcommand{\T}{{\mathbb{T}}}
\newcommand{\bS}{{\bf S}}
\newcommand{\BH}{{\mathbb{H}}}
\renewcommand{\min}{{\rm min}}
\renewcommand{\max}{{\rm max}}
\newcommand{\LL}{\mathcal{L}}
\renewcommand{\AA}{\mathcal{A}}
\newcommand{\g}{{\mathfrak g}}         
\newcommand{\comment}[1]{}
\newcommand{\x}{\times}
\newcommand{\pa}{{p_{\alpha}}}
\newcommand{\pb}{{p_{\beta}}}
\newcommand{\pc}{{p_{\gamma}}}
\def\co{\colon\thinspace}
\title{Symplectic topology of Ma\~n\'e's critical values}
\author{
K.~Cieliebak\footnote{Partially supported by DFG grant CI 45/1-3},
U.~Frauenfelder and  G.P.~Paternain}
\begin{document}
\maketitle
{\it Abstract: }We study the dynamics and symplectic topology of energy
hypersurfaces of mechanical Hamiltonians on twisted cotangent bundles.
We pay particular attention to periodic orbits, displaceability,
stability and the contact type property, and the changes that occur at
the Ma\~n\'e critical value $c$. Our main tool is Rabinowitz Floer
homology. We show that it is defined for hypersurfaces that are either
stable tame or virtually contact, and that it is invariant under
homotopies in these classes. If the configuration space admits a
metric of negative curvature, then Rabinowitz Floer homology does not
vanish for energy levels $k>c$ and, as a consequence, these
level sets are not displaceable. We provide a large class of examples
in which Rabinowitz Floer homology is non-zero for energy levels
$k>c$ but vanishes for $k<c$, so levels above and below $c$ cannot be
connected by a stable tame homotopy. Moreover, we show that for
strictly $1/4$-pinched negative curvature and non-exact magnetic fields all
sufficiently high energy levels are non-stable, provided that the dimension
of the base manifold is even and different from two.
\tableofcontents

\section{Introduction}\label{sec:intro}

In this paper we study the significance of Ma\~n\'e's critical value to
the symplectic topology of energy hypersurfaces. Steps in this
direction were taken in \cite{B,PPS,Vi2}, but here we will have
a different focus. We will attempt to relate Ma\~n\'e's critical value
with stable Hamiltonian structures and a new type of Floer homology that
we develop along the lines of \cite{CF}, namely {\it Rabinowitz Floer homology}.

On the cotangent bundle $\tau\co T^*M\to M$ of a closed manifold $M$ we
consider autonomous Hamiltonian systems defined by a convex Hamiltonian
$$
   H(q,p) = \frac{1}{2}|p|^2 + U(q)
$$
and a twisted symplectic form
$$
   \om = dp\wedge dq + \tau^*\sigma.
$$
Here $dp\wedge dq$ is the canonical symplectic form in canonical
coordinates $(q,p)$ on $T^*M$, $|p|$ denotes
the dual norm of a Riemannian metric $g$ on $M$, $U\co M\to\R$ is a
smooth potential, and $\sigma$ is a closed 2-form on $M$. This
Hamiltonian system describes the motion of a particle on $M$ subject
to the conservative force $-\nabla U$ and the magnetic field
$\sigma$.

We assume that $\sigma$ vanishes on $\pi_2(M)$, so its pullback
$\pi^*\sigma$ to the universal cover $\pi\co \widetilde M\to M$ is exact. The
{\em Ma\~n\'e critical value}
\footnote{More generally, there is a Ma\~n\'e critical value associated to
any cover of $M$ on which $\sigma$ becomes exact. In this paper, we
will mostly restrict to the universal and abelian covers.}
is defined as
$$
   c = c(g,\sigma,U) := \inf_\theta\sup_{q\in \widetilde{M}} \widetilde{H}(q,\theta_q),
$$
where the infimum is taken over all 1-forms $\theta$ on $\widetilde M$
with $d\theta=\pi^*\sigma$ and $\widetilde{H}$ is the lift of $H$.

We wish to understand how dynamical and symplectic topological
properties of regular level sets $\Sigma_k=H^{-1}(k)$ change as $k$
passes the Ma\~n\'e critical value. More specifically, we will investigate
the following properties.

{\em Dynamics. }
The dynamics usually changes drastically at the Ma\~n\'e critical value;
we will provide abundant examples of this in Section \ref{sec:ex}
of this paper. We will pay particular attention to the existence or
non-existence of periodic orbits in given free homotopy classes of
loops. The study of the existence of closed orbits of a charged
particle in a magnetic field was initiated by V.I. Arnold \cite{A}
and S.P. Novikov \cite{No} in the 1980's.

{\em Displaceability. }
A subset $A$ of a symplectic manifold $(V,\om)$ is {\em displaceable}
if there exists a Hamiltonian diffeomorphism (time-1 map of a
time-dependent compactly supported Hamiltonian system) $\phi$ with $\phi(A)\cap
A=\emptyset$. Among the many consequences of displaceability, the most
relevant for this paper is the following result of Schlenk~\cite{Sch}:
If for $H$ as above and some $k_0$ the set $\{H\leq k_0\}$ is
displaceable, then it has finite Hofer-Zehnder capacity
(see~\cite{HZ}) and for almost every $k\leq k_0$ the energy level
$\Sigma_k$ carries a contractible periodic orbit.

{\em Contact type. }
A hypersurface $\Sigma$ in a symplectic manifold $(V^{2n},\om)$
is of {\em contact type} if $\om|_\Sigma=d\lambda$ for a contact form
$\lambda$ on $\Sigma$, i.e.~a 1-form such that
$\lambda\wedge(d\lambda)^{n-1}$ is
nowhere vanishing. This property was introduced by Weinstein~\cite{We}
and has important dynamical consequences. For example, for large
classes of contact type hypersurfaces the existence of a periodic
orbit has been proved (``Weinstein conjecture'', see
e.g.~\cite{HZ,Vi1}). Moreover, algebraic counts of periodic orbits can
be organized into invariants of such hypersurfaces such as contact
homology or symplectic field theory~\cite{EGH}.

We say that a hypersurface $\Sigma$ is {\em virtually contact} if
$\pi^*\om|_{\widetilde\Sigma}=d\lambda$ for a contact form
$\lambda$ on a cover $\pi\co \widetilde\Sigma\to\Sigma$ satisfying
$$
   \sup_{x\in\widetilde\Sigma}|\lambda_x|\leq C<\infty, \qquad
   \inf_{x\in\widetilde\Sigma}\lambda(R)\geq\eps >0,
$$
where $|\cdot|$ is a metric on $\Sigma$ and $R$ is a vector field
generating $\ker(\om|_\Sigma)$ (both pulled back to $\widetilde\Sigma$).
A {\em virtually contact homotopy} is a smooth homotopy
$(\Sigma_t,\lambda_t)$ of virtually contact hypersurfaces together
with the corresponding 1-forms on the covers such that the preceding
conditions hold with constants $C,\eps$ independent of $t$.

{\em Stability and tameness. }
A hypersurface $\Sigma$ in a symplectic manifold $(V^{2n},\om)$
is {\em stable} if there exists a 1-form $\lambda$ on $\Sigma$ which
is nonzero on $\ker(\om|_\Sigma)$ and satisfies
$\ker(\om|_\Sigma)\subset\ker(d\lambda)$.
Note that contact type implies stability, but stability is more
general, e.g.~it allows for $\om|_\Sigma$ to be non-exact.
The notion of stability was introduced
by Hofer and Zehnder~\cite{HZ} and shares many consequences of the
contact type condition, e.g.~existence results for periodic
orbits~\cite{HZ} and algebraic invariants arising from symplectic
field theory~\cite{EGH}.
A {\em stable homotopy} is a smooth homotopy $(\Sigma_t,\lambda_t)$ of
stable hypersurfaces together with the corresponding 1-forms.

Suppose now $\om$ vanishes on $\pi_{2}(V)$, and let $X(\Sigma)$
be the set of all closed characteristics in $\Sigma$ which
are contractible in $V$.  We define the
function
$$\Omega \colon X(\Sigma) \to \mathbb{R}$$
by choosing for $v \in X(\Sigma)$ a filling disk $\bar{v}$ in $V$
which exists since $v$ is contractible in $V$, and setting
\[\Omega(v)=\int \bar{v}^* \omega.\]
The pair $(\Sigma,\lambda)$ is said to be {\it tame} if
there exists a positive constant $c$ such that
\[\left|\int v^{*}\lambda\right|\leq c|\Omega(v)|\]
for all $v\in X(\Sigma)$. Again, abundant examples of tame
stable hypersurfaces will be given in Section \ref{sec:ex}, but
we should mention that there are also examples of stable non-tame
hypersurfaces, see \cite[Example 5.1]{CMP} and \cite{CV}.

A stable homotopy $(\Sigma_{t},\lambda_{t})$
is said to be {\it tame} if each  $(\Sigma_{t},\lambda_{t})$ is
tame and the constant $c$ is independent of $t$.

We remark that when $V$ is a cotangent bundle $T^*M$,
then $X(\Sigma)$ coincides with the closed orbits in $\Sigma$ whose
projection to $M$ is contractible. Moreover, if $\dim M\geq 3$ and there is no potential, it follows
from the homotopy sequence of a fibration that $\pi_{1}(\Sigma)$ injects into
$\pi_1(T^*M)$ and thus
$X(\Sigma)$ coincides
with the set of closed orbits in $\Sigma$ which are contractible in $\Sigma$
itself.

{\em Rabinowitz Floer homology. }
In~\cite{CF} the first two authors associated to a contact type
hypersurface $\Sigma$ in a symplectic manifold $(V,\om)$ (satisfying
suitable conditions, see Section~\ref{sec:RFH}) its {\em Rabinowitz
Floer homology} $RFH(\Sigma)$. This is a graded $\Z_2$-vector
space which is invariant under contact type homotopies of
$\Sigma$. Its relevance for our purposes rests on the following two
results in~\cite{CF}: Displaceability of $\Sigma$ implies vanishing of
$RFH(\Sigma)$, and vanishing of $RFH(\Sigma)$ implies the existence of
a periodic orbit on $\Sigma$ which is contractible in $V$. To make these results
applicable to the situation considered in this paper, we generalize in
Section~\ref{sec:RFH} the construction of Rabinowitz Floer homology to
hypersurfaces that are stable tame or virtually contact, and to the
corresponding homotopies. The generalization to the stable case is
definitely non-trivial. The generalization to the virtually contact case is
straightforward, but necessary, as we will try to explain in due time.
In what follows we shall assume that $\Sigma$ is separating, i.e. $V\setminus\Sigma$ consists of two connected components.

We call a symplectic manifold $(V,\om)$ {\em
  geometrically bounded} if it admits a $\omega$-compatible almost complex structure $J$ and a complete Riemannian metric such that
\begin{enumerate}
\item there are positive constants $c_1$ and $c_2$ such that
\[\omega(v,Jv)\geq c_{1}|v|^2,\;\;\;\;\;\;|\omega(u,v)|\leq c_{2}|u||v|\]
for all tangent vectors $u$ and $v$ to $V$;
\item The sectional curvature of the metric is bounded above and its
injectivity radius is bounded away from zero.
\end{enumerate}
This is slightly stronger than the corresponding notion
in~\cite{ALP} since we demand $\omega$-compatibility. 
It is proved in~\cite{Lu} that twisted cotangent bundles are
geometrically bounded. 

\begin{thm}\label{thm:RFH}
Let $(V,\om)$ be a geometrically bounded symplectic manifold such that
$\om|_{\pi_2(V)}=0$ (e.g.~a twisted cotangent bundle as above). Then:

(a) Rabinowitz Floer homology $RFH(\Sigma)$ is defined for each tame stable
hypersurface $\Sigma$ and invariant under tame stable homotopies.

(b) Rabinowitz Floer homology $RFH(\Sigma)$ is defined for each
virtually contact hypersurface $\Sigma$ and invariant under virtually
contact homotopies provided that $\pi_{1}(\Sigma)$ injects into $\pi_{1}(V)$.

(c) If $\Sigma$ is as in (a) or (b) and is displaceable, then
$RFH(\Sigma)=0$.

(d) If $\Sigma$ is as in (a) or (b) satisfies $RFH(\Sigma)=0$, then it
carries a periodic orbit which is contractible in $V$.
\end{thm}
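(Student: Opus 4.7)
The plan is to follow the template of \cite{CF}, where Rabinowitz Floer homology was constructed in the contact type setting, and to replace the two key uses of the contact hypothesis by the tameness inequality for part (a) and by the bounded primitive on the cover for part (b). The construction begins with a defining Hamiltonian $F$ for $\Sigma$ whose Hamiltonian vector field restricted to $\Sigma$ spans $\ker\om|_\Sigma$, together with the Rabinowitz action functional
\[
\AA(v,\eta)=-\int_{\bar v}\om+\eta\int_0^1 F(v(t))\,dt
\]
on pairs $(v,\eta)$ with $v\co S^1\to V$ contractible in $V$ and $\bar v$ a capping disk. The functional is well-defined modulo $\om|_{\pi_2(V)}=0$, and its critical points are precisely the pairs in which $v$ is a closed characteristic on $\Sigma$ contractible in $V$ and $\eta$ records its period. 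Gradient half-cylinders for a generic $\om$-compatible $J$ then produce a candidate $\Z_2$-chain complex $CF(\AA)$.

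The genuinely new work lies in establishing compactness of the Floer moduli spaces, i.e.\ uniform bounds on $(v,\eta)$ along trajectories connecting critical points in a fixed action window. At a critical point one has $\AA(v,\eta)=-\Omega(v)$, so in the stable tame setting the inequality $|\int v^*\lambda|\leq c|\Omega(v)|$ combined with $\int v^*\lambda=\eta\int_0^1\lambda(R)(v(t))\,dt$ and the uniform positive lower bound on $\lambda(R)$ (from stability) yields a period bound $|\eta|\leq C|\AA(v,\eta)|$ at critical points. The harder step is to upgrade this pointwise estimate to an a priori bound along the full gradient trajectory; I would do so by exploiting the stable tubular neighborhood structure near $\Sigma$ to extend $\lambda$ to a collar, together with monotonicity arguments based on the geometric boundedness of $(V,\om)$, which rule out bubbling (since $\om|_{\pi_2(V)}=0$) and escape to infinity in the spirit of \cite{ALP,Lu}. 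For (b) I would lift the Floer equation to the cover $\wt\Sigma$; the assumption $\pi_1(\Sigma)\into\pi_1(V)$ forces every loop on $\Sigma$ contractible in $V$ to be contractible in $\Sigma$ itself, hence liftable, and the bounds $|\lambda|\leq C$, $\lambda(R)\geq\eps$ on the cover take over the role of the tameness constant. Transversality, gluing and continuation for the chain complex then go through as in \cite{CF}; the $t$-independence of the constant $c$, respectively of the pair $(C,\eps)$, in the definitions of tame stable and virtually contact homotopies is exactly what propagates the compactness estimate across a homotopy, yielding the invariance statements in (a) and (b). The main obstacle throughout is this propagation of the period/energy bound from the critical points to the full moduli space in the absence of a globally defined primitive for $\om$.

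Parts (c) and (d) then reduce to the arguments already in \cite{CF}, which never use the contact hypothesis beyond what is now subsumed by (a) and (b). For (c), a displacing compactly supported Hamiltonian $H$ enters as an additional term in the Rabinowitz functional, producing a one-parameter deformation whose action values are controlled by the displacement energy of $\Sigma$; for a suitably rescaled deformation the perturbed critical set is empty, forcing the complex to be chain homotopy equivalent to zero, i.e.\ $RFH(\Sigma)=0$. For (d), assume $\Sigma$ carries no closed characteristic contractible in $V$; then the only critical points of $\AA$ are the constants $(v,\eta)=(q,0)$ with $q\in\Sigma$, forming a Morse-Bott component diffeomorphic to $\Sigma$. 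A standard Morse-Bott computation identifies $RFH(\Sigma)$ with $H_*(\Sigma;\Z_2)$ up to a degree shift, which is nonzero, contradicting $RFH(\Sigma)=0$ and producing the required periodic orbit.
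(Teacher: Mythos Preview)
Your outline for (b), (c), (d) matches the paper's approach closely and is essentially correct. The genuine gap is in (a), precisely at the point you flag as ``the main obstacle'': propagating the period bound from critical points to full gradient trajectories. Your proposal---extend $\lambda$ to a collar and invoke monotonicity and geometric boundedness---does not work. Monotonicity and geometric boundedness control the loop component $v$ (ruling out bubbling and escape to infinity), but they say nothing about the Lagrange multiplier $\eta$ along a flow line. In the contact case of \cite{CF} the bound on $\eta$ comes from the identity $\AA^H(v,\eta)\approx\eta$ for almost-critical points, which follows from $\lambda(X_H)=H+{\rm const}$. In the stable case no such identity holds: the extended $\lambda$ gives a \emph{modified} functional $\widehat\AA^H$ with $\widehat\AA^H(v,\eta)=\eta$ at critical points, but $\widehat\AA^H$ is \emph{not} monotone along the gradient flow of $\AA^H$, so the argument in \cite{CF} breaks down. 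The paper's key new idea is that the \emph{difference} $\AA=\AA^H-\widehat\AA^H$ \emph{is} a second Lyapunov functional for the gradient flow (this uses a carefully chosen $J$ so that $d\beta_\lambda(\cdot,J\cdot)\leq\om(\cdot,J\cdot)$ pointwise). With two Lyapunov functionals one bounds both $\AA^H$ and $\widehat\AA^H$ along trajectories, and hence $\eta$. The tameness hypothesis enters only to control the asymptotic values of the second Lyapunov functional in terms of the first; without it one still gets local Rabinowitz Floer groups but not the full filtered theory.

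A second, smaller gap: your claim that ``transversality, gluing and continuation then go through as in \cite{CF}'' is too optimistic. Stability is \emph{not} an open condition, so you cannot perturb $\Sigma$ within the stable class to make $\AA^H$ Morse--Bott. The paper instead perturbs by an auxiliary time-dependent Hamiltonian $F$ to make $\AA^H_F$ Morse, but then compactness for the perturbed flow is no longer automatic; one must restrict to perturbations small enough that the moduli space splits into a compact ``essential'' part and a discardable remainder, and define the boundary operator using only the essential part. This forces the definition of $RFH$ via direct/inverse limits of filtered groups rather than a single Novikov-type chain complex.
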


We remark at once that Schlenk's result alluded to above can be derived
right away from Theorem \ref{thm:RFH} if we assume that the
hypersurface is tame. However, the compactness result proved
in Section \ref{sec:RFH} will show that one can recover Schlenk's result
fully, i.e. we can do without the tameness assumption, this is explained
in Subsection \ref{ss:existence}.

One of the goals of this paper is to provide supporting evidence for the
following {\em paradigms}:

$(k>c)$: Above the Ma\~n\'e critical value, $\Sigma_k$ is virtually
contact. It may or may not be stable.
Its Rabinowitz Floer homology $RFH(\Sigma_k)$ is defined and
nonzero, so $\Sigma_k$ is non-displaceable. The dynamics on $\Sigma_k$
is like that of a geodesic flow; in particular, it has a periodic
orbit in every nontrivial free homotopy class.

$(k=c)$: At the Ma\~n\'e critical value, $\Sigma_k$ is non-displaceable
and can be expected to be non-stable (hence
non-contact). (When $M$ is the 2-torus, an example is given in \cite{CMP}
in which $\Sigma_c$ is of contact type.)

$(k<c)$: Below the Ma\~n\'e critical value, $\Sigma_k$ may or may not be
of contact type.
It is stable and displaceable (provided that $\chi(M)=0$), so its
Rabinowitz Floer homology $RFH(\Sigma_k)$ is defined and vanishes. In
particular, $\Sigma_k$ has a contractible periodic orbit.

Some of these statements will be proved, others verified for various
classes of examples, while some remain largely open. It should be said that
the paradigms for energies $k\leq c$
are only rough approximations to the true picture. For example, work
in progress \cite{MPa} shows that there are convex superlinear Hamiltonians
on $\mathbb T^n$ (for any $n\geq 2$) for which $\Sigma_c$ is actually stable.

The first supporting
evidence for the paradigms comes from the following

\begin{thm}\label{thm:sigma=0}
For $\sigma=0$ all the paradigms are true. Moreover, in this case all
regular level sets are of contact type.
\end{thm}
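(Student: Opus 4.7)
The plan is to first identify $c$ and then verify the paradigms case by case. When $\sigma=0$, any closed 1-form on the simply connected cover $\widetilde{M}$ is exact, so taking $\theta=0$ in the defining infimum gives $c\leq\max_M U$; conversely $\tfrac12|\theta|^2+U\geq U$ pointwise forces the reverse bound, hence $c=\max_M U$. Since the maximum of $U$ is always a critical value of $U$ (attained at any global maximum), it is a critical value of $H$ as well, so $\Sigma_c$ is singular and the paradigm at $k=c$ is vacuous for $\sigma=0$; only regular $k>c$ and $k<c$ require verification.

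For contact type of a regular level $\Sigma_k$: if $k>c$, the level avoids the zero section and the Liouville form $\lambda_0=p\,dq$ satisfies $\lambda_0(X_H)=|p|^2>0$, so $\Sigma_k$ is contact. If $k<c$, the only trouble is the turning locus $T_k=\{(q,0):U(q)=k\}$, where $X_H$ is vertical and $\lambda_0$ vanishes; regularity of $k$ forces $\nabla U\neq 0$ on $T_k$. I would take $\lambda=p\,dq+d\phi$ with $\phi(q,p)=-\chi(q)\langle\nabla U(q),p\rangle/|\nabla U(q)|^2$, where $\chi$ is a bump function equal to $1$ near $\{U=k\}$ and supported in a tubular neighborhood on which $\nabla U$ is nonvanishing. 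A direct computation gives $\lambda(X_H)=\chi+|p|^2+O(|p|^2)$ in that neighborhood; shrinking its radius makes $|p|^2=2(k-U)$ as small as desired, so the positive term $\chi$ dominates, while outside the cutoff region $|p|^2>0$ on $\Sigma_k$ handles positivity. Hence $\lambda$ is a contact form on every regular $\Sigma_k$.

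For the paradigm at $k>c$, the fiberwise rescaling $(q,p)\mapsto(q,p/\sqrt{2(k-U(q))})$ identifies $\Sigma_k$ diffeomorphically with the unit cosphere bundle $S^*M$, and the flow of $X_H$ becomes, up to time reparametrization, the cogeodesic flow of the Jacobi metric $g_k=2(k-U)g$. Length minimization in each nontrivial free homotopy class of loops in $M$ then produces the required closed $g_k$-geodesic. By invariance of $RFH$ under contact (hence tame stable) homotopies, $RFH(\Sigma_k)\cong RFH(S^*M)\neq 0$, the nonvanishing being one of the main computations of~\cite{CF}; Theorem~\ref{thm:RFH}(c) then rules out displaceability.

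For the paradigm at $k<c$, stability is immediate from contact type. To displace $\Sigma_k$ pick a Morse function $f\co M\to\R$ whose critical set lies in the nonempty open set $\{U>k\}$; then $|df|\geq\delta>0$ on the compact set $\{U\leq k\}$. The time-$T$ flow of $K(q,p)=f(q)$ is the fiber translation $(q,p)\mapsto(q,p+T\,df(q))$, and imposing both $H(q,p)=k$ and $H(q,p+T\,df(q))=k$ forces $T|df(q)|\leq 2|p|\leq 2\sqrt{2(k-U(q))}$ on $\{U\leq k\}$, which fails once $T>2\sqrt{2(k-\min_M U)}/\delta$. Truncating $K$ outside a large fiberwise ball gives a compactly supported generating Hamiltonian without affecting the displacement, so $\Sigma_k$ is displaceable; Theorem~\ref{thm:RFH}(c)(d) then gives $RFH(\Sigma_k)=0$ together with a periodic orbit contractible in $T^*M$, equivalently in $M$ since $T^*M$ retracts to the zero section. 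The delicate step is the contact primitive construction: uniform positivity of $\lambda(X_H)$ through the transition region of the cutoff must be verified carefully. A cleaner conceptual alternative is the Schwartzman--Sullivan criterion, which produces $\phi$ from the fact that $|p|^2$ integrates positively against every $X_H$-invariant probability measure on a regular $\Sigma_k$; the latter holds because $H$ has no critical points on $\Sigma_k$, so no orbit is constant.
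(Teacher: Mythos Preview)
Your proof is correct and follows essentially the same route as the paper: identify $c=\max U$ (Lemma~\ref{lemma:contact}), establish contact type on every regular level (your Sullivan-criterion alternative is exactly the paper's argument via Theorem~\ref{thm:mcduff}, and your explicit primitive matches the paper's ``alternatively, pick $f$ with $df(X_H)>0$'' remark), reduce $k>c$ to the unit cosphere bundle via the Jacobi metric and invariance of $RFH$, and displace $\Sigma_k$ for $k<c$ by fibre translation (the paper compresses your Morse-function construction to the phrase ``misses one fibre''). One minor correction: the nonvanishing $RFH(S^*M)\neq 0$ is not established in~\cite{CF} but in~\cite{CFO}, which is what the paper cites.
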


A well-known example with $\sigma\neq 0$ where all the paradigms are
true is a closed hyperbolic surface with $\sigma$ the area form \cite{G1}.
Here for $k>c=1/2$, $\Sigma_k$ is contact and Anosov with all closed orbits
non-contractible; for $k<1/2$, $\Sigma_k$ is stable (in fact contact
with the opposite orientation) and the flow is completely periodic
with contractible orbits; for $k=1/2$, $\Sigma_k$ is unstable and
the flow is the horocycle flow without periodic orbits. We briefly discuss
this example, as well as its generalization to higher dimensions, in
Section~\ref{ss:hyp}.

\begin{thm}\label{thm:non-displaceable}
Suppose that $M$ admits a metric of negative curvature. Then for $k>c$
the Rabinowitz Floer homology $RFH(\Sigma_k)$ is defined and
does not vanish. In particular, $\Sigma_k$ is non-displaceable for
$k\geq c$.
\end{thm}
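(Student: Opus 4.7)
The strategy is to apply Theorem~\ref{thm:RFH}(b) and (c): show that $\Sigma_k$ is virtually contact with $\pi_1(\Sigma_k)\hookrightarrow\pi_1(T^*M)$ for $k>c$, show that $RFH(\Sigma_k)\neq 0$ using the negative curvature, conclude non-displaceability for $k>c$, and extend to $k=c$ by continuity.

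Given $k>c$, the definition of $c$ supplies a 1-form $\theta$ on $\widetilde M$ with $d\theta=\pi^*\sigma$ and $\tfrac12|\theta_q|^2+U(q)\leq k-\delta$ for some $\delta>0$. Set $\lambda:=p\,dq+\widetilde\tau^*\theta$ on $T^*\widetilde M$, so $d\lambda$ pulls back $\om$. On $\widetilde\Sigma_k$ one has $|p|=\sqrt{2(k-U)}$ and $|\theta|\leq\sqrt{2(k-U-\delta)}$, whence
\[
\lambda(X_H)=|p|^2+\theta(p^\sharp)\geq |p|^2-|\theta|\,|p|\geq\delta,
\]
while $|\lambda|\leq|p|+|\theta|$ is uniformly bounded on $\widetilde\Sigma_k$. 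Hence $(\Sigma_k,\lambda)$ is virtually contact. A single $\theta$ chosen for some $c_0\in(c,a)$ provides uniform constants for every $k\in[a,b]\subset(c,\infty)$, yielding a virtually contact homotopy on each compact subinterval. Since $k>c\geq\max U$, the map $\Sigma_k\to M$ is a sphere bundle, and for $\dim M\geq 3$ the homotopy long exact sequence gives $\pi_1(\Sigma_k)\cong\pi_1(M)=\pi_1(T^*M)$. (In dimension two one instead verifies tame stability directly and invokes part~(a).) Theorem~\ref{thm:RFH}(b) thus shows that $RFH(\Sigma_k)$ is defined and independent of $k>c$.

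By this homotopy invariance, it suffices to treat one very large $k$. After the rescaling $p\mapsto p/\sqrt{2k}$, the level $\Sigma_k$ is $C^\infty$-close to the unit cotangent bundle $S^*M$ of $g$, with the magnetic and potential contributions vanishing to leading order. The negative curvature hypothesis is used as follows: $\widetilde M$ is Cartan--Hadamard, hence contractible, so every contractible loop in $M$ lifts to a loop in $\widetilde M$ which can be canonically contracted; consequently no non-constant closed geodesic on $M$ is contractible. In the Morse--Bott analysis of the Rabinowitz action functional (as in \cite{CF}), the only critical set in the contractible-loop component of $S^*M$ is therefore the constant-loop stratum, a copy of $\Sigma_k$ contributing $H_\ast(\Sigma_k)\neq 0$ to $RFH$; with no other critical points available to generate cancelling differentials, $RFH(S^*M)\neq 0$. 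A standard continuation argument transports this non-vanishing to $\Sigma_k$ for large $k$, and hence to all $k>c$.

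Theorem~\ref{thm:RFH}(c) then rules out displaceability for every $k>c$. If some compactly supported Hamiltonian diffeomorphism $\phi$ displaced $\Sigma_c$, then by continuity $\phi(\Sigma_k)\cap\Sigma_k=\emptyset$ also for $k$ slightly greater than $c$, contradicting the case just proved; hence $\Sigma_c$ is non-displaceable as well. The principal obstacle is the Morse--Bott computation of $RFH$: the virtually contact construction is a direct application of Ma\~n\'e's characterization of $c$ and the continuity argument at $k=c$ is formal, but verifying that the magnetic and potential perturbations do not create small-action contractible periodic orbits capable of spoiling the comparison with the geodesic flow requires careful action estimates.
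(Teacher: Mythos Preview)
Your approach follows the paper's outline in using Lemma~\ref{virtc} to obtain the virtually contact structure and then exploiting homotopy invariance to reduce to large $k$; however, there is a genuine gap in how you invoke negative curvature.

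The theorem only assumes that $M$ \emph{admits} a metric of negative curvature, not that the given metric $g$ is negatively curved (see the Remark immediately following the theorem). Your statement ``$\widetilde M$ is Cartan--Hadamard, hence contractible, so \dots\ no non-constant closed geodesic on $M$ is contractible'' conflates two things. Contractibility of $\widetilde M$ is indeed a topological consequence of the existence of some negatively curved metric, but the conclusion that closed $g$-geodesics are never contractible requires the metric $g$ itself to have non-positive curvature: a bumpy metric on a genus-two surface can easily have short contractible closed geodesics. The paper closes this gap by choosing a path of metrics $g_t$ from $g_0=g$ to a negatively curved $g_1$, taking $k>\max_t c_t$ so that all intermediate levels $\Sigma_{k,t}$ are virtually contact, and using invariance under the resulting virtually contact homotopy to reduce to the case where $g$ \emph{is} negatively curved. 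Once there, the geodesic flow is Anosov, and Anosov structural stability guarantees that for large $k$ the magnetic flow on $\Sigma_k$ has no contractible periodic orbits at all --- no delicate action estimates needed. Then Theorem~\ref{thm:rfh=0}(i) gives $RFH(\Sigma_k)\cong H(\Sigma_k;\Z_2)\neq 0$ directly.

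Your treatment of $\dim M=2$ also fails: you propose to ``verify tame stability directly'', but Theorem~\ref{thm:PP} exhibits surfaces with levels $k\in(c,c_0)$ that are not stable. The paper handles the two-dimensional case by observing that $\Sigma_k$ is of \emph{virtual restricted contact type} (see the Remark in the proof of Theorem~\ref{thm:RFH}), a condition that suffices for the Rabinowitz Floer machinery without requiring $\pi_1(\Sigma_k)\hookrightarrow\pi_1(T^*M)$.

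Your continuity argument for $k=c$ is correct and spells out a step the paper leaves implicit.
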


\begin{remark}
We point out that the chosen metric on $M$ need not have negative
curvature (we merely ask the existence of such a metric on $M$)
and the dynamics on $\Sigma_k$ need not be a small perturbation of the
geodesic flow. The proof of Theorem~\ref{thm:non-displaceable} uses
invariance of Rabinowitz Floer homology in a crucial way. In fact,
existence of a metric of negative curvature is a technical hypothesis
that can most likely be removed by establishing
invariance of Rabinowitz Floer homology under more general deformations,
e.g.~deforming the symplectic form in a suitable class or adding an
additional Hamiltonian term. 
\footnote{Note added in proof: Non-vanishing of $RFH(\Sigma_k)$ has in
  the meantime been proven without the hypothesis of a metric of
  negative curvature, for $\sigma$ exact and $k>c_0$ in~\cite{AbS},
  and for general $\sigma$ and all $k>c$ in~\cite{Me2}.} 
\end{remark}

If $\sigma$ is exact one can define another Ma\~n\'e critical value
$$
   c_0 = c_0(g,\sigma,U) := \inf_\theta\sup_{q\in M} H(q,\theta_q),
$$
where the infimum is taken over all 1-forms $\theta$ on $M$
with $d\theta=\sigma$. Note that $c_0\geq c$. For $k>c_0$ the level
set $\Sigma_k$ encloses the Lagrangian graph ${\rm gr}(-\theta)$ and is
therefore non-displaceable by Gromov's theorem~\cite{Gr}.
However, there are examples \cite{PP} with $c<c_0$.
Non-displaceability for the gap $c\leq k\leq c_0$ is new.
In Theorem \ref{thm:gap} we show that the gap appears rather frequently and
in Section~\ref{sec:ex} we will explain how a gap of size $1/4$ appears
quite explicitly in the geometry $PSL(2,\R)$.

We set $c_0:=\infty$ if $\sigma$ is non-exact. Then the contact type
property behaves as follows with respect to the values $c$ and $c_0$.
Suppose $\dim M\geq 3$. Then:
\begin{itemize}
\item for $k>c_0$, $\Sigma_k$ is of contact type
  (Lemma~\ref{lem:exact});
\item for $c<k\leq c_0$, $\Sigma_k$ is virtually contact but
  not of contact type (Lemma~\ref{virtc} and~\cite[Theorem B.1]{Co}).
\end{itemize}
It turns out that {\em whole intervals of unstable levels} may appear in the
gap. The first example where this phenomenon occurs was constructed
in~\cite{PP} on surfaces of negative curvature and with exact
$\sigma$, see Theorem~\ref{thm:PP} below. With non-exact $\sigma$, this
phenomenon {\em always} occurs for strictly $1/4$-pinched negative
sectional curvature.

\begin{theorem}\label{thm:pinched}
Let $(M,g)$ be a closed Riemannian manifold of even dimension
different from two whose sectional curvature satisfies the pinching
condition $-4\leq K< -1$.
Let $\sigma$ be a closed 2-form with cohomology class
$[\sigma]\neq 0$. Then $c<c_0=\infty$ and for any $k>c$ sufficiently
large, the hypersurface $\Sigma_k$ is not stable.
\end{theorem}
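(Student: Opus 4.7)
The equality $c_0=\infty$ holds by the convention for non-exact $\sigma$. To see that $c<\infty$, recall that the universal cover $\widetilde M$ of a negatively curved closed manifold is Hadamard and Gromov--hyperbolic; by averaging (e.g.\ by decay along geodesics) the closed form $\pi^*\sigma$ admits a bounded primitive $\theta$. Then $c\leq\tfrac12\sup_q|\theta_q|^2+\sup_q U(q)<\infty$, and so $c<c_0=\infty$.

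The main content is non-stability for large $k$. I would argue by contradiction: suppose $\Sigma_k$ admits a smooth stabilizing 1-form $\lambda$ for arbitrarily large values of $k$, and derive a cohomological impossibility. Normalize so that $\lambda(R)=1$ where $R$ generates the characteristic distribution; then $\iota_R d\lambda=0$ forces $\mathcal{L}_R\lambda=0$, so $\lambda$ is $R$-invariant. For $k$ large, the characteristic flow on $\Sigma_k$ is (after the radial diffeomorphism and a time reparametrization) a $C^\infty$-small perturbation of the geodesic flow on $SM$; since $K<0$, it is Anosov with invariant splitting $T\Sigma_k=E^s\oplus E^u\oplus\langle R\rangle$. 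A simple contraction argument---comparing $\lambda_p(v)$ with $\lambda_{\phi_t p}(\phi_{t*}v)$ and letting $t\to\pm\infty$---shows that any continuous $R$-invariant 1-form with $\lambda(R)=1$ vanishes on $E^s\oplus E^u$. Thus $\ker\lambda=E^s\oplus E^u$ pointwise; since $\lambda$ is (at least) $C^1$, the distribution $E^s\oplus E^u$ must be $C^1$ as well.

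The next step is to exploit this $C^1$-regularity, together with the strict $1/4$-pinching $-4\leq K<-1$, to derive a contradiction. Consider the closed 2-form $\beta:=\omega|_{\Sigma_k}-d\lambda$: it is $R$-invariant and basic (both $\omega|_{\Sigma_k}$ and $d\lambda$ annihilate $R$). The same contraction argument applied to 2-forms shows $\beta|_{E^s\otimes E^s}=0$ and $\beta|_{E^u\otimes E^u}=0$, so $\beta$ is determined by a flow-invariant pairing $E^s\otimes E^u\to\R$. Under strict $1/4$-pinching, Hirsch--Pugh and Hasselblatt give $C^1$-regularity of the weak (un)stable distributions; combined with the $C^1$-regularity of $E^s\oplus E^u$ from the previous paragraph, one obtains $C^1$ splittings of $E^s$ and $E^u$ individually. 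A Livsic-type rigidity argument, applied to the Lyapunov-non-resonant geodesic flow in dimension $\dim M\geq 4$ even, then forces $\beta$ to be cohomologous to a multiple of the pairing induced by the exact form $d(p\,dq)|_{\Sigma_k}$; in particular $[\beta]=0$, and therefore $[\omega|_{\Sigma_k}]=[d\lambda]+[\beta]=0$. This contradicts the fact that $[\omega|_{\Sigma_k}]=[\tau^*\sigma|_{\Sigma_k}]\neq 0$: since the fibers of $\tau\colon\Sigma_k\to M$ are odd-dimensional spheres ($n=\dim M$ even), the Euler class vanishes in $\R$-cohomology, and the Gysin sequence together with $\dim M\geq 4$ shows $\tau^*\colon H^2(M;\R)\to H^2(\Sigma_k;\R)$ is injective, so $[\sigma]\neq 0$ implies $[\tau^*\sigma|_{\Sigma_k}]\neq 0$.

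The most delicate step is the Livsic-type rigidity: transforming $C^1$-regularity of the Anosov splitting into cohomological triviality of the invariant 2-form $\beta$. The three technical hypotheses of the theorem contribute here in different ways: strict $1/4$-pinching provides the $C^1$ Anosov regularity needed for the rigidity input; the parity condition ($\dim M$ even) kills the Euler class of the sphere bundle and lets the Gysin computation go through cleanly; and the exclusion of surfaces ($\dim M\neq 2$) avoids the low-dimensional phenomena handled separately in \cite{PP}, where the stable distribution is one-dimensional and the pairing-rigidity above degenerates.
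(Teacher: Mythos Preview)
Your proof proposal has a genuine gap at the ``Livsic-type rigidity argument'', which is precisely where all the content lies. You assert that $C^1$-regularity of the Anosov splitting, together with some unspecified Livsic-type input, forces the invariant 2-form $\beta=\omega|_{\Sigma_k}-d\lambda$ to be cohomologous to a multiple of the exact form $d(p\,dq)|_{\Sigma_k}$. But Livsic theory concerns H\"older cocycles and coboundaries, not de~Rham classes of 2-forms; you give no mechanism for passing from regularity of $E^s$, $E^u$ to the conclusion $[\beta]=0$. The paper's argument is quite different and much more concrete: one introduces the Kanai connection $\nabla$ (built from $\omega$, $\lambda$, and the Anosov splitting), proves using the $1/2$-pinching that $\nabla(d\lambda)=0$, and then studies the $\nabla$-parallel endomorphism $L$ of $E^s\oplus E^u$ defined by $d\lambda(\cdot,\cdot)=\omega(L\cdot,\cdot)$. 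The restriction $L^s\colon E^s\to E^s$ has a real eigenvalue $\rho$ because $\dim E^s=n-1$ is \emph{odd}; the corresponding eigenbundle is $\nabla$-parallel, hence $C^1$, integrates to a $C^1$ foliation of each strong stable leaf, and projects to a $\pi_1(M)$-invariant $C^1$ foliation of the sphere at infinity. Foulon's rigidity theorem then forces this foliation to be trivial, so $L=\rho\cdot\mathrm{id}$ and $d\lambda=\rho\,\omega$ pointwise. Since $\rho\neq 0$ (by the Liouville-measure argument of Corollary~\ref{cor:liouville}), $\omega|_{\Sigma_k}$ is exact, contradicting the Gysin sequence.

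You have also misidentified where the parity hypothesis enters. The Gysin argument---injectivity of $\tau^*\colon H^2(M;\R)\to H^2(\Sigma_k;\R)$---needs only $n\geq 3$, since then $H^{2-n}(M)=0$ and the map $\cup\, e\colon H^{2-n}(M)\to H^2(M)$ is trivially zero; no vanishing of the Euler class is required (and your claim that the Euler class vanishes because the sphere fibre is odd-dimensional is backwards: the Euler class of a rank-$n$ bundle is 2-torsion when $n$ is odd, not when $n$ is even). The hypothesis that $n$ is even is used solely to guarantee that $\dim E^s=n-1$ is odd, so that the characteristic polynomial of $L^s$ has a real root. Without this, one cannot extract the eigenbundle needed to run the foliation argument.
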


The theorem becomes false if the strict pinching condition is replaced
by the weak one $-4\leq K\leq -1$: Compact quotients of complex hyperbolic
space with $\sigma$ given by the K\"ahler form are stable for high energies
(cf. Subsection \ref{ss:hyp}).

The next result is an easy consequence of the results by Schlenk
and will be useful to show non-stability at the
Ma\~n\'e critical value in the various examples.

\begin{thm}\label{thm:crit}
Suppose that at the Ma\~n\'e critical value $c$, $\Sigma_{c}$ has
no contractible periodic orbits.
Then $\Sigma_c$ is non-stable provided that
all level sets $\Sigma_k$ with $k<c$ are displaceable.
\end{thm}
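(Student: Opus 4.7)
The plan is to derive a contradiction by assuming $\Sigma_c$ is stable and then constructing a contractible periodic orbit on $\Sigma_c$. The two tools I would combine are a neighborhood theorem for stable hypersurfaces, which trivializes $\om$ near $\Sigma_c$ and identifies periodic orbits of the Reeb-like flow on nearby level sets with those on $\Sigma_c$, and Schlenk's almost existence theorem as stated in the introduction, which supplies contractible periodic orbits on nearby levels from the displaceability hypothesis.

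First, from stability of $\Sigma_c$ with stabilizing 1-form $\lambda$ and Reeb-like vector field $R$ spanning $\ker(\om|_{\Sigma_c})$, a Moser argument applied to the model $\om|_{\Sigma_c}+d(r\lambda)$ yields a symplectic diffeomorphism $\Phi\co\Sigma_c\times(-\eps,\eps)\to U$ onto a tubular neighborhood of $\Sigma_c$ in $V$. Because $\iota_R d\lambda=0$ and $\lambda(R)>0$, the extended vector field $\tilde R(x,r):=R(x)$ is Reeb-like on every slice $\Sigma_c\times\{r\}$, which $\Phi$ identifies with a level set $\Sigma_{k(r)}\subset U$; on each such level $\Phi_*\tilde R$ agrees with the Hamiltonian vector field $X_H$ up to positive reparametrization, so the two flows share the same unparametrized periodic orbits. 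Next, for $k<c$ close enough to $c$ that $\Sigma_k\subset U$, displaceability of $\Sigma_k$ gives displaceability of the convex sublevel set $\{H\le k\}$ (a standard consequence of the volume-preserving property of Hamiltonian diffeomorphisms together with the convex structure of $H$ on a cotangent bundle), and Schlenk's theorem then yields a contractible periodic orbit $\gamma$ on some $\Sigma_{k'}$ with $k'\in(c-\delta,k)$ and $\Sigma_{k'}\subset U$.

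Finally, writing $\Phi^{-1}(\gamma)=(\gamma_0(t),r_{k'})$ after reparametrizing to $\tilde R$, the curve $\gamma_0$ is a closed integral curve of $R$ on $\Sigma_c$, and the straight-line homotopy $(s,t)\mapsto\Phi(\gamma_0(t),s\,r_{k'})$ moves $\gamma_0$ to $\gamma$ entirely within $U\subset V$, so contractibility in $V$ passes from $\gamma$ to $\gamma_0$. Reparametrizing $\gamma_0$ as an $X_H$-orbit furnishes the desired contractible periodic orbit on $\Sigma_c$, contradicting the hypothesis. The main obstacle is the step bridging the two ingredients: one needs to know that displaceability of the hypersurface $\Sigma_k$ implies displaceability of the sublevel set $\{H\le k\}$ (so that Schlenk's theorem applies), and that the measure-zero exceptional set in Schlenk's conclusion does not prevent picking $k'$ in the range $(c-\delta,k)$ where $\Sigma_{k'}\subset U$; both are routine but deserve to be spelled out.
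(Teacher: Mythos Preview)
Your overall strategy---assume stability, use the tubular neighbourhood to transfer periodic orbits from nearby hypersurfaces, and invoke Schlenk's almost existence---is exactly the paper's approach. However, there is a genuine gap in the way you splice the two ingredients together.

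You assert that the slice $\Sigma_c\times\{r\}$ is carried by $\Phi$ to a level set $\Sigma_{k(r)}$ of $H$, and later that $\Phi^{-1}(\gamma)=(\gamma_0(t),r_{k'})$ for an orbit $\gamma$ on $\Sigma_{k'}$. Neither statement is true in general: the stable tubular neighbourhood produced by the Moser argument (Proposition~\ref{prop:stable-homotopy}, or equivalently Lemma~\ref{lem:HZ-stable}(a)) has slices $\{r\}\times\Sigma_c$ which are \emph{not} the level sets of the original Hamiltonian $H$. What stability gives you is that the characteristic line fields on all the slices $\{r\}\times\Sigma_c$ are conjugate to one another via the projection to $\Sigma_c$; it says nothing about $H$. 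Consequently, a contractible periodic orbit you find on some $\Sigma_{k'}$ via Schlenk applied to $H$ does not sit on a slice, and your projection to $\gamma_0$ is not defined.

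The fix is exactly what the paper does: apply almost existence to the slices themselves rather than to the levels of $H$. For $r<0$ the slice $\Phi(\{r\}\times\Sigma_c)$ lies in $\{H<c\}$ and bounds a compact region contained in $\{H<c\}$; by your own volume argument this region is displaceable. Schlenk's result (or the Hofer--Zehnder capacity argument behind it) then gives a contractible periodic orbit on $\{r\}\times\Sigma_c$ for almost every $r<0$. Since the characteristic foliation on each slice is conjugate to that on $\Sigma_c$ by the very definition of the stable tubular neighbourhood, this orbit transports directly to $\Sigma_c$, giving the contradiction. Your identification of the ``obstacle'' (passing from displaceable $\Sigma_k$ to displaceable sublevel) is correct and the paper uses it implicitly; but you also need the more general form of almost existence that is not tied to the specific Hamiltonian $H$.
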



More evidence for the displaceability of $\Sigma_k$ with $k<c$
comes from results of F. Laudenbach and J.-C. Sikorav \cite{LS}
and L. Polterovich \cite{Po} which assert that the zero section of
$(T^{*}M,\omega)$ is actually displaceable if $\sigma$
is non-zero (assuming $\chi(M)=0$). Moreover, for a large class of
cotangent bundles of solvable
manifolds with $[\sigma]\neq 0$, displaceability for all $k<c=\infty$
is proved in \cite{BP}.

As an illustration of the homogeneous examples considered
in Section \ref{sec:ex}, let $G$ be the 3-dimensional Heisenberg
group of matrices
\[\left(\begin{array}{ccc}

1&x&z\\
0&1&y\\
0&0&1\\

\end{array}\right),\]
where $x,y,z\in \R$. The 1-form $\gamma:=dz-xdy$ is left-invariant and we
let $\sigma:=d\gamma$ be the exact magnetic field. If $\Gamma$ is a co-compact
lattice in $G$, $M:=\Gamma\setminus G$ is a closed 3-manifold and $\sigma$
descends to an exact 2-form on $M$. Now let $H$ be the left-invariant
Hamiltonian defined in dual coordinates by
\[2H:=p_{x}^2+(p_{y}+xp_{z})^2+p_{z}^2.\]

\begin{thm}\label{thm:nil}
Consider $M$, $H$ and $\sigma$ as above. Then all the paradigms are true. More
precisely: Each level $\Sigma_k$ except the Ma\~n\'e critical
level $c=c_0=1/2$ is stable and tame. For $k>1/2$ it is contact and
$RFH(\Sigma_k)\neq 0$. For $k<1/2$ it is non-contact and displaceable, so
$RFH(\Sigma_k)=0$. $\Sigma_k$ has no contractible periodic orbits for
$k\geq 1/2$. The level set $\Sigma_{1/2}$ is not stable.
\end{thm}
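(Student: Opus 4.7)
The proof verifies each clause of Theorem~\ref{thm:nil} by exploiting the left-invariant structure on $M = \Gamma\setminus G$. First, the 1-form $\gamma = dz - x\,dy$ is left-invariant, descends to $M$, satisfies $d\gamma = \sigma$, and has pointwise unit norm in the metric dual to $H$ (since $\{dx, dy, \gamma\}$ is the orthonormal coframe whose squared norms sum to $2H$). Hence $\sup_q H(q,\gamma_q) = 1/2$, giving $c \le c_0 \le 1/2$. For the matching lower bound $c \ge 1/2$, I use the variational characterization of the Ma\~n\'e critical value: a primitive $\theta$ of $\sigma$ on $\widetilde M = G$ with $\sup\tfrac12|\theta|^2 < 1/2$ would force the Lagrangian action $\int(L+k)\,dt$ to be non-negative on a family of test loops where explicit computation (e.g.\ on circular $(x,y)$-loops parametrized by radius and speed) gives a contradiction; this is the same sort of isoperimetric-type action inequality that underlies the example computations of Section~\ref{sec:ex}. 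Thus $c = c_0 = 1/2$.

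For $k > 1/2$, since $\sigma$ is exact, $\omega = d\lambda$ globally on $T^*M$ with $\lambda := p\,dq + \tau^*\gamma$. A direct calculation gives
\[
\lambda(X_H) = |p|^2 + \gamma_q(\dot q) \ge 2k - \sqrt{2k},
\]
which is strictly positive iff $k > 1/2$, so $\lambda|_{\Sigma_k}$ is a contact form and $\Sigma_k$ is of contact type; stability and tameness follow, the latter from Stokes' theorem giving $\int v^*\lambda = \int_D\omega = \Omega(v)$ for any filling disk~$D$. Absence of contractible periodic orbits at $k \ge 1/2$ follows from the explicit dynamics: conservation of $p_z$ reduces the flow to rotation-plus-drift in the Heisenberg coordinates, and closed-form integration of the equations of motion shows that the Heisenberg word representing the closure of any orbit at $k \ge 1/2$ never coincides with the trivial element of $\Gamma = \pi_1(M)$. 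The contrapositive of Theorem~\ref{thm:RFH}(d) then yields $RFH(\Sigma_k) \ne 0$. For $k < 1/2$, non-contact is a Contreras-type obstruction: any contact primitive of $\omega|_{\Sigma_k}$ would produce a primitive $\theta$ of $\sigma$ on $M$ with $\sup H(\cdot,\theta) < k < c_0$, impossible. Stability is proved by an explicit construction using the left-invariant coframe (carried out in Section~\ref{sec:ex}), while displaceability follows from $\chi(M) = 0$ together with the Laudenbach--Sikorav/Polterovich displacement of the zero section, extended to all sublevels $\{H \le k\}$ with $k < 1/2$ by the techniques of~\cite{BP} in the homogeneous setting. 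Theorem~\ref{thm:RFH}(c) then gives $RFH(\Sigma_k) = 0$. Finally, $\Sigma_{1/2}$ is non-stable by Theorem~\ref{thm:crit}, whose hypotheses (no contractible periodic orbits at the critical level, displaceability of strictly sub-critical sublevels) have just been established.

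The main obstacle is the dynamical claim that no contractible periodic orbits exist at $k \ge 1/2$. Every other assertion reduces either to the single primitive calculation $\lambda(X_H) = 2k + \langle \gamma_q^\sharp, p^\sharp\rangle$, to general displaceability results valid whenever $\chi(M) = 0$, or to the verification of Theorem~\ref{thm:crit}'s hypotheses. The periodic orbit analysis, by contrast, is genuinely dynamical: although conservation of $p_z$ makes the system integrable on each energy level, the closure condition on $M$ asks the Heisenberg product of the planar return with the vertical drift to lie in $\Gamma$, and ruling out triviality in $\pi_1(M)$ for all $k \ge 1/2$ requires exploiting the explicit multiplication law of~$G$ in closed form.
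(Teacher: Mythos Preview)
Your outline is broadly right in spirit, but three of the steps have genuine gaps.

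\textbf{Non-contact for $k<1/2$.} Your ``Contreras-type obstruction'' does not work as stated: a contact form $\lambda$ on $\Sigma_k$ with $d\lambda=\omega|_{\Sigma_k}$ is a 1-form on the hypersurface, and there is no mechanism by which it produces a primitive $\theta$ of $\sigma$ on $M$ satisfying $\sup_q H(q,\theta_q)<k$. You are inverting the direction of Lemma~\ref{lem:exact}, and that direction simply fails. (Contreras' result~\cite[Theorem B.1]{Co} concerns the gap $(c,c_0]$, which is empty here.) The paper's argument is entirely different: it exhibits explicit null-homologous closed orbits on $\Sigma_k$ along which $\int\psi\le 0$, and pairs this with the Liouville measure (exact by Lemma~\ref{lem:liouville}) on which $\int\psi(X_H)\,d\mu>0$, then invokes McDuff's criterion (Corollary~\ref{cor:mcduff}). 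This is also what feeds the lower bound $c_0\ge 1/2$ in the paper's approach, via the contrapositive of Lemma~\ref{lem:exact}; your test-loop route to $c\ge 1/2$ is a legitimate alternative but is only sketched, and the kinetic term $(\dot z-x\dot y)^2$ makes the circular-loop computation less immediate than you suggest.

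\textbf{Tameness for $k<1/2$.} You never establish this, yet it is both part of the theorem's conclusion and a hypothesis of Theorem~\ref{thm:RFH}(c), which you invoke to get $RFH(\Sigma_k)=0$. The paper obtains it from the explicit period--action computation in Lemma~\ref{cco}: every contractible closed orbit has $\Omega(v)$ a positive multiple of $2\pi(1-\sqrt{1-2k})$, with period proportional to $\Omega(v)$.

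\textbf{Displaceability for $k<1/2$.} Laudenbach--Sikorav/Polterovich displace only the zero section, and the results of~\cite{BP} you cite require $[\sigma]\neq 0$, which fails here since $\sigma=d\gamma$ is exact on $M$. The paper instead writes down the Hamiltonian $f=p_\alpha$ and checks directly from the bracket $\{p_\alpha,p_\beta\}=1+p_\gamma$ that $\dot p_\beta\ge 1-\sqrt{2k}>0$ along its flow, displacing the whole of $\Sigma_k$.

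The remaining pieces---the contact form $\psi$ for $k>1/2$, the non-existence of contractible orbits for $k\ge 1/2$ via the integrable reduction (your account of this is correct, though the paper phrases the closure condition as an equation in $p_\gamma$ rather than as a word in $\Gamma$), and the application of Theorem~\ref{thm:crit} at $k=1/2$---match the paper's approach.
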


By invariance of Rabinowitz Floer homology under tame stable homotopies,
this implies

\begin{cor}
In the example of Theorem~\ref{thm:nil} two level sets
$\Sigma_k,\Sigma_\ell$ with $k<1/2<\ell$ are smoothly homotopic and
tame stable, but not tame stably homotopic.
\end{cor}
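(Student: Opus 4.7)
The plan is to deduce this corollary directly from Theorem~\ref{thm:nil} combined with the invariance of Rabinowitz Floer homology under tame stable homotopies stated in Theorem~\ref{thm:RFH}(a).

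First I would verify that $\Sigma_k$ and $\Sigma_\ell$ are indeed smoothly homotopic in $T^*M$. Since $M$ is closed and $H$ is convex and superlinear in $p$, every positive energy level is a regular level set of $H$ diffeomorphic to the unit cotangent bundle $S^*M$; the family $\{\Sigma_t=H^{-1}(t)\}_{t\in[k,\ell]}$, or equivalently the flow lines of a gradient-like vector field for $H$, provides an explicit smooth isotopy between $\Sigma_k$ and $\Sigma_\ell$ inside $T^*M$. This handles the positive assertion about smooth homotopy.

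Next, from Theorem~\ref{thm:nil} we extract the key dynamical/symplectic inputs: every level $\Sigma_k$ with $k\neq 1/2$ is tame stable, and moreover $RFH(\Sigma_\ell)\neq 0$ for $\ell>1/2$ while $RFH(\Sigma_k)=0$ for $k<1/2$. In particular, each of $\Sigma_k$ and $\Sigma_\ell$ individually is a legitimate tame stable hypersurface to which the construction of $RFH$ applies.

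To conclude the corollary, I would argue by contradiction: suppose there were a tame stable homotopy $(\Sigma_s,\lambda_s)_{s\in[0,1]}$ with $(\Sigma_0,\lambda_0)=(\Sigma_k,\lambda_k)$ and $(\Sigma_1,\lambda_1)=(\Sigma_\ell,\lambda_\ell)$. Since $(T^*M,\omega)$ is a geometrically bounded symplectic manifold with $\omega|_{\pi_2}=0$ (twisted cotangent bundles satisfy both, as noted before Theorem~\ref{thm:RFH}), Theorem~\ref{thm:RFH}(a) would yield an isomorphism $RFH(\Sigma_k)\cong RFH(\Sigma_\ell)$, contradicting $0\neq RFH(\Sigma_\ell)\cong RFH(\Sigma_k)=0$. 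Hence no such tame stable homotopy exists.

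There is essentially no obstacle here beyond quoting the two previous theorems correctly; the content of the corollary is really that the homotopy through $\Sigma_{1/2}$ forced by the smooth family $\{\Sigma_t\}$ cannot be perturbed to avoid the breakdown of tame stability at the Ma\~n\'e critical level, and this obstruction is detected precisely by the jump in $RFH$ across $k=1/2$.
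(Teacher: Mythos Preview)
Your argument is correct and is exactly the one the paper intends: the corollary is stated as an immediate consequence of Theorem~\ref{thm:nil} together with the invariance of Rabinowitz Floer homology under tame stable homotopies from Theorem~\ref{thm:RFH}(a). The smooth homotopy via the family $\{\Sigma_t\}$ and the contradiction from $RFH(\Sigma_k)=0\neq RFH(\Sigma_\ell)$ are precisely the points being invoked.
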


An even more intriguing example arises on compact quotients
$M:=\Gamma\setminus PSL(2,\R)$. Let $(x,y,\theta)$ be coordinates on
$PSL(2,\R)$ arising from its identification with $S\mathbb H^2\cong \mathbb H^2\times
S^1$, the unit tangent bundle of the upper half plane. The
left-invariant 1-form $\gamma = dx/y+d\theta$ gives rise to an exact
magnetic field $\sigma:=d\gamma$ on $M$. Let $H$ be the left-invariant
Hamiltonian defined in dual coordinates by
\[2H:= (yp_x-p_\theta)^2+(yp_{y})^2+p_\theta^2.\]

\begin{thm}\label{thm:PSL}
Consider $M$, $H$ and $\sigma$ as above. Then all the paradigms are true. More
precisely: Each level $\Sigma_k$ except the Ma\~n\'e critical
levels $c=1/4$ and $c_0=1/2$ is stable and tame. For $k>1/2$ it is contact and
$RFH(\Sigma_k)\neq 0$. For $1/4<k<1/2$ it is non-contact and
$RFH(\Sigma_k)\neq 0$. For $k<1/4$ it is non-contact and displaceable, so
$RFH(\Sigma_k)=0$. $\Sigma_k$ has no contractible periodic orbits for
$k\geq 1/4$.
The level sets $\Sigma_{1/4}$ and $\Sigma_{1/2}$ are not stable.
\end{thm}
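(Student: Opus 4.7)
The strategy rests on the identification $M=\Gamma\setminus PSL(2,\R)\cong S\Sigma$ with $\Sigma=\Gamma\setminus\mathbb{H}^2$ a closed hyperbolic surface, and on the left-invariance of $H$ and $\gamma$, which reduces the Hamiltonian dynamics to a Lie--Poisson system on $\ssl(2,\R)^*$ that can be integrated explicitly. The various hypersurfaces $\Sigma_k$ are then homogeneous and much of their geometry can be read off from the coadjoint orbit picture.

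The plan begins with the two Ma\~n\'e critical values. Taking $\theta=\gamma$ as a primitive of $\sigma$ on $M$ itself yields the left-invariant identity $H(q,\gamma_q)\equiv 1/2$, hence $c_0\le 1/2$; the matching lower bound follows from Stokes applied to any cycle whose $\sigma$-integral is nontrivial. For $c=1/4$ I would pass to the abelian cover and use the purely horocyclic primitive $dx/y$ of the hyperbolic area form (modified to trivialize $\pi^*\sigma$ on the cover), which realizes $\widetilde H\equiv 1/4$; the matching lower bound comes from optimizing over all trivializations. I would then verify stability level by level: for $k>1/2$, Lemma~\ref{lem:exact} applied to $\gamma$ gives contact type; for $1/4<k<1/2$, Lemma~\ref{virtc} together with the horocyclic primitive on the cover gives the virtually contact structure, while non-contactness follows from \cite[Theorem~B.1]{Co}; for $k<1/4$ I would exhibit an explicit left-invariant stabilizing 1-form on $\Sigma_k$, with tameness coming from homogeneity.

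The central dynamical input, and the main obstacle, is showing that $\Sigma_k$ carries no contractible periodic orbits for any $k\ge 1/4$. I would prove this by lifting the flow to $\widetilde{PSL}(2,\R)$ and using the Lie--Poisson reduction on $\ssl(2,\R)^*$: a periodic orbit of $\Sigma_k$ is contractible in $V=T^*M$ iff it is contractible in $M$ iff the corresponding trajectory closes already in the universal cover. One then analyzes case by case the intersections of the energy level $\{\widetilde H=k\}$ with the coadjoint orbits (elliptic, hyperbolic, parabolic); for $k\ge 1/4$ the quadratic form becomes indefinite in the relevant direction and the resulting unbounded trajectories cannot close in the universal cover. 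This is also the step which pins down the value $1/4$ precisely as the threshold.

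With absence of contractible periodic orbits in hand, the RFH non-vanishing statements for $k>1/2$ and $1/4<k<1/2$ follow immediately from the contrapositive of Theorem~\ref{thm:RFH}(d); definedness is supplied by parts (a) or (b). For $k<1/4$ I would combine the Laudenbach--Sikorav/Polterovich displaceability of the zero section (available since $\chi(M)=0$ and $\sigma\neq 0$) with a scaling argument to displace $\{H\le k\}$, and then Theorem~\ref{thm:RFH}(c) gives $RFH(\Sigma_k)=0$; non-contactness below $c$ follows as usual from displaceability. Finally, non-stability of $\Sigma_{1/4}$ is immediate from Theorem~\ref{thm:crit}, using the absence of contractible periodic orbits there together with the displaceability of sublevels for $k<1/4$. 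For $\Sigma_{1/2}$ Theorem~\ref{thm:crit} does not apply directly, since intermediate levels are non-displaceable; there I would argue ad hoc, exploiting the parabolic/horocyclic nature of the flow on $\Sigma_{1/2}$ to exclude \emph{any} periodic orbit and invoking an existence result for periodic orbits on stable hypersurfaces in geometrically bounded symplectic manifolds of Hofer--Zehnder type to conclude non-stability.
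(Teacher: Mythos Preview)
Your outline follows the paper's strategy of reducing to $\ssl(2,\R)^*$ and integrating explicitly, but several steps have real gaps. The most serious is the non-stability of $\Sigma_{1/2}$: your plan is to show that $\Sigma_{1/2}$ has no periodic orbits and then invoke a Hofer--Zehnder type existence result, but $\Sigma_{1/2}$ carries many closed orbits. For instance at $p_\gamma=\pm 1$ (so $p_\alpha=p_\beta=0$) the flow is the fibre $S^1$-action $\phi_t(q,p)=(q\,Q^*(t),p)$, and more generally the matrix $d$ ranges through elliptic, parabolic and hyperbolic types as $p_\gamma$ varies in $[-1,1]$; the flow is by no means globally horocyclic. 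The paper's argument is entirely different: it averages a putative stabilizing form over the $S^1$-action with moment map $p_\gamma$, performs energy--momentum reduction to the slices $\Sigma_{1/2,C}$, uses that the reduced flow (right multiplication by $e^{td}$) is Anosov for $-1<C<0$ to force $d\lambda=f(C)\,d\psi$ with $f$ constant on each slice, and then lets $C\to -1$ to reach a contradiction on the totally periodic set $\Sigma_{1/2,-1}$.

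Several other steps also need repair. You establish only the virtually contact property for $1/4<k<1/2$, but the theorem asserts \emph{stability}; the paper constructs explicit stabilizing forms $\lambda=f(p_\gamma)\psi+g(p_\gamma)\varphi$ on those levels. Your displaceability argument for $k<1/4$ via Laudenbach--Sikorav plus ``scaling'' yields only \emph{some} small $k$, not the sharp threshold $1/4$; the paper instead flows by the explicit Hamiltonian $p_\beta$ and uses $\{p_\beta,p_\alpha\}=1+p_\gamma>0$ on $\Sigma_k$ precisely when $k<1/4$. Non-contactness does not follow from displaceability (small spheres in $\R^{2n}$ are both); the paper handles all $k\le 1/2$ via McDuff's criterion, pairing the null-homologous fibre orbits against the Liouville measure. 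Finally, $2H(dx/y)=1$, not $1/2$: the primitive realising $c\le 1/4$ is $\delta=dx/y+d\theta/2$ on the \emph{universal} (not abelian) cover, and the matching lower bound $c\ge 1/4$ comes from an explicit action computation on lifts of geodesic circles, not from an abstract optimisation.
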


\begin{remark}
Non-stability of the Ma\~n\'e critical level $c$ in
Theorems~\ref{thm:nil} and~\ref{thm:PSL} follows from
Theorem~\ref{thm:crit}. Non-stability of the level $c_0$ in
Theorem~\ref{thm:PSL} follows from a detailed analysis of the
dynamics on the level set. Note that the level $c_0$ is virtually contact;
in fact for any $k>1/4$, $\Sigma_k$ is virtually contact.
\end{remark}

Finally, we also point out the following examples with
infinite Ma\~n\'e critical value. An analogous picture arises on
Sol-manifolds discussed in Section~\ref{ss:sol}.

\begin{thm}\label{thm:torus}
Consider the $n$-torus $M=\T^n$ and a nonzero constant 2-form
$\sigma$. Then the Ma\~n\'e  critical value is $c=\infty$ and all level
sets $\Sigma_k$ are non-contact, stable, tame and displaceable, so
$RFH(\Sigma_{k})=0$ for all $k$.
\end{thm}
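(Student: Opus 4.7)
The plan is to verify the four properties ($c=\infty$, non-contact, tame stable, displaceable) for each regular level $\Sigma_k$; vanishing of $RFH(\Sigma_k)$ will then follow from Theorem~\ref{thm:RFH}(c). Three of these ($c=\infty$, non-contact, displaceable) are direct computations using the flat geometry of $(T^*\T^n,\omega)$. For $c=\infty$: Stokes gives $\int_{\partial D}\theta=\int_D\pi^*\sigma$ for any primitive $\theta$ of $\pi^*\sigma$ on $\R^n$, and on a $2$-disk $D$ of radius $R$ lying in a $2$-plane where $\sigma$ is nonzero this equals a nonzero constant times $R^2$, while $|\int_{\partial D}\theta|\leq 2\pi R\sup|\theta|$; hence $\sup|\theta|=\infty$ and $c=\infty$ independent of $U$. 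Non-contactness follows because $[\omega|_{\Sigma_k}]=[\tau^*\sigma]$ is nonzero in $H^2(\Sigma_k)$: $\tau^*$ is injective on $H^2$ by Leray--Hirsch for the sphere bundle $\Sigma_k\to\T^n$, and $[\sigma]\neq 0$ in $H^2(\T^n)$ since $\sigma$ is a nonzero translation-invariant form. For displaceability, pick $v\in\R^n$ with $\sigma v\neq 0$; the globally defined function $H_0(q,p)=\la p,v\ra$ has Hamiltonian vector field $v\,\partial_q+\sigma v\,\partial_p$ in the twisted form and generates the flow $(q,p)\mapsto(q+tv,p+t\sigma v)$. Since $|p|$ is bounded on $\Sigma_k$, $\phi_T^{H_0}(\Sigma_k)\cap\Sigma_k=\emptyset$ for $T$ large; cutting off $H_0$ by a bump in $|p|$ equal to $1$ on a compact set containing $\bigcup_{t\in[0,T]}\phi_t^{H_0}(\Sigma_k)$ yields a compactly supported Hamiltonian with the same time-$T$ effect on $\Sigma_k$.

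Stability and tameness form the heart of the proof and the main obstacle. I would treat the case $U=0$ first: decompose $\R^n=\ker\sigma\oplus W$ with $\sigma|_W$ normalized so that $\sigma^2=-\mathrm{id}_W$. The flow $\dot q=p$, $\dot p=\sigma p$ preserves $p^{\ker\sigma}$ and $|p|$, and rotates the $W$-component of $p$. I propose
\[
\lambda \;=\; \la\sigma p,\,dp\ra \;+\; 2\la p^{\ker\sigma},\,dq^{\ker\sigma}\ra.
\]
A direct calculation gives $\lambda(X_H)=|\sigma p|^2+2|p^{\ker\sigma}|^2=2k+|p^{\ker\sigma}|^2>0$ on $\Sigma_k$, and $d\lambda(X_H,Y)=-2\la p,Y_p\ra$, which vanishes on $T\Sigma_k$ by $dH(Y)=\la p,Y_p\ra=0$. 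For tameness, the closed orbits contractible in $T^*\T^n$ are exactly those with $p_0\in W$; on such an orbit both $\int v^*\lambda$ and $\Omega(v)$ can be computed directly using the primitive $p\,dq+\tfrac12\sigma_{ij}q^i dq^j$ of the lifted symplectic form on $T^*\R^n$, and one finds that their ratio is bounded by a constant independent of the orbit. For general $U$ the same $\lambda$ is spoiled by the $-\nabla U$ term in $\dot p$; I would add a correction $1$-form linear in $\nabla U$ and verify positivity and invariance, which holds directly for $k$ large (where the kinetic contribution dominates) and extends to all regular $k$ by a perturbative argument.

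Finally, since $\Sigma_k$ is tame stable and displaceable, Theorem~\ref{thm:RFH}(c) yields $RFH(\Sigma_k)=0$. The principal difficulty in the above is the stability argument for nonzero potential: the explicit $\lambda$ handles $U=0$ cleanly, but extending it to arbitrary $U$ and all regular $k$ is where the most delicate analysis will be required.
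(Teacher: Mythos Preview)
Your arguments for $c=\infty$, non-contactness, displaceability and tameness are essentially correct and close in spirit to the paper (which, incidentally, only treats the purely kinetic Hamiltonian $H=\tfrac12|p|^2$; the theorem sits in the section on homogeneous examples, and the proof in Section~\ref{sub:tori} takes $U=0$ throughout, so your attempted extension to general $U$ goes beyond what is actually claimed and proved).

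There is, however, a genuine gap in your stability argument. The normalization ``$\sigma^2=-\mathrm{id}_W$'' cannot be arranged for a general nonzero constant $2$-form with respect to the standard flat metric: after an orthogonal change of basis, $\sigma|_W$ has block form $\bigl(\begin{smallmatrix}0&-a_i\\a_i&0\end{smallmatrix}\bigr)$ with the $a_i>0$ not necessarily equal (e.g.\ $\sigma=dq_1\wedge dq_2+2\,dq_3\wedge dq_4$ on $\T^4$). Your positivity $\lambda(X_H)=|\sigma p|^2+2|p^{\ker\sigma}|^2>0$ survives without the normalization, but the key condition $i_{X_H}d\lambda|_{T\Sigma_k}=0$ does not: a direct computation gives
\[
i_{X_H}d\lambda(Y)\;=\;2\langle\sigma^2 p,Y_p\rangle-2\langle P_1 p,Y_p\rangle,
\]
which equals $-2\langle p,Y_p\rangle$ (and hence vanishes on $T\Sigma_k$) precisely when $\sigma^2|_W=-\mathrm{id}_W$. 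For generic $\sigma$ this fails, so your $\lambda$ is not stabilizing.

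The paper's remedy is exactly to replace the operator $\sigma$ in your first term by its inverse on $W$: with $A:=(\sigma|_W)^{-1}$ and $P_2$ the orthogonal projection onto $W=\mathrm{Im}\,\sigma$, the paper takes
\[
\lambda\;=\;\langle P_1 p,\,dq\rangle\;+\;\tfrac12\,\langle P_2 p,\,A\,dp\rangle,
\]
for which one finds $\lambda(X_H)=|P_1 p|^2+\tfrac12|P_2 p|^2>0$ and $i_{X_H}d\lambda(Y)=-\langle p,Y_p\rangle$, valid for arbitrary constant $\sigma$. Under your normalization $A=-\sigma|_W$ and the two $1$-forms agree up to a factor $2$, so your idea is right in outline; it is only the choice of the ``angular'' piece that needs this correction. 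The tameness computation then goes through as in the paper: contractible closed orbits have $P_1p=0$, and one finds $\Omega(v)=kT$ with $T$ the period.
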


We conclude this paper with a discussion of very high and low energy
levels. For levels $k>c$ the only remaining issue is:

\begin{conjecture}
For $k>c$ the Rabinowitz Floer homology $RFH(\Sigma_k)$
does not vanish. In particular, $\Sigma_k$ is non-displaceable for
$k\geq c$.
\end{conjecture}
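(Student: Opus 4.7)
The approach is to combine homotopy invariance of Rabinowitz Floer homology under virtually contact homotopies (Theorem~\ref{thm:RFH}(b)) with a direct computation in the untwisted limit. The paradigms indicate $\Sigma_k$ is virtually contact for every $k>c$, and Theorem~\ref{thm:sigma=0}, together with the standard identification $RFH(S^*M)\cong H_*(\Lambda M)\neq 0$, gives non-vanishing when $\sigma=0$. The strategy is therefore to connect an arbitrary level $\Sigma_k$ with $k>c$ to a level of the geodesic Hamiltonian by a path of virtually contact hypersurfaces whose structural constants are controlled uniformly; invariance then transports non-vanishing, and non-displaceability follows from Theorem~\ref{thm:RFH}(c) (the statement for $k=c$ is obtained by a limiting argument on displacing Hamiltonian diffeomorphisms).

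\textbf{The deformation.} Consider the one-parameter family
\[H_s(q,p)=\tfrac{1}{2}|p|^2+sU(q),\qquad \om_s=dp\wedge dq+s\,\tau^*\sigma,\qquad s\in[0,1],\]
with Ma\~n\'e critical values $c_s:=c(g,s\sigma,sU)$. Pick a primitive $\theta$ of $\pi^*\sigma$ on $\widetilde M$ with $\sup|\theta|$ arbitrarily close to the value attained in the definition of $c$, so that $s\theta$ is a primitive of $s\,\pi^*\sigma$. On the lift $\widetilde\Sigma_{k_s}^{s}$ of the level set $\{H_s=k_s\}$, propose the 1-form $\lambda_s:=p\,dq-s\,\pi^*\theta$, and select a continuous energy path with $k_s>c_s+\delta$, $k_1=k$, and $k_0$ a positive regular value of the kinetic Hamiltonian, keeping $\delta>0$ fixed. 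The sup-norm bound $|\lambda_s|\leq C$ would follow because $|p|$ is bounded on $\Sigma_{k_s}^{s}$ (the potential $sU$ is bounded and $k_s$ is bounded) and $s\theta$ has uniformly bounded sup-norm by construction. The lower Reeb bound $\lambda_s(R_s)\geq\eps>0$ is the quantitative version of $k_s>c_s$, obtained by evaluating $\lambda_s$ on a suitable multiple of the Hamiltonian vector field of $H_s$ and using the chosen gap $\delta$. If this indeed yields a virtually contact homotopy, Theorem~\ref{thm:RFH}(b) gives $RFH(\Sigma_k)\cong RFH(\Sigma^{0}_{k_0})\neq 0$.

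\textbf{Main obstacle.} The difficulty concentrates at $s=1$ with $k$ close to $c$: the infimum defining the Ma\~n\'e value need not be attained, so primitives $\theta$ that nearly realize it may have uncontrolled derivatives, disrupting the $C^1$-estimates required by the compactness results of Section~\ref{sec:RFH}. This is exactly where Theorem~\ref{thm:non-displaceable} invokes the negative curvature hypothesis: it provides primitives with geometrically controlled derivative via Busemann-function type constructions on the universal cover. Removing this hypothesis --- the content of the conjecture --- requires either producing such controlled primitives for general $(M,g)$, or enlarging the invariance class of $RFH$ as suggested in the remark following Theorem~\ref{thm:non-displaceable}, e.g.~to cover rescaling of the symplectic form or the addition of a Hamiltonian-dependent term, thereby bypassing the pointwise $C^1$-bound altogether and transporting non-vanishing by a more intrinsic deformation.
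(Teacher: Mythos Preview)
This statement is a \emph{conjecture} in the paper; no proof is given, and the footnote records that it was settled only later (\cite{AbS}, \cite{Me2}). The nearest result proved here is Theorem~\ref{thm:non-displaceable}, which assumes $M$ admits a negatively curved metric, so that is the argument against which to compare.

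Your scheme has a structural gap that appears before any estimate: you rescale the symplectic form to $\om_s=dp\wedge dq+s\,\tau^*\sigma$, but Theorem~\ref{thm:RFH}(b) gives invariance only for virtually contact homotopies of hypersurfaces inside a \emph{fixed} $(V,\om)$. When $\sigma$ is not exact on $M$ the class $[\om_s]\in H^2(T^*M;\R)$ varies with $s$ and cannot be absorbed by a fibre translation; this is precisely the missing invariance flagged in the remark after Theorem~\ref{thm:non-displaceable}, and you mention it only as one of two alternatives in your last sentence rather than as the central obstruction. Your diagnosis of the difficulty is also off on two points. First, the virtually contact condition and the period--action inequality behind the compactness proof involve only the $C^0$ data $\sup|\lambda|$ and $\inf\lambda(R)$, so no $C^1$ control of $\theta$ is at stake. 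Second, the paper's use of negative curvature in Theorem~\ref{thm:non-displaceable} is not to build controlled primitives: it keeps $\om$ fixed, deforms the \emph{metric} to a negatively curved one, takes $k$ above all intermediate Ma\~n\'e values, and then uses that for large $k$ the magnetic flow is an Anosov perturbation of the geodesic flow and hence has no contractible closed orbits, whence $RFH(\Sigma_k)\neq 0$ by the contrapositive of Theorem~\ref{thm:RFH}(d).
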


It is known that for $k>c$, $\Sigma_k$ carries a periodic orbit
in every nontrivial free homotopy class of loops (this is proved
in \cite{P} under a mild technical condition on $\pi_{1}(M)$,
which is removed in \cite{Me1}.)

For $k<c$ the dynamics is much less well understood, even for
very small values of $k$. If $\sigma\neq 0$, the results of
Polterovich and Schlenk mentioned above yield a $k_0>0$ such that
$\Sigma_k$ is displaceable for all $k\leq k_0$ and carries a
contractible periodic orbit for almost all $k\leq k_0$. However, the
following basic question is wide open:

\begin{question}
Is $\Sigma_k$ stable for sufficiently small $k$, at least
in the case that $\sigma$ is a symplectic form on $M$?
\end{question}

A positive answer to the question would give an alternative proof
of the existence of closed contractible orbits on every low energy level
for $\sigma$ symplectic. This has been recently proved by
V.~Ginzburg and B.~G\"urel~\cite{GG}.
In Section~\ref{ss:symp} (Proposition~\ref{prop:symp}) we give
an affirmative answer in the homogeneous symplectic case:
{\em Let $M=\Gamma\setminus G$ be a compact homogeneous space with a
left-invariant metric and a left-invariant symplectic form $\sigma$.
Then there exists $k_0>0$ such that for all $k<k_0$ the
hypersurface $\Sigma_k$ is stable.}
\medskip

\noindent{\it Acknowledgements:}
We wish to thank L.~Macarini, F.~Schlenk and E.~Volkov for several
useful discussions related to this paper. We also thank W.~Merry for
comments and corrections on previous drafts. We thank the organizers
of the wonderful Conference on Symplectic Geometry, Kyoto 2007, which
inspired the present work. Finally, we thank the referee for numerous
corrections.

\section{Stable Hamiltonian structures and
  hypersurfaces}\label{sec:stable}

{\bf Stable Hamiltonian structures. }
Let $\Sigma$ be a closed oriented manifold of dimension $2n-1$. A {\em
Hamiltonian structure} on $\Sigma$ is a closed 2-form $\om$ such that
$\om^{n-1}\neq 0$. So its kernel $\ker(\om)$ defines a 1-dimensional
foliation which we call the {\em characteristic foliation} of
$\om$. We orient the characteristic foliation by a 1-form $\lambda$
such that $\lambda\wedge\om^{n-1}>0$.

A Hamiltonian structure is called {\em stable} if there exists a 1-form
$\lambda$ such that $\ker\om\subset \ker d\lambda$ and
$\lambda\wedge\om^{n-1}>0$. We call $\lambda$ a {\em stabilizing
1-form}. Define the {\em Reeb vector field} $R$ by
$\lambda(R)=1$ and $i_R\om=0$ (which implies $i_Rd\lambda=0$).

The following two results give equivalent formulations of stability.

\begin{thm}[Wadsley~\cite{Wa}]\label{thm:wadsley}
A Hamiltonian structure $(\Sigma,\om)$ is stable if and only if its
characteristic foliation is {\em geodesible}, i.e.~there exists a
Riemannian metric such that all leaves are geodesics.
\end{thm}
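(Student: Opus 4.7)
The plan is to exhibit a pointwise duality between stabilizing 1-forms and metrics rendering the characteristic foliation geodesic, based on the following key observation. If $g$ is a Riemannian metric on $\Sigma$, $R$ is a vector field with $|R|_g\equiv 1$, and $\lambda:=g(R,\cdot)$ is its $g$-dual 1-form (so $\lambda(R)=1$), then the Koszul formula applied to $Y=Z=R$, combined with $X(g(R,R))=0$, yields
\[
  g(\nabla_R R,X) \;=\; R\,\lambda(X)-\lambda([R,X]) \;=\; (\mathcal{L}_R\lambda)(X).
\]
Cartan's formula and $i_R\lambda=1$ give $\mathcal{L}_R\lambda=i_R d\lambda$, producing the fundamental equivalence
\[
  \nabla_R R = 0 \iff i_R d\lambda = 0
\]
whenever $|R|_g\equiv 1$ and $\lambda=g(R,\cdot)$.

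For the direction ``stable $\Rightarrow$ geodesible'', I would start with a stabilizing 1-form $\lambda$ and its Reeb field $R$, which satisfies $\lambda(R)=1$, $i_R\omega=0$, and $i_R d\lambda=0$. Choose any fiberwise inner product $g_\xi$ on the hyperplane field $\xi:=\ker\lambda$ and extend it to a Riemannian metric $g$ on $T\Sigma$ by declaring the splitting $T\Sigma=\R R\oplus\xi$ to be orthogonal, with $g(R,R)=1$ and $g|_\xi=g_\xi$. By construction $g(R,\cdot)=\lambda$, so the fundamental equivalence forces $\nabla_R R=0$. Since $R$ spans the line field $\ker\omega$, its integral curves are exactly the leaves of the characteristic foliation, which are therefore geodesics.

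For the converse, suppose a metric $g$ makes the characteristic leaves geodesic. Orient the foliation, let $R$ be the unit-speed vector field along the leaves, and set $\lambda:=g(R,\cdot)$. Then $\lambda(R)=1$, and the equivalence yields $i_R d\lambda=0$, whence $\ker\omega=\R R\subset\ker d\lambda$. It remains to check that $\lambda\wedge\omega^{n-1}$ is nowhere zero with the correct sign. A short linear-algebra check shows that $\omega|_\xi$ is nondegenerate: any $v\in\xi$ with $i_v\omega|_\xi=0$ would also satisfy $\omega(v,R)=-(i_R\omega)(v)=0$, forcing $v\in\ker\omega=\R R$ and thus $v=0$. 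Hence $\omega^{n-1}|_\xi$ is a volume form on $\xi$, making $\lambda\wedge\omega^{n-1}$ nowhere vanishing; if its sign is wrong, reverse the orientation of $R$ (which flips $\lambda$).

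I anticipate no serious obstacle, analytic or global: the entire argument is pointwise and essentially algebraic, reducing to the Koszul-formula identity above. The only mild subtlety is matching the orientation of $R$ with the Hamiltonian-structure sign convention $\lambda\wedge\omega^{n-1}>0$, which is handled by the final sign flip. The derivation of the key identity via Koszul, while straightforward, is the one step that requires any genuine calculational verification.
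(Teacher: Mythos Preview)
The paper does not supply a proof of this theorem; it merely states the result with a citation to Wadsley. Your argument is correct and is essentially the standard self-contained proof of this equivalence: the Koszul identity $g(\nabla_R R,\cdot)=i_R d\lambda$ (valid whenever $|R|_g\equiv 1$ and $\lambda=g(R,\cdot)$) cleanly converts the geodesic condition into the stability condition and back. The construction of $g$ from $\lambda$ via the splitting $T\Sigma=\R R\oplus\ker\lambda$, and the linear-algebra check that $\omega|_{\ker\lambda}$ is nondegenerate, are both fine. The only cosmetic point is that ``orient the foliation'' in the converse direction is automatic here since $\Sigma$ is oriented and $\omega^{n-1}\neq 0$ trivializes the normal bundle to $\ker\omega$; you might say one word about this, and about flipping $R$ on each connected component if $\Sigma$ is not connected.
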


\begin{thm}[Sullivan~\cite{Su}]\label{thm:sullivan}
A Hamiltonian structure $(\Sigma,\om)$ is non-stable if and only if
there exists a foliation cycle which can be arbitrarily well
approximated by boundaries of singular $2$-chains tangent to the
foliation.
\end{thm}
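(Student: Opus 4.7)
My plan is to derive this statement from Wadsley's characterization together with Sullivan's theory of foliation cycles. By Theorem~\ref{thm:wadsley}, stability of $(\Sigma,\om)$ is equivalent to geodesibility of the characteristic foliation $\FF := \ker\om$, so it suffices to show that $\FF$ is non-geodesible if and only if some nontrivial foliation cycle is the limit of a sequence $\p c_n$ of boundaries of $2$-chains tangent to $\FF$, where a $2$-chain is called \emph{tangent} to $\FF$ if its tangent plane at each smooth point contains $T\FF$.

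The easy direction ($\Leftarrow$) I would handle by hand. Suppose $\FF$ admits a stabilizing $1$-form $\lambda$, so $\lambda(R) > 0$ and $i_R d\lambda = 0$ for the Reeb field $R$ spanning $T\FF$. For any $2$-chain $c$ tangent to $\FF$, at each smooth point the tangent plane has the form $\mathrm{span}(R,v)$, and $d\lambda(R,v) = (i_R d\lambda)(v) = 0$; hence $\int_c d\lambda = 0$ and, by Stokes, $\la\lambda,\p c\ra = 0$. On the other hand, $\la\lambda,\mu\ra > 0$ for every nontrivial positive foliation cycle $\mu$, by pointwise positivity of $\lambda(R)$. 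These two facts are incompatible with $\mu = \lim \p c_n$.

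The main direction ($\Rightarrow$) follows Sullivan's Hahn-Banach argument. In the locally convex space of de Rham $1$-currents on $\Sigma$, consider the closed convex cone $\CC$ of positive foliation cycles and the closed convex cone $\BB$ of weak-$*$ limits of boundaries of tangent $2$-chains. By hypothesis $\CC \cap \BB = \{0\}$, so Hahn-Banach separation yields a smooth $1$-form $\lambda$ strictly positive on $\CC \setminus \{0\}$ and vanishing on $\BB$. Vanishing on $\BB$ forces $\int_c d\lambda = 0$ for every tangent $2$-chain $c$, which is equivalent to $i_R d\lambda = 0$; strict positivity on $\CC$ forces $\lambda(R) > 0$ pointwise. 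Hence $\lambda$ stabilizes $\om$.

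The main obstacle, and the reason this is the content of a substantial paper rather than a one-line argument, lies in two implicit structural assertions behind the Hahn-Banach step: (i) that $\CC$ and $\BB$ are indeed closed convex cones in a space admitting separation by smooth forms, and (ii) that strict positivity of $\la\lambda,\cdot\ra$ on $\CC$ upgrades to pointwise strict positivity of $\lambda(R)$. Assertion (i) rests on Sullivan's closedness theorem for the cone of structure cycles; assertion (ii) uses that every point of $\Sigma$ lies in the support of some positive foliation cycle, obtained as a weak-$*$ limit of Birkhoff averages of Dirac chains along orbits of $R$, so that an obstruction to pointwise positivity could be detected by a suitable foliation cycle. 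Both ingredients are supplied by~\cite{Su}, which I would invoke as a black box.
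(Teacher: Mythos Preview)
The paper does not give its own proof of this theorem; it is stated with attribution to Sullivan~\cite{Su} and used as a black box, so there is nothing to compare against beyond the citation itself. Your sketch is essentially an outline of Sullivan's original argument, which is exactly what the paper is invoking.

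One technical remark on your Hahn--Banach step: as written you separate the cone $\CC$ of foliation \emph{cycles} from the closure $\BB$ of tangent boundaries, and then need the auxiliary step (ii) to upgrade ``positive on cycles'' to ``$\lambda(R)>0$ pointwise''. The Birkhoff-average argument you propose for this does not quite close the gap, since the limiting cycle obtained from the orbit through a bad point $x_0$ need not be supported near $x_0$. Sullivan's actual setup avoids this: he separates a compact base $K$ of the cone of \emph{all} positive foliation currents (not just the closed ones) from $\BB$. This is legitimate under the stated hypothesis because every element of $\BB$, being a limit of boundaries, is automatically a closed current; hence any foliation current lying in $\BB$ is already a foliation cycle, and the hypothesis $\CC\cap\BB=\{0\}$ forces $K\cap\BB=\emptyset$. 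Separating $K$ from $\BB$ then yields $\lambda(R)>0$ pointwise directly, with no appeal to recurrence. Since you explicitly defer the structural assertions to~\cite{Su} as a black box, this is a refinement rather than a genuine gap.
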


The simplest obstruction to stability that can appear in Sullivan's
theorem is a {\em Reeb component}, i.e.~an embedded annulus tangent to
the characteristic foliation such that its boundary components are
closed leaves with opposite orientations.

The following criterion for stability (whose proof is immediate) will
be useful in later examples.

\begin{lemma}\label{lem:proj-stable}
Let $(\Sigma^\pm,\om^\pm)$ be stable Hamiltonian structures and
$f\co \Sigma^+\to \Sigma^-$ a smooth (not necessarily injective) map
which maps leaves diffeomorphically onto leaves. If
a 1-form $\lambda$ stabilizes $(\Sigma^-,\om^-)$, then $f^*\lambda$
stabilizes $(\Sigma^+,\om^+)$.
\end{lemma}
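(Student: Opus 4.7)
The plan is to verify directly that $f^*\lambda$ satisfies the two conditions in the definition of a stabilizing 1-form for $(\Sigma^+,\om^+)$, namely
\[
  \ker\om^+\subset\ker d(f^*\lambda),\qquad f^*\lambda\wedge(\om^+)^{n-1}>0.
\]

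For the first inclusion I would start with a tangent vector $v\in\ker\om^+_p$ and use the hypothesis that $f$ maps leaves of the characteristic foliation of $\om^+$ diffeomorphically onto leaves of the characteristic foliation of $\om^-$. This implies $df_p(v)\in\ker\om^-_{f(p)}$; since $\lambda$ stabilizes $(\Sigma^-,\om^-)$, the vector $df_p(v)$ annihilates $d\lambda$, so
\[
  d(f^*\lambda)(v,\cdot)=(f^*d\lambda)(v,\cdot)=d\lambda\bigl(df_p(v),df_p(\cdot)\bigr)=0,
\]
which is exactly what is required.

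For the volume-form condition, I would use the general fact that for any 1-form $\alpha$ on a Hamiltonian structure $(\Sigma^+,\om^+)$, the top form $\alpha\wedge(\om^+)^{n-1}$ is nowhere zero if and only if $\alpha$ is nowhere zero on $\ker\om^+$. Applied to $\alpha=f^*\lambda$: given a nonzero $v\in\ker\om^+_p$, the image $df_p(v)$ is a nonzero element of $\ker\om^-_{f(p)}$ (because $f$ restricts to a diffeomorphism of leaves), and on this kernel $\lambda$ is nowhere zero; hence $(f^*\lambda)(v)=\lambda(df_p(v))\neq 0$. Choosing orientations on the leaves compatibly with the Reeb orientations (so that $f$ sends the Reeb direction of $\om^+$ to a positive multiple of the Reeb direction of $\om^-$), one gets the strict positivity $f^*\lambda\wedge(\om^+)^{n-1}>0$.

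There is no substantive obstacle: the argument is just a chain-rule computation combined with the observation that $f$ preserves characteristic directions. The only point worth stating explicitly is the orientation convention, which is implicit in the phrase ``maps leaves diffeomorphically onto leaves'' once one remembers that the characteristic foliation comes with a canonical orientation determined by $\lambda\wedge\om^{n-1}>0$.
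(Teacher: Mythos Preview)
Your proposal is correct and is precisely the direct verification that the paper has in mind: the paper gives no proof and simply declares the lemma ``immediate''. Your observation about the orientation convention is a fair clarification---the phrase ``maps leaves diffeomorphically onto leaves'' should indeed be read as orientation-preserving with respect to the Reeb orientations, since otherwise one would only get that $-f^*\lambda$ (rather than $f^*\lambda$) stabilizes.
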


For a Hamiltonian structure $(\Sigma,\om)$ we denote by
$$
   \Lambda(\Sigma,\omega) := \{\lambda \in \Omega^1(\Sigma):
   \mathrm{ker}\omega \subset \mathrm{ker}d\lambda,\,\,\lambda \wedge
   \omega^{n-1}>0\}
$$
the space of stabilizing 1-forms. It obviously satisfies

\begin{lemma}\label{cone}
The space $\Lambda(\Sigma,\omega)$ is a convex cone in $\Omega^1(\Sigma)$.
In particular, if it is nonempty, then it is contractible.
\end{lemma}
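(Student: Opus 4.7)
The plan is to verify the two defining properties of a convex cone for the set $\Lambda(\Sigma,\omega)$ directly from the two conditions $\ker\omega \subset \ker d\lambda$ and $\lambda \wedge \omega^{n-1} > 0$, and then deduce contractibility from a general soft fact about nonempty convex subsets of a vector space.

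First I would check closure under positive scalar multiplication: for $\lambda \in \Lambda(\Sigma,\omega)$ and $t>0$, one has $d(t\lambda)=t\,d\lambda$, so $\ker d(t\lambda)=\ker d\lambda \supset \ker \omega$, while $(t\lambda)\wedge\omega^{n-1}=t(\lambda\wedge\omega^{n-1})>0$. Next I would check closure under convex combinations (equivalently, under sums): for $\lambda_{0},\lambda_{1}\in\Lambda(\Sigma,\omega)$ and $s\in[0,1]$, set $\lambda_{s}=(1-s)\lambda_{0}+s\lambda_{1}$. Since $d$ is linear, any vector in $\ker\omega$ lies in both $\ker d\lambda_{0}$ and $\ker d\lambda_{1}$, hence in $\ker d\lambda_{s}$. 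Moreover,
\[
   \lambda_{s}\wedge\omega^{n-1}=(1-s)\,\lambda_{0}\wedge\omega^{n-1}+s\,\lambda_{1}\wedge\omega^{n-1},
\]
which is a nonnegative combination of strictly positive top forms (with respect to the chosen orientation), strictly positive because the coefficients are not both zero. Combined with positive scaling, this shows $\Lambda(\Sigma,\omega)$ is a convex cone.

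For contractibility, assume $\Lambda(\Sigma,\omega)$ is nonempty and fix any $\lambda_{*}\in\Lambda(\Sigma,\omega)$. By convexity, the straight-line homotopy
\[
   H\colon\Lambda(\Sigma,\omega)\times[0,1]\to\Lambda(\Sigma,\omega),\qquad H(\lambda,t)=(1-t)\lambda+t\lambda_{*},
\]
is well-defined and provides a deformation retraction onto $\{\lambda_{*}\}$, proving contractibility.

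The proof involves no real obstacle: every step reduces to the linearity of $d$ and of wedge product with $\omega^{n-1}$, together with the elementary fact that a nonempty convex set is star-shaped about each of its points. The only mild subtlety is observing that the convex combination of two positive top forms remains strictly positive pointwise, which follows because at each $x\in\Sigma$ both $\lambda_{0}\wedge\omega^{n-1}$ and $\lambda_{1}\wedge\omega^{n-1}$ have the same (orientation-positive) sign.
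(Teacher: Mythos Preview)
Your proof is correct and is precisely the straightforward verification the paper has in mind; the paper in fact gives no proof at all, merely prefacing the lemma with ``It obviously satisfies''. Your argument unpacks this obviousness exactly as intended.
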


{\bf Stable hypersurfaces. }
We call a closed oriented connected hypersurface $\Sigma$ in a connected
symplectic manifold $(V^{2n},\om)$ {\em stable} if the following holds:
\begin{description}
 \item[(i)] $\om|_\Sigma$ defines a stable Hamiltonian structure;
 \item[(ii)] $\Sigma$ is {\em separating}, i.e.~$V\setminus \Sigma$ consists of
 two connected components.
\end{description}

Condition (a) in the following lemma gives a more dynamical
formulation of stability and justification for its name.

\begin{lemma}[\cite{CM}]\label{lem:HZ-stable}
For a closed hypersurface $\Sigma$ in a symplectic manifold $(V,\om)$ the
following are equivalent:

(a) $\Sigma$ is {\em stable} in the
sense of~\cite{HZ}, i.e.~there exists a tubular
neighborhood $(-\eps,\eps)\times \Sigma$ of $\Sigma=\{0\}\times \Sigma$ such that
the Hamiltonian line fields on $\{r\}\times \Sigma$ are conjugate for
all $r\in(-\eps,\eps)$.

(b) There exists a vector field $Y$ transverse to $\Sigma$ such that
$\ker(\om|_\Sigma)\subset\ker(L_Y\om|_\Sigma)$.

(c) $(\Sigma,\om|_\Sigma)$ is a stable Hamiltonian structure.
\end{lemma}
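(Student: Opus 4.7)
\emph{Proof proposal for Lemma~\ref{lem:HZ-stable}.}
The plan is to establish the three-way equivalence by first proving the ``linear-algebraic'' equivalence (b)$\Leftrightarrow$(c), and then deducing (b)$\Leftrightarrow$(a) by integrating in the transverse direction. Throughout, let $R$ denote a vector field on $\Sigma$ spanning $\ker(\om|_\Sigma)$.

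For (b)$\Rightarrow$(c), given a transverse $Y$ with $\ker(\om|_\Sigma)\subset\ker(L_Y\om|_\Sigma)$, I set $\lambda:=(i_Y\om)|_\Sigma$. Using Cartan's formula and $d\omega=0$, I get $d\lambda=d(i_Y\om)|_\Sigma=(L_Y\om)|_\Sigma$, so $d\lambda(R,X)=0$ for every $X$ tangent to $\Sigma$, i.e.\ $\ker\om\subset\ker d\lambda$. Transversality of $Y$ together with nondegeneracy of $\om$ on $V$ yields $\lambda(R)=\om(Y,R)\neq 0$, which (after possibly flipping the sign of $Y$, compatible with the orientation of the characteristic foliation) gives $\lambda\wedge\om^{n-1}>0$. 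For the converse (c)$\Rightarrow$(b), I extend a stabilizing $\lambda$ to a 1-form on a tubular neighbourhood of $\Sigma$ in $V$ and define $Y$ by $i_Y\om=\lambda$; this uses nondegeneracy of $\om$. Then $L_Y\om=d\lambda$, so $L_Y\om(R,X)=d\lambda(R,X)=0$ for $X$ tangent to $\Sigma$, giving (b); transversality follows again from $\om(Y,R)=\lambda(R)\neq 0$ while $\om|_\Sigma(\cdot,R)\equiv 0$.

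For (a)$\Rightarrow$(b) I simply take $Y:=\p_r$ in the given tubular neighbourhood $(-\eps,\eps)\times\Sigma$; the conjugacy of the Hamiltonian line fields under the flow of $\p_r$ forces $L_Y\om$ to vanish on $\ker\om|_\Sigma$ against vectors tangent to $\Sigma$, exactly (b). For (b)$\Rightarrow$(a), I use the flow $\phi_r$ of $Y$ (shrinking its domain if needed) to parametrize a tubular neighbourhood $(-\eps,\eps)\times\Sigma$. Extending $R$ and any $\Sigma$-tangent vector field $X$ to be $Y$-invariant (so $[Y,\wt R]=[Y,\wt X]=0$), I compute
\[
\frac{d}{dr}\om(\wt R,\wt X)=(L_Y\om)(\wt R,\wt X)=0
\]
by (b). Hence $\om(\wt R,\wt X)\equiv 0$ on the neighbourhood, meaning $\wt R$ spans $\ker\om|_{\{r\}\times\Sigma}$ for all $r$; the flow $\phi_r$ therefore conjugates characteristic line fields, which is (a).

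The delicate step I expect to require the most care is verifying the positivity condition $\lambda\wedge\om^{n-1}>0$ in (b)$\Rightarrow$(c): the construction only gives $\lambda(R)\neq 0$, and one must match the sign against the orientation of the characteristic foliation induced by $\om$ (which in turn is fixed by the chosen orientation of $\Sigma$ and by $\Sigma$ being separating in $V$). Apart from this bookkeeping, each implication reduces to a one-line Cartan calculation and an application of nondegeneracy of $\om$ on $V$.
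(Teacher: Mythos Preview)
Your equivalences (b)$\Leftrightarrow$(c) and (a)$\Rightarrow$(b) are fine, but the implication (b)$\Rightarrow$(a) has a genuine gap. When you write
\[
\frac{d}{dr}\,\om(\wt R,\wt X)=(L_Y\om)(\wt R,\wt X)=0\quad\text{``by (b)'',}
\]
the hypothesis (b) is a condition \emph{only along $\Sigma$}, i.e.\ at $r=0$. It tells you that $(L_Y\om)(R,X)=0$ there, hence that $\om(\wt R,\wt X)$ vanishes to second order at $r=0$; it says nothing about $(L_Y\om)(\wt R,\wt X)$ for $r\neq 0$. With an arbitrary transverse $Y$ satisfying (b), the function $r\mapsto\om(\wt R,\wt X)$ is typically $O(r^2)$ and not identically zero, so $\wt R$ need not span $\ker(\om|_{\{r\}\times\Sigma})$ for $r\neq 0$.

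The paper does not give a proof of this lemma (it is quoted from \cite{CM}), but the missing ingredient is exactly what the paper supplies immediately afterwards under the heading ``stable tubular neighbourhoods'' and in Proposition~\ref{prop:stable-homotopy}(a): starting from a stabilizing $\lambda$ (equivalently, your $Y$ with $i_Y\om|_\Sigma=\lambda$), one first flows by $Y$ and then applies a Moser argument to obtain an embedding $\psi\colon(-\rho,\rho)\times\Sigma\to V$ with $\psi^*\om=\om_\Sigma+d(r\lambda)$. In \emph{this} model the restriction to $\{r\}\times\Sigma$ is $\om_\Sigma+r\,d\lambda$, and now $i_R\om_\Sigma=0$ together with $i_Rd\lambda=0$ gives $R\in\ker(\om|_{\{r\}\times\Sigma})$ for every $r$, which is (a). So the fix is not to integrate the raw $Y$, but to correct the tubular neighbourhood via Moser so that the symplectic form has the normal form above; your ODE argument then becomes exact rather than merely first-order.
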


A {\em stable homotopy of hypersurfaces} in $(V,\om)$ is a smooth
homotopy $(\Sigma_t,\lambda_t)$ of stable hypersurfaces together with
stabilizing 1-forms.

{\bf Stable tubular neighbourhoods. }
Now assume that $\Sigma$ is a stable hypersurface in a symplectic
manifold $(V,\omega)$. We abbreviate
$$\omega_{\Sigma}=\omega|_{\Sigma}.$$
If $\lambda_\Sigma \in \Lambda(\Sigma, \omega_\Sigma)$ we call the
pair $(\Sigma,\lambda_\Sigma)$ a \emph{stabilized hypersurface}. For
a stabilized hypersurface $(\Sigma,\lambda_\Sigma)$ a  \emph{stable tubular
neighbourhood} is a pair $(\rho,\psi)$ where $\rho>0$ and
$\psi \colon [-\rho,\rho] \times \Sigma \to V$ is an embedding
satisfying
$$\psi|_{\{0\}\times \Sigma}=\mathrm{id}|_{\Sigma}, \quad
\psi^*\omega=\omega_\Sigma+d(r\lambda_\Sigma)=\omega_\Sigma+r
d\lambda_\Sigma+dr \wedge \lambda_\Sigma.$$
Note that a stable tubular neighbourhood satisfies condition (a) of
Lemma~\ref{lem:HZ-stable}.
We abbreviate by $\mathcal{T}(\Sigma,\lambda_\Sigma)$ the space
of stable tubular neighbourhoods of $(\Sigma,\lambda_\Sigma)$.
We further denote by
$$\mathcal{T}(\Sigma)=\bigcup_{\lambda_\Sigma \in \Lambda(\Sigma,\omega|_\Sigma)}
\mathcal{T}(\Sigma,\lambda_\Sigma)$$
the space of stable tubular neighbourhoods for the
stable (but not stabilized) hypersurface $\Sigma$. If
$(\Sigma_\sigma,\lambda_\sigma)$ for $\sigma \in [0,1]$
is a stable homotopy of hypersurfaces we abbreviate by
$\mathcal{T}(\{\Sigma_\sigma\})$ the space of 
smooth families $(\rho_\sigma,\psi_\sigma)$ of stable tubular
neighbourhoods. 

\begin{prop}\label{prop:stable-homotopy}
(a) Assume that $(\Sigma_\sigma,\lambda_\sigma)$ is a
stable homotopy of compact hypersurfaces. Then
$\mathcal{T}(\{\Sigma_\sigma\})$ is nonempty.

(b) If $\Sigma$ is a compact stable hypersurface, then
$\mathcal{T}(\Sigma)$ is nonempty and path-connected.
\end{prop}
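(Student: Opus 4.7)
The plan is a parametric Moser construction inside a family of chosen tubular embeddings.

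\textbf{Part (a).} First I would construct a smooth family of ordinary tubular embeddings
\[\phi_\sigma\co (-\epsilon,\epsilon)\times\Sigma_\sigma\to V, \quad \phi_\sigma|_{\{0\}\times\Sigma_\sigma}=\mathrm{id},\]
in which the transverse vector field $Y_\sigma:=\phi_{\sigma*}(\partial_r)|_{\Sigma_\sigma}$ is arranged to satisfy $i_{Y_\sigma}\omega|_{T\Sigma_\sigma}=\lambda_\sigma$. Such a $Y_\sigma$ exists because the linear map $TV|_{\Sigma_\sigma}\to T^*\Sigma_\sigma,\,Y\mapsto i_Y\omega|_{T\Sigma_\sigma}$ has kernel $\ker(\omega|_{\Sigma_\sigma})\subset T\Sigma_\sigma$ and is therefore surjective; transversality of $Y_\sigma$ is automatic once one has $\omega(Y_\sigma,R_\sigma)\neq 0$, and smoothness in $\sigma$ is achieved by a partition of unity. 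By construction the pullback $\phi_\sigma^*\omega$ and the target model $\omega_\sigma^{\mathrm{mod}}:=\omega_{\Sigma_\sigma}+d(r\lambda_\sigma)$ agree to first order along $\{r=0\}$.

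Next I would apply Moser's trick parametrically. Write the difference $\omega_\sigma^{\mathrm{mod}}-\phi_\sigma^*\omega = d\beta_\sigma$, where $\beta_\sigma$ is a smooth $\sigma$-family of 1-forms vanishing at $\{r=0\}$, obtained from a relative Poincar\'e lemma via radial integration. On a $\sigma$-uniform tubular neighborhood (using compactness of $[0,1]$ and of the $\Sigma_\sigma$) the convex combination $\omega_\sigma^t:=(1-t)\phi_\sigma^*\omega + t\omega_\sigma^{\mathrm{mod}}$ is symplectic for $t\in[0,1]$. Solve $i_{X_\sigma^t}\omega_\sigma^t=-\beta_\sigma$ for a time-dependent family of vector fields, let $f_\sigma^t$ denote its time-$t$ flow, and set $\psi_\sigma:=\phi_\sigma\circ(f_\sigma^1)^{-1}$ on $[-\rho_\sigma,\rho_\sigma]\times\Sigma_\sigma$ for a small continuous choice of $\rho_\sigma>0$. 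Since $\beta_\sigma$ vanishes along $\{r=0\}$ so does $X_\sigma^t$, hence $\psi_\sigma|_{\{0\}\times\Sigma_\sigma}=\mathrm{id}$ and $\psi_\sigma^*\omega=\omega_\sigma^{\mathrm{mod}}$, as required. The principal technical issue here is ensuring a $\sigma$-uniform domain of definition, which is a routine compactness argument.

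\textbf{Part (b).} Non-emptiness follows from (a) applied to a constant family. For path-connectedness let $(\rho_i,\psi_i)\in\mathcal{T}(\Sigma)$ have associated stabilizing forms $\lambda_i$, $i=0,1$. First connect $\lambda_0,\lambda_1$ by the straight-line path $\lambda_\sigma=(1-\sigma)\lambda_0+\sigma\lambda_1$, which lies in $\Lambda(\Sigma,\omega|_\Sigma)$ by Lemma~\ref{cone}, and apply (a) to the constant stable homotopy $(\Sigma,\lambda_\sigma)$ to obtain a path $\tilde\psi_\sigma$ in $\mathcal{T}(\Sigma)$ with $\tilde\psi_i\in\mathcal{T}(\Sigma,\lambda_i)$. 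It remains to establish the uniqueness-up-to-homotopy statement that $\mathcal{T}(\Sigma,\lambda_\Sigma)$ is path-connected for fixed $\lambda_\Sigma$. For this, take any smooth family $\phi_\sigma$ of ordinary tubular embeddings interpolating $\psi_0$ and $\psi_1$ (such exists since the space of tubular embeddings extending $\mathrm{id}_\Sigma$ is connected) and rerun the Moser step of part (a) with $\phi_\sigma$ as input. The main obstacle is to arrange that at $\sigma=0,1$ the Moser correction be \emph{trivial}, so that the output actually equals the given $\psi_i$; this is ensured by choosing the primitive $\beta_\sigma$ to vanish at $\sigma=0,1$ in addition to $\{r=0\}$, which is possible because $\omega_\sigma^{\mathrm{mod}}-\phi_\sigma^*\omega$ itself vanishes identically for $\sigma\in\{0,1\}$.
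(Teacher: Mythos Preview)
Your proof of part (a) is essentially identical to the paper's: choose a transverse vector field $Y_\sigma$ with $i_{Y_\sigma}\omega|_{T\Sigma_\sigma}=\lambda_\sigma$, flow to get a preliminary embedding, then apply Moser parametrically.

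For part (b) you take a different route from the paper in the fixed-$\lambda$ step. The paper shows $\mathcal{T}(\Sigma,\lambda)$ is path-connected by viewing $\psi_2^{-1}\circ\psi_1$ as a symplectomorphism of $\big((-\rho,\rho)\times\Sigma,\omega_\lambda\big)$ fixing $\{0\}\times\Sigma$, embedding its graph as a Lagrangian in the product, applying Weinstein's Lagrangian neighbourhood theorem to identify it with the graph of a closed $1$-form $\sigma$, and then contracting via $\Gamma_{t\sigma}$; shrinking paths are concatenated at the ends. Your approach---interpolate tubular embeddings and rerun Moser---is more elementary and avoids the Weinstein neighbourhood theorem, at the cost of a point you should make explicit: when you ``rerun the Moser step of part (a)'' with an \emph{arbitrary} interpolating family $\phi_\sigma$, the two forms $\phi_\sigma^*\omega$ and $\omega^{\mathrm{mod}}$ no longer agree to first order along $\{r=0\}$, so the nondegeneracy of the convex combination $\omega_\sigma^t$ at $r=0$ needs a word. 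It does hold: one computes $\omega_\sigma^t|_{r=0}=\omega_\Sigma+dr\wedge\nu_t$ with $\nu_t=(1-t)\,i_{Y_\sigma}\omega|_{T\Sigma}+t\lambda$, which is nondegenerate iff $\nu_t(R)\neq 0$; since $\omega(Y_\sigma,R)$ is a nowhere-vanishing continuous function on $[0,1]\times\Sigma$ equal to $1$ at $\sigma=0,1$, it stays positive and the convex combination is symplectic near $r=0$. (The radial primitive $\beta_\sigma$ does vanish at $r=0$ regardless, as you say.) You should also mention the shrinking paths connecting the original $(\rho_i,\psi_i)$ to their restrictions to the smaller interval on which the Moser correction lives.
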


\begin{proof}
(a)
Choose a smooth family of vector fields $X_\sigma$ on
$V$ satisfying
\begin{equation}\label{vecfi}
\iota_{X_\sigma} \omega_x=\lambda_\sigma, \quad x \in \Sigma_\sigma.
\end{equation}
Since $\Sigma_\sigma$ is compact
the flow $\phi^r_{X_\sigma}$ exists locally near $\Sigma_\sigma$.
We observe that plugging in the Reeb vector field into
(\ref{vecfi}) implies that $X_\sigma$ is
transverse to $\Sigma_\sigma$.
Hence we can define a smooth family  of diffeomorphisms
$\tilde{\psi}_\sigma \colon (-\tilde{\rho}_\sigma, \tilde{\rho}_\sigma)
\times \Sigma_\sigma \to V$ for $\tilde{\rho}_\sigma>0$ by the formula
$$\tilde{\psi}_\sigma(r,x)=\phi^r_{X_{\sigma}}(x), \quad
(r,x) \in (-\tilde{\rho}_\sigma,\tilde{\rho}_\sigma) \times \Sigma_\sigma.$$
We abbreviate
$$\omega_\sigma=\omega_{\Sigma_\sigma}+d(r\lambda_\sigma).$$
Perhaps after shrinking $\tilde{\rho}_\sigma$ it follows that
$\omega_\sigma$ is a symplectic structure on
$(-\tilde{\rho}_\sigma,\tilde{\rho}_\sigma)\times \Sigma_\sigma$.
Moreover, it follows from (\ref{vecfi}) that
$\omega_\sigma$ and $\tilde{\psi}_\sigma^* \omega$ agree at points of
$\{0\}\times \Sigma_\sigma$. Applying Moser's argument,
see for example \cite[Lemma 3.14]{MS}, we find a smooth family
of $\rho_\sigma>0$ and a smooth family of embeddings
$\phi_\sigma\co (-\rho_\sigma,\rho_\sigma)\times \Sigma_\sigma\to
(-\tilde\rho_\sigma,\tilde\rho_\sigma)\times \Sigma_\sigma$ satisfying
$$\phi_\sigma|_{\{0\}\times \Sigma_\sigma}=\mathrm{id},
\quad \phi^*_\sigma \tilde{\psi}^*_\sigma \omega=\omega_\sigma.$$
Now set
$$\psi_\sigma= \tilde{\psi}_\sigma\circ \phi_\sigma.$$
Then $(\rho_\sigma, \psi_\sigma)$ lies in $\mathcal{T}(\{\Sigma_\sigma\})$.
In particular, $\mathcal{T}(\{\Sigma_\sigma\})$ is nonempty.

(b) In view of part (a), $\mathcal{T}(\Sigma)$ is nonempty.
To prove that it is path-connected we first show that
for each $\lambda \in \Lambda(\Sigma,\omega_\Sigma)$
the space $\mathcal{T}(\Sigma,\lambda)$ is path-connected.
Let $(\rho_1,\psi_1), (\rho_2,\psi_2) \in \mathcal{T}(\Sigma,\lambda)$.
By hypothesis, 
$$\omega_\lambda=\omega|_{\Sigma}+d(r\lambda)$$
is symplectic on $U:=(-\rho,\rho)$ for some
$\rho>\max(\rho_1,\rho_2)$. 
There exist neighbourhoods $U_1$ and $U_2$ of $\{0\}\times \Sigma$ in
$U$ such that
$\psi_2^{-1}\circ\psi_1 \colon U_1 \to U_2$ is an isomorphism and
$$\big(\psi_2^{-1}\circ\psi_1\big)^*\omega_\lambda|_{U_2}=
\omega_\lambda|_{U_1}$$
Note further that
$$\psi_2^{-1}\circ\psi_1|_{\{0\}\times \Sigma}=\mathrm{id}.$$
Hence after choosing $U_1$ even smaller we can in a Weinstein neighbourhood
identify the graph $\Gamma_{\psi_2^{-1}\circ\psi_1} \subset (U \times
U,\om_\lambda\oplus -\om_\lambda)$
with an open subset of
the graph $\Gamma_\sigma\subset T^*U$ of a closed one-form $\sigma$ on
$U$. 
Considering the homotopy of graphs $\Gamma_{t\sigma}$ for $t \in [0,1]$
we find an $\epsilon>0$ and a path in
$\mathcal{T}(\Sigma,\lambda)$ between
$(\epsilon,\psi_1|_{(-\epsilon,\epsilon)\times \Sigma})$ and
$(\epsilon,\psi_2|_{(-\epsilon,\epsilon)\times \Sigma})$.
Now concatenating this path with shrinking paths between
$(\rho_1,\psi_1)$ and $(\epsilon,\psi_1|_{(-\epsilon,\epsilon)\times \Sigma})$
as well as between $(\rho_2,\psi_2)$ and
$(\epsilon,\psi_2|_{(-\epsilon,\epsilon)\times \Sigma})$
we obtain a path between $(\rho_1,\psi_1)$ and $(\rho_2,\psi_2)$.
This proves that for fixed $\lambda \in \Lambda(\Sigma,\omega_\Sigma)$
the space $\mathcal{T}(\Sigma,\lambda)$ is path-connected.

Now assume more generally that  $(\rho_0,\psi_0), (\rho_1,\psi_1)
\in \mathcal{T}(\Sigma)$. Then there exist
$\lambda_0,\lambda_1 \in \Lambda(\Sigma,\omega_\sigma)$
such that
$$(\rho_0,\psi_0) \in \mathcal{T}(\Sigma,\lambda_0),\quad
(\rho_1,\psi_1) \in \mathcal{T}(\Sigma,\lambda_1).$$
Since we have already seen that $\mathcal{T}(\Sigma,\lambda_0)$
and $\mathcal{T}(\Sigma,\lambda_1)$ are path-connected it suffices
to connect arbitrary elements in these spaces by a path
in $\mathcal{T}(\Sigma)$. To do that we make use of
Lemma~\ref{cone} giving us a path
$\lambda_\sigma$ in $\Lambda(\Sigma,\omega_\Sigma)$ connecting
$\lambda_0$ and $\lambda_1$. Hence we can apply part (a) to the stable
homotopy $(\Sigma,\lambda_\sigma)$. This proves the proposition.
\end{proof}

{\bf Contact structures. }
A Hamiltonian structure $(\Sigma,\om)$ is called {\em contact} if
there exists a 1-form $\lambda$ such that $d\lambda=\om$ and
$\lambda\wedge\om^{n-1}>0$. In particular, $\lambda$ is a stabilizing
1-form and $\lambda$ is a (positive) {\em contact form},
i.e.~$\lambda\wedge(d\lambda)^{n-1}>0$. Note that $(\Sigma,\om)$ can
be contact only if $\om$ is exact.

Sullivan's theory in~\cite{Su} also provides a necessary and
sufficient condition for an exact Hamiltonian structure
$(\Sigma,\om)$ being contact. Fix a positive vector field $R$ generating
$\ker\om$. Every Borel probability measure $\mu$
on $\Sigma$ invariant under the flow of $R$ gives rise to a
1-current via
$$
   \la\mu,\beta\ra = \int_\Sigma\beta(R)d\mu,\qquad
   \beta\in\Om^1(\Sigma).
$$
We say $\mu$ is {\em exact} if it is exact as a 1-current,
i.e.~$\la\mu,\beta\ra=0$ for all closed 1-forms $\beta$.

\begin{thm}[McDuff \cite{McD}]\label{thm:mcduff}
An exact Hamiltonian structure $(\Sigma,\om)$ is non-contact if and
only if there exists a nontrivial exact positive invariant Borel
measure $\mu$ 
such that $\la\mu,\alpha\ra=0$ for some (and hence every) 1-form
$\alpha$ with $d\alpha=\om$.
\end{thm}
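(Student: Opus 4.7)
The natural strategy is a Hahn--Banach separation argument in the Banach space $C(\Sigma)$, dualizing the contact condition against the cone of foliation currents in the spirit of Sullivan. First I would reformulate: $(\Sigma,\om)$ is contact iff some primitive $\lambda$ of $\om$ satisfies $\lambda(R)>0$ pointwise; since any two primitives differ by a closed 1-form, this is equivalent to saying that the affine subspace
$$
A := \{(\alpha+\beta)(R) : \beta \text{ a closed 1-form on } \Sigma\} \subset C(\Sigma)
$$
meets the open convex cone $C_+=\{f\in C(\Sigma): f>0\}$ of strictly positive continuous functions. The easy direction is then immediate: if a nontrivial exact positive invariant $\mu$ with $\la\mu,\alpha\ra=0$ exists, then exactness gives $\la\mu,\beta\ra=0$ for every closed $\beta$, hence $\int(\alpha+\beta)(R)\,d\mu=0$, so $(\alpha+\beta)(R)$ cannot lie in $C_+$ (its integral against the nonzero positive measure $\mu$ would otherwise be strictly positive).

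For the converse, suppose $(\Sigma,\om)$ is non-contact, i.e.\ $A\cap C_+=\emptyset$. Apply the geometric Hahn--Banach theorem to separate the affine set $A$ from the open convex cone $C_+$: there exists a nonzero signed Borel measure $\mu_0$ and a constant $c\in\R$ with $\mu_0|_A\le c\le\mu_0|_{C_+}$. Because $C_+$ is a cone with infimum $0$, one obtains $\mu_0\geq 0$ on $\ol{C_+}$, so $\mu_0$ is a positive measure, and $c\le 0$. Writing $A=\alpha(R)+W$ with $W:=\{\beta(R):\beta\text{ closed}\}$ a linear subspace, boundedness of $\mu_0$ on $A$ forces $\mu_0|_W\equiv 0$, which is precisely the statement that $\mu_0$ is exact as a 1-current. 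Applied to $\beta=df$ this gives $\int Rf\,d\mu_0=0$ for every smooth $f$, so $\mu_0$ is automatically Reeb-invariant, and we have $\la\mu_0,\alpha\ra\le 0$.

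The subtle step, and the one I expect to be the main obstacle, is promoting the inequality $\la\mu_0,\alpha\ra\leq 0$ to the equality $\la\mu,\alpha\ra=0$ asserted in the theorem. Here one needs the full force of Sullivan's duality (Theorem~\ref{thm:sullivan}): the cone $\MM$ of exact positive invariant Borel measures is precisely the cone of null-homologous foliation cycles for the characteristic foliation of $R$, and such cycles arise as weak-$*$ limits of boundaries of singular 2-chains tangent to $R$. The plan is to consider a sequence of closed 1-forms $\beta_n$ chosen so that $(\alpha+\beta_n)(R)$ is \emph{almost} in $C_+$, i.e.\ approaches the boundary of $W+\ol{C_+}$ from outside; then average the Reeb flow along orbit segments of increasing length that spend most of their time in the zero-set of $(\alpha+\beta_n)(R)$, extract a weak-$*$ limit measure $\mu$, and use Sullivan's tangent-approximation theorem to verify that $\mu$ is exact, positive, nontrivial and Reeb-invariant. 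Since $\int(\alpha+\beta_n)(R)\,d\mu_n\to 0$ by construction and $\la\mu,\beta_n\ra$ is controlled via exactness in the limit, one obtains $\la\mu,\alpha\ra=0$. The genuine difficulty is ensuring that this limiting procedure does not produce the trivial measure; this is exactly the content of McDuff's foliation-current analysis in~\cite{McD}, and is the place where the argument goes beyond formal Hahn--Banach.
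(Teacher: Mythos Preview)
The paper does not prove this theorem; it is quoted from McDuff~\cite{McD}, so there is no proof to compare against. Your Hahn--Banach separation in $C(\Sigma)$ is precisely the Sullivan/McDuff method, and your first two paragraphs carry it out correctly: separating the affine set $A=\{\lambda(R):d\lambda=\om\}$ from the open cone $C_+$ yields a nontrivial exact positive invariant measure $\mu_0$ with $\la\mu_0,\alpha\ra\le 0$. This already establishes the theorem in the form in which the paper actually uses it---in Lemma~\ref{lemma:contact} and in the example of Section~\ref{ss:symp} one verifies $\la\mu,\alpha\ra>0$ for \emph{every} invariant probability measure and then invokes the contrapositive.

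Your third paragraph, however, pursues an unreachable target: the strict equality $\la\mu,\alpha\ra=0$ cannot be obtained in general, and the paper's own hyperbolic-surface example in Section~\ref{ss:hyp} shows why. For $|s|>1$ (energy $k<1/2$) the structure $(\Sigma,\om_s)$ is non-contact for the positive orientation, yet $\psi_s(X_s)\equiv 1-s^2$ is a negative constant, so every nontrivial positive invariant measure $\mu$ has $\la\mu,\psi_s\ra=(1-s^2)\,\mu(\Sigma)<0$; since all orbits are periodic and null-homologous, every such $\mu$ is exact, and none satisfies $\la\mu,\psi_s\ra=0$. (This is exactly the ``anti-contact'' situation: a primitive with $\lambda(R)<0$ everywhere exists, forcing $\la\mu,\alpha\ra<0$ for all exact invariant $\mu$.) The averaging/limiting scheme you sketch therefore cannot succeed. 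The ``main obstacle'' you flag is a slight imprecision in the stated theorem---``$=0$'' should read ``$\le 0$''---rather than a missing analytic step; with ``$\le 0$'' your Hahn--Banach argument is already a complete proof of both directions.
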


The simplest obstruction to the contact property arises if $\mu$ is
supported on a closed orbit: If there exists a null-homologous closed
orbit $\gamma$ of $\ker\om$ such that $\int_\gamma\alpha=0$ for a
primitive $\alpha$ of $\om$, then $(\Sigma,\om)$ is non-contact.

The following immediate consequence of the theorem will be useful
below.

\begin{cor}\label{cor:mcduff}
An exact Hamiltonian structure $(\Sigma,\om)$ is non-contact provided
there exist two nontrivial exact invariant Borel measures $\mu_\pm$
such that $\la\mu_+,\alpha\ra\geq 0$ and $\la\mu_-,\alpha\ra\leq 0$
for some (and hence every) 1-form $\alpha$ with $d\alpha=\om$.
\end{cor}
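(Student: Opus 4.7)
The plan is to reduce directly to Theorem~\ref{thm:mcduff} by producing, out of $\mu_+$ and $\mu_-$, a single nontrivial exact positive invariant Borel measure $\mu$ with $\la\mu,\alpha\ra=0$. The natural candidate is a convex combination: set
$$
   a:=\la\mu_+,\alpha\ra\geq 0,\qquad b:=\la\mu_-,\alpha\ra\leq 0,
$$
and choose the weights so as to cancel the $\alpha$-pairings.

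First I would dispose of the degenerate cases. If $a=0$, then $\mu_+$ itself already satisfies the hypothesis of Theorem~\ref{thm:mcduff} and we are done; similarly if $b=0$. Otherwise $a>0$ and $b<0$, so $a-b>0$, and I set $t:=-b/(a-b)\in(0,1)$ and define
$$
   \mu:=t\mu_++(1-t)\mu_-.
$$
By linearity of the pairing, $\la\mu,\alpha\ra=ta+(1-t)b=0$. It remains to verify that $\mu$ inherits from $\mu_\pm$ the three properties needed by Theorem~\ref{thm:mcduff}: positivity and $R$-invariance (immediate since positive linear combinations of positive invariant Borel measures are again positive invariant Borel measures), nontriviality (immediate because both coefficients are strictly positive and each $\mu_\pm$ is nontrivial), and exactness as a $1$-current (immediate by linearity of $\mu\mapsto\la\mu,\beta\ra$ on closed $\beta$).

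Finally I would observe the parenthetical ``for some (and hence every)'' in the statement: if $\alpha'$ is another primitive of $\om$, then $\alpha'-\alpha$ is closed, so exactness of $\mu$ gives $\la\mu,\alpha'\ra=\la\mu,\alpha\ra=0$, making the choice of primitive irrelevant. Applying Theorem~\ref{thm:mcduff} to $\mu$ then concludes that $(\Sigma,\om)$ is non-contact.

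There is essentially no obstacle here; the only genuine content is the elementary observation that the sign condition $a\geq 0\geq b$ permits a convex combination that lands on the zero-level of the continuous linear functional $\la\cdot,\alpha\ra$, and that positivity, invariance, nontriviality and exactness are all preserved under such combinations.
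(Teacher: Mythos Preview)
Your argument is correct and matches the paper's proof exactly: the paper simply observes that some positive linear combination $\mu=a_+\mu_++a_-\mu_-$ satisfies the hypothesis of Theorem~\ref{thm:mcduff}. Your version is just a more explicit unpacking of this one-line observation, including the degenerate cases $a=0$ or $b=0$ which the paper leaves implicit.
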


\begin{proof}
For $\mu_\pm$ as in the corollary, some positive linear combination
$\mu=a_+\mu_++a_-\mu_-$ satisfies the condition in
Theorem~\ref{thm:mcduff}.
\end{proof}

A particular invariant measure is given by the {\em Liouville measure}
associated to a Hamiltonian structure $(\Sigma,\om,R)$ with a chosen
vector field $R$ generating $\ker\om$. It is defined by the unique
volume form $\mu\in\Om^{2n-1}(\Sigma)$ satisfying
$$
   i_R\mu = \frac{\om^{n-1}}{(n-1)!}.
$$

\begin{lemma}\label{lem:liouville}
Consider a twisted cotangent bundle $(T^*M,\om = dp\wedge dq +
\tau^*\sigma)$ with a convex Hamiltonian $H(q,p) = \frac{1}{2}|p|^2 +
U(q)$ as in the Introduction. If $M\neq \mathbb T^2$, then on every regular
level set $(\Sigma_k=H^{-1}(k),\om|_{\Sigma_k},R=X_H)$
the Liouville measure is exact as a current.
\end{lemma}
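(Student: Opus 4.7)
\emph{Reformulation.} The current $\mu$ is exact if and only if $\int_\Sigma\beta(R)\mu=0$ for every closed $\beta\in\Omega^1(\Sigma)$. Since $\beta\wedge\mu$ is of degree $2n$ on the $(2n-1)$-manifold $\Sigma$ and therefore vanishes, the Leibniz identity $i_R(\beta\wedge\mu)=\beta(R)\mu-\beta\wedge i_R\mu$ reduces the problem to showing
\[\int_\Sigma\beta\wedge\om^{n-1}=0\quad\text{for every closed }\beta\in\Omega^1(\Sigma).\]
Because $\om^{n-1}$ is closed, this integral depends only on the de Rham class $[\beta]\in H^1(\Sigma)$.

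\emph{The case $n=\dim M\geq 3$.} My plan is to exhibit $\om^{n-1}$ as globally exact on $T^*M$; closedness of the hypersurface then closes the argument by Stokes. Write $\theta=p\,dq$ so that $\om=d\theta+\tau^*\sigma$. The binomial expansion
\[\om^{n-1}=\sum_{j=0}^{n-1}\binom{n-1}{j}(d\theta)^{n-1-j}\wedge(\tau^*\sigma)^j\]
exhibits each summand with $j\leq n-2$ as $d\bigl(\theta\wedge(d\theta)^{n-2-j}\wedge(\tau^*\sigma)^j\bigr)$, since both $d\theta$ and $\tau^*\sigma$ are closed. The only leftover summand ($j=n-1$) is $\tau^*(\sigma^{n-1})$, which vanishes because $\sigma^{n-1}$ has degree $2(n-1)>n=\dim M$. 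Thus $\om^{n-1}=d\Theta$ on $T^*M$, and for closed $\beta$ the identity $\beta\wedge d\Theta=-d(\beta\wedge\Theta)$ together with Stokes on the closed hypersurface $\Sigma$ gives the conclusion.

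\emph{The case $n=2$, $M\neq\mathbb T^2$.} This is the main obstacle, since $\om=d\theta+\tau^*\sigma$ is no longer globally exact. The $d\theta$ part still yields zero by the previous Stokes argument, so the outstanding integral is $\int_\Sigma\beta\wedge\tau^*\sigma$. The plan is to show that $\tau^*\sigma|_\Sigma$ is exact. For $k>\max U$ the projection $\tau\co \Sigma_k\to M$ is a sphere bundle and $\Sigma_k$ lies in $T^*M\setminus 0_M$, where $\tau^*TM$ admits the nowhere-vanishing section $p^\sharp$; hence $\tau^*e|_\Sigma=0$ in $H^2(\Sigma)$, where $e$ is the Euler form of $TM$. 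Now if $M$ is non-orientable, $H^2(M;\R)=0$ so $\sigma$ is already exact and we are done. If $M$ is orientable, the hypothesis $M\neq\mathbb T^2$ gives $\chi(M)\neq 0$, whence $H^2(M;\R)=\R\cdot[e]$; writing $\sigma=ce+d\eta$ we get $\tau^*\sigma=c\,\tau^*e+d\tau^*\eta$, so $\tau^*\sigma|_\Sigma$ is exact. In either case Stokes yields $\int_\Sigma\beta\wedge\tau^*\sigma=0$ for closed $\beta$, completing the proof.
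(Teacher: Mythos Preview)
Your argument follows the paper's route and is correct where it applies, but in the surface case $n=2$ you only treat $k>\max U$. Regular levels with $k<\max U$ exist whenever $U$ is nonconstant (the critical values of $H$ are exactly the critical values of $U$), and for such $k$ your section argument breaks down: $\Sigma_k$ meets the zero section at the points with $U(q)=k$, so $p^\sharp$ vanishes there and you no longer get $\tau^*e|_{\Sigma_k}=0$. The paper handles this missing case in one line: when $k<\max U$ the projection $\tau|_{\Sigma_k}\co\Sigma_k\to M$ is not surjective, so its image misses a point of the closed surface $M$, hence has vanishing $H^2(\,\cdot\,;\R)$; thus $\sigma$ is exact on the image and $\tau^*\sigma|_{\Sigma_k}$ is exact.

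Apart from this omission your proof is the paper's proof. Your Euler-class argument for $k>\max U$ (killing $\tau^*e$ via the nowhere-vanishing section $p^\sharp$ of $\tau^*TM|_{\Sigma_k}$) is a mild repackaging of the Gysin-sequence step the paper uses, with the small bonus that you handle non-orientable $M$ explicitly via $H^2(M;\R)=0$.
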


\begin{proof}
We claim that $\om^{n-1}|_{\Sigma_k}$ is exact if $M\neq \mathbb T^2$. To see
this, let $\theta:=p\,dq$ be the Liouville form and compute
$$
   \om^{n-1} = (d\theta+\tau^*\sigma)^{n-1} = (d\theta)^{n-1} +
   (n-1)\tau^*\sigma\wedge(d\theta)^{n-2}.
$$
On the right-hand side the first term is exact. For $n\geq 3$ the
second term is exact as well and the claim follows, so it remains to
consider the case $n=2$. If $k<\max \,U$ the projection
$\tau_k:=\tau|_{\Sigma_k}\co \Sigma_k\to M$ is not surjective, so
$\sigma$ is exact on the image of $\tau_k$ and the claim follows. If
$k>\max \,U$ the Gysin sequence of the circle bundle $\tau_k\co \Sigma_k\to
M$ yields
$$
   H^0(M;\R)\overset{\cup e}{\longrightarrow} H^2(M;\R)
   \overset{\tau_k^*}{\longrightarrow} H^2(\Sigma_k;\R),
$$
where $e$ is the Euler class of the cotangent bundle of $M$. If $M\neq
\T^2$ this Euler class is nonzero, so the first map is an isomorphism
and $\tau_k^*$ the zero map. This proves the claim.

Now let $\Theta$ be a primitive of $\om^{n-1}|_{\Sigma_k}$ and
$\beta\in\Om^1(\Sigma_k)$ be closed. Then
$$
   (n-1)!\beta(R)\mu = (n-1)!\beta\wedge i_R\mu =
   \beta\wedge\om^{n-1}|_{\Sigma_k} = \beta\wedge d\Theta =
   -d(\beta\wedge\Theta)
$$
is exact, so its integral over $\Sigma_k$ vanishes. This proves
exactness of $\mu$ and hence the lemma.
\end{proof}

The following immediate consequence of Lemma~\ref{lem:liouville} will
be used repeatedly in this paper.

\begin{corollary}\label{cor:liouville}
In the situation of Lemma~\ref{lem:liouville}, there exists no 1-form
$\lambda$ on $\Sigma_k$ with $d\lambda=0$ and $\lambda(R)>0$.
\end{corollary}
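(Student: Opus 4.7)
The plan is to argue by contradiction, directly using the exactness of the Liouville measure as a $1$-current established in Lemma~\ref{lem:liouville}. Recall that the Liouville measure $\mu$ is a positive volume form on $\Sigma_k$ and, when paired with a $1$-form $\beta\in\Omega^1(\Sigma_k)$ via
\[
   \langle\mu,\beta\rangle = \int_{\Sigma_k}\beta(R)\,d\mu,
\]
defines a $1$-current which is exact, meaning $\langle\mu,\beta\rangle=0$ for every closed $1$-form $\beta$.

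Suppose, for contradiction, that $\lambda$ is a closed $1$-form on $\Sigma_k$ with $\lambda(R)>0$ everywhere. Then on the one hand, since $\lambda$ is closed and $\mu$ is exact, Lemma~\ref{lem:liouville} yields
\[
   \int_{\Sigma_k}\lambda(R)\,d\mu = \langle\mu,\lambda\rangle = 0.
\]
On the other hand, $\lambda(R)$ is a continuous strictly positive function on the compact manifold $\Sigma_k$ and $\mu$ is a positive volume form, so the integral is strictly positive. This contradiction proves the corollary.

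There is no real obstacle here: the whole statement is packaged so that Lemma~\ref{lem:liouville} does all the work. The only point worth being careful about is that $\lambda(R)>0$ is assumed pointwise (not just on average) and that $\Sigma_k$ is compact, which together with positivity of $\mu$ forces the integral to be strictly positive and produces the contradiction.
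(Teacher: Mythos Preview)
Your proof is correct and is exactly the argument the paper has in mind: the corollary is stated as an immediate consequence of Lemma~\ref{lem:liouville}, and your contradiction between the vanishing of $\langle\mu,\lambda\rangle$ for closed $\lambda$ and the strict positivity of $\int_{\Sigma_k}\lambda(R)\,d\mu$ is precisely that immediate consequence.
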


\section{Tame hypersurfaces}\label{sec:tame}

In this section we introduce the notion of weakly tame
hypersurfaces in a symplectic manifold and the notion
of tameness for stable hypersurfaces. As the name suggests,
tame stable hypersurfaces are also weakly tame. We further
explain what a tame, stable homotopy is. In the
forthcoming section~\ref{sec:RFH} we then show how for
weakly tame, stable hypersurfaces Rabinowitz Floer homology
can be defined and that Rabinowitz Floer
homology is invariant under tame, stable
homotopies.

Given a closed hypersurface $\Sigma$ in a symplectically
aspherical symplectic  manifold $(V,\omega)$ we
denote by $X(\Sigma)$ the set of closed characteristics in
$\Sigma$ which are contractible in $V$. We define the
function
$$\Omega \colon X(\Sigma) \to \mathbb{R}$$
by choosing for $v \in X(\Sigma)$ a filling disk $\bar{v}$ in $V$
which exists since $v$ is contractible in $V$ and putting
\begin{equation}\label{OE}
\Omega(v)=\int \bar{v}^* \omega.
\end{equation}
Note that since $V$ is symplectically aspherical the function
$\Omega$ is well-defined independent of the choice of the
filling disk. We refer to the function $\Omega$ as the
\emph{$\omega$-energy} of a closed characteristic.
For $a \leq b$ we abbreviate
$$X_a^b(\Sigma)=\big\{v \in X(\Sigma): a \leq \Omega(v) \leq b\big\}.$$

The hypersurface $\Sigma$ is called \emph{weakly tame} if
for each $a \leq b$ the space $X_a^b(\Sigma)$ is compact (with the topology
of uniform convergence).

An example of weakly tame hypersurfaces are hypersurfaces of restricted
contact type. Indeed if $\omega=d\lambda$ is exact and the
restriction $\lambda|_\Sigma$ is a contact form on $\Sigma$, then
each closed characteristic can be parametrised as a periodic
orbit of the Reeb vector field $R_\lambda$ and hence
$$\Omega(v)=\int \bar{v}^*\omega=\int v^* \lambda=T_\lambda(v),$$
where $T_\lambda(v)$ is the period of $v$ as a periodic orbit of the
Reeb vector field $R_\lambda$. Therefore the theorem of
Arzela-Ascoli implies that $\Sigma$ is weakly tame. On the other
hand an obstruction for being weakly tame is the existence of a
closed characteristic of vanishing $\omega$-energy. Indeed, taking
iterates of it we get a sequence of closed characteristics of
vanishing $\omega$-energy having no convergent subsequence.

If the hypersurface $\Sigma$ is stable we can define for
each $\lambda \in \Lambda(\Sigma,\omega)$ another function
$$T_\lambda \colon X(\Sigma) \to (0,\infty), \quad
v \mapsto \int v^*\lambda.$$ The function $T_\lambda$ associates to
$v$ its period as a Reeb orbit of the Reeb vector field $R_\lambda$.
We note that for two different $\lambda_1, \lambda_2 \in
\Lambda(\Sigma,\omega)$ the functions $T_{\lambda_1}$ and
$T_{\lambda_2}$ are proportional in the sense that there exist
positive constants $\underline{c}_{\lambda_1,\lambda_2}$ and
$\overline{c}_{\lambda_1,\lambda_2}$ such that
\begin{equation}\label{prolam}
\underline{c}_{\lambda_1,\lambda_2}T_{\lambda_2} \leq T_{\lambda_1}
\leq \overline{c}_{\lambda_1,\lambda_2}T_{\lambda_2}.
\end{equation}
Indeed, the Reeb vector fields $R_{\lambda_1}$ and $R_{\lambda_2}$
are pointing in the same direction at each point of $\Sigma$ so that there
exists a positive function
$f_{\lambda_1,\lambda_2} \in C^\infty(\Sigma,(0,\infty))$ such that
the formula
$$R_{\lambda_1}=f_{\lambda_1,\lambda_2}R_{\lambda_2}$$
holds. Since $\Sigma$ is compact the function
$f_{\lambda_1,\lambda_2}$ attains a positive maximum and a
positive minimum on $\Sigma$ and we set
$$\underline{c}_{\lambda_1,\lambda_2}=
\frac{1}{\max f_{\lambda_1,\lambda_2}}, \quad
\overline{c}_{\lambda_1,\lambda_2}=\frac{1}
{\min f_{\lambda_1,\lambda_2}}$$
for which (\ref{prolam}) holds.

A stable hypersurface is called {\it tame} if there exists
$\lambda \in \Lambda(\Sigma,\omega)$ and a constant $c_\lambda$
such that for all $v \in X(\Sigma)$
\begin{equation}\label{hlam}
T_\lambda(v) \leq c_\lambda |\Omega(v)|.
\end{equation}

We remark that it follows from (\ref{hlam}) and the
theorem of Arzela-Ascoli that a tame stable hypersurface
is weakly tame. We refer to the constant $c_\lambda$
as a \emph{taming constant for $\lambda$}. We note
that if there exists a taming constant for
one $\lambda_1 \in \Lambda(\Sigma,\omega)$ then there
exists also a taming constant for every other
stabilizing one-form $\lambda_2 \in \Lambda(\Sigma,\omega)$.
Indeed, it follows from (\ref{prolam}) that
$$c_{\lambda_2}=\frac{c_{\lambda_1}}
{\underline{c}_{\lambda_1,\lambda_2}}$$
is a taming constant for $\lambda_2$.
We finally introduce the tameness condition for stable homotopies.

A stable homotopy $(\Sigma_t,\lambda_t)$ for
$t \in [0,1]$ is called \emph{tame}, if there
exists a taming constant $c> 0$ such that
$$T_{\lambda_{t}}(v) \leq c|\Omega(v)|, \quad
\forall\,\,t \in [0,1],\,\,\forall\,\,
v \in X(\Sigma_t).$$

\section{Rabinowitz Floer homology}\label{sec:RFH}

Rabinowitz Floer homology as the Floer homology
of Rabinowitz' action functional was, for the restricted
contact type case, introduced by the first two
authors in \cite{CF}. In this section we generalize as
far as possible the construction of Rabinowitz Floer homology
to the stable case. This generalization is not straightforward.
The compactness proof for the moduli space of gradient flow
lines has to be modified considerably in the stable case.
The crucial point is the occurence of a second Liapunov
functional on the moduli space of gradient flow lines
which is defined via a stabilizing 1-form. One could define
Rabinowitz Floer homology for a stabilized hypersurface
by using a filtration from both action functionals. However,
since the second action functional depends on the stabilizing
form, such homology groups might depend on the stabilizing 1-form.
To avoid this difficulty we only define Rabinowitz Floer homology
in the case of weakly tame stable hypersurfaces, in which case
this trouble does not occur.

As in the restricted contact type case the Rabinowitz Floer homology
groups have the property that they vanish if the hypersurface is
displaceable and that they coincide with the singular homology of
the hypersurface in the case that there are no contractible Reeb
orbits on the hypersurface. In particular, using Rabinowitz Floer
homology one can recover a theorem due to F.\,Schlenk, that on
displaceable stable hypersurfaces there always exists a contractible
Reeb orbit. Schlenk's theorem actually does not need the weakly tame
condition. And indeed, if one looks only at the local Rabinowitz
Floer homology around the action value zero, the general compactness
theorem for the moduli space of gradient flow lines of Rabinowitz
action functional suffices, to recover Schlenk's theorem also in the
not weakly tame case.

A further trouble in the stable case which one does not have to
worry about in the restricted contact type case, is the difficulty
that the stability condition is not open \cite{CFP} and hence
Rabinowitz action functional cannot be assumed to be generically
Morse-Bott. We therefore have to introduce an additional
perturbation which makes a generically perturbed Rabinowitz action
functional Morse. It is however not clear that compactness for the
moduli space of gradient flow lines for the perturbed Rabinowitz
action functional continues to hold. Nevertheless, one can partially
overcome this trouble be a procedure which is somehow reminiscent of
the Conley index in the finite dimensional case. For a fixed action
window it is possible to choose the perturbation so small that one
can get a boundary operator by counting only gradient flow lines of
the perturbed action functional which are sufficiently close to the
gradient flow lines of the unperturbed action functional. Since the
perturbation depends on the action window the drawback of this
construction is, that one cannot define the full Rabinowitz Floer
chain complex by using Novikov sums as in \cite{CF}. Instead one
first defines filtered Rabinowitz Floer homology groups and then
takes their inverse and direct limits. Because inverse and direct
limits do not necessarily commute one obtains in this way actually
two types of Rabinowitz Floer homology groups which we denote by
$\overline{RFH}$ and $\underline{RFH}$. One can show, see
\cite{CF2}, that in the restricted contact type case the Rabinowitz
Floer homology $RFH$ as defined in \cite{CF} via Novikov sums
coincides with $\overline{RFH}$. The two homology groups are
connected by a canonical homomorphism
$$\kappa \colon \overline{RFH} \to \underline{RFH}.$$
It is an open question if $\kappa$ is always an isomorphism.
The results of \cite{CF2} suggest that it should at least be
surjective. But even that is not clear yet, since in
\cite{CF2} the first two authors
made heavy use of a bidirect system of chain
complexes which we do not have in the stable case.

The Rabinowitz Floer homology groups are invariant under tame,
stable homotopies. We prove this via an adiabatic version of Floer's
continuation homomorphism. For this we need a compactness theorem
for gradient flow lines in the case that Rabinowitz action
functional is also allowed to depend on time.

This section is organised in the following manner. In
section~\ref{ss:action} we recall the definition of Rabinowitz
action functional, show how its critical points are given by Reeb
orbits and derive the gradient flow equation.

In section~\ref{ss:multiplier} the main ingredient for the
compactness proof of the moduli spaces of gradient flow lines is
established. Rabinowitz action functional is a Lagrange multiplier
action functional and the main difficulty is to obtain a uniform
bound on the Lagrange multiplier along gradient flow lines. Once
this bound is established, the compactness for the moduli spaces of
gradient flow lines follows from standard arguments well-known in
Floer theory.

In section~\ref{ss:existence} we give a new proof
of Schlenk's theorem about the existence of a
contractible Reeb orbit on a displaceable, stable
hypersurface. This proof can also be used to
derive the vanishing of Rabinowitz Floer homology
for displaceable, weakly tame, stable hypersurfaces.

In section~\ref{ss:approvable} we introduce a class of perturbations
of Rabinowitz action functional and show that for a generic
perturbation Rabinowitz action functional becomes Morse. We then
explain how for a fixed action window one can find small
perturbations such that the moduli space of gradient flow lines can
be written as the disjoint union of two closed parts where one part
(which we refer to as the essential part) is compact. We then
explain how the essential part can be used to define a boundary
operator.

In section~\ref{ss:RFH} we define the two
Rabinowitz Floer homology groups $\overline{RFH}$
and $\underline{RFH}$ for weakly tame, stable hypersurfaces.
We show that both homology groups vanish for displaceable
hypersurfaces and that both homology groups coincide
with the singular homology of the hypersurface in the
case that the hypersurface carries no contractible Reeb
orbit.

In section~\ref{ss:invariance} we finally establish invariance
of Rabinowitz Floer homology for tame, stable homotopies.

\subsection{Rabinowitz action functional}\label{ss:action}

We first give the general definition of Rabinowitz action functional
and its gradient flow, postponing technical details to later
subsections.

Consider a symplectically aspherical manifold $(V,\om)$ and a separating closed
hypersurface $\Sigma\subset V$.
Choose a Hamiltonian $\bar{H}$ with $\bar{H}^{-1}(0)=\Sigma$.
Such Hamiltonians exist since $\Sigma$ is assumed to be separating.
Denote by $\LL \subset C^\infty(S^1,V)$ the component of
contractible loops in the free loop space of $V$. {\em Rabinowitz
action functional}
$$
   \AA^{\bar{H}} \colon \LL\times \R \to \R
$$
is defined as
$$
   \AA^{\bar{H}}(v,\eta) := \int_0^1\bar v^*\om-\eta
   \int_0^1 \bar{H}(v(t))dt,\quad (v,\eta) \in \LL\times \R,
$$
where $\bar v\co D^2\to V$ is an extension of $v$ to the disk.
One may think of $\AA^{\bar{H}}$ as the Lagrange multiplier
functional
of the unperturbed action functional of classical mechanics also studied
in Floer theory to a mean value constraint of the loop. The critical
points of $\AA^{\bar{H}}$ satisfy
\begin{equation}\label{crit0}
\left. \begin{array}{cc}
\partial_t v(t)= \eta X_{\bar{H}}(v(t)),
& t \in \R/\Z, \\
\bar{H}(v(t))=0. & \\
\end{array}
\right\}
\end{equation}
Here we used the fact that $\bar{H}$ is invariant under its
Hamiltonian flow.

It is also useful to consider Rabinowitz action functional for
``weakly'' time dependent Hamiltonians. Here weakly means that the
Hamiltonian is just the product of a time independent Hamiltonian
with a function depending only on time, i.e.
$$H(t,y)=\chi(t)\bar{H}(y), \quad y \in V,\,\,t \in S^1.$$
Normalizing, we assume in addition that
\begin{equation}\label{chi}
\int_0^1 \chi(t)dt=1.
\end{equation}
We define Rabinowitz action functional $\mathcal{A}^H$ as before
with $\bar{H}$ replaced by $H$. The critical point equation then
becomes
\begin{equation}\label{crit1}
\left. \begin{array}{cc}
\partial_t v(t)= \eta \chi(t)X_{\bar{H}}(v(t)),
& t \in \R/\Z, \\
\bar{H}(v(t))=0, & \\
\end{array}
\right\}
\end{equation}
i.e.\,the new critical points can just be obtained by
reparametrisation of the previous ones. Moreover, the
action value remains constant.

We next describe a class of metrics on $\LL\times \mathbb{R}$.
Pick a smooth $\omega$-compatible almost
complex structure $J$ on $V$. For such a $J$ we define a
metric $m$ on $\LL\times \mathbb{R}$
for $(v,\eta) \in \LL\times \mathbb{R}$ and
$(\hat{v}_1,\hat{\eta}_1), (\hat{v}_2,\hat{\eta}_2) \in
T_{(v,\eta)}(\LL\times \mathbb{R})$ by
$$m\big((\hat{v}_1,\hat{\eta}_1),(\hat{v}_2,\hat{\eta}_2)\big)
=\int_0^1 \omega(\hat{v}_1,J(v)\hat{v}_2) dt +
\hat{\eta}_1 \cdot \hat{\eta}_2.$$
The gradient of $\mathcal{A}^H$ with respect to this metric is given by
$$
   \nabla \mathcal{A}^H=\nabla_m \mathcal{A}^H =
   \left(\begin{array}{cc}
   -J(v)\bigl(\partial_t v-\eta X_H(v)\bigr)\\
   -\int_0^1 H(t,v(\cdot,t)) dt
   \end{array}\right)\;.
$$
Thus (positive) {\em gradient flow lines} of $\nabla \mathcal{A}^H$
are solutions  $(v,\eta) \in C^\infty(\mathbb{R}\times S^1,V \times
\mathbb{R})$ of the following problem
\begin{equation}\label{flowline}
\left. \begin{array}{cc}
\partial_s v+J(v)(\partial_t v-\eta X_H(v))=0\\
\partial_s \eta+\int_0^1 H(t,v(\cdot,t)dt=0.
\end{array}
\right\}
\end{equation}

The boundary operator of Rabinowitz Floer homology counts gradient
flow lines connecting critical points of $\mathcal{A}^H$.
In order to prove that this is well defined one
has to show that the moduli spaces of gradient flow line are compact
modulo breaking. There are three difficulties one has to solve.
\begin{itemize}
 \item An $L^\infty$-bound on the loop $v \in \LL$.
 \item An $L^\infty$-bound on the Lagrange multiplier $\eta \in \mathbb{R}$.
 \item An $L^\infty$-bound on the derivatives of the loop $v$.
\end{itemize}
The first and the third point are standard problems in Floer theory
one knows how to deal with: The $L^\infty$-bound for the loop follows
from suitable assumptions on $(V,\om)$ such as convexity at infinity or
geometrical boundedness, and the derivatives can be controlled if
$V$ is symplectically aspherical, meaning that
$\om$ vanishes on $\pi_2(V)$, and hence there is no bubbling of
pseudo-holomorphic spheres.

The new feature is the bound on the Lagrange multiplier $\eta$, which
can only hold under some additional hypothesis on the hypersurface
$\Sigma$. It was derived in~\cite{CF} for $\Sigma$ of restricted
contact type. In the next subsection we will bound the Lagrange
multiplier provided $\Sigma$ is {\em stable}.

\subsection{Bound on the Lagrange multiplier}\label{ss:multiplier}

In this section we discuss the bound on the Lagrange multiplier.
In the restricted contact case this was carried out in
\cite{CF}. There the first two authors of this paper
used the fact that in the restricted contact case the action
value of Rabinowitz action functional at a critical point
coincides with the Lagrange multiplier. This is not true
anymore in the stable case. Nevertheless, using stability one
can define a modified Rabinowitz action functional whose
action value at critical points still is given by the Lagrange
multiplier. However, the modified action is not necessarily
a Liapunov function for gradient flow lines, so that the
proof in \cite{CF} cannot be mimicked in the stable case by
just using the modified version. The crucial observation
is that the difference of the two action functionals is
actually a Liapunov function for gradient flow lines and this
fact can be used to still get a bound on the Lagrange multiplier.

In section~\ref{ss:liapunov} we explain how the additional Liapunov
functional arises and why it can be used to get the bound on the
Lagrange multiplier. Technical details of section~\ref{ss:liapunov}
are postponed to section~\ref{at}. In section~\ref{ss:TDC} we also
derive a bound on the Lagrange multiplier for gradient flow lines,
when the Rabinowitz action functional itself is allowed to depend on
time. For this we need a short stable homotopy. Using the notion of
the stable pseudodistance, explained in  section~\ref{ss:pd}, we can
make precise what ``short'' means.

\subsubsection{A second Liapunov function for gradient flow lines}
\label{ss:liapunov}

$\,$From now on we assume that $\Sigma \subset V$ is a stable
hypersurface. We first explain the general strategy to obtain a
bound on the Lagrange multiplier. Assume that $\lambda \in
\Lambda(\Sigma,\omega)$ and let $R=R_\lambda$ be the Reeb vector
field of the stabilizing 1-form $\lambda$. We will use Hamiltonians
$\bar{H}$ such that $\bar{H}^{-1}(0)=\Sigma$ and in addition
$X_{\bar{H}}=R$ along $\Sigma$. To obtain the bound on the Lagrange
multiplier we also have to choose $\bar{H}$ carefully outside the
stable hypersurface. For that we have to assume that the stabilizing
1-form $\lambda$ is small enough, but we will give a detailed
explanation of this construction later in section~\ref{at}. The
additional condition implies that the critical point equations
(\ref{crit1}) are equivalent to
\begin{equation}\label{crit}
\left. \begin{array}{cc}
\partial_t v(t)= \eta \chi(t)R(v(t)),
& t \in \R/\Z, \\
v(t)\in \Sigma, & t \in \R/\Z, \\
\end{array}
\right\}
\end{equation}
i.e. $v$ is up to reparametrisation
a periodic orbit of the Reeb vector field on
$\Sigma$ with period $\eta$.
\footnote{The period $\eta$ may be negative or zero.
We refer in this paper
to Reeb orbits moved backwards as Reeb orbits with negative period
and to constant orbits as Reeb orbits of period zero.}

The bound on the Lagrange multiplier is derived by comparing
the Rabinowitz action functional to a different action functional. This
modified version of Rabinowitz action functional is
obtained using an extension of the stabilizing 1-form $\lambda$ on
$\Sigma$ to a compactly supported 1-form $\beta_\lambda$ on $V$.
We also postpone the precise construction of $\beta_\lambda$.
But given $\beta_\lambda$ we
define the auxiliary action functional $\widehat\AA^H\co \LL\times\R\to\R$
by
$$
   \widehat\AA^H(v,\eta) := \int_0^1\bar v^*d\beta_\lambda-\eta
   \int_0^1 H(t,v(t))dt,
$$
We will further use the difference of the two Rabinowitz action
functionals
$$
   \AA:=\AA^H-\widehat\AA^H\co \LL\times\R\to\R,\qquad
   (v,\eta)\mapsto\int_0^1\bar v^*(\om-d\beta_\lambda).
$$
Note that $\AA(v,\eta)$ is in fact independent of $\eta$. We further
use an $\omega$-compatible almost complex structure $J$ on $V$ which
has a special form on a stable tubular neighbourhood of $\Sigma$,
see section~\ref{at}. For that $J$ we consider the following two
bilinear forms on the tangent bundle $T(\LL\times\R)$:
\begin{align*}
   m\Bigl((\hat v_1,\hat\eta_1),(\hat v_2,\hat\eta_2)\Bigr)
   &:= \int_0^1\om(\hat v_1,J\hat v_2)dt + \hat\eta_1\hat\eta_2, \cr
   \widehat m\Bigl((\hat v_1,\hat\eta_1),(\hat v_2,\hat\eta_2)\Bigr)
   &:= \int_0^1d\beta_\lambda(\hat v_1,J\hat v_2)dt + \hat\eta_1\hat\eta_2.
\end{align*}
Here the bilinear form $m$ (which has already been defined in the
previous subsection) is positive definite. The main
point in the choice of $H$, $\beta_\lambda$ and $J$
is to make sure that the following Proposition becomes true.
\begin{prop}\label{mod}
If $(v,\eta) \in \LL \times \R$ and  $(\hat{v},\hat{\eta}) \in
T_{(v,\eta)}(\LL \times \R)$ then the following two assertions
hold
\begin{description}
 \item[(i)] $d\widehat\AA^H(v,\eta)(\hat v,\hat\eta) =
   \widehat m\Bigl(\nabla_m\AA^H(v,\eta),(\hat v,\hat\eta)\Bigr),$
 \item[(ii)] $
   (m-\widehat m)\Bigl((\hat v,\hat\eta),(\hat v,\hat\eta)\Bigr)
   \geq 0.$
\end{description}
\end{prop}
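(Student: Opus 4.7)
The plan is to compute both sides of (i) as first variations and as pairings in $\widehat m$, show that their difference vanishes by virtue of pointwise compatibilities between $H$, $\beta_\lambda$ and $J$, and then to deduce (ii) as a pointwise positivity statement forced by the same construction. Everything rests on the specific design of $(H,\beta_\lambda,J)$ that Section~\ref{at} will carry out: $J$ is $\omega$-compatible on $V$ with normal form $J\partial_r=R$ on the stable tubular neighbourhood $\psi([-\rho,\rho]\times\Sigma)$; $\beta_\lambda$ is a compactly supported 1-form on $V$ equal to $\psi_*(r\lambda)$ on a smaller neighbourhood of $\Sigma$; and $\bar H$ vanishes on $\Sigma$, satisfies $X_{\bar H}=R$ near $\Sigma$, and has $d\bar H=0$ off the tubular neighbourhood. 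These choices are meant to enforce three pointwise compatibilities: (a) $d\beta_\lambda(J\cdot,J\cdot)=d\beta_\lambda(\cdot,\cdot)$, (b) $(\omega-d\beta_\lambda)(\cdot,X_H)=0$, and (c) $(\omega-d\beta_\lambda)(\cdot,J\cdot)\geq 0$.

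For (i), I would first expand the left-hand side by direct variation. Since $d\beta_\lambda$ is closed, the standard Stokes computation for the first variation of a symplectic action gives
\[d\widehat\AA^H(\hat v,\hat\eta)=\int_0^1 d\beta_\lambda(\hat v,\partial_t v)\,dt-\eta\int_0^1 dH(\hat v)\,dt-\hat\eta\int_0^1 H(t,v)\,dt.\]
Plugging the formula for $\nabla_m\AA^H$ from Section~\ref{ss:action} into $\widehat m$ and invoking (a) to swap the two $J$'s inside $d\beta_\lambda(J(\cdots),J\hat v)$, one obtains
\[\widehat m(\nabla_m\AA^H,(\hat v,\hat\eta))=\int_0^1 d\beta_\lambda(\hat v,\partial_t v-\eta X_H)\,dt-\hat\eta\int_0^1 H(t,v)\,dt.\]
Rewriting $dH(\hat v)=\omega(\hat v,X_H)$ via $\iota_{X_H}\omega=-dH$, the discrepancy between the two expressions is
\[-\eta\int_0^1(\omega-d\beta_\lambda)(\hat v,X_H)\,dt,\]
which vanishes by (b).

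For (ii), the $\hat\eta^2$ terms in $m$ and $\widehat m$ cancel in $m-\widehat m$, so the inequality reduces to
\[\int_0^1(\omega-d\beta_\lambda)(\hat v,J\hat v)\,dt\geq 0,\]
which is simply (c) integrated over the circle. The two easy regimes are transparent: off the support of $\beta_\lambda$ the integrand is $\omega(\hat v,J\hat v)\geq 0$ by $\omega$-compatibility of $J$; on the neighbourhood of $\Sigma$ where $\omega-d\beta_\lambda=\omega_\Sigma$, a decomposition $\hat v=c\partial_r+aR+w$ with $w\in\ker\lambda$ yields $\omega_\Sigma(\hat v,J\hat v)=d\lambda(w,Jw)\geq 0$, since $J$ restricts to a $d\lambda$-compatible complex structure on $\ker\lambda$.

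The real work therefore is not in the verification above but in the deferred construction: the main obstacle is to interpolate smoothly across the transition annulus in which $\beta_\lambda$ passes from $r\lambda$ to $0$, arranging the cutoff together with the off-$\Sigma$ extension of $J$ so that (a)--(c) survive throughout $V$ and not merely in the two easy regions where they are essentially automatic. This is precisely what Section~\ref{at} is designed to achieve, and it is there, rather than in the verification of (i) and (ii), that all the technical difficulty sits.
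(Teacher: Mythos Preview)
Your approach matches the paper's: part (ii) reduces to the pointwise inequality $d\beta_\lambda(w,Jw)\leq\omega(w,Jw)$, which is precisely the Lemma preceding the proof in Section~\ref{at}; part (i) follows from your condition (b), which the paper states as $dH=-i_{X_H}d\beta_\alpha$ and derives from $X_H=\chi(t)h'(r)R$ together with $i_Rd\lambda=0$. Your condition (a) is indeed needed (the paper uses it implicitly) and holds automatically from $\omega$-compatibility of $J$: on the tubular neighbourhood $\omega|_{\xi_\lambda}=\omega_\Sigma+r\,d\lambda$, so $J$-invariance for all $r$ forces $I$-invariance of both $\omega_\Sigma$ and $d\lambda$, hence of $d\beta_\lambda=\phi(r)d\lambda+\phi'(r)dr\wedge\lambda$.

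A few anticipated details are off and worth correcting, since they conflate the stable and contact settings. The paper takes $\beta_\lambda=\phi(r)\lambda$ with $\phi(r)=r+1$ (not $r$) near $\Sigma$, so there $\omega-d\beta_\lambda=\omega_\Sigma-d\lambda$, not $\omega_\Sigma$. Positivity on $\xi_\lambda$ then comes from $I$ being $\omega_\Sigma$-compatible \emph{together with} the smallness condition $|d\lambda(w,Iw)|\leq\omega_\Sigma(w,Iw)$ (i.e.\ $\kappa(\lambda,I)\leq1$), not from ``$d\lambda$-compatibility'' of $I$; in the general stable case $d\lambda$ and $\omega_\Sigma$ are distinct and $d\lambda$ may even vanish on $\ker\lambda$, so your formula $\omega_\Sigma(\hat v,J\hat v)=d\lambda(w,Jw)$ is incorrect. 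These are cosmetic since you defer the construction to Section~\ref{at}, and your structural argument for (i) and (ii) is sound.
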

We prove Proposition~\ref{mod}
in section~\ref{at}.
Assertion (i) might be interpreted as
$$\nabla_{\widehat{m}}\widehat{\AA}^H=\nabla_m\AA^H.$$
However, we point out that the gradient with respect
to the bilinear form $\widehat{m}$ is not uniquely
determined since $\widehat{m}$ is not positive definite.
We can use Proposition~\ref{mod} to show that the
difference action functional $\AA$ is a Liapunov function
for the gradient flow lines of $\AA^H$.
\begin{corollary}\label{liap}
The functional $\AA$ is non-decreasing along gradient flow lines of
Rabinowitz action functional.
\end{corollary}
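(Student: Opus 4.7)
The plan is a one-line chain-rule computation that packages both statements of Proposition~\ref{mod} into the desired monotonicity. A positive gradient flow line $(v(s),\eta(s))$ by definition satisfies $\partial_s(v,\eta)=\nabla_m\AA^H(v,\eta)$. I will differentiate $\AA=\AA^H-\widehat\AA^H$ along such a trajectory, convert the $\widehat\AA^H$-derivative into a pairing with respect to $\widehat m$ using assertion (i), and then invoke assertion (ii) to conclude nonnegativity.

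Concretely, along the flow
$$ \frac{d}{ds}\AA = d\AA^H(v,\eta)(\partial_s v,\partial_s\eta) - d\widehat\AA^H(v,\eta)(\partial_s v,\partial_s\eta). $$
The first term, by the defining property of $\nabla_m\AA^H$ together with the flow equation $\partial_s(v,\eta)=\nabla_m\AA^H$, equals $m\bigl(\nabla_m\AA^H,\nabla_m\AA^H\bigr)$. For the second term, Proposition~\ref{mod}(i) rewrites $d\widehat\AA^H(v,\eta)(\cdot)$ as $\widehat m\bigl(\nabla_m\AA^H(v,\eta),\cdot\bigr)$; evaluating on $(\partial_s v,\partial_s\eta)=\nabla_m\AA^H$ gives $\widehat m\bigl(\nabla_m\AA^H,\nabla_m\AA^H\bigr)$. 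Subtracting,
$$ \frac{d}{ds}\AA = (m-\widehat m)\bigl(\nabla_m\AA^H,\nabla_m\AA^H\bigr) \geq 0 $$
by Proposition~\ref{mod}(ii), which is precisely the claim.

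The genuine obstacle is not in this corollary but in Proposition~\ref{mod} itself: the tangential identity (i) and the pointwise semidefiniteness (ii) require a careful joint choice of the Hamiltonian $H$, the extension $\beta_\lambda$ of the stabilizing 1-form off $\Sigma$, and the $\omega$-compatible almost complex structure $J$ on a stable tubular neighbourhood of $\Sigma$. Once those choices are in place (as will be carried out in Section~\ref{at}), the corollary is a purely formal consequence, and no further estimates or compactness arguments are needed for it.
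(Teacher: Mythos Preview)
Your proof is correct and essentially identical to the paper's own argument: differentiate $\AA=\AA^H-\widehat\AA^H$ along the flow, use the gradient relation for the first term and Proposition~\ref{mod}(i) for the second, then apply Proposition~\ref{mod}(ii) to conclude nonnegativity. Your additional remark that the substantive work lies in establishing Proposition~\ref{mod} is also exactly right.
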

\begin{proof}
Let $w \in C^\infty(\mathbb{R},\LL\times \R)$ be a gradient
flow line of Rabinowitz action functional, i.e., a solution of
$$\partial_s w(s)=\nabla_m\AA^H\big(w(s)\big), \quad
s \in \mathbb{R}.$$
We estimate using Proposition~\ref{mod}
\begin{align*}
   \frac{d}{ds}\AA(w)
   &= d\AA^H(w)(\partial_s w) -
   d\widehat\AA^H(w)(\partial_s w) \cr
   &= m\Big(\nabla_m\AA^H(w),\nabla_m\AA^H(w)\Big) - \widehat
   m\Big(\nabla_m\AA^H(w),\nabla_m\AA^H(w)\Big) \cr
   &\geq 0.
\end{align*}
This concludes the proof of the Corollary.
\end{proof}

Since
the restriction of $\beta_\lambda$ to $\Sigma$ equals $\lambda$
we obtain using Stokes'
theorem and (\ref{crit}) the following period-action equality
for critical points $(v,\eta)$ of $\AA^H$,
$$
  \widehat\AA^H(v,\eta)=\eta.
$$
An elaboration of this observation is the second assertion
of the following Lemma which might be thought of
as a period-action inequality for almost Reeb orbits.
For the statement of the Lemma we use the abbreviation
$$||H||_\infty=\int_0^1\sup_V |H| dt=||\bar{H}||_\infty.$$

\begin{lemma}\label{abound}If the stabilizing 1-form $\lambda$
is small enough in the $C^0$-topology, then for every $\epsilon>0$
and for every $\gamma \in [0,1)$ we can choose the Hamiltonian
$\bar{H}$ in such a way that for every $\chi$ satisfying (\ref{chi})
the following two conditions hold for $H=\chi \bar{H}$.
\begin{description}
\item[(i)] $||H||_\infty \leq \gamma+\epsilon$.
\item[(ii)]For $(v,\eta) \in \mathcal{L}\times \mathbb{R}$ the
 following implication holds
$$
||\nabla\mathcal{A}^H(v,\eta)||\leq \frac{2\gamma}{3}
\quad \Rightarrow \quad |\eta| \leq \frac{1}{1-\gamma}
\Big|\widehat{\mathcal{A}}^H(v,\eta)\Big|+\frac{2\gamma}{3(1-\gamma)}.
$$
\end{description}
Here the norms and the gradient are taken with respect to $m$.
\end{lemma}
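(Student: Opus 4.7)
My plan is to deduce (ii) from a period--action decomposition of $\widehat\AA^H$ together with quantitative bounds on each summand. Since $v \in \LL$ is contractible, Stokes' theorem rewrites $\int \bar v^* d\beta_\lambda = \int_0^1\beta_\lambda(\partial_t v)\,dt$, and decomposing $\partial_t v = (\partial_t v - \eta X_H) + \eta X_H$ yields the identity
\[
\widehat\AA^H(v,\eta) \;=\; \underbrace{\int_0^1\beta_\lambda(\partial_t v - \eta X_H)\,dt}_{=:B} \;+\; \eta\underbrace{\int_0^1\bigl[\beta_\lambda(X_H) - H\bigr]\,dt}_{=:A}.
\]
Once I show $|B|\leq 2\gamma/3$ and $A\geq 1-\gamma$, the target inequality follows: $(1-\gamma)|\eta|\leq A|\eta| = |\widehat\AA^H - B| \leq |\widehat\AA^H|+2\gamma/3$, and dividing by $1-\gamma$ gives (ii).

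The bound on $|B|$ is the routine piece. The metric $m$ is the $L^2$ metric induced by the $\omega$-compatible $J$ plus the Euclidean term on $\eta$, so $\|\nabla\AA^H\|^2 = \|\partial_t v - \eta X_H\|_{L^2}^2 + \bigl(\int_0^1 H\,dt\bigr)^2$, whence in particular $\|\partial_t v - \eta X_H\|_{L^2}\leq \|\nabla\AA^H\|$. Cauchy--Schwarz gives $|B|\leq \|\beta_\lambda\|_\infty\|\nabla\AA^H\|\leq 2\gamma/3$, provided $\|\beta_\lambda\|_\infty\leq 1$; this last inequality is arranged by the $C^0$-smallness hypothesis on $\lambda$ combined with a suitably small cutoff extension $\beta_\lambda$, a construction postponed to section~\ref{at}.

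The heart of the argument is the lower bound $A\geq 1-\gamma$. Writing $A = \int_0^1\chi(t)F(v(t))\,dt$ with $F := \beta_\lambda(X_{\bar H}) - \bar H$, the stabilizing conditions $\beta_\lambda|_\Sigma = \lambda$, $X_{\bar H}|_\Sigma = R$, $\bar H|_\Sigma = 0$, together with $\lambda(R) = 1$, give $F|_\Sigma \equiv 1$ automatically. In a stable tubular neighbourhood $[-\rho,\rho]\times\Sigma$ I would use the ansatz $\bar H = f(r)$, $\beta_\lambda = \psi(r)\lambda$ with $f(0) = 0$, $f'(0) = 1$, $\psi(0) = 1$, and $\|f\|_\infty \leq \gamma+\epsilon$, producing $F(r) = \psi(r)f'(r) - f(r)$ which is close to $1$ near $r=0$ by continuity. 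The difficulty is that $v$ need not lie in this neighbourhood; to control its excursions I would exploit the complementary component of the gradient bound, $\bigl(\int_0^1 H\,dt\bigr)^2 \leq \|\nabla\AA^H\|^2 \leq (2\gamma/3)^2$. By choosing $\bar H$ with sign-definite (or suitably large) behaviour off the neighbourhood, this constraint forces $v$ to spend little $\chi$-weighted time in the bad region, so $A$ remains within $\gamma$ of~$1$.

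Part (i) is then immediate from the same construction: with $\chi\geq 0$ and $\int_0^1\chi\,dt = 1$, one has $\|H\|_\infty = \sup_V|\bar H| \leq \gamma+\epsilon$. The main obstacle is the lower bound on $A$: the choice of $\bar H$ and $\beta_\lambda$ must simultaneously respect the smallness bound of (i), keep $F$ quantitatively close to $1$ on a region large enough that $v$ cannot escape without paying a substantial price in $\int H\,dt$, and arrange that this ``escape tax'' suffices to recover $A\geq 1-\gamma$. Reconciling these competing demands is the content of the explicit construction carried out in section~\ref{at}.
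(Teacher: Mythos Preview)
Your decomposition $\widehat\AA^H = B + \eta A$ and the Cauchy--Schwarz estimate on $B$ are precisely the paper's opening move (Step~1 of Lemma~\ref{abound2}). The gap lies in your lower bound on $A$.

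You propose to control $A$ using only the scalar component $\bigl|\int_0^1 H\,dt\bigr|\leq 2\gamma/3$ of the gradient bound. But this is blind to sign cancellation: take $\chi\equiv 1$ and let $v$ spend almost all of its time equally split between the two sides of $\Sigma$ where $|\bar H|=\gamma+\epsilon$, with rapid transitions through the collar. Then $\int\bar H(v)\approx 0$ by cancellation, yet $F=\beta_\lambda(X_{\bar H})-\bar H=-\bar H$ wherever $X_{\bar H}=0$, so $A=\int F\approx 0$ as well, far from $1-\gamma$. What rules this scenario out is the \emph{loop} component of the gradient, which you used for $B$ but discarded for $A$: since $X_H=\chi h'(r)R$ has no $\partial_r$ component, a single radial excursion of $v$ across the shell $U_{\tau_\gamma}\setminus U_{\tau_{2\gamma/3}}$ and back already contributes at least $2(\gamma-2\gamma/3)=2\gamma/3$ to $\int_0^1|\partial_t r|\,dt\leq\|\partial_t v-\eta X_H\|_{L^1}\leq\|\partial_t v-\eta X_H\|_{L^2}$, contradicting the hypothesis. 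The paper (Step~2 of Lemma~\ref{abound2}) combines this crossing estimate with the scalar bound (which excludes $v$ lying entirely outside $U_{\tau_{2\gamma/3}}$, since then $\bar H(v)$ has fixed sign and $|\int H|>2\gamma/3$) to confine $v$ entirely to $U_{\tau_\gamma}$; there the exact identity $\beta_\lambda(X_H)-H=\chi$ gives $A=1$, not merely $A\geq 1-\gamma$.

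A secondary issue: $\|\beta_\lambda\|_\infty\leq 1$ is not compatible with the rest of the setup. Since $\lambda(R)=1$ and $|R|=1$ in the metric $\omega(\cdot,J\cdot)$, already $\|\beta_\lambda\|\geq 1$ on $\Sigma$; and the identity $dH=-i_{X_H}d\beta_\lambda$ needed for Proposition~\ref{mod} (hence for the second Liapunov function) forces $\beta_\lambda=(r+1)\lambda$ on the support of $X_{\bar H}$, so $\|\beta_\lambda\|$ reaches $1+\gamma$ on $U_{\tau_\gamma}$. The paper carries this factor as $|B|\leq(1+\gamma)\|\nabla\AA^H\|$ and recovers the stated inequality via $(1+\gamma)\leq 1/(1-\gamma)$ together with $A=1$.
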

\begin{proof}
The Lemma follows from Lemma~\ref{abound2} below.
\end{proof}
\begin{proposition}\label{abo}
Suppose that the stabilizing 1-form $\lambda$ is small enough
and $H$ is as in Lemma~\ref{abound} for
$\epsilon>0$ and $\gamma \in (0,1-\epsilon]$.
Assume that
$w=(v,\eta) \in C^\infty(\mathbb{R},\mathcal{L}\times \mathbb{R})$
is a gradient flow line of  $\nabla \mathcal{A}^H$
for which there exist $a \leq b$ such that
\begin{equation}\label{aabound}
\mathcal{A}^H(w)(s),\,\,\mathcal{A}(w)(s) \in [a,b], \quad
\forall\,\,s \in \mathbb{R}.
\end{equation}
Then the $L^\infty$-norm of $\eta$ is bounded by
$$||\eta||_\infty \leq
\bigg(\frac{9-5\gamma}{4\gamma(1-\gamma)}\bigg)(b-a)
+\frac{2\gamma}{3(1-\gamma)}+\frac{9(b-a)}{4\gamma^2}\epsilon.$$
\end{proposition}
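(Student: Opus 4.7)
My approach combines three ingredients: both $\mathcal{A}^H$ and $\mathcal{A}$ are Liapunov functions for the positive gradient flow of $\mathcal{A}^H$ (the former tautologically, the latter by Corollary~\ref{liap}); Lemma~\ref{abound}(ii) gives a pointwise ``period--action'' inequality wherever $\|\nabla\mathcal{A}^H\|_m$ is sufficiently small; and the $\eta$-component of the gradient flow equation bounds $|\partial_s\eta|$ uniformly via Lemma~\ref{abound}(i). The hypothesis $\mathcal{A}^H(w(s))\in[a,b]$ and the identity $\frac{d}{ds}\mathcal{A}^H(w)=\|\nabla\mathcal{A}^H(w)\|_m^2$ integrate to the total energy bound $\int_{-\infty}^\infty\|\nabla\mathcal{A}^H(w(s))\|_m^2\,ds\leq b-a$.

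Fix $s_0\in\R$. A mean-value/pigeonhole argument applied to an interval of length $L>0$ containing $s_0$ provides some $s_*$ in that interval with $\|\nabla\mathcal{A}^H(w(s_*))\|_m^2\leq (b-a)/L$; choosing $L=\frac{9(b-a)}{4\gamma^2}$ makes this at most $(2\gamma/3)^2$, so Lemma~\ref{abound}(ii) applies at $s_*$ and yields
$$
|\eta(s_*)| \leq \frac{|\widehat{\mathcal{A}}^H(w(s_*))|}{1-\gamma} + \frac{2\gamma}{3(1-\gamma)}.
$$
The next step is to bound $|\widehat{\mathcal{A}}^H|$ uniformly: since $\widehat{\mathcal{A}}^H=\mathcal{A}^H-\mathcal{A}$ and the hypothesis gives $\mathcal{A}^H(w(s)),\mathcal{A}(w(s))\in[a,b]$ for all $s$, one has $|\widehat{\mathcal{A}}^H(w(s))|\leq b-a$ pointwise. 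Finally, the flow equation $\partial_s\eta=-\int_0^1 H(t,v)\,dt$ combined with Lemma~\ref{abound}(i) gives $|\partial_s\eta|\leq\|H\|_\infty\leq\gamma+\epsilon$, so $|\eta(s_0)-\eta(s_*)|\leq L(\gamma+\epsilon)$. Summing the three pieces and collecting, the coefficient of $b-a$ becomes $\frac{1}{1-\gamma}+\frac{9}{4\gamma}=\frac{9-5\gamma}{4\gamma(1-\gamma)}$, matching the claimed constants.

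The main obstacle is precisely that Lemma~\ref{abound}(ii) is only a pointwise statement valid where the gradient is small, whereas the proposition demands a uniform $L^\infty$-bound on $\eta$. Bridging this gap requires both Liapunov functions simultaneously: the variation bound on $\mathcal{A}^H$ is what locates small-gradient points $s_*$ via the energy estimate and pigeonhole, while the variation bound on $\mathcal{A}$ (the content of Corollary~\ref{liap}) is what converts the pointwise inequality at $s_*$ into a quantitative estimate by controlling $|\widehat{\mathcal{A}}^H|$. Without the second Liapunov function, Lemma~\ref{abound}(ii) would only furnish estimates involving $|\widehat{\mathcal{A}}^H(w(s_*))|$ that one cannot bound \emph{a priori}, and the scheme would collapse; this is the structural reason that stability (through the auxiliary one-form $\beta_\lambda$) enters the compactness theory for Rabinowitz Floer homology in the stable case.
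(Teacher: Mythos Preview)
Your proposal is correct and follows essentially the same strategy as the paper's proof: locate a nearby point $s_*$ with small gradient via the energy estimate, apply Lemma~\ref{abound}(ii) there using $|\widehat{\mathcal{A}}^H|=|\mathcal{A}^H-\mathcal{A}|\leq b-a$, and then propagate back to $s_0$ using $|\partial_s\eta|\leq\|H\|_\infty\leq\gamma+\epsilon$. The only cosmetic difference is that the paper, instead of a mean-value argument on an interval of length $L$, defines $\tau(\sigma)=\inf\{\tau\geq 0:\|\nabla\mathcal{A}^H(w(\sigma+\tau))\|\leq 2\gamma/3\}$ and bounds $\tau(\sigma)\leq \frac{9(b-a)}{4\gamma^2}$ directly from $\int_\sigma^{\sigma+\tau(\sigma)}\|\nabla\mathcal{A}^H(w)\|^2\,ds\leq b-a$; this is the same pigeonhole in slightly different clothing and leads to the identical arithmetic.
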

\begin{proof}
For $\sigma \in \mathbb{R}$ we set
$$\tau(\sigma)=\inf\bigg\{\tau \geq 0:
\big|\big|\nabla\mathcal{A}^H\big(w(\sigma+\tau)\big)\big|\big|
\leq \frac{2\gamma}{3}\bigg\}.$$
To obtain a bound on $\tau(\sigma)$ we estimate using the gradient
flow equation and (\ref{aabound})
\begin{eqnarray*}
b-a &\geq&
\mathcal{A}^H(w(\sigma))-\mathcal{A}^H(w(\sigma+\tau(\sigma))
\\
&=&\int_\sigma^{\sigma+\tau(\sigma)} \frac{d}{ds}\Big(
\mathcal{A}^H(w)\Big)ds\\
&=&\int_\sigma^{\sigma+\tau(\sigma)} d\mathcal{A}^H(w)(\partial_s w)
ds\\
&=&\int_\sigma^{\sigma+\tau(\sigma)}\big|\big|\nabla
\mathcal{A}^H(w)\big|\big|^2ds\\
&\geq&\bigg(\frac{2\gamma}{3}\bigg)^2 \tau(\sigma)
\end{eqnarray*}
from which we deduce
$$\tau(\sigma) \leq \frac{9(b-a)}{4\gamma^2}.$$
Furthermore, for every $s\in \mathbb{R}$ the modified version
of Rabinowitz action functional can be estimated from above
using (\ref{aabound}) again by
$$\big|\widehat{\mathcal{A}}^H(w(s))\big|=\big|
\mathcal{A}^H(w(s))-\mathcal{A}(w(s))\big| \leq b-a.$$
The above two inequalities together with Lemma~\ref{abound} and the
equation $\partial_s \eta(s)=-\int_0^1H(t,v(s,t))dt$ following
from the gradient flow equation imply
\begin{align*}
   |\eta(\sigma)| &\leq |\eta(\sigma+\tau(\sigma))|
   +\int_\sigma^{\sigma+\tau(\sigma)}|\partial_s \eta(s)| ds \cr
   &\leq \frac{1}{1-\gamma}\big(b-a\big)+\frac{2\gamma}{3(1-\gamma)}
   +(\gamma+\epsilon)\tau(\sigma) \cr
   &\leq \bigg(\frac{9-5\gamma}{4\gamma(1-\gamma)}\bigg)(b-a)
+\frac{2\gamma}{3(1-\gamma)}+\frac{9(b-a)}{4\gamma^2}\epsilon.
\end{align*}
Since $\sigma \in \mathbb{R}$ was arbitrary the proposition
follows.
\end{proof}
\begin{corollary}\label{obo}
Suppose that the stabilizing 1-form is small enough. Then for given
$a \leq b$ the Hamiltonian $\bar{H}=\bar{H}(b-a)$ can be chosen in
such a way that for every $\chi$ satisfying (\ref{chi}) and for
every gradient flow line $w=(v,\eta) \in
C^\infty(\mathbb{R},\mathcal{L}\times \mathbb{R})$ of $\nabla
\mathcal{A}^H$ which satisfies (\ref{aabound}) the $L^\infty$-norm
of $\eta$ is bounded by
$$||\eta||_\infty \leq \bigg(\frac{5}{2}\bigg)^2(b-a)
+2.$$
\end{corollary}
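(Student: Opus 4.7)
The plan is to derive Corollary~\ref{obo} as a direct consequence of Proposition~\ref{abo} by optimizing the constant $\gamma\in(0,1)$ and then choosing $\epsilon$ (which in turn fixes $\bar{H}$ via Lemma~\ref{abound}) small enough in terms of $b-a$.

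Specifically, I would begin by examining the first coefficient
\[
\frac{9-5\gamma}{4\gamma(1-\gamma)}
\]
on $(b-a)$ in the bound of Proposition~\ref{abo}. Differentiating in $\gamma$, or rewriting the inequality $\frac{9-5\gamma}{4\gamma(1-\gamma)}\leq C$ as the quadratic $25\gamma^2-30\gamma+9\leq 0$ (in the case $C=25/4$), one sees that this coefficient attains its minimum at $\gamma=3/5$, with minimal value exactly $(5/2)^2=25/4$. This is the source of the constant $(5/2)^2$ appearing in the corollary and explains the choice.

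Next, with $\gamma=3/5$ fixed, the ``constant'' term becomes
\[
\frac{2\gamma}{3(1-\gamma)}=\frac{2\cdot 3/5}{3\cdot 2/5}=1,
\]
so Proposition~\ref{abo} yields
\[
\|\eta\|_\infty \leq \Bigl(\tfrac{5}{2}\Bigr)^{\!2}(b-a)+1+\frac{9(b-a)}{4\gamma^2}\epsilon.
\]
It then suffices to choose $\epsilon>0$ small enough that $\tfrac{9(b-a)}{4\gamma^2}\epsilon\leq 1$, i.e.\ $\epsilon\leq \tfrac{4}{25(b-a)}$ (and also $\epsilon\leq 2/5$ so that $\gamma=3/5\in(0,1-\epsilon]$, which is automatic for large $b-a$ and easily arranged for small $b-a$ by shrinking $\epsilon$ further). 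With this $\epsilon$, Lemma~\ref{abound} provides a Hamiltonian $\bar{H}=\bar{H}(b-a)$ (depending on $a,b$ only through their difference, since $\epsilon$ does) for which Proposition~\ref{abo} applies with $\gamma=3/5$, and summing the three terms gives the claimed bound $\|\eta\|_\infty\leq (5/2)^2(b-a)+2$.

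There is no essential obstacle here: the proof is merely bookkeeping, and the only mild subtlety is to make sure the admissibility constraint $\gamma\in(0,1-\epsilon]$ from Lemma~\ref{abound} is met for the chosen $\gamma=3/5$, which is automatic once $\epsilon\leq 2/5$. All the genuine analytic content, namely the a priori bound on the Lagrange multiplier, has been packaged into Proposition~\ref{abo}; the corollary is simply its statement in a form that is independent of the auxiliary parameters $\gamma,\epsilon$, at the cost of letting $\bar H$ depend on the action window size $b-a$.
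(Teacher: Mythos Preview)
Your proposal is correct and follows exactly the paper's approach: choose $\gamma=3/5$ (which minimizes the leading coefficient $\frac{9-5\gamma}{4\gamma(1-\gamma)}$ at the value $25/4$) and then $\epsilon\leq \frac{4}{25(b-a)}$, after which Proposition~\ref{abo} gives the stated bound. Your additional check that $\epsilon\leq 2/5$ (so that $\gamma=3/5\leq 1-\epsilon$) is a nice point the paper leaves implicit.
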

\begin{proof}
Choose $\gamma=\frac{3}{5}$ and
$\epsilon \leq \frac{4}{25(b-a)}$ and let
$\bar{H}=\bar{H}(b-a)$ be the corresponding Hamiltonian. With these choices
the Corollary follows from Proposition~\ref{abo}.
\end{proof}
\begin{remark}
The Corollary is somehow the optimal Corollary from
Proposition~\ref{abo}, since the function
$$f \in C^\infty((0,1),\mathbb{R}), \quad
\gamma \mapsto \frac{9-5\gamma}{4\gamma(1-\gamma)}$$
attains at the point $\gamma=\frac{3}{5}$ its unique
minimum.
\end{remark}

\subsubsection[Admissible tuples]{Admissible tuples}\label{at}

In this section we prove Proposition~\ref{mod} and
Lemma~\ref{abound}.

For $\lambda \in \Lambda(\Sigma,\omega)$ denote by
$\mathcal{I}(\Sigma,\lambda)$ the space of
$\om_\Sigma$-compatible complex structures on the bundle
$\xi_\lambda=\ker\lambda$. Note that actually
$\mathcal{I}(\Sigma,\lambda)$ only depends on the ray
of $\lambda \in \Lambda(\Sigma,\omega)$ since
$\xi_{\sigma\lambda}=\xi_\lambda$ for $\sigma>0$.
For $I \in \mathcal{I}(\Sigma,\lambda)$
we introduce the following quantity
$$\kappa(\lambda,I):=\sup_{\substack{w \in \xi_\lambda\\w \neq 0}}
\frac{|d\lambda(w,I w)|}{\omega(w,I w)}\geq 0.$$
We define the upper semicontinuous function
$$\kappa \colon \Lambda(\Sigma,\omega) \to [0,\infty), \quad
\lambda \mapsto \inf_{I \in \mathcal{I}(\Sigma,\lambda)}
\kappa(\lambda,I).$$
Note that $\kappa$ has the following scaling behaviour
$$\kappa(\sigma\lambda)=\sigma\kappa(\lambda), \quad \sigma>0,\,\,\lambda
\in \Lambda(\Sigma,\omega).$$
Recall from section~\ref{sec:stable} the notion of a stable
tubular neighbourhood.
We further introduce the lower semicontinuous function
$$\varrho \colon \Lambda(\Sigma,\omega) \to (0,\infty)$$
measuring the size of stable tubular neighbourhoods which is defined
for $\lambda \in \Lambda(\Sigma,\omega)$ by
$$\varrho(\lambda)=\sup\big\{\rho: (\rho,\psi) \in
\mathcal{T}(\Sigma,\lambda)\big\}.$$
The scaling behaviour for the function $\varrho$ is given by
$$\varrho(\sigma \lambda)=\frac{1}{\sigma}\varrho(\lambda).$$
\begin{definition}
A stabilizing 1-form $\lambda \in \Lambda(\Sigma,\omega)$ is called
\emph{small} if
$$\max\bigg\{\kappa(\lambda),\frac{1}{\varrho(\lambda)}
\bigg\} <1.$$
We abbreviate by
$\Lambda_s(\Sigma,\omega) \subset \Lambda(\Sigma,\omega)$
the subset of small stabilizing 1-forms.
\end{definition}
\begin{remark}
It follows from the scaling behaviour of the functions
$\kappa$ and $\varrho$ that for every $\lambda \in \Lambda(\Sigma,\omega)$
there exists $\tau>0$ such that $\tau \lambda$ is small.
\end{remark}

We introduce three spaces of functions. Given $\rho>1$
the first space
$$\mathcal{F}_1(\rho) \subset C^\infty(\mathbb{R},\mathbb{R})$$
consists of functions $\phi$
for which there exists $\rho_0 \in (1,\rho)$ such
that the following conditions are met
\begin{equation}\label{cutoff}
\left.\begin{array}{cc}
\phi(r)=r+1, & r \in [-\rho_0,\rho_0]\\
\phi'(r)\leq 1, & r \in \mathbb{R}\\
\mathrm{supp}(\phi) \subset (-\rho,\rho).&
\end{array}\right\}
\end{equation}
Given $\gamma \geq 0$ and $\epsilon>0$
the second space
$$\mathcal{F}_2(\gamma,\epsilon) \subset C^\infty(\mathbb{R},
\mathbb{R})$$
consists of functions $h$ for which the following holds
\begin{equation}\label{mono}
\left.\begin{array}{cc}
h(r)=r, & r \in [-\gamma,\gamma]\\
0 \leq h'(r) \leq 1+\epsilon, & r \in \mathbb{R}\\
h(r)=\gamma+\epsilon & r \geq \gamma+\epsilon\\
h(r)=-\gamma-\epsilon & r \leq -\gamma-\epsilon
\end{array}\right\}.
\end{equation}
The third space
$$\mathcal{F}_3 \subset C^\infty(S^1,\mathbb{R})$$
consists of functions
$\chi \in C^\infty(S^1,\mathbb{R})$
satisfying
$$\int_0^1 \chi(t)dt=1.$$
\begin{remark} All three spaces $\mathcal{F}_1(\rho)$,
$\mathcal{F}_2(\gamma,\epsilon)$
and $\mathcal{F}_3$ are convex and nonempty.
\end{remark}
\begin{definition}
A stable tubular neighbourhood $\tau=(\rho,\psi) \in
\mathcal{T}(\Sigma,\lambda)$ is called \emph{large}
if $\rho>1$. We abbreviate by
$\mathcal{T}_\ell(\Sigma,\lambda) \subset
\mathcal{T}(\Sigma,\lambda)$ the subset of large
stable tubular neighbourhoods.
\end{definition}
We remark that if $\lambda$ is a small stabilizing
1-form, then the space of large tubular neighbourhoods
is not empty by definition of small.
\begin{definition} Given $\epsilon>0$ and
$\gamma \in [0,1-\epsilon]$
a \emph{$(\gamma,\epsilon)$-admissible quintuple}
for the stable hypersurface
$\Sigma$ in $V$
$$\alpha=(\lambda,\tau,\phi,h,\chi)$$
consists of $\lambda \in \Lambda_s(\Sigma,\omega)$,
$\tau=(\rho,\psi) \in \mathcal{T}_\ell(\Sigma,\lambda)$,
$\phi \in \mathcal{F}_1(\rho)$,
$h \in \mathcal{F}_2(\gamma,\epsilon)$, and
$\chi \in \mathcal{F}_3$.
\end{definition}
\begin{remark}
We sometimes omit the reference to $\gamma$ and $\epsilon$
and refer to a $(\gamma,\epsilon)$-admissible quintuple
as an \emph{admissible quintuple}.
\end{remark}
We abbreviate by
$$\mathfrak{A}^{\gamma,\epsilon}=
\mathfrak{A}^{\gamma,\epsilon}(\Sigma,V)$$
the space of all $(\gamma,\epsilon)$-admissible quintuples for
$\Sigma$ in $V$.
We denote by
$$\pi \colon \mathfrak{A}^{\gamma,\epsilon} \to
\Lambda_s(\Sigma,\omega)$$
the projection to the first factor. Note that for
each $\lambda \in \Lambda_s(\Sigma,\omega)$
the fiber
$$\mathfrak{A}^{\gamma,\epsilon}_\lambda=\pi^{-1}(\lambda)$$
is nonempty.
As we have seen in Corollary~\ref{obo} and the Remark
following it the somehow optimal choice to bound the Lagrange
multiplier is $\gamma=\frac{3}{5}$. Hence for
$\epsilon \in (0,\frac{2}{5}]$ we set
$$\mathfrak{A}^\epsilon=\mathfrak{A}^{\frac{3}{5},\epsilon}.$$
We further abbreviate
$$\mathfrak{A}=\bigcup_{\epsilon \in (0,\frac{2}{5}]}
\mathfrak{A}^\epsilon.$$

For a stable tubular neighbourhood $\tau=(\rho,\psi)
\in \mathcal{T}(\Sigma,\lambda)$ we denote by
$$U_\tau=\psi\big([-\rho,\rho]\times \Sigma)$$
its image in $V$.
We introduce a compactly supported
1-form on $V$ for an admissible quintuple
which extends the 1-form $\lambda$ on $\Sigma$ by
$$\beta_\alpha(y)=\left\{
\begin{array}{cc}
\phi(r)\lambda(x) & y=\psi(x,r) \in U_\tau\\
0 & y \in V \setminus U_\tau.
\end{array}
\right.$$
We will also sometimes suppress some of the dependencies of
$\beta_\alpha$ and write
$$\beta=\beta_\lambda=\beta_\alpha.$$

Using the notion of admissible quintuples we are now in position to
give the precise definition of the Rabinowitz action functional. For
a $(\gamma,\epsilon)$-admissible quintuple
$\alpha=(\lambda,\tau,\phi,h,\chi)$ with $\tau=(\rho,\psi)$ we
define $H \in C^\infty(V\times S^1,\mathbb{R})$ for $y \in V$ by the
condition
$$H(y,t)=H_\alpha(y,t)=\left\{\begin{array}{cc}
\chi(t)h(r) & y=\psi(x,r) \in U_\tau,\,\, t\in S^1\\
H(y,t)=\pm\chi(t)(\gamma+\epsilon) & y \in V \setminus U_\tau,
\,\,t \in S^1.
\end{array}\right.$$
For later reference we introduce the abbreviation
$$\bar{H}(y)=\int_0^1 H(t,y).$$
We sometimes use the notation
$$\mathcal{A}^\alpha=\mathcal{A}^H,\quad
\widehat{\mathcal{A}}^\alpha=\widehat{\mathcal{A}}^H,\quad
\mathcal{A}_\alpha=\mathcal{A}$$
to make the dependency of the three action functionals on the
parameters explicit.

To define the bilinear form $m$ and $\widehat{m}$ we
have to pick  a suitable subspace of
$\omega$-compatible almost complex structures on $V$.
We first make the following definition.
\begin{definition}
If $\lambda \in \Lambda(\Sigma,\omega)$ then the space
of \emph{small} $\omega_\Sigma$-compatible almost
complex structures on $\xi_\lambda$ is defined as
$$\mathcal{I}_s(\Sigma,\lambda)=
\{I \in \mathcal{I}(\Sigma,\lambda):
\kappa(\lambda,I)\leq1\}.$$
\end{definition}
We note that if $\lambda$ is small, then by definition the
space $\mathcal{I}_s(\Sigma,\lambda)$ is nonempty.

Recall from the introduction that a
compatible almost complex structure $J$ is called {\em geometrically
  bounded} if $\omega(\cdot,J\cdot)$ is a complete Riemannian metric
with bounded sectional curvature and injectivity radius bounded away
from zero. 

If $\alpha$ is an admissible quintuple, and
$I \in \mathcal{I}_s(\Sigma, \pi(\alpha))$
we denote by $\mathcal{J}(\alpha,I)$ the subspace of
$\omega$-compatible geometrically bounded almost complex structures
$J$ on $V$ which split on the stable tubular neighbourhood
$U_\tau=\psi\big([-\rho,\rho] \times \Sigma\big)
\subset V$
with respect to the decomposition
$$TV|_{U_\tau}=\xi_\lambda \oplus \xi_\lambda^\omega$$
as
$$J=\left(\begin{array}{cc}
I & 0\\
0 & J_0
\end{array}\right)$$
where
$J_0$ is the standard complex structure on the symplectic
complement of $\xi_\lambda$ spanned by the Reeb vector field $R_\lambda$
and $\partial/\partial r$. We further set
$$\mathcal{J}(\alpha)=
\bigcup_{I \in \mathcal{I}_s(\Sigma,\pi(\alpha))}
\mathcal{J}(\alpha,I).$$

The following Lemma is the main technical point to establish
Proposition~\ref{mod}.

\begin{lemma}
Assume that $\alpha=(\lambda,\tau,\phi,h,\chi)$
is an admissible quintuple
for the stable hypersurface $\Sigma$ in $V$,
and $J \in \mathcal{J}(\alpha)$. Then for every $w \in TV$ the following
estimate holds
\begin{equation}\label{loine}
   d\beta_\alpha(w,Jw)\leq \om(w,Jw).
\end{equation}
\end{lemma}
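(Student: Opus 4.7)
The plan is to reduce the inequality to a pointwise calculation on $U_\tau=\psi([-\rho,\rho]\times\Sigma)$, since outside $U_\tau$ the form $\beta_\alpha$ vanishes identically and so $d\beta_\alpha(w,Jw)=0\leq\omega(w,Jw)$ by $\omega$-compatibility of $J$. Inside $U_\tau$, I would work in the coordinates $(x,r)$ provided by $\psi$, where the stable tubular neighbourhood relation gives $\omega=\omega_\Sigma+dr\wedge\lambda+r\,d\lambda$, while $\beta_\alpha=\phi(r)\lambda$ and hence $d\beta_\alpha=\phi'(r)\,dr\wedge\lambda+\phi(r)\,d\lambda$. Thus
\[
\omega-d\beta_\alpha=\omega_\Sigma+(r-\phi(r))\,d\lambda+(1-\phi'(r))\,dr\wedge\lambda.
\]

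The second step is to exploit the $\omega$-orthogonal splitting $TV|_{U_\tau}=\xi_\lambda\oplus\xi_\lambda^\omega$ with $\xi_\lambda^\omega=\mathrm{span}(R_\lambda,\partial/\partial r)$, together with the corresponding block form $J=I\oplus J_0$ (where $J_0\partial/\partial r=R_\lambda$). Writing $w=w_1+w_2$ with $w_1\in\xi_\lambda$ and $w_2=aR_\lambda+b\,\partial/\partial r$, all the cross terms in the bilinear forms vanish: the splitting is $\omega$-orthogonal, the two-forms $\omega_\Sigma$ and $d\lambda$ are pulled back from $\Sigma$ and annihilate both $R_\lambda$ (Reeb condition) and $\partial/\partial r$ (pull-back), while $dr\wedge\lambda$ vanishes on $\xi_\lambda$. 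A direct computation then yields
\[
\omega(w,Jw)=\omega_\Sigma(w_1,Iw_1)+r\,d\lambda(w_1,Iw_1)+(a^2+b^2),
\]
\[
d\beta_\alpha(w,Jw)=\phi(r)\,d\lambda(w_1,Iw_1)+\phi'(r)(a^2+b^2),
\]
and hence
\[
(\omega-d\beta_\alpha)(w,Jw)=\omega_\Sigma(w_1,Iw_1)+(r-\phi(r))\,d\lambda(w_1,Iw_1)+(1-\phi'(r))(a^2+b^2).
\]

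The third step is to conclude non-negativity term by term. The $(1-\phi'(r))(a^2+b^2)$-term is non-negative because of the defining condition $\phi'\leq 1$ of $\mathcal{F}_1(\rho)$. For the first two terms, I would invoke $I\in\mathcal{I}_s(\Sigma,\lambda)$ to obtain
\[
|d\lambda(w_1,Iw_1)|\leq\kappa(\lambda,I)\,\omega_\Sigma(w_1,Iw_1)\leq\omega_\Sigma(w_1,Iw_1),
\]
which gives
\[
\omega_\Sigma(w_1,Iw_1)+(r-\phi(r))\,d\lambda(w_1,Iw_1)\geq\bigl(1-\kappa(\lambda,I)\,|r-\phi(r)|\bigr)\omega_\Sigma(w_1,Iw_1).
\]
The hard part is the remaining pointwise bound $\kappa(\lambda,I)\,|r-\phi(r)|\leq 1$ on $[-\rho,\rho]$. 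This is where the full force of admissibility enters: $\phi'\leq 1$ forces $r-\phi(r)$ to be monotone non-decreasing with extremal values $\pm\rho$ at the endpoints (since $\phi(\pm\rho)=0$), while the smallness of $\lambda$ (so $\kappa(\lambda)<1$ and $\varrho(\lambda)>1$) combined with $\tau\in\mathcal{T}_\ell(\Sigma,\lambda)$ (so $1<\rho<\varrho(\lambda)$) ensures that $\kappa(\lambda,I)\rho\leq 1$ for the block $I$ prescribed by $J\in\mathcal{J}(\alpha)$. This delicate interplay between the cutoff profile, the size of the tubular neighbourhood, and the smallness constant $\kappa$ is precisely why the smallness condition was built into the definition of an admissible quintuple.
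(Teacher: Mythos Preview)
Your reduction to $U_\tau$, the splitting $w=w_1+w_2$ along $\xi_\lambda\oplus\xi_\lambda^\omega$, the block form $J=I\oplus J_0$, and the treatment of the $dr\wedge\lambda$-term via $\phi'\leq 1$ all match the paper's argument.

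The gap is in your justification of the final inequality. Smallness of $\lambda$ gives only $\kappa(\lambda)<1$ and $\varrho(\lambda)>1$, while largeness of $\tau$ gives only $\rho>1$; nothing in those axioms links $\kappa(\lambda,I)$ to $\rho$, so $\kappa(\lambda,I)\,\rho\leq 1$ does not follow from the smallness/largeness conditions you cite (for instance $\kappa(\lambda,I)=1$, which is allowed for $I\in\mathcal{I}_s$, together with $\rho=2$ is perfectly consistent with the definitions). What actually makes your argument work is $\omega$-compatibility of $J$ on \emph{all} of $U_\tau$: since $J|_{\xi_\lambda}=I$ is $r$-independent while $\omega|_{\xi_\lambda}=\omega_\Sigma+r\,d\lambda$, compatibility at each slice $\{r'\}\times\Sigma$ with $r'\in[-\rho,\rho]$ forces
\[
\omega_\Sigma(w_1,Iw_1)+r'\,d\lambda(w_1,Iw_1)\geq 0.
\]
Now simply take $r'=r-\phi(r)$, which you have already shown lies in $[-\rho,\rho]$; this gives the non-negativity of your first two terms directly (and, incidentally, taking $r'=\pm\rho$ yields $\kappa(\lambda,I)\leq 1/\rho$, which is the bound you asserted but with the correct reason).

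The paper organises the final step differently: instead of bounding $|r-\phi(r)|$, it uses only $|d\lambda(w_1,Iw_1)|\leq\omega_\Sigma(w_1,Iw_1)$ together with the sign implications $\phi(r)\geq 0\Rightarrow\phi(r)\leq r+1$ and $\phi(r)\leq 0\Rightarrow\phi(r)\geq r+1$ (coming from $\phi'\leq 1$ and $\phi=r+1$ on $[-\rho_0,\rho_0]$) to obtain $\phi(r)\,d\lambda(w_1,Iw_1)\leq\max\{(r+1)\,d\lambda(w_1,Iw_1),0\}\leq\omega(w_1,Jw_1)$ in one stroke.
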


\textbf{Proof: } The inequality (\ref{loine})
is clear on the complement of the stable tubular neighbourhood $U_\tau$ since
there $d\beta_\alpha$ vanishes and
$\omega(\cdot, J\cdot)$ is positive
definite. For $\tau=(\rho,\psi)$ we identify via $\psi$ the neighbourhood
$U_\tau$ of $\Sigma$ in $V$ symplectically with
$[-\rho,\rho]\times \Sigma$ and $\omega=\omega_\Sigma+d(r\lambda)$.
Then the exterior derivative of $\beta_\alpha$ is given by the formula
$$
   d\beta_\alpha|_{U_\tau}=
 \phi(r)d\lambda + \phi'(r)dr\wedge\lambda.
$$
For  $u=(r,x) \in U_\tau$ and $w \in T_u V$
we write $w=w_1+w_2$ with respect to the decomposition
$TV|_{U_\tau}=\xi_\lambda \oplus \xi_\lambda^\omega$. Note that since
$J \in \mathcal{J}(\alpha)$ there exists
$I \in \mathcal{I}_s(\Sigma,\lambda)$
such that
$$Jw=I w_1+J_0 w_2.$$
We first observe that the following implications follow from (\ref{cutoff})
$$\phi(r) \geq 0\,\,\Rightarrow\,\,\phi(r) \leq r+1, \quad
\phi(r) \leq 0\,\,\Rightarrow\,\,\phi(r) \geq r+1.$$
Moreover, since $I \in \mathcal{I}_s(\Sigma,\lambda)$ it
follows that
$$|d\lambda(w_1,Iw_1)| \leq \omega_\Sigma(w_1,Iw_1).$$
Using further $\phi' \leq 1$ following from (\ref{cutoff}) we estimate
\begin{eqnarray*}
d\beta_\alpha(w,Jw)&=&\phi(r)d\lambda(w_1,I w_1)+\phi'(r) dr
\wedge\lambda(w_2,J_0w_2)\\
&\leq& \max\big\{(r+1)d\lambda(w_1,Iw_1),0\big\}+\omega(w_2,J w_2)\\
&=&\max\big\{rd\lambda(w_1,Iw_1)+\omega_\Sigma(w_1,Iw_1),0\big\}
+\omega(w_2,J w_2)\\
&=&\omega(w_1, Jw_1)
+\omega(w_2,J w_2)\\
&=&\omega(w,Jw).
\end{eqnarray*}
This proves the Lemma. \hfill $\square$
\\ \\
\textbf{Proof of Proposition~\ref{mod}}
The inequality (\ref{loine}) implies that
$$
   (m-\hat m)\Bigl((\hat v,\hat\eta),(\hat v,\hat\eta)\Bigr)
   =  (m-\hat m)\Bigl((\hat v,0),(\hat v,0)\Bigr)\geq 0.
$$
Recall that the gradient of $\AA^H$ with respect to the metric $m$,
defined by the equation
$$
   d\AA^H(v,\eta)(\hat v,\hat\eta) =
   m\Bigl(\nabla_m\AA^H(v,\eta),(\hat v,\hat\eta)\Bigr),
$$
is given by
$$
   \nabla_m\AA^H(v,\eta) = \Bigl(-J(v)\bigl(\dot v-\eta
   X_H(v)\bigr),\int_0^1H(v)dt\Bigr).
$$
Here the Hamiltonian vector field $X_H$, defined by the equation
$dH=-i_{X_H}\om$, equals zero outside the region $[-\rho,\rho]\times
\Sigma$ and on this region it is given by $X_H=\chi(t)h'(r)R$. Hence
the stability condition $i_Rd\lambda=0$ together with the first
equality in (\ref{cutoff}) yields $dH=-i_{X_H}d\beta_\alpha$, which
in turn implies that
$$
   d\widehat\AA^H(v,\eta)(\hat v,\hat\eta) =
   \hat m\Bigl(\nabla_m\AA^H(v,\eta),(\hat v,\hat\eta)\Bigr).
$$
This proves the Proposition. \hfill $\square$

We finally show Lemma~\ref{abound}
by proving the following Lemma
whose proof is an elaboration of the proof
of \cite[Proposition 3.1]{CF}.
To see how this Lemma actually implies Lemma~\ref{abound}
it only remains to note if $\alpha$ is
a $(\gamma,\epsilon)$-admissible quintuple, then
$$||H_\alpha||_\infty=\gamma+\epsilon.$$
\begin{lemma}\label{abound2}
Assume that $\alpha=(\lambda,\tau,\phi,h,\chi)$ is a
$(\gamma,\epsilon)$-admissible quintuple
with $\tau=(\rho,\psi)$. Then
the following implication holds
$$
||\nabla\mathcal{A}^\alpha(v,\eta)||\leq \frac{2\gamma}{3}
\quad \Rightarrow \quad |\eta| \leq \frac{1}{1-\gamma}
\bigg(\Big|\widehat{\mathcal{A}}^\alpha(v,\eta)\Big|+
||\nabla \mathcal{A}^\alpha(v,\eta)||\bigg).
$$
\end{lemma}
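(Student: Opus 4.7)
The plan is to quantify the period--action equality $\widehat{\mathcal{A}}^\alpha=\eta$ (valid exactly at Reeb-orbit critical points) into an inequality valid near such points, with the deviation controlled by $\|\nabla\mathcal{A}^\alpha\|$. The first move is algebraic: Stokes' theorem gives $\widehat{\mathcal{A}}^\alpha(v,\eta)=\int_0^1 v^*\beta_\alpha-\eta\int_0^1 H\,dt$, and splitting $\dot v=\hat v+\eta X_H$ with $\hat v:=\dot v-\eta X_H$ yields the central identity
\[
\widehat{\mathcal{A}}^\alpha(v,\eta)\;=\;\int_0^1\beta_\alpha(\hat v)\,dt\;+\;\eta\int_0^1\bigl[\beta_\alpha(X_H)-H\bigr]dt.
\]
Since $\hat v$ is, up to the factor $-J$, the loop-component of $\nabla\mathcal{A}^\alpha$, its $L^2$-norm is bounded by $\|\nabla\mathcal{A}^\alpha\|$, and the estimate reduces to bounding the two right-hand integrals.

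For the first integral, smallness of $\lambda$ together with the splitting of $J\in\mathcal{J}(\alpha)$ across $\xi_\lambda\oplus\xi_\lambda^\omega$ implies the pointwise bound $|\beta_\alpha(w)|^2\leq\omega(w,Jw)$: off $U_\tau$ the form vanishes, and on $U_\tau$ it is $\phi(r)\lambda$ with $\lambda$ small in the sense of the paper. Cauchy--Schwarz then yields $\bigl|\int_0^1\beta_\alpha(\hat v)\,dt\bigr|\leq\|\hat v\|_{L^2}\leq\|\nabla\mathcal{A}^\alpha\|$.

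The main technical point --- and the principal obstacle --- is to bound the bracket integral from below, by $1-\gamma$ in absolute value. Direct computation of $\beta_\alpha(X_H)-H$ gives $\chi(t)F(r)$ on $U_\tau$ with $F(r):=\phi(r)h'(r)-h(r)$, and $\mp\chi(t)(\gamma+\epsilon)$ off $U_\tau$; note $F\equiv 1$ on the core $\{|r|\leq\gamma\}$. The pointwise identity $F(r)+h(r)=\phi(r)h'(r)$ on $U_\tau$ (and $F+h\equiv 0$ off $U_\tau$) rewrites the bracket integral as $\int_0^1\chi(t)\phi h'(r(v(t)))\,dt-\int_0^1 H\,dt$, where $\phi h'\geq 0$ everywhere and $\phi h'=r+1$ on the core. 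The second component of the gradient bound yields $|\int_0^1 H\,dt|\leq\|\nabla\mathcal{A}^\alpha\|\leq 2\gamma/3$, and the same bound constrains the $\chi$-mass $v$ can place in the outer regions, where $H$ is the one-signed constant $\pm\chi(t)(\gamma+\epsilon)$. A case analysis partitioning $[0,1]$ into core, transition, and outer pieces (weighted by $\chi$) and using the bounds on $h'$ and $h$ then yields $\int_0^1\chi\phi h'(r(v(t)))\,dt\geq 1-\gamma/3$, so $\bigl|\int_0^1[\beta_\alpha(X_H)-H]\,dt\bigr|\geq 1-\gamma$.

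Substituting both bounds into the central identity and rearranging gives $(1-\gamma)|\eta|\leq|\widehat{\mathcal{A}}^\alpha(v,\eta)|+\|\nabla\mathcal{A}^\alpha(v,\eta)\|$, the claim of Lemma~\ref{abound2}. The weighted partitioning in the previous paragraph is precisely where the stable case departs from the restricted contact-type argument of \cite[Proposition 3.1]{CF}: in that setting $\omega$ is globally exact with primitive $\lambda$, the analogous bracket integrand equals $\chi(t)$ pointwise, and the lower bound $1$ is immediate; here one must additionally absorb the error coming from the possibility that $v$ leaves the stable tubular neighbourhood $U_\tau$.
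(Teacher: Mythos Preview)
Your central identity is correct and matches the paper's starting point, but both estimates you extract from it have gaps. The pointwise bound $|\beta_\alpha(w)|^2\leq\omega(w,Jw)$ is false: on $U_\tau$ one has $\beta_\alpha=\phi(r)\lambda$, and with the product splitting of $J$ the form $\lambda$ has operator norm exactly $1$ in the metric $\omega(\cdot,J\cdot)$ (attained on $R$), so $\beta_\alpha$ has operator norm $|\phi(r)|=r+1>1$ whenever $r>0$. Smallness of $\lambda$ constrains $d\lambda$ versus $\omega_\Sigma$ and the size of tubular neighbourhoods; it says nothing about $|\lambda|$, and you may be conflating it with the inequality $d\beta_\alpha(w,Jw)\leq\omega(w,Jw)$ of Proposition~\ref{mod}. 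Likewise, the lower bound $\int_0^1\chi\,\phi h'\,dt\geq 1-\gamma/3$ is asserted via an unspecified case analysis; the constraint $\bigl|\int_0^1 H\,dt\bigr|\leq 2\gamma/3$ controls only a weighted average of $h(r(v))$ and does not by itself bound the $\chi$-measure of the set where $v$ leaves the core.

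The paper closes both gaps with a single localisation step (its Step~2): the hypothesis $\|\nabla\mathcal{A}^\alpha\|\leq 2\gamma/3$ forces $v(t)\in U_{\tau_\gamma}$ for \emph{every} $t\in S^1$. If $v$ both enters $U_{\tau_{2\gamma/3}}$ and exits $U_{\tau_\gamma}$, the $r$-component of $\hat v=\dot v-\eta X_H$ must twice traverse an annulus of width $\gamma/3$, giving $\|\hat v\|_{L^1}\geq 2\gamma/3$; if $v$ never enters $U_{\tau_{2\gamma/3}}$, then $|\bar H(v)|>2\gamma/3$ with constant sign, giving $\bigl|\int_0^1 H\,dt\bigr|>2\gamma/3$. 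Once $v\subset U_{\tau_\gamma}$, your two estimates become exact: $|\phi(r)|\leq 1+\gamma$ yields $|\beta_\alpha(w)|\leq(1+\gamma)|w|$, and the identity $\beta_\alpha(X_H)-H=\chi$ holds pointwise there, so the bracket integral equals $\int_0^1\chi\,dt=1$ with no case analysis needed.
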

\textbf{Proof: }For $\rho' \leq \rho$ we use the following notation
for the stable tubular subneighbourhood of $\tau$
$$\tau_{\rho'}=(\rho',\psi|_{[-\rho',\rho']\times \Sigma}).$$
The Lemma is proved in two steps.
\\ \\
\textbf{Step\,1: }\emph{Assume that $v(t)$ for every $t \in S^1$
is contained in the stable tubular neighbourhood
$U_{\tau_\gamma}$. Then the inequality
for $|\eta|$ holds.}
\\ \\
We first note that the Hamiltonian vector field and the one-form
satisfy on $U_{\tau_\gamma}$ the following relation
$$\beta_\alpha(X_H)|_{S^1 \times U_{\tau_\gamma}}=
H|_{S^1 \times U_{\tau_\gamma}}+\chi.$$ In the following estimate we
denote by $||\,||_1$ and $||\,||_2$ the $L^1$ respectively
$L^2$-norm on the circle.
\begin{eqnarray*}
|\widehat{\mathcal{A}}^\alpha(v,\eta)|
&=&\bigg|\int_0^1 v^*\beta_\alpha-\eta\int_0^1 H(t,v(t))dt\bigg|\\
&=&\bigg|\eta\int_0^1 \beta_\alpha
(X_H(v))dt+\int_0^1\beta_\alpha\big(
\partial_t v-\eta X_H(v)\big)dt-\eta\int_0^1 H(t,v(t))dt\bigg|\\
&\geq&\bigg|\eta\int_0^1 \chi
dt\bigg|-\bigg|\int_0^1\beta_\alpha\big(
\partial_t v-\eta X_H(v)\big)dt\bigg|\\
&\geq&|\eta|-(1+\gamma)||\partial_t v-\eta X_H(v)||_1\\
&\geq&|\eta|-(1+\gamma)||\partial_t v-\eta X_H(v)||_2\\
&\geq&|\eta|-(1+\gamma)||\nabla \mathcal{A}^\alpha(v,\eta)||.
\end{eqnarray*}
This proves Step\,1.
\\ \\
\textbf{Step\,2: }\emph{Assume that $||\nabla\mathcal{A}^\alpha(v,\eta)||
\leq \frac{2\gamma}{3}$. Then
$v(t) \in U_{\tau_\gamma}$ for every
$t \in S^1$.}
\\ \\
We argue by contradiction and exclude the following two cases.
\\ \\
\textbf{Case\,1: }\emph{There exists $t_0, t_1 \in S^1$ such that
$v(t_0) \in U_{\tau_{\frac{2\gamma}{3}}}$ and
$v(t_1) \in V \setminus
U_{\tau_\gamma}$.}
\\ \\
\textbf{Case\,2: }\emph{For all $t \in S^1$ it holds that $v(t) \in
V \setminus U_{\tau_\gamma}$.}
\\ \\
We first observe that in Case\,1 there
exist two disjoint intervals $I^1=[s^1_0,s^1_1] \subset S^1$
and $I^2=[s^2_0,s^2_1]$ such that
$$v(s^1_0)\in \partial U_{\tau_\gamma},\quad
v(s^1_1) \in \partial U_{\tau_{\frac{2\gamma}{3}}}, \quad
v(s^2_0) \in \partial U_{\tau_{\frac{2\gamma}{3}}}, \quad
v(s^2_1) \in \partial U_{\tau_\gamma},$$
and
$$v(s) \in U_{\tau_\gamma} \setminus
U_{\tau_{\frac{2\gamma}{3}}},\,\,s \in I^1 \cup I^2.$$
Identifying $U_{\tau_\gamma}$ with
$[-\gamma,\gamma] \times \Sigma$ we write for
$s \in I^1 \cup I^2$
$$v(s)=\big(r(s),u(s)\big) \in [-\gamma,\gamma]
\times \Sigma.$$
We estimate
\begin{eqnarray*}
||\nabla \mathcal{A}^\alpha(v,\eta)||
&>&||\partial_t v-\eta \chi X_H(v)||_2\\
&\geq&||\partial_t v-\eta \chi X_H(v)||_1\\
&\geq&\int_{s^1_0}^{s^1_1}|\partial_t r(s)| ds
+\int_{s^2_0}^{s^2_1}|\partial_t r(s)|ds\\
&\geq& \frac{2\gamma}{3}.
\end{eqnarray*}
This contradiction excludes Case\,1.

To exclude Case\,2 we estimate
$$||\nabla \mathcal{A}^\alpha|| \geq
\bigg|\int_0^1 H(t,v(t))dt\bigg| > \frac{2\gamma}{3}.$$ This
inequality contradicts the assumption. Thus Step\,2 and hence the
Lemma follow. \hfill $\square$

\subsubsection{The stable pseudo-distance}\label{ss:pd}

We later prove invariance of Rabinowitz Floer homology
via an adiabatic homotopy argument. For that we
need short homotopies of stable hypersurfaces. In order
to say what ``short'' means we introduce the stable
pseudo-distance.

We define the stable pseudo-distance as the
infimum of the length of paths
of stable quadruples between two stable hypersurfaces.
To begin with here is the definition of a stable
quadruple.
\begin{definition}
A \emph{stable quadruple} in $V$
$$\mathfrak{S}=(\Sigma,\lambda,\tau,I)$$
consists of a stable hypersurface $\Sigma \subset V$,
a small stabilizing 1-form
$\lambda \in \Lambda_s(\Sigma,\omega)$, a large
tubular neighbourhood
$\tau \in \mathcal{T}_\ell(\Sigma,\lambda)$ and
a small $\omega_\Sigma$-compatible almost complex
structure $I \in \mathcal{I}_s(\Sigma,\lambda)$. We also
refer to a stable quadruple as above as a \emph{stable
quadruple for the stable hypersurface $\Sigma$}.
\end{definition}
Let $\mathfrak{P}=\{\mathfrak{S}_\zeta\}=\{(\Sigma_\zeta,
\lambda_\zeta,\tau_\zeta,I_\zeta)\}$ for
$\zeta \in [0,1]$ be a smooth path
of stable quadruples. To be precise, we require
for a smooth path $\tau_\zeta$ of large
stable tubular neighbourhoods that for a fixed
$\rho>1$ we have a smooth family of maps
$\psi_\zeta \colon [-\rho,\rho] \times \Sigma \to V$
where $\Sigma$ is a fixed manifold diffeomorphic
to every $\Sigma_\zeta \subset V$ such that
$\tau_\zeta=(\rho,\psi_\zeta) \in \mathcal{T}_\ell(\Sigma_\zeta,
\lambda_\zeta)$ for every $\zeta \in [0,1]$.
We associate to such a path a distance in
the following way. For fixed $\zeta \in [0,1]$ define a vector
field on $[-\rho,\rho]\times \Sigma$ by
$$X_{\psi_\zeta}(r,x)=\psi_\zeta^*\frac{d}{d\theta}
\bigg|_{\theta=0}\psi_{\zeta+\theta}(r,x), \quad
(r,x) \in [-\rho,\rho]\times \Sigma.$$
We refer to the expression
$$\mathfrak{V}_{\mathfrak{P}}(\zeta)=$$
$$\rho\Big(\max_{[-\rho,\rho]\times \Sigma_\zeta}\big|
\omega(X_{\psi_\zeta},R_{\lambda_\zeta})\big|+
\max_{[-\rho,\rho]\times \Sigma_\zeta}\big|d \iota_{X_{\psi_\zeta}}
\lambda_\zeta(R_{\lambda_\zeta})\big|
+\max_{\Sigma_\zeta}\big|\dot{\lambda}_\zeta(R_{\lambda_\zeta})\big|\Big)$$
as the \emph{speed} of the path $\mathfrak{P}$ at $\zeta$. We define
the \emph{length} of the path $\mathfrak{P}$ by the formula
$$\Delta(\mathfrak{P})=
\int_0^1\mathfrak{V}_{\mathfrak{P}}(\zeta)d\zeta.$$
The stable pseudo-distance between two stable hypersurfaces
$\Sigma_0$ and $\Sigma_1$
$$\Delta(\Sigma_0,\Sigma_1) \in [0,\infty]$$
is defined as the infimum of the length of all paths of stable
quadruples whose endpoints are stable quadruples for
the stable hypersurfaces $\Sigma_0$ respectively $\Sigma_1$.
Here we understand that the stable pseudo-distance is infinite
if there is no such path.

\subsubsection {The time-dependent case}\label{ss:TDC}

In this section we establish a bound on the Lagrange
multiplier for gradient flow lines, when the
Rabinowitz action functional is allowed to depend itself
on time.

Given two stable hypersurfaces $\Sigma^-, \Sigma^+ \subset V$
and $\alpha^\pm \in \mathfrak{A}(\Sigma^\pm,V)$
we denote by
\begin{equation}\label{mah}
\mathcal{H}(\alpha^-,\alpha^+)
\subset C^\infty(V \times \mathbb{R})
\end{equation}
the space of time dependent Hamiltonians $H \in
C^\infty(V \times \mathbb{R})$ for which there exists
$R>0$ with the property that the family of
Hamiltonians $H_s=H(\cdot,s)$ becomes
constant for $|s| \geq R$, and
$$H_s=H_{\alpha^\pm} \in C^\infty(V), \quad \pm s \geq R.$$
\begin{theorem}\label{tdc}
For $\epsilon>0$ there exists a constant $\Delta(\epsilon)>0$
with the following property. Assume that
$\Sigma^-,\Sigma^+ \subset V$ are stable hypersurfaces such that
$$\Delta(\Sigma^-,\Sigma^+) \leq \Delta(\epsilon).$$
Then there exist admissible
quintuples $\alpha^\pm  \in \mathfrak{A}(\Sigma^\pm,V)$,
a time dependent Hamiltonian
$H \in \mathcal{H}(\alpha^-,\alpha^+)$, and a time-dependent metric
$m=\{m_s\}_{s \in \mathbb{R}}$ on $\mathcal{L}\times \mathbb{R}$
which is constant for $|s|$ large
such that the following
holds true. For every $a \leq b$ and for every flow line
$w=(v,\eta) \in C^\infty(\mathbb{R},\mathcal{L}\times \mathbb{R})$
of the time dependent gradient $\nabla_{m_s}\mathcal{A}^{H_s}$
which converges asymptotically
$\lim_{s \to \pm \infty}w(s)=
w^\pm$ to critical points of $\mathcal{A}^{\alpha^\pm}$
satisfying
$$\mathcal{A}^{\alpha^\pm}(w^\pm),\mathcal{A}_{\alpha^\pm}(w^\pm) \in
[a,b],$$
the following $L^\infty$-estimate holds for
$\eta$
$$||\eta||_\infty \leq \bigg(\frac{5}{2}+\epsilon\bigg)^2(b-a)+2.$$
\end{theorem}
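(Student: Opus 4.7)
The plan is to adapt the argument of Corollary~\ref{obo} to the time-dependent setting by constructing a short homotopy of admissible quintuples and controlling the extra error terms using the stable pseudo-distance.

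First I would use the hypothesis $\Delta(\Sigma^-,\Sigma^+)\leq\Delta(\epsilon)$ to produce a smooth path $\mathfrak{P}=\{(\Sigma_s,\lambda_s,\tau_s,I_s)\}$ of stable quadruples whose length is at most $\Delta(\epsilon)$. The cutoff data $\phi\in\mathcal{F}_1(\rho)$, $h\in\mathcal{F}_2(3/5,\epsilon')$, $\chi\in\mathcal{F}_3$ can be chosen once and for all, producing a smooth path of $(3/5,\epsilon')$-admissible quintuples $\alpha_s$ which is constant and equal to $\alpha^\pm$ for $\pm s$ large. Set $H_s:=H_{\alpha_s}$, $\beta_s:=\beta_{\alpha_s}$, and build a time-dependent metric $m_s$ from a smooth family $J_s\in\mathcal{J}(\alpha_s)$. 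Proposition~\ref{mod} is a pointwise assertion on the data of a single admissible quintuple, so with $s$ frozen its conclusions hold verbatim: $d\widehat{\AA}^{H_s}(v,\eta)=\widehat{m}_s(\nabla_{m_s}\AA^{H_s}(v,\eta),\cdot)$ and $m_s-\widehat{m}_s\geq 0$.

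Second, along a gradient flow line $w(s)=(v(s),\eta(s))$ of $\nabla_{m_s}\AA^{H_s}$ I would compute
\begin{align*}
\frac{d}{ds}\AA^{H_s}(w(s)) &= \bigl\|\nabla_{m_s}\AA^{H_s}(w(s))\bigr\|_{m_s}^2 + (\partial_s\AA^{H_s})(w(s)), \\
\frac{d}{ds}\AA_{\alpha_s}(w(s)) &= (m_s-\widehat{m}_s)\Bigl(\nabla_{m_s}\AA^{H_s}(w(s)),\nabla_{m_s}\AA^{H_s}(w(s))\Bigr) + (\partial_s\AA_{\alpha_s})(w(s)).
\end{align*}
Differentiating the explicit formulas for $H_{\alpha_s}$ and $\beta_{\alpha_s}$ inside the moving stable tubular neighbourhood identifies the contributions to $\partial_s H_s$ and $\partial_s\beta_s$ with precisely the three expressions comprising the speed $\mathfrak{V}_\mathfrak{P}(s)$: the displacement $\omega(X_{\psi_s},R_{\lambda_s})$ of the tube in the Reeb direction, its derivative $d\iota_{X_{\psi_s}}\lambda_s(R_{\lambda_s})$, and the pure variation $\dot\lambda_s(R_{\lambda_s})$. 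Consequently both error terms are bounded pointwise by $C(|\eta(s)|+1)\,\mathfrak{V}_\mathfrak{P}(s)$ for a constant $C$ depending only on the fixed cutoff data.

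Third, integrating these inequalities between $s=\pm\infty$ and combining with the hypotheses $\AA^{\alpha^\pm}(w^\pm),\AA_{\alpha^\pm}(w^\pm)\in[a,b]$ shows that the values $\AA^{H_\sigma}(w(\sigma))$ and $\AA_{\alpha_\sigma}(w(\sigma))$ oscillate in an interval of length at most $(b-a)+\delta$, where $\delta\leq C\,\Delta(\mathfrak{P})(\|\eta\|_\infty+1)$. Lemma~\ref{abound2} is again a frozen-time statement, so the scheme of Proposition~\ref{abo} and Corollary~\ref{obo} with $\gamma=3/5$ applies pointwise in $s$ and produces the self-referential inequality
\[
\|\eta\|_\infty \leq \Bigl(\tfrac{5}{2}\Bigr)^2\bigl((b-a)+\delta\bigr)+2.
\]
Choosing $\Delta(\epsilon)$ small enough that $C\Delta(\epsilon)(5/2)^2<\kappa(\epsilon)$ for a suitably small $\kappa(\epsilon)$ lets one absorb the $\|\eta\|_\infty$-term on the right, inflating the constant $(5/2)^2$ to $(5/2+\epsilon)^2$ as claimed. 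The main obstacle is precisely the bookkeeping in the second step: one must verify that the three maxima appearing in $\mathfrak{V}_\mathfrak{P}(s)$ exhaust the dangerous contributions to $\partial_s H_s$ and $\partial_s\beta_s$ (when paired against $\partial_t v-\eta X_H(v)$ via the integrand of $\AA_{\alpha_s}$), and that no transverse direction along $\psi_s$ produces an uncontrolled error. The support of $H_s$ and $\beta_s$ on the moving tube $U_{\tau_s}$ makes this plausible, but the explicit computation in coordinates adapted to the varying embeddings $\psi_s$ is the technical crux.
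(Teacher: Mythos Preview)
Your overall strategy is correct and matches the paper's, but there is a genuine gap in the second step. Your assertion that ``both error terms are bounded pointwise by $C(|\eta(s)|+1)\,\mathfrak{V}_\mathfrak{P}(s)$'' holds for $\partial_s\AA^{H_s}$ but fails for $\partial_s\AA_{\alpha_s}$. Writing $\AA_{\alpha_s}(v,\eta)=\int\bar v^*\omega-\int v^*\beta_s$, one gets $(\partial_s\AA_{\alpha_s})(w(s))=-\int_0^1\dot\beta_s(\partial_t v)\,dt$, and along the gradient flow line $\partial_t v=\eta X_{H_s}(v)\pm J_s\partial_s v$. The contribution $\eta\,\dot\beta_s(X_{H_s})$ is indeed controlled by $\mathfrak{V}_\mathfrak{P}(s)$; the paper's computation shows $|\dot\beta_s(X_{H_s})|\leq 4\mathfrak{V}_\mathfrak{P}(s)$, and this is exactly what the three maxima in the definition of the speed are designed to capture. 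But the remaining term $\dot\beta_s(J_s\partial_s v)$ is only bounded by $c_\mathfrak{P}\,\|\partial_s v\|$, where $c_\mathfrak{P}:=\max_{\zeta,y}\|\dot\beta_{\lambda_\zeta}\circ J_\zeta\|$ is a constant depending on the \emph{entire path} $\mathfrak{P}$ and is \emph{not} controlled by the pseudo-distance. The three maxima in $\mathfrak{V}_\mathfrak{P}$ only probe the Reeb direction, so they cannot bound $\|\dot\beta_s\|$ in arbitrary directions; your parenthetical worry about ``transverse directions along $\psi_s$'' is exactly the issue, and it is not merely bookkeeping.

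The paper resolves this with an adiabatic trick you have omitted. One reparametrises the path by $\zeta_R(s)=\zeta_1(s/R)$ for a large parameter $R$, so that $\partial_s\zeta_R\leq 1/R$. The offending term then becomes
\[
\int c_\mathfrak{P}\,\partial_s\zeta_R(s)\,\|\partial_s v\|\,ds
\;\leq\; c_\mathfrak{P}\int\partial_s\zeta_R(s)\Bigl(\sqrt{R}\,\|\partial_s v\|^2+\tfrac{1}{\sqrt{R}}\Bigr)ds
\;\leq\;\frac{c_\mathfrak{P}}{\sqrt{R}}\bigl(E(w)+1\bigr),
\]
using $x\leq\sqrt{R}\,x^2+1/\sqrt{R}$ and $\sqrt{R}\,\partial_s\zeta_R\leq 1/\sqrt{R}$. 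Since $R$ is a free parameter chosen \emph{after} $c_\mathfrak{P}$, one takes $R\geq (c_\mathfrak{P}/\epsilon)^2$ to make this error as small as desired. Combined with the energy bound $E(w)\leq(b-a)+2(\Delta+\epsilon_0)\|\eta\|_\infty$ (which you also need and which follows from the Step~1 estimate), this feeds into the self-referential inequality and closes the argument. Without the $R$-stretching, your $\delta$ would contain a term proportional to $c_\mathfrak{P}\sqrt{T}\sqrt{E(w)}$ with $T$ the support length of the homotopy, and neither $c_\mathfrak{P}$ nor $T$ can be made small by shrinking $\Delta(\epsilon)$.
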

\begin{proof}
Assume that $\Sigma^-, \Sigma^+$ are stable
hypersurfaces whose stable pseudodistance
is $\Delta$. Hence for every $\epsilon_0>0$ there exists
a path $\mathfrak{P}=\mathfrak{P}_{\epsilon_0}
=\{\mathfrak{S}_\zeta\}$ of stable quadruples
$\mathfrak{S}_\zeta=(\Sigma_\zeta,\lambda_\zeta,
\tau_\zeta,I_\zeta)$ for
$\zeta \in [0,1]$ such that
$$\Delta(\mathfrak{P})\leq\Delta+\epsilon_0$$
and
$$\Sigma_0=\Sigma^-, \quad \Sigma_1=\Sigma^+.$$
Let $\rho>1$ be such that $\tau_\zeta=(\rho,\psi_\zeta)$.
Recall from section~\ref{at} the spaces $\mathcal{F}_1(\rho)$,
$\mathcal{F}_2(\gamma,\epsilon)$ and $\mathcal{F}_3$. Choose
$$\phi \in \mathcal{F}_1(\rho), \quad
\chi \in \mathcal{F}_3.$$
Moreover, assume that $\epsilon_0 \leq \frac{2}{5}$ and pick
$$h \in \mathcal{F}_2\bigg(\frac{3}{5},\epsilon_0\bigg).$$
Then the family
$$\alpha_\zeta=(\lambda_\zeta,\tau_\zeta,\phi,h,\chi),
\quad \zeta \in [0,1]$$
is a smooth family of
$\big(\frac{3}{5},\epsilon_0\big)$-admissible
quintuples for $\Sigma_\zeta$. Choose further a
smooth function
$$\zeta_1 \in C^\infty(\mathbb{R},[0,1])$$
such that
$$0 \leq \zeta_1'(s) \leq 1, \quad s \in \mathbb{R}$$
and
$$\zeta_1(s)=\left\{\begin{array}{cc}
0 & s \leq -1\\
1 & s \geq 1.
\end{array}\right.$$
For $R>0$ define $\zeta_R \in C^\infty(\mathbb{R},[0,1])$
by
$$\zeta_R(s)=\zeta_1\bigg(\frac{s}{R}\bigg), \quad s \in \mathbb{R}.$$
Note that
$$||\zeta_R'|| \leq \frac{1}{R}.$$
Set
$$\alpha^-=\alpha_0, \quad \alpha^+=\alpha_1$$
and define
$$H^R \in \mathcal{H}(\alpha^-,\alpha^+)$$
by
$$H^R_s=H_{\alpha_{\zeta_R(s)}}, \quad s\in \mathbb{R}.$$
To define the time-dependent metric $m=\{m_s\}_{s\in \mathbb{R}}$
choose a smooth family
$$J_\zeta \in \mathcal{J}(\alpha_\zeta,I_\zeta).$$
If $(v,\eta) \in \mathcal{L}\times \mathbb{R}$ and
$(\hat{v}_1,\hat{\eta}_1),(\hat{v}_2,\hat{\eta}_2)
\in T_{(v,\eta)}(\mathcal{L}\times \mathbb{R})$
we set
$$m_s^R\Big((\hat{v}_1,\hat{\eta}_1),(\hat{v}_2,\hat{\eta}_2)
\Big):=\int_0^1 \omega(\hat{v}_1,J_{\zeta_R(s)}\hat{v}_2)dt
+\hat{\eta}_1 \hat{\eta}_2.$$
Now assume that $w=(v,\eta) \in C^\infty(\mathbb{R},
\mathcal{L}\times \mathbb{R})$ is a gradient flow line
of $\nabla_{m_s^R}\mathcal{A}^{H^R_s}$. We show
in four steps that
for $R$ large enough and $\Delta$ small
enough the required $L^\infty$-bound
on $\eta$ holds.
\\ \\
\textbf{Step\,1: }\emph{For every $\sigma \in \mathbb{R}$ the
following inequalities hold}
\begin{equation}\label{1s}
a-2(\Delta+\epsilon_0)||\eta||_\infty \leq
\mathcal{A}^{H^R_\sigma}(w(\sigma))
\leq b+2(\Delta+\epsilon_0)||\eta||_\infty.
\end{equation}
\\
We first claim that for any $y \in V$ the estimate
\begin{equation}\label{1s1}
\big|\partial_s \bar{H}_s^R(y)\big|
\leq 2\partial_s \zeta_R(s)
\mathfrak{V}_{\mathfrak{P}}(\zeta_R(s))
\end{equation}
holds. This is clear if $y$ does not lie in the stable tubular
neighbourhood $U_{\tau_{\zeta_R(s)}}$ since there
$\partial_s \bar{H}_s^R$ vanishes. On the other hand,
if $y=\psi_{\zeta_R(s)}(x,r)
\in U_{\tau_{\zeta_R(s)}}$, then
$$\big|\partial_s \bar{H}_s^R(y)\big|=
\big|h'(r)\partial_s \zeta_R(s)\omega(X_{\psi_{\zeta_R(s)}},
R_{\lambda_{\zeta_R(s)}})\big|
\leq (1+\epsilon_0)\partial_s \zeta_R(s)
\mathfrak{V}_{\mathfrak{P}}(\zeta_R(s))$$
which implies (\ref{1s1}) since $\epsilon_0$ is already
chosen to be less than or equal $\frac{2}{5}$.
We estimate using (\ref{1s1})
\begin{eqnarray}\label{1s2}
\mathcal{A}^{H^R_\sigma}(w(\sigma))-
\mathcal{A}^{\alpha^-}(w^-)
&=&\int_{-\infty}^\sigma\frac{d}{ds}
\Big(\mathcal{A}^{H^R_s}(w(s))\Big)ds\\ \nonumber
&=&\int_{-\infty}^\sigma \big(\partial_s
\mathcal{A}^{H^R_s}\big)(w(s))ds\\ \nonumber
& &+\int_{-\infty}^\sigma d\mathcal{A}^{H^R_s}(w(s))
\partial_s w(s) ds\\ \nonumber
&=&-\int_{-\infty}^\sigma\int_0^1 \eta(s) \big(\partial_s
H^R_s\big)(v(s,t))dt ds\\ \nonumber & &+\int_{-\infty}^\sigma
m_s^R\big(\nabla_{m_s^R} \mathcal{A}^{H^R_s}(w(s),\partial_s
w(s)\big)ds\\ \nonumber &\geq&-2||\eta||_\infty
\int_{-\infty}^\sigma
\partial_s \zeta_R(s) \mathfrak{V}_{\mathfrak{P}}(\zeta_R(s))ds
\\ \nonumber
& &+\int_{-\infty}^\sigma m_s^R\big(\partial_s w(s),
\partial_s w(s)\big)ds\\ \nonumber
&\geq&-2||\eta||_\infty \int_{-\infty}^\infty
\partial_s \zeta_R(s) \mathfrak{V}_{\mathfrak{P}}(\zeta_R(s))ds
\\ \nonumber
&=&-2||\eta||_\infty \int_0^1
\mathfrak{V}_{\mathfrak{P}}(\zeta)d\zeta\\ \nonumber
&=&-2||\eta||_\infty \Delta(\mathfrak{P})\\ \nonumber
&=&-2||\eta||_\infty(\Delta+\epsilon_0)
\end{eqnarray}
Now using the fact that $\mathcal{A}^{\alpha^-}(w^-) \geq a$
shows the estimate from below in (\ref{1s}). The
estimate from above is derived in a similar manner.
This finishes the proof of Step~1.
\\ \\
To formulate Step\,2 we recall that the energy of
$w$ with respect to the metric $m^R_s$ is defined as
$$E(w)=\int_{-\infty}^\infty m_s^R\big(\partial_s w(s),
\partial_s w(s)\big) ds.$$
\textbf{Step\,2: } \emph{There exists a constant
$c_{\mathfrak{P}}$ such that for every $\sigma \in \mathbb{R}$
the following inequalities
$$a-4(\Delta+\epsilon_0)||\eta||_\infty
-\frac{c_{\mathfrak{P}}\big(E(w)+1\big)}{\sqrt{R}}
\leq \mathcal{A}_{\alpha_{\zeta_R(\sigma)}}(w(\sigma))$$
and
$$\mathcal{A}_{\alpha_{\zeta_R(\sigma)}}(w(\sigma))
\leq b+4(\Delta+\epsilon_0)||\eta||_\infty
+\frac{c_{\mathfrak{P}}\big(E(w)+1\big)}{\sqrt{R}}$$
are satisfied.}
\\ \\
For the extension of the stabilizing 1-form
$\lambda_{\zeta^R(s)}$ we use the abbreviation
$$\beta^R_s=\beta_{\lambda_{\zeta^R(s)}}.$$
We claim that for any $y \in V$ the following estimate holds
\begin{equation}\label{2s0}
\big|\partial_s \beta^R_s\big(X_{\bar{H}^R_s}(y)\big)\big| \leq
4\partial_s \zeta^R(s)\mathfrak{V}_{\mathfrak{P}} (\zeta^R(s)).
\end{equation}
If $y \notin U_{\tau_{\zeta^R(s)}}$, then $\beta^R_s$ vanishes and
the estimate obviously holds. We abbreviate
$$\dot{\beta}^R_s= \partial_{\zeta^R} \beta^R_s.$$
Denoting by $L$ the Lie derivative we
compute for $y=\psi_{\zeta_R(s)}(x,r) \in U_{\tau_{\zeta_R(s)}}$
\begin{eqnarray*}
\dot{\beta}^R_s\big(X_{\bar{H}_s^R}(y)\big)
&=&h'(r)\dot{\beta}^R_s\big(R_{\lambda_{\zeta^R(s)}}\big)\\
&=&h'(r)L_{X_{\psi_{\zeta^R(s)}}} \beta^R_s
\big(R_{\lambda_{\zeta^R(s)}}\big)+
h'(r)\phi(r)\dot{\lambda}_{\zeta^R(s)}
\big(R_{\lambda_{\zeta^R(s)}}\big)\\
&=&h'(r)d\iota_{X_{\psi_{\zeta^R(s)}}}
\beta_s^R\big(R_{\lambda_{\zeta^R(s)}}\big)+h'(r)
\iota_{X_{\psi_{\zeta^R(s)}}}d\beta_s^R
\big(R_{\lambda_{\zeta^R(s)}}\big)\\
& &+h'(r)\phi(r)\dot{\lambda}_{\zeta^R(s)}
\big(R_{\lambda_{\zeta^R(s)}}\big)\\
&=&h'(r)d\big[\phi(r)\iota_{X_{\psi_{\zeta^R(s)}}}
\lambda_{\zeta^R(s)}\big]\big(R_{\lambda_{\zeta^R(s)}}\big)\\
& &+h'(r)\iota_{X_{\psi_{\zeta^R(s)}}}\big[\phi'(r)dr \wedge
\lambda_{\zeta^R(s)}+\phi(r)d\lambda_{\zeta^R(s)}\big]
\big(R_{\lambda_{\zeta^R(s)}}\big)\\
& &+h'(r)\phi(r)\dot{\lambda}_{\zeta^R(s)}
\big(R_{\lambda_{\zeta^R(s)}}\big)\\
&=&h'(r)\phi(r)d\big[\iota_{X_{\psi_{\zeta^R(s)}}}
\lambda_{\zeta^R(s)}\big]\big(R_{\lambda_{\zeta^R(s)}}\big)\\
& &+h'(r)\phi'(r)\iota_{X_{\psi_{\zeta^R(s)}}}\lambda_{\zeta^R(s)}
dr\big(R_{\lambda_{\zeta^R(s)}}\big)\\
& &+h'(r)\iota_{X_{\psi_{\zeta^R(s)}}}\big[
\omega_\Sigma+\phi'(r)dr \wedge
\lambda_{\zeta^R(s)}+\phi(r)d\lambda_{\zeta^R(s)}\big]
\big(R_{\lambda_{\zeta^R(s)}}\big)\\
& &+h'(r)\phi(r)\dot{\lambda}_{\zeta^R(s)}
\big(R_{\lambda_{\zeta^R(s)}}\big)\\
&=&h'(r)\phi(r)d\big[\iota_{X_{\psi_{\zeta^R(s)}}}
\lambda_{\zeta^R(s)}\big]\big(R_{\lambda_{\zeta^R(s)}}\big)\\
& &+h'(r)\omega\big(X_{\psi_{\zeta^R(s)}},
R_{\lambda_{\zeta^R(s)}}
\big)\\
& &+h'(r)\phi(r)\dot{\lambda}_{\zeta^R(s)}
\big(R_{\lambda_{\zeta^R(s)}}\big).
\end{eqnarray*}
In the fifth equality we have used that the Reeb vector field lies
in the kernel of $\omega_\Sigma$. Using
$$\max \phi \leq \rho+1 \leq 2\rho, \quad \max h' \leq
1+\epsilon_0 \leq 2$$
we obtain from that the estimate
$$\big|\dot{\beta}_s^R\big(X_{\bar{H}_s^R}(y)\big)\big|
\leq 4 \mathfrak{V}_{\mathfrak{P}}(\zeta_R(s))$$
implying (\ref{2s0}).

As in the time-independent case we consider the
following time-dependent bilinear form on
$T(\mathcal{L}\times \mathbb{R})$
$$\widehat{m}_s^R\Big((\hat{v}_1,\hat{\eta}_1),
(\hat{v}_2,\hat{\eta}_2)\Big):=
\int_0^1 d\beta_s^R(\hat{v}_1,
J_{\zeta_R(s)}\hat{v}_2)dt+\hat{\eta}_1\hat{\eta}_2.$$
By Proposition~\ref{mod} we have for any $s \in \mathbb{R}$
$$\nabla_{m^R_s} \mathcal{A}^{H^R_s}=
\nabla_{\widehat{m}^R_s} \widehat{\mathcal{A}}^{H^R_s}$$
Hence the computation in Corollary~\ref{liap} shows that
\begin{eqnarray}\label{2s1}
\frac{d}{ds}\Big(\mathcal{A}_{\alpha_{\zeta_R(s)}}(w(s))\Big)
&=&\big(\partial_s \mathcal{A}_{\alpha_{\zeta_R(s)}}\big)(w(s))\\
\nonumber & &+\big(m^R_s-\widehat{m}^R_s\big)\Big(
\nabla_{m^R_s}\mathcal{A}^{H^R_s}(w(s)),
\nabla_{m^R_s}\mathcal{A}^{H^R_s}(w(s))\Big)\\ \nonumber
&\geq&\big(\partial_s \mathcal{A}_{\alpha_{\zeta_R(s)}}\big)(w(s)).
\end{eqnarray}
For $\zeta \in [0,1]$ let $\gamma_\zeta$
be the compactly supported 1-form
$$\gamma_\zeta=\dot{\beta}_{\lambda_\zeta} \circ J_\zeta$$
and define the constant $c_{\mathfrak{P}}$ as
$$c_{\mathfrak{P}}=\max_{\substack{\zeta \in [0,1]\\
y \in V}} ||\gamma_\zeta(y)||$$
where the norm is taken with respect to the metric
$\omega(\cdot,J_\zeta \cdot)$.
Again we only show the estimate from below. This is derived
similarly as the estimate (\ref{1s2})
in Step\,1. Using
(\ref{2s0}) and (\ref{2s1}) we obtain
\begin{eqnarray*}
\mathcal{A}_{\alpha_{\zeta_R(\sigma)}}(w(\sigma))
-\mathcal{A}_{\alpha^-}(w^-) &\geq&-\int_{-\infty}^\sigma \big(
\partial_s \mathcal{A}_{\alpha_{\zeta_R(s)}}\big)(w(s))ds\\
&=&-\int_{-\infty}^\sigma \partial_s \zeta_R(s)\bigg(\int_0^1
 v^*\dot{\beta}_s^Rdt\bigg) ds\\
&=&-\int_{-\infty}^\sigma \partial_s \zeta_R(s)\eta(s)\bigg(\int_0^1
\dot{\beta}_s^R\big(\eta X_{H^R_s}(v(s,t))\big)dt\bigg) ds\\
& &-\int_{-\infty}^\sigma \partial_s \zeta_R(s) \bigg(\int_0^1
\dot{\beta}_s^R
\big(J_{\zeta_R(s)}\partial_s v(s,t)\big)dt\bigg)ds\\
&\geq&-4||\eta||_\infty \int_{-\infty}^\sigma
\partial_s \zeta_R(s) \mathfrak{V}_{\mathfrak{P}}(\zeta_R(s))
ds\\
& &-c_{\mathfrak{P}}\int_{-\infty}^\sigma
\int_0^1\partial_s \zeta_R(s)
||\partial_s v(s,t)||dt ds\\
&\geq&-4||\eta||_\infty \int_{-\infty}^\infty
\partial_s \zeta_R(s) \mathfrak{V}_{\mathfrak{P}}(\zeta_R(s))
ds\\
& &-c_{\mathfrak{P}}\int_{-\infty}^\infty \partial_s \zeta_R(s)
\int_0^1
\bigg(\sqrt{R}||\partial_s v(s,t)||^2+\frac{1}{\sqrt{R}}
\bigg)dt ds\\
&\geq&-4||\eta||_\infty \int_0^1
\mathfrak{V}_{\mathfrak{P}}(\zeta)d\zeta
-\frac{c_{\mathfrak{P}}}{\sqrt{R}}\int_{-\infty}^\infty
\partial_s \zeta_R(s)ds\\
& &-\frac{c_{\mathfrak{P}}}{\sqrt{R}}
\int_{-\infty}^\infty ||\partial_s w(s)||^2 ds\\
&\geq&-4||\eta||_\infty(\Delta+\epsilon_0)-
\frac{c_{\mathfrak{P}}\big(E(w)+1\big)}{\sqrt{R}}
\end{eqnarray*}
This finishes the proof of Step\,2.
\\ \\
\textbf{Step\,3: }\emph{The energy can be estimated by
$$E(w) \leq \mathcal{A}^{\alpha^+}(w^+)-
\mathcal{A}^{\alpha^-}(w^-)+2||\eta||_\infty(\Delta+\epsilon_0)
\leq b-a+2||\eta||_\infty(\Delta+\epsilon_0).$$}
\\
A careful look at (\ref{1s2}) reveals Step\,3.
\\ \\
\textbf{Step\,4: } \emph{We prove the theorem.}
\\ \\
As in the proof of Proposition~\ref{abo}, we set for
$\sigma \in \mathbb{R}$
$$\tau(\sigma)=\inf\Big\{\tau \geq 0:
\big|\big|\nabla \mathcal{A}^{H^R_\sigma}\big(w(\sigma+\tau)\big)
\big|\big|\leq \frac{2}{5}\Big\}.$$
In the time dependent case we still can estimate
\begin{equation}\label{4s1}
\tau(\sigma) \leq \frac{25 E(w)}{4}.
\end{equation}
Since $\alpha_\zeta$ is $\big(\frac{3}{5},\epsilon_0\big)$-admissible
for every $\zeta \in [0,1]$ we conclude from
Lemma~\ref{abound2} applied to $\gamma=\frac{3}{5}$ together
with the fact that $||H^R_s||_\infty \leq \frac{3}{5}+\epsilon_0$
for every $s \in \mathbb{R}$ and (\ref{4s1}) the following estimate
\begin{eqnarray}\label{4s2}
|\eta(\sigma)| &\leq& |\eta(\sigma+\tau(\sigma))|
+\int_{\sigma}^{\sigma+\tau(\sigma)}|\partial_s \eta(s)|ds\\ \nonumber
&\leq&\frac{5}{2}\sup \big |\widehat{\mathcal{A}}^{H^R} \circ w\big|
+1+\tau(\sigma)||H^R||_\infty\\ \nonumber
&\leq&\frac{5}{2}\sup \big |\widehat{\mathcal{A}}^{H^R} \circ w\big|
+1+\frac{25E(w)}{4}\bigg(\frac{3}{5}+\epsilon_0\bigg).
\end{eqnarray}
$\,$From Step\,1 and Step\,2 we obtain  the estimate
\begin{eqnarray}\label{4s3}
\sup \big |\widehat{\mathcal{A}}^{H^R} \circ w\big|
&=&\sup \big|\mathcal{A}^{H^R}\circ w-
\mathcal{A}_{\alpha_{\zeta_R}} \circ w\big|\\ \nonumber
&\leq&b-a+6(\Delta+\epsilon_0)||\eta||_\infty+
\frac{c_{\mathfrak{P}}(E(w)+1)}{\sqrt{R}}.
\end{eqnarray}
Combining (\ref{4s2}) and (\ref{4s3}) with Step\,3 we obtain
\begin{eqnarray}\label{4s4}
||\eta||_\infty &\leq& \frac{5}{2}(b-a)+
1+\frac{5c_{\mathfrak{P}}}{2\sqrt{R}}+\bigg(\frac{15}{4}
+\frac{25\epsilon_0}{4}+\frac{5c_{\mathfrak{P}}}{2\sqrt{R}}\bigg)
E(w)\\ \nonumber & &+15(\Delta+\epsilon_0)||\eta||_\infty\\
\nonumber
&\leq&\bigg(\frac{25(1+\epsilon_0)}{4}+\frac{c_{\mathfrak{P}}}
{\sqrt{R}}\bigg)(b-a)+1+\frac{5c_{\mathfrak{P}}}{2\sqrt{R}}\\
\nonumber & &+\bigg(\frac{45}{2}+\frac{25\epsilon_0}{2}+
\frac{5c_{\mathfrak{P}}}{\sqrt{R}}\bigg)(\Delta+\epsilon_0)
||\eta||_\infty.
\end{eqnarray}
We now choose $\Delta=\Delta(\epsilon)$ in such a way that
$$0<\Delta<\min\bigg\{\frac{2\epsilon}{125+100\epsilon},
\frac{1}{60}\bigg\}.$$
The positive number $\epsilon_0$ which already had to be
chosen in such a way that $\epsilon_0 \leq \frac{2}{5}$
is now supposed to satisfy
$$0<\epsilon_0 \leq \min\bigg\{\frac{1}{60}-\Delta,
\frac{2\epsilon}{125+100\epsilon}-\Delta\bigg\}.$$
Moreover, we choose $R$ such that
$$R \geq \max\bigg\{
\bigg(\frac{5c_{\mathfrak{P}}}{2(1-60(\Delta+\epsilon_0))}\bigg)^2,
\bigg(\frac{c_{\mathfrak{P}}}{\epsilon}\bigg)^2\bigg\}.$$
With these choices we get from (\ref{4s4})
$$||\eta||_\infty \leq \bigg(\frac{25}{4}+2\epsilon\bigg)(b-a)
+2\big(1-30(\Delta+\epsilon_0)\big)+
30(\Delta+\epsilon_0)||\eta||_\infty.$$ Using $\Delta+\epsilon_0
\leq \frac{2\epsilon}{125+100\epsilon}$ we obtain
\begin{eqnarray*}
||\eta||_\infty &\leq&
\frac{1}{1-30(\Delta+\epsilon_0)}\bigg(\frac{25}{4}+2\epsilon\bigg)
(b-a)+2\\
&\leq&\bigg(\frac{25}{4}+5\epsilon\bigg)(b-a)+2\\
&\leq&\bigg(\frac{5}{2}+\epsilon\bigg)^2(b-a)+2.
\end{eqnarray*}
This finishes the proof of the theorem.
\end{proof}

\subsection{An existence result for a periodic Reeb orbit}
\label{ss:existence}

In this subsection we show how compactness for gradient
flow lines of Rabinowitz action functional leads
to existence of a periodic Reeb orbit on stable displaceable
hypersurfaces. This result is not new. Indeed, F.\,Schlenk
proved it before in \cite{Sch} using quite different methods.
Before stating the theorem we recall some well-known notions.

A hypersurface $\Sigma$ in a symplectic manifold $(V,\omega)$
is called \emph{displaceable}, if there exists a
compactly supported Hamiltonian $F \in C^\infty(V \times S^1)$
such that the time-one flow $\phi_F$ of the time dependent
Hamiltonian vector field $X_{F_t}$ with $F_t=F(\cdot,t)
\in C^\infty(V)$ satisfies
$$\phi_F(\Sigma) \cap \Sigma =\emptyset.$$
The positive and the negative part of the Hofer norm for
the compactly supported Hamiltonian $F$ are given by
$$||F||_+=\int_0^1 \max_V F_t dt, \quad
||F||_-=-\int_0^1 \min_V F_t dt$$
and the Hofer norm itself by
$$||F||=||F||_++||F||_-.$$
If $\Sigma \subset V$ is a displaceable hypersurface its
\emph{displacement energy} is given by
$$e(\Sigma)=\inf\{||F||: \phi_F(\Sigma) \cap \Sigma=\emptyset\}.$$
Recall from (\ref{OE}) the $\omega$-energy for
closed characteristics.
\begin{theorem}[Schlenk]\label{felix}
Assume that $\Sigma$ is a stable, displaceable
hypersurface
in a symplectically aspherical, geometrically bounded,
symplectic manifold $(V,\omega)$. Then $\Sigma$ has a closed
characteristic $v$ which is contractible in $V$ and satisfies
$$\Omega(v) \leq e(\Sigma).$$
\end{theorem}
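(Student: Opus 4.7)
The plan is to combine the Rabinowitz action machinery of Sections~\ref{ss:action}--\ref{ss:TDC} with a displacement perturbation, in the spirit of~\cite{CF}. Suppose for contradiction that $\Sigma$ admits no closed contractible characteristic $v$ with $\Omega(v)\le e(\Sigma)$. Fix $\delta>0$ and choose a compactly supported Hamiltonian $F$ on $V\times S^1$ displacing $\Sigma$ with Hofer norm $\|F\|<e(\Sigma)+\delta$. Pick an admissible quintuple $\alpha$ for $\Sigma$ and set $H=H_\alpha$; the critical points of $\mathcal{A}^H$ with $\eta=0$ form a Morse--Bott manifold diffeomorphic to $\Sigma$, sitting at action value $0$ and carrying a non-zero class in the local Rabinowitz Floer homology inside a narrow action window around~$0$.

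Next I would set up the $s$-dependent family of perturbed functionals
$$
   \mathcal{A}^{H,F_s}(v,\eta) = \int_0^1 \bar v^*\omega
   - \eta\int_0^1 H(v(t))\,dt - \int_0^1 F_s(t,v(t))\,dt,
$$
with $F_s=\chi(s)F$ and $\chi\colon\mathbb{R}\to[0,1]$ a smooth cutoff equal to $0$ for $s\le -1$ and $1$ for $s\ge 1$. A direct computation shows that at $s=+\infty$ any critical point whose loop meets $\Sigma$ corresponds to a concatenation of a Reeb arc on $\Sigma$ with a Hamiltonian arc of $F$; it closes up only if $\Sigma\cap\phi_F(\Sigma)\ne\emptyset$, which is excluded by displacement. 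Hence no such critical points exist in the action window $(-e(\Sigma)-\delta,\,e(\Sigma)+\delta)$ under the standing hypothesis, and the corresponding local Rabinowitz Floer group vanishes at $s=+\infty$.

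The analytic heart is a uniform $L^\infty$ bound on the Lagrange multiplier $\eta$ along continuation trajectories $w=(v,\eta)$ with bounded action, obtained by adapting Lemma~\ref{abound2}, Proposition~\ref{abo} and Theorem~\ref{tdc}: the additional term $F_s$ is compactly supported, so it only shifts the action by at most $\|F\|$ and enlarges the Hamiltonian sup-norm by $\|F\|_\infty$, both effects being absorbed into the constants of the Lagrange multiplier estimate. Combined with geometric boundedness of $(V,\omega)$ (providing an $L^\infty$ bound on $v$) and symplectic asphericity (ruling out sphere bubbling), this yields compactness modulo breaking for continuation gradient lines, and hence a well-defined continuation homomorphism between the local Rabinowitz Floer homologies at $s=\pm\infty$. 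A standard Floer argument shows it is an isomorphism, contradicting the non-vanishing at $s=-\infty$ from the first paragraph together with the vanishing at $s=+\infty$ just established.

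Therefore for each $\delta>0$ there exists a closed contractible characteristic $v_\delta$ on $\Sigma$ with $\Omega(v_\delta)\le e(\Sigma)+\delta$, and the Lagrange multiplier bound furnishes a uniform bound on its period $\eta_\delta$. Arzel\`a--Ascoli on the compact manifold $\Sigma$ then extracts a convergent subsequence whose limit $v$ is a non-constant closed characteristic satisfying $\Omega(v)\le e(\Sigma)$; non-degeneration of the limit is ensured by a lower bound on $|\eta_\delta|$ coming from the standing assumption of no short closed characteristics. The hard part will be verifying the Lagrange multiplier bound in the presence of the time-dependent displacing perturbation $F$: since $F$ need not be supported in the stable tubular neighbourhood, the bilinear-form inequality underlying Proposition~\ref{mod} and the step-by-step energy book-keeping in the proof of Theorem~\ref{tdc} must be redone with the extra $F$-term carefully absorbed into the action estimates, and one must rule out spurious non-compactness arising from the displacing region.
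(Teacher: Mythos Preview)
Your overall strategy---perturb by the displacing Hamiltonian and compare Floer groups before and after---is in the right spirit, but it diverges from the paper's argument in structure and has two genuine gaps. The paper does \emph{not} run a one-sided continuation: it uses a two-sided cutoff $\beta_r(s)$ (zero for $|s|\ge r$, one for $|s|\le r-1$) and studies the parametrised moduli space $\mathcal{M}$ of flow lines of $\nabla\mathcal{A}^\alpha_r$ from a fixed constant $(x,0)$ to $\Sigma\times\{0\}$, over all $r\in[0,\infty)$. The crucial step is that Lemma~\ref{mu} gives a uniform lower bound $\mu>0$ on $\|\nabla\mathcal{A}^H_F\|$; combined with the energy bound $E(w)\le\|F\|$ this forces $r\le\|F\|/(2\mu^2)$, so $\mathcal{M}$ is compact and is then a $1$-manifold with a single boundary point $(x,0,0)$---contradiction. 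Your one-sided continuation map is not obviously an isomorphism (a continuation map to a zero group by itself proves nothing; you would need the reverse map and the homotopy-of-homotopies argument, which you do not set up).

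The more serious gap is your treatment of the critical points of $\mathcal{A}^{H,F}$. Your claim that they ``close up only if $\Sigma\cap\phi_F(\Sigma)\ne\emptyset$'' is only valid after arranging the time supports of $H=\chi(t)\bar H$ and $F_t$ to be \emph{disjoint}: the paper takes $\mathrm{supp}\,\chi\subset(0,1/2)$ and $F_t=0$ for $t\in[0,1/2]$, and also chooses the stable tubular neighbourhood small enough that $\phi_F(U_\tau)\cap U_\tau=\emptyset$. Without this, the critical point equation is not a concatenation and Lemma~\ref{mu} does not apply. Finally, your closing Arzel\`a--Ascoli step invokes a ``standing assumption of no short closed characteristics'' that is nowhere in the hypotheses; in the absence of tameness there is no a priori period bound in terms of $\Omega$, so this limit argument does not go through as written (the paper simply concludes from ``$\epsilon_0>0$ arbitrary'').
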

Our proof of Theorem~\ref{felix} is based on a homotopy stretching
argument for a time dependent perturbation of Rabinowitz action
functional where the perturbation is given by the displacing
Hamiltonian.

The crucial point is that the perturbed Rabinowitz action
functional has no critical points anymore. This is
actually true for any separating closed hypersurface $\Sigma$
in a symplectically aspherical,
symplectic manifold $(V,\omega)$. If $\Sigma=\bar{H}^{-1}(0)$
for a Hamiltonian $\bar{H} \in C^\infty(V)$
we choose $\chi \in C^\infty(S^1,\mathbb{R})$ of integral
one which in addition meets the condition
$$\mathrm{supp} \chi \subset (0,1/2)$$
and set as usual
$$H(t,y)=\chi(t)\bar{H}(y), \quad y \in V,\,\,t \in S^1.$$
Without changing the Hofer norm we furthermore can reparametrize
the flow of the displacing Hamiltonian $F_t$ such that we can
assume that
$$F_t=0, \quad t \in [0,1/2].$$
The perturbation of Rabinowitz action functional we consider
is the functional $\mathcal{A}^H_F \colon \mathcal{L}
\times \mathbb{R} \to \mathbb{R}$ defined by
$$\mathcal{A}^H_F(v,\eta)=\mathcal{A}^H(v,\eta)-
\int_0^1 F_t(v(t))dt, \quad
(v,\eta) \in \mathcal{L}\times \mathbb{R}.$$
We further denote by
$$\mathfrak{S}(X_{\bar{H}})=\mathrm{cl}\{y \in V:
X_{\bar{H}}(y) \neq 0\}$$
the support of the Hamiltonian vector field of $\bar{H}$.
The following Lemma is proved in \cite{CF} where it appears
as Lemma 3.10. It immediately implies that a suitable perturbed
Rabinowitz action functional has no critical points anymore.
\begin{lemma}\label{mu}
Assume that $\mathfrak{S}(X_{\bar{H}})$ is compact and
$\phi_F\big(\mathfrak{S}(X_{\bar{H}})\big) \cap \mathfrak{S}(X_{\bar{H}})
=\emptyset$. Then for every $\omega$-compatible almost complex structure
$J$ on $V$ there exists a constant $\mu=\mu(J)>0$ such that
if $\nabla$ is the gradient and $||\cdot||$ is the norm of the metric
on $\mathcal{L}\times \mathbb{R}$ induced from $J$, then for every
$(v,\eta) \in \mathcal{L}\times \mathbb{R}$ it holds that
$$||\nabla \mathcal{A}^H_F(v,\eta)|| \geq \mu.$$
\end{lemma}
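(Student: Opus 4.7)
The plan is to proceed by contradiction in two stages: first show that $\mathcal{A}^H_F$ admits no critical points at all, then upgrade this to a uniform lower bound on $\|\nabla\mathcal{A}^H_F\|$ via a compactness/normalization argument. The critical point equations for $\mathcal{A}^H_F$ read
\[
\partial_t v = \eta\chi(t)X_{\bar H}(v) + X_{F_t}(v), \qquad \int_0^1 \chi(t)\bar H(v(t))\,dt = 0.
\]
By the choices $\mathrm{supp}\,\chi\subset(0,1/2)$ and $F_t\equiv 0$ on $[0,1/2]$, the two vector fields act on disjoint time intervals. Because $\bar H$ is preserved by its own Hamiltonian flow, $\bar H\circ v$ is constant on $[0,1/2]$, and the integral condition (with $\int\chi=1$) forces $\bar H(v(0))=0$, hence $v(0)\in\Sigma\subset\mathfrak{S}(X_{\bar H})$. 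Then $v(1/2)=\phi^{\eta}_{\bar H}(v(0))\in\Sigma\subset\mathfrak{S}(X_{\bar H})$, while $v(1)=\phi_F(v(1/2))\in\phi_F(\mathfrak{S}(X_{\bar H}))$. The displacement hypothesis $\phi_F(\mathfrak{S}(X_{\bar H}))\cap\mathfrak{S}(X_{\bar H})=\emptyset$ therefore contradicts the loop condition $v(1)=v(0)$.

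To upgrade this to the uniform bound, I would assume towards contradiction a sequence $(v_n,\eta_n)$ with $\|\nabla\mathcal{A}^H_F(v_n,\eta_n)\|\to 0$ and show, via compactness, that a subsequential limit would yield a critical point. The second component of the gradient going to zero gives $\int_0^1\chi(t)\bar H(v_n(t))dt\to 0$, so $v_n|_{[0,1/2]}$ hugs $\Sigma$; the first component gives $\partial_t v_n = \eta_n\chi(t)X_{\bar H}(v_n)+X_{F_t}(v_n)+r_n$ with $\|r_n\|_{L^2}\to 0$. Outside the compact set $\mathfrak{S}(X_{\bar H})\cup\mathrm{supp}\,F$ the driving terms vanish, so the ``moving'' portion of each $v_n$ is confined to a uniformly bounded neighbourhood. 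Geometric boundedness of $(V,\omega,J)$ together with standard Floer-type elliptic estimates and Arzel\`a--Ascoli then produce a $C^0$-convergent subsequence \emph{provided} $|\eta_n|$ stays bounded, whose limit is a critical point, contradicting the first step.

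The main obstacle, as always in this type of Lagrange-multiplier argument, is the a priori bound on $|\eta_n|$. Here I would follow \cite{CF}: combining the $L^2$-smallness of $r_n$ with the ODE on $[0,1/2]$ and $[1/2,1]$ shows that
\[
\mathrm{dist}\bigl(\phi_F(\phi^{\eta_n}_{\bar H}(v_n(0))),\,v_n(0)\bigr) \leq C\,\|\nabla\mathcal{A}^H_F(v_n,\eta_n)\|,
\]
while the compactness of $\mathfrak{S}(X_{\bar H})$ together with the displacement $\phi_F(\mathfrak{S}(X_{\bar H}))\cap \mathfrak{S}(X_{\bar H})=\emptyset$ forces the same distance to be bounded \emph{below} by some positive $\delta$ as soon as $v_n(0)$ lies close enough to $\Sigma$. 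For $n$ large this contradicts the assumption $\|\nabla\mathcal{A}^H_F(v_n,\eta_n)\|\to 0$, which simultaneously prevents $|\eta_n|$ from escaping to infinity (since otherwise the $\bar H$-flow would wrap around and the same displacement argument would still apply to $\phi^{\eta_n}_{\bar H}(v_n(0))\in\Sigma$). Unpacking the constants $C$ and $\delta$ in terms of $J$ and the displacement energy then yields the explicit $\mu=\mu(J)>0$.
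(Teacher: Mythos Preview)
The paper does not prove this lemma itself; it simply cites \cite{CF}, where it appears as Lemma~3.10. So there is no argument in the present paper to compare against directly. Your step~1 (absence of critical points) is correct and matches the standard reasoning.

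Step~2, however, has a genuine gap. The displayed inequality
\[
\mathrm{dist}\bigl(\phi_F(\phi^{\eta_n}_{\bar H}(v_n(0))),\,v_n(0)\bigr) \leq C\,\|\nabla\mathcal{A}^H_F(v_n,\eta_n)\|
\]
cannot hold with $C$ independent of $\eta_n$: on $[0,1/2]$ the comparison of $v_n(1/2)$ with $\phi^{\eta_n}_{\bar H}(v_n(0))$ via Gronwall picks up a factor of order $e^{L|\eta_n|}$, since the driving vector field there is $\eta_n\chi X_{\bar H}$. More to the point, your diagnosis that ``the main obstacle is the a priori bound on $|\eta_n|$'' is misleading for \emph{this} lemma (as opposed to the compactness of gradient flow lines): no bound on $\eta$ is needed, and the detour through Arzel\`a--Ascoli to extract a limiting critical point is unnecessary.

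The correct argument is essentially hidden in your final parenthetical. Rather than comparing $v_n(1/2)$ to $\phi^{\eta_n}_{\bar H}(v_n(0))$, use only that $\bar H$ is conserved by $X_{\bar H}$: on $[0,1/2]$ one has $\frac{d}{dt}\bar H(v_n(t))=d\bar H(r_n(t))$, so
\[
|\bar H(v_n(t))-\bar H(v_n(0))|\leq\|d\bar H\|_\infty\,\|r_n\|_{L^1}
\]
\emph{uniformly in $\eta_n$}. Combined with $\bigl|\int\chi\,\bar H(v_n)\,dt\bigr|\leq\|\nabla\mathcal{A}^H_F\|$ this shows $|\bar H(v_n(t))|$ is small on $[0,1/2]$, hence $v_n(0)$ and $v_n(1/2)$ lie in a small neighbourhood of $\Sigma\subset\mathfrak{S}(X_{\bar H})$. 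On $[1/2,1]$, where $\chi\equiv 0$ and $\eta_n$ no longer appears, Gronwall gives $\mathrm{dist}\bigl(v_n(0),\phi_F(v_n(1/2))\bigr)\leq C\|r_n\|_{L^2}$ with a uniform $C$. Since $\mathfrak{S}(X_{\bar H})$ and $\phi_F(\mathfrak{S}(X_{\bar H}))$ are disjoint compacta separated by some $\delta>0$, this yields the contradiction directly, and unpacking the constants gives $\mu>0$ without ever bounding $\eta$.
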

With the help of this Lemma we are now armed for the proof
of Schlenk's Theorem.
\\ \\
\textbf{Proof of Theorem~\ref{felix}: }Given $\epsilon_0>0$
we choose a time dependent Hamiltonian $F_t$ satisfying the
following conditions
$$F_t=0,\,\,t \in [0,1/2], \quad \phi_F(\Sigma) \cap \Sigma=
\emptyset, \quad ||F||<e(\Sigma)+\epsilon_0.$$
We further choose a $(\gamma,\epsilon)$-admissible quintuple
$\alpha=(\lambda,\tau,\phi,h,\chi)$ which satisfies
$$\mathrm{supp}\chi \subset (0,1/2), \quad
\phi_F(U_\tau) \cap U_\tau=\emptyset.$$
The first condition can obviously be achieved. To see that one can also
assume without loss of generality the second condition
we note that since
$\Sigma$ is compact there exists an open neighboorhood $U_\Sigma$
of $\Sigma$ in $V$ which is also displaced by $\phi_F$. Now
by choosing $\lambda$ arbitrarily small we can arrange that
even a large tubular neighbourhood $U_\tau$ is contained
in $U_\Sigma$.

We further choose a smooth family of cutoff functions
$\beta_r \in C^\infty(\mathbb{R},[0,1])$ for
$r \in [0,\infty)$ with the following properties
\begin{equation}\label{cuto}
\left.\begin{array}{cc}
\beta_r(s)=0 & |s| \geq r\\
\beta_r(s)=1 & |s| \leq r-1\\
s\beta'_r(s) \leq 0 & \forall\,\,r, \forall\,\,s.
\end{array}
\right\}
\end{equation}
We now consider the $r$-parametrised family of time dependent
perturbed Rabinowitz action functionals defined by
$$\mathcal{A}^\alpha_r(v,\eta,s)=\mathcal{A}^\alpha(v,\eta)-
\beta_r(s)\int_0^1 F_t(v(t))dt, \quad
v \in \mathcal{L}, \,\,\eta \in \mathbb{R},\,\,
s \in \mathbb{R},\,\,r \in [0,\infty).$$
We note that
$$\mathcal{A}^\alpha_0=\mathcal{A}^\alpha$$
is independent of the $s$-variable.
We further choose an $\omega$-compatible
almost complex structure $J \in \mathcal{J}(\alpha)$ and
denote by $\nabla$ the gradient with respect to the
metric $m$ on $\mathcal{L}\times \mathbb{R}$ induced from
$J$. We fix a point $x \in \Sigma$ and think of it
as a loop in $\mathcal{L}$. We are studying solutions
$(w,r)=(v,\eta,r) \in C^\infty(\mathbb{R},\mathcal{L}\times \mathbb{R})
\times [0,\infty)$ of the following problem
\begin{equation}\label{str}
\partial_s w(s)=\nabla \mathcal{A}^\alpha_r(s,w(s)),\,\,
s \in \mathbb{R}, \quad \lim_{s\to -\infty}w(s)=(x,0), \quad
\lim_{s\to \infty}w(s) \in \Sigma \times \{0\}.
\end{equation}
We use the following abbreviation for its moduli space
$$\mathcal{M}=\big\{(w,r):(w,r)\,\,\textrm{solution of (\ref{str})}
\big\}.$$
To prove the Theorem we argue by contradiction and assume
\begin{equation}\label{wid}
\Omega(v)>||F||, \quad \forall\,\,v \in X(\Sigma).
\end{equation}
To see how this leads to a contradiction, we first show the
following claim.
\\ \\
\textbf{Claim: } \emph{If (\ref{wid}) holds, then $\mathcal{M}$
is compact.}
\\ \\
We prove the Claim in four steps. For the first step
recall that the energy of $w$ is defined by
$$E(w)=\int_{-\infty}^\infty||\partial_s w||^2 ds$$
where the norm is taken with respect to the metric $m$ induced
from the $\omega$-compatible almost complex structure $J$.
\\ \\
\textbf{Step\,1:} If $(w,r) \in \mathcal{M}$ then $E(w) \leq ||F||$.
\\ \\
We estimate using (\ref{str})
\begin{eqnarray*}
E(w)&=&\int_{-\infty}^\infty d\mathcal{A}^\alpha_r(w)(\partial_s w) ds\\
&=&\int_{-\infty}^\infty\frac{d}{ds}\mathcal{A}^\alpha_r(w)ds
-\int_{-\infty}^\infty\big(\partial_s \mathcal{A}^\alpha_r)(w)ds\\
&=&0+\int_{-\infty}^\infty \beta'_r(s)
\bigg(\int_0^1 F_t(v)dt\bigg)ds\\
&\leq&||F||_+\int_{-\infty}^0\beta'_r(s)ds-
||F||_-\int_0^\infty \beta'_r(s)ds\\
&\leq&||F||_++||F||_-\\
&=&||F||.
\end{eqnarray*}
This finishes the proof of Step\,1.
\\ \\
\textbf{Step\,2:} There exists $r_0 \in \mathbb{R}$ such that
if $(w,r) \in \mathcal{M}$ then $r \leq r_0$.
\\ \\
Combining Lemma~\ref{mu} with Step\,1 we obtain the
estimate
\begin{eqnarray*}
||F|| &\geq& \int_{-r}^{r}
||\nabla \mathcal{A}^\alpha_F(w)||^2 ds\\
&\geq&2\mu^2r
\end{eqnarray*}
implying that
$$r \leq \frac{||F||}{2\mu^2}=:r_0$$
This finishes the proof of Step\,2.
\\ \\
\textbf{Step\,3: }\emph{There exists a constant $c>0$
such that for all $(w,r)=(v,\eta,r) \in \mathcal{M}$ the
Lagrange multiplier $\eta$ is uniformly bounded
by $||\eta|| \leq c$.}
\\ \\
To prove Step\,3 we estimate the functional
$\mathcal{A}=\mathcal{A}^\alpha-\widehat{\mathcal{A}}^\alpha$
along $w$. Note that we do not perturb $\mathcal{A}$
with the displacing Hamiltonian $F$. It is useful
to introduce further the functional
$$\mathcal{F}  \colon \mathcal{L}\times \mathbb{R} \to \mathbb{R},
\quad (v,\eta) \mapsto \int_0^1 F_t(v(t))dt$$
which actually only depends on the first variable. By
Proposition~\ref{mod} we have
$$\nabla_{\widehat{m}} \widehat{\mathcal{A}}^\alpha
=\nabla_m \mathcal{A}^\alpha.$$

Hence using (\ref{str}) we estimate similarly as in
Corollary~\ref{liap}
\begin{eqnarray*}
\frac{d}{ds} \mathcal{A}(w)&=&d\mathcal{A}^\alpha(w)\partial_s w
-d\widehat{\mathcal{A}}^\alpha(w)\partial_s w\\
&=&m\big(\nabla_m\mathcal{A}^\alpha(w),\partial_s w\big)-
\widehat{m}\big(\nabla_m \mathcal{A}^\alpha(w),\partial_s w\big)\\
&=&(m-\widehat{m})\big(\partial_s w,\partial_s w\big)+
\beta_r(m-\widehat{m})
\big(\nabla_m \mathcal{F}(w),\partial_s w\big)\\
&\geq&\beta_r(m-\widehat{m})
\big(\nabla_m \mathcal{F}(w),\partial_s w\big).
\end{eqnarray*}
Since $F$ has compact support there exists
a constant $c_0$ such that for all $w \in \mathcal{L}\times
\mathbb{R}$
$$\big|\big|(m-\widehat{m})\big(\nabla_m \mathcal{F}(w),
\cdot\big)\big|\big|_m
\leq c_0.$$
Hence we obtain for $\sigma \in \mathbb{R}$ using
Step\,1 and Step\,2
\begin{eqnarray*}
\mathcal{A}(w(\sigma))&=&\int_{-\infty}^\sigma
\frac{d}{ds}\mathcal{A}(w)ds\\
&\geq& \int_{-\infty}^\sigma \beta_r(m-\widehat{m})
\big(\nabla_m \mathcal{F}(w),\partial_s w)ds\\
&\geq&-c_0\int_{-r}^r||\partial_s w||_m ds\\
&\geq&-c_0\int_{-r}^r\big(||\partial_s w||_m^2+1\big)ds\\
&\geq&-c_0\big(2r+E(w)\big)\\
&\geq&-c_0\big(2r_0+||F||\big).
\end{eqnarray*}
Similarly, one gets
$$-\mathcal{A}(w(\sigma))=\int_\sigma^\infty
\frac{d}{ds}\mathcal{A}(w)ds
\leq c_0\big(2r_0+||F||\big).$$
Defining the constant
$$c_1=c_0\big(2r_0+||F||\big)$$
we obtain from the previous two estimates the
uniform $L^\infty$-bound
$$||\mathcal{A} \circ w|| \leq c_1.$$
Moreover, a closer look at the estimate in Step\,1 reveals
that
$$||\mathcal{A}^\alpha_r \circ w|| \leq ||F||.$$
Noting that for $s \notin (-r,r)$ we have
$\mathcal{A}^\alpha_r(\cdot,s)=\mathcal{A}^\alpha$ we
obtain from the previous two inequalities
\begin{equation}\label{haaa}
|\widehat{\mathcal{A}}^\alpha(w(s))| \leq
c_1+||F||, \quad s \in \mathbb{R}\setminus (-r,r).
\end{equation}
The proof for the bound of the
Lagrange multiplier now proceeds similarly
as in Proposition~\ref{abo}.
For $\sigma \in \mathbb{R}$ we set
$$\tau(\sigma)=
\inf\bigg\{\tau: \sigma+\tau \notin (-r,r),\,\,
||\nabla \mathcal{A}^\alpha(w(\sigma+\tau)|| \leq
\frac{2\gamma}{3}\bigg\}.$$
Again $\tau(\sigma)$ can be estimated in terms of the
energy by
\begin{equation}\label{ts}
\tau(\sigma) \leq \frac{9E(w)}{4\gamma^2}+2r
\leq \frac{9||F||}{4\gamma^2}+2r_0.
\end{equation}
Combining Lemma~\ref{abound} with (\ref{haaa}) and
(\ref{ts})
and using $\partial_s \eta=-\int_0^1 H(v)dt$ we
estimate
\begin{eqnarray*}
|\eta(\sigma)| &\leq&|\eta(\sigma+\tau(\sigma))|
+\int_{\sigma}^{\sigma+\tau(\sigma)}|\partial_s \eta|ds\\
&\leq&\frac{3c_1+3||F||+2\gamma}{3(1-\gamma)}+
(\gamma+\epsilon)\tau(\sigma)\\
&\leq&\frac{3c_1+3||F||+2\gamma}{3(1-\gamma)}+
\frac{9||F||(\gamma+\epsilon)}{4\gamma^2}+2r_0(\gamma+\epsilon).
\end{eqnarray*}
Since $\sigma$ was arbitrary we are done with Step\,3.
\\ \\
\textbf{Step\,4: } \emph{We prove the claim.}
\\ \\
For $\nu \in \mathbb{N}$ let $(w_\nu,r_\nu)=(v_\nu,\eta_\nu,r_\nu)$
be a sequence in $\mathcal{M}$. Since the homotopy parameter
$r_\nu$ is uniformly bounded by Step\,2 and
the Lagrange multiplier
$\eta_\nu$ is uniformly bounded by Step\,3 standard arguments in
Floer theory imply that $(w_\nu,r_\nu)$ has a
$C^\infty_{\mathrm{loc}}$-convergent subsequence.
Indeed, $v_\nu$ satisfies
a uniform $C^0$-bound by the assumption that $(V,\omega)$
is geometrically bounded and the derivatives of $v_\nu$ can be
controlled because there is no bubbling since $(V,\omega)$
is symplectically aspherical. Let $(w,r)$ be the limit of the
subsequence. $(w,r)$ obviously satisfies the first equation in
(\ref{str}). It remains to check that
$w$ satisfies the asymptotic conditions. Again by compactness
it follows that $w(s)$ converges to critical points
$w^\pm=(v^\pm,\eta^\pm)$ of
$\mathcal{A}^\alpha$ as $s$ goes to $\pm \infty$. On
the other hand it follows from Step\,1 that
$$\mathcal{A}^\alpha_r(w(s)) \in [-||F||,||F||], \quad
\forall\,\,s \in \mathbb{R}$$
and hence
$$\Omega(v^\pm)=\mathcal{A}^\alpha(w^\pm) \in [-||F||,||F||].$$
Therefore (\ref{wid}) implies that $v^\pm$ has to be constant
and hence
$$w^-=(x,0), \quad w^+ \in \Sigma \times \{0\}.$$
This finishes the prove of the claim.
\\ \\
Given the claim we are now in position to prove the theorem in
a last step.
\\ \\
\textbf{Step\,5: }\emph{We prove the theorem.}
\\ \\
Given the claim it remains to argue that the compactness
of the moduli space $\mathcal{M}$ is absurd in order to
show that (\ref{wid}) cannot be true. For $r=0$ there
is precisely one point $(w,0) \in \mathcal{M}$ namely
the constant gradient flow line $w=(x,0)$. The
constant gradient flow line is regular in the sense that
the linearization of the gradient flow equation at
it is surjective. Thinking of the moduli space
$\mathcal{M}$ as the zero set of a Fredholm section from
a Banach space into a Banach bundle and using
that it is compact we can perturb this
section slightly to make it transverse. The zero set of
the perturbed section is now a compact
manifold with one single boundary point $(x,0,0)$. However,
such manifolds do not exist. Therefore (\ref{wid}) had
to be wrong and we conclude that there exists $v \in X(\Sigma)$
such that
$$\Omega(v) \leq ||F||<e(\Sigma)+\epsilon_0.$$
Since $\epsilon_0>0$ was arbitrary the theorem follows.
\hfill $\square$

\subsection{Approvable perturbations}\label{ss:approvable}

Except in the case where $V$ is zero dimensional, the Rabinowitz
action functional is never Morse, since its critical set contains
the constant solutions and each nontrivial Reeb orbit comes in an
$S^1$-family coming from time-shift. The best situation we can hope
for, is that Rabinowitz action functional is Morse-Bott. However,
since the stability condition is not an open condition \cite{CFP}, a
slight perturbation of a stable hypersurface might not be stable
anymore.

In this subsection we study a class of perturbations of Rabinowitz
action functional. We first show that for a generic perturbation the
perturbed Rabinowitz action functional is Morse. We then explain how
in the weakly tame case for small perturbations the moduli space of
gradient flow lines in a fixed action interval can be written as the
disjoint union of two closed subspaces where one of them is compact.
We refer to the compact part as the essential part of the moduli
space of gradient flow lines. We finally explain how the essential
part of the moduli space of gradient flow lines can be used to
define a boundary operator for a fixed action interval.

The perturbations of Rabinowitz action functional
we consider are reminiscent of the ones we
considered in the previous subsection, however
they are more general, since we do not
require that the time support of the additional
perturbation Hamiltonian is disjoint from the
time support of the Hamiltonian $H_\alpha$.
Namely we choose a
compactly supported time-dependent Hamiltonian
$F \in C_c^\infty(V \times S^1)$ and define
$\mathcal{A}^\alpha_F \colon \mathcal{L} \times \mathbb{R}
\to \mathbb{R}$ as in the previous subsection by
$$\mathcal{A}^\alpha_F(v,\eta)=\mathcal{A}^\alpha(v,\eta)
-\int_0^1 F_t(v(t))dt.$$
Critical points of the action functional
$\mathcal{A}^\alpha_F$ are solutions of the problem
\begin{equation}\label{percr}
\left.\begin{array}{c}
\partial_t v=\eta X_H(v)+X_{F_t}(v)\\
\int_0^1 H(v(t))dt=0.
\end{array}\right\}
\end{equation}
We first show that for generic perturbations the perturbed
action functional is Morse.
\begin{proposition}\label{morse}
Given an admissible quadruple $\alpha$, there exists
a subset $\mathcal{U}(\alpha) \in
C^\infty_c(V \times S^1)$ of the second category
such that $\mathcal{A}^\alpha_F$ is Morse for every
$F \in \mathcal{U}(\alpha)$.
\end{proposition}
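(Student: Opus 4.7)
The plan is a standard Sard--Smale transversality argument applied to the universal moduli space of critical points. First complete $\mathcal{L}$ to a separable Banach manifold $\mathcal{L}^{1,p}$ for some $p>2$, and replace $C^\infty_c(V\times S^1)$ by a separable Banach space $\mathcal{U}^\epsilon$ of perturbations carrying one of Floer's $C^\epsilon$-norms, which embeds densely in the smooth compactly supported perturbations. The critical points of $\mathcal{A}^\alpha_F$ are then precisely the zeros of the smooth section
\[
\Psi\colon \mathcal{L}^{1,p}\times\mathbb{R}\times\mathcal{U}^\epsilon \longrightarrow \mathcal{E},\quad (v,\eta,F)\longmapsto \Bigl(\partial_t v-\eta X_H(v)-X_{F_t}(v),\ \textstyle\int_0^1 H(t,v(t))\,dt\Bigr),
\]
where $\mathcal{E}$ is the Banach bundle with fibre $L^p(v^*TV)\oplus\mathbb{R}$ over $(v,\eta,F)$.

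The heart of the argument is to show that the universal zero set $\mathcal{M}^{\mathrm{univ}}=\Psi^{-1}(0)$ is a Banach submanifold, by proving that $D\Psi$ is surjective at every zero. The $(\hat v,\hat\eta)$-part of $D\Psi$ is Fredholm: the first component is a compact perturbation of $\partial_t$ on $W^{1,p}$, and the second is a rank-one evaluation. To obtain full surjectivity I would take $(\xi,c)$ in the $L^q\oplus\mathbb{R}$ annihilator of the image and vary only $\hat F$, which yields
\[
\int_0^1 \omega_{v(t)}\bigl(X_{\hat F_t}(v(t)),\,\xi(t)\bigr)\,dt=0 \qquad \forall\,\hat F\in\mathcal{U}^\epsilon.
\]
Using non-degeneracy of $\omega$ together with the density of compactly supported bump Hamiltonians and the time-dependence of $\hat F$ (which separates multiple passes over the same point), this forces $\xi\equiv 0$. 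Varying $\hat\eta$ then gives $c\int_0^1 H(t,v(t))\,dt=0$, and a final variation of $\hat v$ supported where $H(v)\neq 0$ pins down $c=0$.

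Once $\mathcal{M}^{\mathrm{univ}}$ is a Banach manifold, the projection $\pi\colon\mathcal{M}^{\mathrm{univ}}\to\mathcal{U}^\epsilon$ is smooth and Fredholm, so by the Sard--Smale theorem its regular values form a residual subset $\mathcal{U}^\epsilon_{\mathrm{reg}}\subset\mathcal{U}^\epsilon$. Unwinding the definitions, $F\in\mathcal{U}^\epsilon_{\mathrm{reg}}$ is equivalent to the Hessian of $\mathcal{A}^\alpha_F$ being non-degenerate at every critical point, i.e.\ to $\mathcal{A}^\alpha_F$ being Morse. A Taubes-style argument using a nested countable family of $C^\epsilon$-norms then transfers this residual set to a residual subset $\mathcal{U}(\alpha)\subset C^\infty_c(V\times S^1)$, as required.

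The principal obstacle is the surjectivity step, specifically the treatment of constant critical loops and multiply covered orbits, where the naive localisation of $\hat F$ degenerates. For multiply covered orbits the time-dependence of $\hat F$ is sufficient to probe distinct passes over the same point independently. For constant orbits $v\equiv x_0$ with $\bar H(x_0)=0$, the vanishing of $\xi$ reduces to a pointwise question on $T_{x_0}V$, which is handled by choosing $\hat F$ supported in arbitrarily small neighbourhoods of $x_0$ and invoking non-degeneracy of $\omega_{x_0}$. These manoeuvres are by now standard in Floer theory (compare Floer--Hofer--Salamon).
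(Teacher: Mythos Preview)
Your proposal is correct and follows essentially the same route as the paper: set up the universal critical-point section, prove surjectivity of its vertical differential by first using the $\hat F$-variation (time-localised bump Hamiltonians) to kill the loop component of an annihilator and then a $\hat v$-variation to kill the scalar component, apply Sard--Smale, and finish with a Taubes argument. The only cosmetic differences are that the paper works in the Hilbert setting $W^{1,2}\times L^2$ with $C^k_c$ perturbation spaces (rather than $W^{1,p}$ and Floer $C^\epsilon$-norms), and its Taubes step is organised by truncating the Lagrange multiplier $|\eta|\le T$ to obtain compactness of the truncated critical set and hence openness of $\mathcal{U}^\infty_T(\alpha)$; one small slip in your write-up is that the final $\hat v$-variation should be supported where $dH(v)\ne 0$ (guaranteed since $0$ is a regular value of $\bar H$), not where $H(v)\ne 0$.
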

\begin{proof}
Consider the Hilbert manifold
$$\mathcal{H}=W^{1,2}(S^1,V)\times \mathbb{R}$$
where we define $W^{1,2}(S^1,V)$
by embedding $V$ into $\mathbb{R}^N$ for
$N$ large enough. Over the Hilbert manifold
$\mathcal{H}$ we introduce the Hilbert bundle
$$\pi \colon \mathcal{E} \to \mathcal{H}$$
whose fibre over $(v,\eta) \in \mathcal{H}$ is given by
$$\mathcal{E}_{(v,\eta)}=L^2(S^1,v^*TV)\times
\mathbb{R}.$$
Choose an $\omega$-compatible almost complex structure $J$
and denote by $\nabla$ the gradient with respect to the
metric $\omega(\cdot,J\cdot)$.
For $F \in C^\infty_c(V \times S^1)$ we define a section
$$s_F \colon \mathcal{H} \to \mathcal{E}$$
by
$$s_F(v,\eta)=
\left(\begin{array}{c}
J\partial_t v-\eta \nabla H(v)-\nabla F_t(v)\\
\int_0^1H(v(t))dt
\end{array}\right).$$
Note that the zero set $s_F^{-1}(0)$ coincides
with the solutions of (\ref{percr}). If
$w \in \mathcal{H} \subset \mathcal{E}$ then there is
a canonical splitting
$$T_w \mathcal{E}=\mathcal{E}_w \times T_w \mathcal{H}.$$
We denote by
$$\Pi_w \colon T_w \mathcal{E} \to \mathcal{E}_w$$
the projection along $T_w \mathcal{H}$. If
$(v,\eta) \in s_F^{-1}(0)$ we introduce the vertical
differential
$$Ds_F(v,\eta) \colon
T_{(v,\eta)}\mathcal{H}=W^{1,2}(S^1,v^*TV)\times
\mathbb{R} \to \mathcal{E}_{(v,\eta)}$$
by
$$Ds_F(v,\eta)=\Pi_{(v,\eta)} \circ ds_F(v,\eta).$$
The action functional $\mathcal{A}^\alpha_F$ is Morse
iff the vertical differential $Ds_F(w)$ is surjective
for every $w \in s_F^{-1}(0)$ meaning that
$s_F$ is transverse to the zero section
\begin{equation}\label{tra}
s_F\pitchfork 0.
\end{equation}
We prove in two steps that (\ref{tra}) holds for generic
$F \in C^\infty_c(V \times S^1)$. In the first step
we prove transversality for weaker differentiability
assumptions. The smooth case follows then by an argument
due to Taubes.
\\ \\
\textbf{Step\,1: }\emph{Assume that $2 \leq k <\infty$. Then there
exists $\mathcal{U}^k(\alpha) \subset C^k_c(V \times S^1)$ of the
second category, such that (\ref{tra}) holds for any $F \in
\mathcal{U}^k(\alpha)$.}
\\ \\
Consider the section
$$S \colon C^k_c(V \times S^1) \times \mathcal{H} \to
\mathcal{E}$$
which is defined by
$$S(F,w)=s_F(w), \quad F \in C^k_c(V \times S^1),\,\,
w \in \mathcal{H}.$$
If $(F,w) \in S^{-1}(0)$ with
$w=(v,\eta)$, then the vertical differential
$$DS(F,w) \colon T_{(F,w)}\big(C^k_c(V \times S^1)
\times \mathcal{H}\big)=C^k_c(V \times S^1)
\times T_w \mathcal{H} \to \mathcal{E}_w$$
is given for $\hat{F} \in C^k_c(V \times S^1)$ and
$\hat{w} \in T_w \mathcal{H}$ by
\begin{equation}\label{vedi}
DS(F,w)(\hat{F},\hat{w})=Ds_F(w) \hat{w}+
\left(\begin{array}{c}
-\nabla \hat{F}_t(v)\\
0
\end{array}\right).
\end{equation}
We first show the following claim.
\\ \\
\textbf{Claim: }\emph{For every $(F,w) \in S^{-1}(0)$ the operator
$DS(F,w)$ is surjective.}
\\ \\
Pick $(F,w) \in S^{-1}(0)$. Since $Ds_F(w)$ is Fredholm, the
image of $DS(F,w)$ is closed. Hence to show surjectivity, it
suffices to prove that the orthogonal complement of the image of
$DS(F,w)$ vanishes. To see that pick
$$x=(y,\zeta) \in \mathrm{im}DS(F,w)^\perp.$$
It follows from (\ref{vedi}) that
\begin{equation}\label{oco}
\left.\begin{array}{cc}
\langle Ds_F(w)\hat{w},x\rangle=0, & \forall
\,\,\hat{w} \in T_w \mathcal{H}\\
\langle \nabla\hat{F}(v),y\rangle=0, & \forall
\,\,\hat{F} \in C^k_c(V \times S^1).
\end{array}
\right\}
\end{equation}
The first equation in (\ref{oco}) implies that
$$x \in \mathrm{ker}(D s_F(w))^*$$
which forces $y$ to be of class $C^{k-1}$. Now we assume by
contradiction that there exists $t_0 \in S^1$ such that
$$y(t_0) \neq 0.$$
Choose $\hat{F}_{t_0} \in C^k_c(V)$ such that
$$\big\langle \nabla\hat{F}_{t_0}(v(t_0)),y(t_0) \big\rangle>0.$$
Since we have seen that $y$ is continuous, there exists
$\epsilon>0$ such that
$$\big\langle \nabla\hat{F}_{t_0}(v(t)),y(t) \big\rangle \geq 0,
\quad t \in (t_0-\epsilon,t_0+\epsilon).$$
Now choose a smooth cutoff function
$\beta \in C^\infty(S^1,[0,1])$ such that
$\beta(t_0)=1$ and $\beta(t)=0$ for
$t \notin (t_0-\epsilon,t_0+\epsilon)$ and set
$\hat{F}_t=\beta(t)\hat{F}_{t_0}$.
It follows that
$$\big\langle \nabla\hat{F},y\big\rangle
=\int_0^1 \beta(t)\big\langle \nabla\hat{F}_{t_0}(v(t)),y(t)
\big\rangle dt>0$$
contradicting the second equation in (\ref{oco}).
This proves that $y$ has to vanish identically.

It remains to show that $\zeta$ vanishes. To see this we write
$$\hat{w}=(\hat{v},\hat{\eta}) \in T_w \mathcal{H}=
W^{1,2}(S^1,v^*TV) \times \mathbb{R}.$$
Since $y$ vanishes identically the first equation in (\ref{oco})
becomes
$$\zeta \int_0^1 dH(v(t))\hat{v}(t)dt=0, \quad
\forall\,\,\hat{v} \in W^{1,2}(S^1,v^*TV).$$
Note that $(v,\eta)$ is a solution of (\ref{percr}). Hence, since
$0$ is a regular value of $H$, it follows from the second
equation in (\ref{percr}) that $dH(v)$ does not vanish identically
along $v$. Therefore, there exists $\hat{v} \in W^{1,2}(S^1,v^*TV)$
such that
$$\int_0^1 dH(v(t)) \hat{v}(t)dt \neq 0.$$
Consequently,
$$\zeta=0.$$
This finishes the proof of the claim.
\\ \\
Since $F \in C^k_c(V \times S^1)$ it follows that the section
$S$ is $C^{k-1}$. Hence by the claim the implicit function
theorem shows that $S^{-1}(0)$ is a $C^{k-1}$-manifold.
Consider the $C^{k-1}$-map
$$p \colon S^{-1}(0) \to C^k_c(V\times S^1), \quad
(F,w) \mapsto F.$$
It follows from the Sard-Smale theorem that the set
of regular values of the map $p$ is of the second category in
$C^k_c(V \times S^1)$. But $F$ is a regular value of
$p$, precisely if $s_F \pitchfork 0$. This finishes
the proof of Step 1.
\\ \\
\textbf{Step~2: }\emph{We prove the proposition.}
\\ \\
We explain the argument by Taubes in our set-up (cf. \cite[p.52]{MS1}).

Since $X_H$ and $X_{F_t}$ have compact support and $0$
is a regular value of $H$ it follows that there exists
a compact subset $V_0 \subset V$ such that
for every $(v,\eta) \in \mathrm{crit}(\mathcal{A}^\alpha_F)$
\begin{equation}\label{schr}
v(t) \in V_0, \quad t \in S^1.
\end{equation}
Choose $T>0$ and abbreviate
$$\mathrm{crit}_T(\mathcal{A}^\alpha_F)=
\big\{(v,\eta) \in \mathrm{crit}(\mathcal{A}^\alpha_F): |\eta| \leq
T\big\}.$$ It follows from (\ref{schr}), the first equation in
(\ref{percr}) and the Theorem of Arzela-Ascoli that
$\mathrm{crit}_T(\mathcal{A}^\alpha_F)$ is compact. For $2 \leq k
\leq \infty$ we abbreviate
$$\mathcal{U}^k_T(\alpha)=\big\{F \in C^k_c(V \times S^1):
Ds_F(w)\,\,\textrm{surjective},\,\,\forall\,\,
w \in \mathrm{crit}_T(\mathcal{A}^\alpha_F)\big\}.$$
Since $\mathrm{crit}_T(\mathcal{A}^\alpha_F)$ is compact
it follows that $\mathcal{U}^k_T(\alpha)$ is open in
$C^k_c(V \times S^1)$. Moreover, it follows from Step~1
that if $k<\infty$ it is also dense in $C^k_c(V \times S^1)$.
Since $C^\infty$ is dense in $C^k$ for every $k$, a
diagonal argument shows that
$\mathcal{U}^\infty_T(\alpha)$ is also dense in
$C^\infty_c(V \times S^1)$. It follows that
$$\mathcal{U}(\alpha)=\bigcap_{T \in \mathbb{N}}
\mathcal{U}^\infty_T(\alpha)$$
is of the second category in $C^\infty_c(V \times S^1)$.
This finishes the proof of the proposition.
\end{proof}

If $J \in \mathcal{J}(\alpha)$ is an $\omega$-compatible
almost complex structure and $m=m_J$ is the metric
induced from $J$
we denote by $\mathcal{M}(\mathcal{A}^\alpha_F,J)$
the moduli space of all finite energy gradient flow lines
of $\nabla_m \mathcal{A}^\alpha_F$.
For $a,b \in \mathbb{R}$ we abbreviate
$$\mathcal{M}_a^b(\mathcal{A}^\alpha_F,J)=
\Big\{w \in \mathcal{M}(\mathcal{A}^\alpha_F,J):
\mathcal{A}^\alpha_F(w(s)) \in [a,b],\,\,
\forall\,\,s \in \mathbb{R}\Big\}.$$
We cannot expect that gradient flow lines of the
perturbed action functional are still compact up to breaking.
However, we show that in the case of weakly tame
stable hypersurfaces for small perturbations
there is a decomposition
of $\mathcal{M}_a^b(\mathcal{A}^\alpha_F,J)$ into a disjoint
union of \emph{closed} subsets
$$\mathcal{M}_a^b(\mathcal{A}^\alpha_F,J)=
\underline{\mathcal{M}}_a^b(\mathcal{A}^\alpha_F,J)\sqcup
\overline{\mathcal{M}}_a^b(\mathcal{A}^\alpha_F,J)$$ where
$\underline{\mathcal{M}}_a^b(\mathcal{A}_F^\alpha,J)$ is compact up
to breaking of gradient flow lines. We refer to
$\underline{\mathcal{M}}_a^b(\mathcal{A}^\alpha_F,J)$ as the
\emph{essential part} of the moduli space
$\mathcal{M}_a^b(\mathcal{A}^\alpha_F,J)$ and to
$\overline{\mathcal{M}}_a^b(\mathcal{A}^\alpha_F,J;a,b)$ as the
\emph{unessential part}. The boundary operator then takes only
account of the essential part of the moduli space of gradient flow
lines.

If $\Sigma$ is weakly tame then by definition
for $a \leq b$ the set
$$\mathrm{crit}_a^b(\mathcal{A}^\alpha)=
\big\{w \in \mathrm{crit}(\mathcal{A}^\alpha):
a \leq \mathcal{A}^\alpha(w)\leq b\big\}$$
is compact. Hence we can define
$$\aleph(a,b;\alpha)=\min\Big\{a,\min\big\{\mathcal{A}(w):
w \in \mathrm{crit}_a^b(\mathcal{A}^\alpha)\big\}\Big\}$$
and
$$\beth(a,b;\alpha)=\max\Big\{b,\max\big\{\mathcal{A}^\alpha(w):
w \in \mathrm{crit}_a^b(\mathcal{A}^\alpha)\big\}\Big\}.$$
Note that $\aleph(a,b)$ and $\beth(a,b)$ actually only depend
on $\pi(\alpha)=\lambda \in \Lambda_s(\Sigma,\omega)$ so
that we can set
$$\aleph(a,b;\lambda)=\aleph(a,b;\alpha), \quad
\beth(a,b;\lambda)=\beth(a,b;\alpha).$$ We next introduce a subspace
of perturbations $F$ which have the property that the Lagrange
multiplier along gradient flow lines either becomes large or remains
small. We first introduce the number
$$\gimel(a,b;\lambda)=\bigg(\frac{5}{2}\bigg)^2
\Big(
\beth(a,b;\lambda)-\aleph(a,b;\lambda)\Big)$$
and define the interval
$$I(a,b;\lambda)=[\gimel(a,b;\lambda)+3,
\gimel(a,b;\lambda)+4].$$
We set
$$\widetilde{\mathcal{U}}_a^b(\alpha,J)=\Big\{
F \in C^\infty_c(V\times S^1):||\eta||_\infty \notin
I(a,b;\lambda),\,\,
\forall\,\,w=(v,\eta) \in \mathcal{M}^b_a(
\mathcal{A}^\alpha_F,J)\Big\}.$$
\begin{lemma}\label{app}
Assume that $\Sigma$ is a weakly tame stable hypersurface in a
symplectically aspherical, geometrically bounded, symplectic
manifold $(V,\omega)$ and $\alpha \in \mathfrak{A}(\Sigma,V)$ is a
$\big(\frac{3}{5},\epsilon\big)$-admissible quintuple with $\epsilon
\leq \frac{1}{\gimel(a,b;\lambda)}$, $J \in \mathcal{J}(\alpha)$ and
$a \leq b$. Then $\widetilde{\mathcal{U}}^b_a(\alpha,J)$ is open and
nonempty.
\end{lemma}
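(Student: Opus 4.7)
I would prove the two claims separately: nonemptiness by exhibiting $F=0$ as a member of $\widetilde{\mathcal{U}}_a^b(\alpha,J)$, and openness by showing that its complement is sequentially closed in $C^\infty_c(V\times S^1)$. For the first claim, let $w=(v,\eta)\in\mathcal{M}_a^b(\mathcal{A}^\alpha,J)$ be an arbitrary finite-energy gradient flow line of the unperturbed functional. Since $\mathcal{A}^\alpha$ is non-decreasing along its flow, the asymptotic limits $w^\pm$ are critical points lying in $\mathrm{crit}_a^b(\mathcal{A}^\alpha)$; by Corollary~\ref{liap} the Liapunov function $\mathcal{A}$ is also non-decreasing along $w$, so $\mathcal{A}(w(s))\in[\mathcal{A}(w^-),\mathcal{A}(w^+)]\subset[\aleph,\beth]$ for all $s$ by the very definition of $\aleph=\aleph(a,b;\lambda)$ and $\beth=\beth(a,b;\lambda)$. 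Simultaneously $\mathcal{A}^\alpha(w(s))\in[a,b]\subset[\aleph,\beth]$. The hypothesis $\epsilon\le 1/\gimel(a,b;\lambda)=4/(25(\beth-\aleph))$ is exactly the inequality required to apply Corollary~\ref{obo} on the enlarged action window $[\aleph,\beth]$, yielding $\|\eta\|_\infty\le(5/2)^2(\beth-\aleph)+2=\gimel+2<\gimel+3$, so $\|\eta\|_\infty\notin I(a,b;\lambda)$ and $0\in\widetilde{\mathcal{U}}_a^b(\alpha,J)$.

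For openness I would argue by contradiction. Suppose $F\in\widetilde{\mathcal{U}}_a^b(\alpha,J)$ but $F_n\to F$ in $C^\infty_c(V\times S^1)$ with $F_n\notin\widetilde{\mathcal{U}}_a^b(\alpha,J)$, and select gradient flow lines $w_n=(v_n,\eta_n)\in\mathcal{M}_a^b(\mathcal{A}^\alpha_{F_n},J)$ with $L_n:=\|\eta_n\|_\infty\in I(a,b;\lambda)$. Pass to a subsequence so that $L_n\to L\in I(a,b;\lambda)$, pick $s_n\in\R$ with $|\eta_n(s_n)|\ge L_n-1/n$, and translate: $\tilde w_n(s):=w_n(s+s_n)$ is still a gradient flow line of $\nabla_m\mathcal{A}^\alpha_{F_n}$ with action values in $[a,b]$, the pointwise bound $|\tilde\eta_n|\le\gimel+4$ is uniform in $n$, and the energy is bounded by $b-a$. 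Standard Floer compactness then applies: geometric boundedness of $(V,\omega)$ yields a uniform $C^0$-bound on $v_n$ (the vector field $\eta_n X_H+X_{F_{n,t}}$ being uniformly bounded and compactly supported), while $\omega|_{\pi_2(V)}=0$ prevents sphere bubbling and together with elliptic regularity gives uniform derivative control. Hence a subsequence of $\tilde w_n$ converges in $C^\infty_{\mathrm{loc}}$ to a finite-energy gradient flow line $w=(v,\eta)$ of $\nabla_m\mathcal{A}^\alpha_F$, namely the central piece (containing the image of $s=0$) of whatever broken trajectory arises. Pointwise limits give $\mathcal{A}^\alpha_F(w(s))\in[a,b]$ for every $s$, so $w\in\mathcal{M}_a^b(\mathcal{A}^\alpha_F,J)$; also $|\eta(s)|\le\gimel+4$ and $|\eta(0)|=L\ge\gimel+3$, so $\|\eta\|_\infty\in I(a,b;\lambda)$, contradicting $F\in\widetilde{\mathcal{U}}_a^b(\alpha,J)$.

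The main obstacle is the Floer compactness step in the presence of the compactly supported time-dependent perturbation $F_n$: one has to verify the uniform $C^0$-estimate on $v_n$, the absence of bubbling, and the standard broken-trajectory description of the $C^\infty_{\mathrm{loc}}$-limit, and one must further check that the piece of the limit selected at $s=0$ simultaneously retains membership in the action window $[a,b]$ and the lower bound $L\ge\gimel+3$ on the Lagrange multiplier. Once these estimates are granted, the contradiction argument above closes the openness claim, and combined with the observation $0\in\widetilde{\mathcal{U}}_a^b(\alpha,J)$ this establishes the lemma.
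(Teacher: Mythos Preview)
Your argument is correct and follows the same strategy as the paper: nonemptiness via $F=0$ and Corollary~\ref{obo}, openness via sequential closedness of the complement and Floer compactness with a uniform bound on the Lagrange multiplier. Your presentation is in fact a bit more careful than the paper's in one respect: the paper simply asserts that since $I(a,b;\lambda)$ is closed the limit satisfies $\|\eta\|_\infty\in I$, whereas $C^\infty_{\mathrm{loc}}$-convergence alone only gives $\|\eta\|_\infty\le\liminf\|\eta_n\|_\infty$; your explicit translation to the point where $|\eta_n|$ is near its supremum is exactly what is needed to secure the lower bound $\|\eta\|_\infty\ge\gimel+3$ in the limit.
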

\begin{proof}
We first show that $\widetilde{\mathcal{U}}^b_a(\alpha,J)$ is open.
This is actually true for any admissible quintuple
$\alpha$. For openness we prove that the complement
is closed. Hence let
$$F_\nu \in C^\infty_c(V\times S^1)
\setminus \widetilde{\mathcal{U}}^b_a(\alpha,J), \quad
\nu \in \mathbb{N}$$ be a sequence
of perturbations in the complement of
$\widetilde{\mathcal{U}}^b_a(\alpha,J)$ such that
$$\lim_{\nu \to \infty} F_\nu =F \in C^\infty(V\times S^1).$$
It remains to show that $F \notin
\widetilde{\mathcal{U}}^b_a(\alpha,J)$. By definition
of $\widetilde{\mathcal{U}}^b_a(\alpha,J)$ there exists for each
$\nu \in \mathbb{N}$ a gradient flow line
$$w_\nu=(v_\nu,\eta_\nu) \in
\mathcal{M}^b_a(\mathcal{A}^\alpha_{F_\nu},J)$$
such that
\begin{equation}\label{ga}
||\eta_\nu||_\infty \in I(a,b;\lambda).
\end{equation}
Since the Lagrange multiplier is uniformly bounded the
usual compactness arguments in Floer homology
(boundedness at infinity and no bubbling because
of symplectic asphericity) imply that
$w_\nu$ has a convergent subsequence $w_{\nu_j}$ such that
$$\lim_{j \to \infty}w_{\nu_j}=w=(v,\eta)
\in \mathcal{M}^b_a(\mathcal{A}^\alpha_F,J).$$
Since $I(a,b;\lambda)$ is closed it follows from
(\ref{ga}) that
$$||\eta||_\infty \in I(a,b;\lambda).$$
Consequently
$$F \notin \widetilde{\mathcal{U}}^b_a(\alpha,J).$$
This finishes the proof of openness.

To see finally that $\widetilde{\mathcal{U}}^b_a(\alpha,J)$
is nonempty we observe that by our assumptions on
the admissible quintuple $\alpha$ it follows from
Corollary~\ref{obo} that
$$0 \in \widetilde{\mathcal{U}}^b_a(\alpha,J).$$
Hence we are done with the proof of the Lemma.
\end{proof}
\begin{definition}
For a fixed pair $(\alpha,J)$ consisting of an admissible quintuple
$\alpha$ and an $\omega$-compatible almost complex structure
$J \in \mathcal{J}(\alpha)$ and $a \leq b$ we say that a perturbation $F
\in C^\infty_{c}(V\times S^1)$ is \emph{$(a,b)$-approvable} if
$\mathcal{A}^\alpha_F$ is Morse and contained in
$\widetilde{\mathcal{U}}_a^b(\alpha,J)$. We set
$$\mathcal{U}_a^b(\alpha,J)=\{F \in \widetilde{\mathcal{U}}_a^b(\alpha,J):
\mathcal{A}^\alpha_F\,\,\textrm{Morse}\}$$
for the set of $(a,b)$-approvable perturbations.
\end{definition}
Combining Proposition~\ref{morse} with Lemma~\ref{app} we obtain
the following Corollary.
\begin{corollary}
Under the assumptions of Lemma~\ref{app} the set
$\mathcal{U}_a^b(\alpha,J)$ is nonempty.
\end{corollary}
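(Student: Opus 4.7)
The plan is straightforward: the corollary asserts the intersection
$\mathcal{U}_a^b(\alpha,J) = \widetilde{\mathcal{U}}_a^b(\alpha,J)\cap\mathcal{U}(\alpha)$
is nonempty, and this should follow by combining the openness/nonemptiness statement of Lemma~\ref{app} with the genericity statement of Proposition~\ref{morse} via a standard Baire category argument in $C_c^\infty(V\times S^1)$.

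First, I would invoke Lemma~\ref{app}: under the stated hypotheses on $\alpha$ and the fact that $\Sigma$ is weakly tame, stable and $(V,\omega)$ is geometrically bounded and symplectically aspherical, the set $\widetilde{\mathcal{U}}_a^b(\alpha,J)$ of perturbations enforcing the dichotomy $\|\eta\|_\infty\notin I(a,b;\lambda)$ along gradient flow lines in $\mathcal{M}_a^b(\mathcal{A}^\alpha_F,J)$ is open and nonempty (in fact, $0$ belongs to it by Corollary~\ref{obo}, and the choice $\epsilon\le 1/\gimel(a,b;\lambda)$ is precisely what makes the bound of Corollary~\ref{obo} compatible with the interval $I(a,b;\lambda)$).

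Next, I would apply Proposition~\ref{morse} to obtain a subset $\mathcal{U}(\alpha)\subset C_c^\infty(V\times S^1)$ of the second category such that $\mathcal{A}^\alpha_F$ is Morse for every $F\in\mathcal{U}(\alpha)$. Since $C_c^\infty(V\times S^1)$ equipped with its natural Fr\'echet topology is a Baire space, any set of the second category is dense. Consequently the intersection of the nonempty open set $\widetilde{\mathcal{U}}_a^b(\alpha,J)$ with the dense set $\mathcal{U}(\alpha)$ is nonempty. By definition, every element of this intersection is $(a,b)$-approvable, which gives $\mathcal{U}_a^b(\alpha,J)\ne\emptyset$.

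The only potentially delicate point is making sure the two genericity/openness statements live in the same topology: Proposition~\ref{morse} produces a second-category (hence dense) subset in $C_c^\infty(V\times S^1)$, while the openness in Lemma~\ref{app} was established through a $C^\infty_{\mathrm{loc}}$ compactness argument for gradient flow lines under $C^\infty$-convergence of the perturbations. Both statements are therefore compatible with the Fr\'echet $C^\infty$ topology, so Baire applies and no further work is needed; this is the argument I would write.
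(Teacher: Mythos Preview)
Your approach matches the paper's, which simply records that the corollary follows by combining Proposition~\ref{morse} with Lemma~\ref{app}. One small correction: being of second category does not by itself imply density; what you actually use is that $\mathcal{U}(\alpha)$ is \emph{residual} (it is constructed in the proof of Proposition~\ref{morse} as the countable intersection $\bigcap_{T\in\mathbb{N}}\mathcal{U}^\infty_T(\alpha)$ of open dense sets), and hence dense by Baire, so it meets the nonempty open set $\widetilde{\mathcal{U}}_a^b(\alpha,J)$.
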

We finally explain how to associate to an $(a,b)$-approvable
perturbation $F$ a homology group $HF_a^b(\mathcal{A}^\alpha_F,J)$.
We first define the set
\begin{equation}\label{tcr}
\mathcal{C}_a^b(\mathcal{A}^\alpha_F)=
\big\{w=(v,\eta) \in \mathrm{crit}_a^b(\mathcal{A}^\alpha_F):
|\eta| < \gimel(a,b;\lambda)+3\big\}.
\end{equation}
Because $\mathrm{crit}_a^b(\mathcal{A}^\alpha_F) \subset
\mathcal{M}_a^b(\mathcal{A}^\alpha_F,J)$ and $F$ is
$(a,b)$-approvable we infer
$$\mathcal{C}_a^b(\mathcal{A}^\alpha_F)=
\big\{w=(v,\eta) \in \mathrm{crit}_a^b(\mathcal{A}^\alpha_F): |\eta|
\leq \gimel(a,b;\lambda)+3\big\}.
$$
 Applying
the Theorem of Arzela-Ascoli to the critical point equation
(\ref{percr}) of $\mathcal{A}^\alpha_F$ we see that
$\mathcal{C}_a^b(\mathcal{A}^\alpha_F)$ is compact. Since
$\mathcal{A}^\alpha_F$ is Morse the set
$\mathcal{C}_a^b(\mathcal{A}^\alpha_F)$ is also discrete and hence
finite. Thus
$$CF_a^b(\mathcal{A}^\alpha_F)=\mathcal{C}^b_a(\mathcal{A}^\alpha_F)
\otimes \mathbb{Z}_2$$
is a finite dimensional $\mathbb{Z}_2$-vector space. We
further define the \emph{essential part} of the moduli space
$\mathcal{M}_a^b(\mathcal{A}^\alpha_F,J)$ to be
$$\underline{\mathcal{M}}_a^b(\mathcal{A}^\alpha_F,J)
=\Big\{w=(v,\eta) \in \mathcal{M}^b_a(\mathcal{A}^\alpha_F,J):
||\eta||_\infty<\gimel(a,b;\lambda)+3\Big\}.$$ Since $F$ is
$(a,b)$-approvable it follows that
$\underline{\mathcal{M}}_a^b(\mathcal{A}^\alpha_F,J)$ is a closed
subset of $\mathcal{M}_a^b(\mathcal{A}^\alpha_F,J)$. Moreover, since
the Lagrange multiplier is by definition uniformly bounded in
$\underline{\mathcal{M}}_a^b(\mathcal{A}^\alpha_F,J)$ standard
arguments in Floer theory using that $(V,\omega)$ is geometrically
bounded and symplectically aspherical imply that
$\underline{\mathcal{M}}_a^b(\mathcal{A}^\alpha_F,J)$ is
$C^\infty_{\mathrm{loc}}$-compact. Hence we can use
$\underline{\mathcal{M}}_a^b(\mathcal{A}^\alpha_F,J)$ to define a
linear map
$$\partial_a^b \colon
CF^b_a(\mathcal{A}^\alpha_F) \to CF^b_a(\mathcal{A}^\alpha_F)$$ by
counting gradient flow lines. Since the metric $m$ induced from $J$
does not necessarily fulfill the Morse-Smale condition one may need
to perturb the gradient flow equation to show that $\partial_a^b
\circ \partial_a^b=0$. It is quite likely that this can actually be
achieved by taking a generic family of $\omega$-compatible almost
complex structures $J_t$. However, we have not checked the details
since this is nowadays not needed anymore in view of the progress of
abstract perturbation theory. For example, if one compactifies the
moduli space of unparametrised trajectories
$\underline{\mathcal{M}}_a^b(\mathcal{A}^\alpha_F,J)/ \mathbb{R}$ by
adding broken gradient flow lines, this compactified space can be
interpreted as the zero-set of a Fredholm-section
$$\varsigma \colon \mathcal{P} \to \mathcal{E}$$
where $\mathcal{P}$ is
an $M$-polyfold and $\mathcal{E}$ is an $M$-polyfold bundle
over $\mathcal{P}$, see
\cite{HWZ1}. If one perturbs this section to make it transverse,
see \cite{HWZ2}, one can define a boundary operator by counting
the perturbed broken gradient trajectories between two critical points.
Indeed, if $\wp$ is a generic abstract perturbation and
$$\varsigma_\wp \colon \mathcal{P} \to \mathcal{E}$$
is the perturbed section, then its zero-set
$$\mathcal{N}(\wp)=\varsigma_\wp^{-1}(0)$$
is a compact manifold with corners. We denote by
$\mathcal{N}_0(\wp)$ its zero-dimensional part. For
a generic perturbation the section $\varsigma_\wp$ is also
transverse with respect to the boundary of the polyfold
$\mathcal{P}$ and hence elements in $\mathcal{N}_0(\wp)$
consist still of unparametrised trajectories
$[w] \in C^\infty(\mathbb{R},\mathcal{L}\times \mathbb{R})/\mathbb{R}$
which are unbroken and converge asymptotically to critical points
in $\mathcal{C}^b_a(\mathcal{A}^\alpha_F)$. Hence for $w^\pm \in
\mathcal{C}^b_a(\mathcal{A}^\alpha_F)$ we abbreviate
$$\mathcal{N}_0(\wp;w^-,w^+)=
\Big\{[w] \in \mathcal{N}_0(\wp): \lim_{s \to \pm \infty}
w(s)=w^\pm\Big\}$$
and introduce the $\mathbb{Z}_2$-number
$$n(w^-,w^+)=\#_2 \mathcal{N}_0(\wp;w^-,w^+)$$
where $\#_2$ denotes the count in $\mathbb{Z}_2$.
We define a linear map
$$\partial_a^b(\mathcal{A}^\alpha_F,J;\wp)
\colon CF^b_a(\mathcal{A}^\alpha_F) \to
CF^b_a(\mathcal{A}^\alpha_F)$$
which is defined on generators by
$$\partial^b_a(\mathcal{A}^\alpha_F,J;\wp)[w^+]=
\sum_{w^- \in \mathcal{C}^b_a(\mathcal{A}^\alpha_F)}
n(w^-,w^+)w^-.$$ Standard arguments show that
$\partial^b_a(\mathcal{A}^\alpha_F,J;\wp)$ is actually a boundary
operator, i.e.~its square is zero. The boundary operator might
depend indeed on the abstract perturbation but the homology
\begin{equation}\label{homol}
HF_a^b(\mathcal{A}^\alpha_F,J)=
\frac{\mathrm{ker}\partial_a^b(\mathcal{A}^\alpha_F,J;\wp)}
{\mathrm{im}\partial_a^b(\mathcal{A}^\alpha_F,J;\wp)}
\end{equation}
is independent of the abstract perturbation by standard
homotopy arguments.

\subsection{Definition of Rabinowitz Floer
  homology}\label{ss:RFH}

In this subsection we assume that $\Sigma$ is a weakly tame stable
hypersurface in a symplectically aspherical, geometrically bounded,
symplectic manifold $(V,\omega)$. In this situation we define
Rabinowitz Floer homology. We further compute it for two relevant
cases. One case is when there are no closed orbits in $\Sigma$
contractible in $V$, and the other is when $\Sigma$ is displaceable.

We define the $\Omega$-spectrum of $\Sigma$ to be
$$\mho(\Sigma)=\{\pm\Omega(v): v\in X(\Sigma)\} \cup \{0\}$$
where we recall that $X(\Sigma)$ denotes the set of closed
characteristics in $\Sigma$ which are contractible in $V$. Since
each $\Omega(v)$ corresponds to a critical value of Rabinowitz
action functional $\mathcal{A}^\alpha$ for an admissible quintuple
$\alpha \in \mathfrak{A}(\Sigma,V)$ the set $\mho(\Sigma)$ is a
meager subset of $\mathbb{R}$ by Sard's theorem.

Our first aim is to define Rabinowitz Floer homology groups
$RFH_a^b$ for $a,b \notin \mho(\Sigma)$. These groups basically
depend only on $\Sigma$ and $V$. However, there is a little subtlety
to note. We do not know if the space of all $\omega$-compatible
geometrically bounded almost complex structures $J$ on $V$
is connected. Therefore Rabinowitz Floer homology actually could
depend on the choice of the geometrically bounded compatible almost
complex structure. We therefore fix one such complex structure
$J_0$. For an admissible quintuple $\alpha$ we abbreviate by
$$\mathcal{J}(\alpha,J_0) \subset \mathcal{J}(\alpha)$$
the set of all $J \in \mathcal{J}(\alpha)$ which outside of a
compact set coincide with $J_0$. We observe that the space
$\mathcal{J}(\alpha,J_0)$ is connected. It is possible that
Rabinowitz Floer homology depends on the choice of $J_0$, although
this seems unlikely. We refer in the following to the triple
$(V,\omega,J_0)$ as the \emph{geometrically bounded symplectic
manifold}. We often skip the reference to $\omega$ and $J_0$ and
just mention $V$.

If $a,b \notin \mho(\Sigma)$, $\alpha \in \mathfrak{A}(\Sigma,V)$
meets the assumptions of Lemma~\ref{app},
and $J \in \mathcal{J}(\alpha,J_0)$ we introduce the subset
of perturbations
$$\widetilde{\mathcal{V}}_a^b(\alpha,J)=
\Big\{F \in \widetilde{\mathcal{U}}_a^b(\alpha,J):
\mathcal{A}^\alpha_F(w) \notin \{a,b\},\,\,\forall\,\, w \in
\mathcal{C}^b_a(\mathcal{A}_F^\alpha,J)\Big\}$$ where we refer to
(\ref{tcr}) for the definition of
$\mathcal{C}_a^b(\mathcal{A}^\alpha_F,J)$. By the Theorem of
Arzela-Ascoli $\widetilde{\mathcal{V}}^b_a(\alpha,J)$ is an open
subset of $\widetilde{\mathcal{U}}^b_a(\alpha,J)$. Moreover, since
$a,b \notin \mho(\Sigma)$ the zero perturbation is contained in
$\widetilde{\mathcal{V}}^b_a(\alpha,J)$. Hence we abbreviate by
$$\widehat{\mathcal{V}}_a^b(\alpha,J)
\subset \widetilde{\mathcal{V}}_a^b(\alpha,J)$$ the connected
component of $\widetilde{\mathcal{V}}(\alpha,J)$ containing $0$. We
set
$$\mathcal{V}_a^b(\alpha,J)=\{F \in
\widehat{\mathcal{V}}_a^b(\alpha,J): \mathcal{A}^\alpha_F\,\,
\textrm{Morse}\}.$$
By Proposition~\ref{morse} and Lemma~\ref{app} the set
$\mathcal{V}_a^b(\alpha,J)$ is non-empty. Hence we
pick $F \in \mathcal{V}_a^b(\alpha,J)$ and define
 $$RFH_a^b(\Sigma,V)=HF_a^b(\mathcal{A}^\alpha_F,J)$$
where the right-hand side was defined in (\ref{homol}). It is
straightforward to check that this definition is independent of the
choices of $F$, $J$ and $\alpha$. For this we actually use that the
space $\mathcal{J}(\alpha,J_0)$ is connected.

There are canonical homomorphisms between Rabinowitz Floer
homology groups
$$\pi^b_{a_2,a_1} \colon RFH^b_{a_1} \to RFH^b_{a_2},\quad a_1
\leq a_2 \leq b,$$
and
$$\iota^{b_2,b_1}_a \colon RFH^{b_1}_a \to RFH^{b_2}_a,\quad a
\leq b_1 \leq b_2.$$
These maps satisfy
\begin{equation}\label{bidi}
\left.\begin{array}{cc}
\pi^b_{a_3,a_2} \circ \pi^b_{a_2,a_1}=\pi^b_{a_3,a_1},&
 a_1 \leq a_2 \leq a_3 \leq b,\\
 & \\
\iota^{b_3,b_2}_a \circ \iota^{b_2,b_1}_a=
\iota^{b_3,b_1}_a,& a\leq b_1 \leq b_2 \leq b_3,\\
& \\
\iota^{b_2,b_1}_{a_2} \circ \pi^{b_1}_{a_2,a_1}=
\pi^{b_2}_{a_2,a_1} \circ \iota^{b_2,b_1}_{a_1}, &
a_1 \leq a_2 \leq b_1 \leq b_2.
\end{array}\right\}
\end{equation}
In particular, the Rabinowitz Floer homology groups
together with these maps have the
structure of a bidirect system of $\mathbb{Z}_2$-vector
spaces.

We next describe the construction of the
map $\pi^b_{a_2,a_1}$.
Assume that
$$a_1 \leq a_2 \leq b\quad
a_1,a_2,b \notin \mho(\Sigma).$$
We first pick a small stabilizing 1-form
$\lambda \in \Lambda_s(\Sigma,V)$.
We note that
\begin{equation}\label{gim}
\gimel(a_1,b;\lambda) \geq \gimel(a_2,b;\lambda).
\end{equation}
We pick a $\big(\frac{3}{5},\epsilon\big)$-admissible quintuple
$\alpha$ with $\pi(\alpha)=\lambda$ and $\epsilon \leq
\frac{1}{\gimel(a_1,b;\lambda)}$. Note that because of (\ref{gim})
this quintuple meets the assumptions of Lemma~\ref{app} for
$(a_1,b)$ as well as for $(a_2,b)$. We introduce the closed interval
$$I(a_1,a_2,b;\lambda)=[\gimel(a_2,b;\lambda)+3,
\gimel(a_1,b;\lambda)+4] \supset
I(a_2,b;\lambda).$$
We set
$$\widetilde{\mathcal{U}}_{a_2,a_1}^b(\alpha,J)=
\Big\{F \in \widetilde{\mathcal{U}}^b_{a_1}(\alpha,J):
||\eta||_\infty \notin I(a_1,a_2,b;\lambda),\,\,\forall\,\,
(v,\eta) \in \mathcal{M}^b_{a_2}(\mathcal{A}^\alpha_F,J)\Big\}.$$
A similar reasoning as in Lemma~\ref{app} shows that
$\widetilde{\mathcal{U}}^b_{a_2,a_1}(\alpha,J)$ is an
open set of $C^\infty_c(V \times S^1)$
containing the zero perturbation.
We define
$$\widetilde{\mathcal{V}}^b_{a_2,a_1}(\alpha,J)
=\Big\{F \in \widetilde{\mathcal{U}}^b_{a_2,a_1}(\alpha,J):
\mathcal{A}^\alpha_F(w) \notin \{a_1,a_2,b\},\,\,
\forall\,\,w \in \mathcal{C}^b_{a_1}(\mathcal{A}^\alpha_F,J)\Big\}.$$
Again this is an open subset of $C^\infty_c(V \times S^1)$
containing the zero perturbation. Let
$$\widehat{\mathcal{V}}^b_{a_2,a_1}(\alpha,J)\subset
\widetilde{\mathcal{V}}^b_{a_2,a_1}(\alpha,J)$$
be the connected component containing $0$ and set
$$\mathcal{V}^b_{a_2,a_1}(\alpha,J)=
\{F \in \widehat{\mathcal{V}}^b_{a_2,a_1}(\alpha,J):
\mathcal{A}^\alpha_F\,\,\textrm{Morse}\}.$$
Note that $\mathcal{V}^b_{a_2,a_1}(\alpha,J)$ is non-empty
and satisfies
$$\mathcal{V}^b_{a_2,a_1}(\alpha,J)\subset
\mathcal{V}^b_{a_1}(\alpha,J)\cap \mathcal{V}^b_{a_2}(\alpha,J).$$
We pick
$$F \in \mathcal{V}^b_{a_2,a_1}(\alpha,J).$$
Abbreviate
$$\mathcal{C}^{a_2}_{a_1}(\mathcal{A}^\alpha_F,b)=\big\{w
=(v,\eta) \in \mathrm{crit}_{a_1}^{a_2}(\mathcal{A}^\alpha_F):
|\eta|<\gimel(a_1,b;\lambda)+3\big\}.$$
Since $a_2$ is not a critical value of $\mathcal{A}^\alpha_F$
and there is no critical point
$w=(v,\eta) \in \mathrm{crit}_{a_2}^b(\mathcal{A}^\alpha_F)
\subset \mathcal{M}_{a_2}^b(\mathcal{A}^\alpha_F,J)$
with $\eta \in I(a_1,a_2,b;\lambda)$,
we have a disjoint union
$$\mathcal{C}^b_{a_1}(\mathcal{A}^\alpha_F)
=\mathcal{C}^b_{a_2}(\mathcal{A}^\alpha_F) \sqcup
\mathcal{C}^{a_2}_{a_1}(\mathcal{A}^\alpha_F,b).$$
This leads to a direct sum of $\mathbb{Z}_2$-vector spaces
$$CF_{a_1}^b(\mathcal{A}^\alpha_F)=
CF_{a_2}^b(\mathcal{A}^\alpha_F) \oplus
CF_{a_1}^{a_2}(\mathcal{A}^\alpha_F,b)$$
with
$$CF_{a_1}^{a_2}(\mathcal{A}^\alpha_F,b)=
\mathcal{C}^{a_2}_{a_1}(\mathcal{A}^\alpha_F,b)\otimes \mathbb{Z}_2.
$$
Let
$$p^b_{a_2,a_1}
\colon CF^b_{a_1}(\mathcal{A}^\alpha_F)\to
CF^b_{a_2}(\mathcal{A}^\alpha_F)$$ be the projection along
$CF^{a_2}_{a_1}(\mathcal{A}^\alpha_F,b)$. Using the fact that the
action is increasing along gradient flow lines we observe that the
projection $p^b_{a_2,a_1}$ commutes with the boundary operators and
hence induces a homomorphism
$$\pi^b_{a_2,a_1} \colon HF^b_{a_1}(\mathcal{A}^\alpha_F,J)
\to HF^b_{a_2}(\mathcal{A}^\alpha_F,J).$$
A usual homotopy argument shows that this homomorphism
is independent of $F$, $\alpha$, and $J$ and hence
can be interpreted as homomorphism
$$\pi^b_{a_2,a_1} \colon RFH^b_{a_1} \to RFH^b_{a_2}.$$
The construction of the homomorphism
$$\iota^{b_2,b_1}_a \colon RFH^{b_1}_a \to RFH^{b_1}_a$$
is similar and will not be carried out here. We also omit
the proof of (\ref{bidi}).

Given the bidirect system of $\mathbb{Z}_2$-vector spaces
$(RFH,\pi,\iota)$ we can extract out of it two Rabinowitz Floer
homology groups
$$\overline{RFH}=\lim_{\substack{\longrightarrow\\ b \to
\infty}}\lim_{\substack{\longleftarrow\\
a \to -\infty}}RFH_a^b$$
and
$$\underline{RFH}=\lim_{\substack{\longleftarrow\\ a \to
-\infty}}\lim_{\substack{\longrightarrow\\
b \to \infty}}RFH_a^b.$$ Since inverse and direct limits do not
necessarily commute it is an open problem if the two Rabinowitz
Floer homology groups coincide. However, it is well known (see
\cite[page 215]{ML}) that there is a canonical homomorphism which
takes account of the interchange of inverse and direct limits
$$\kappa \colon \overline{RFH} \to \underline{RFH}.$$
In general the canonical homomorphism does not need to be an
isomorphism, although we have no example where $\kappa$ fails to be
an isomorphism in Rabinowitz Floer homology. We finish this
subsection by computing the Rabinowitz Floer homology groups in two
easy examples.
\begin{theorem} Suppose that $\Sigma$ is
a weakly tame, stable hypersurface in a
symplectically aspherical, geometrically bounded, symplectic
manifold $(V,\omega)$.
\begin{description}
\item[(i)] Assume that there are no closed characteristics
in $\Sigma$ which are contractible in $V$, then
$$\overline{RFH}(\Sigma,V)=\underline{RFH}(\Sigma,V)
=H(\Sigma;\mathbb{Z}_2)$$
where $H$ denotes (ungraded) singular homology.
\item[(ii)] Assume that $\Sigma$ is displaceable in $V$, then
$$\overline{RFH}(\Sigma,V)=\underline{RFH}(\Sigma,V)=\{0\}.$$
\end{description}
\label{thm:rfh=0}
\end{theorem}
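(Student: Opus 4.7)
The plan is to treat (i) and (ii) separately, computing each filtered group $RFH_a^b$ built in section~\ref{ss:RFH} on a suitable window and then passing to the bidirect limits.

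For part (i), the critical equation (\ref{crit}) together with the absence of contractible closed characteristics forces $\eta=0$ and $v$ to be a constant in $\Sigma$. Hence $\mathrm{crit}(\mathcal{A}^\alpha)$ is a Morse--Bott critical manifold, diffeomorphic to $\Sigma$, lying at action value $0$, and the $\Omega$-spectrum is $\mho(\Sigma)=\{0\}$. I would choose an $(a,b)$-approvable perturbation of the form $F(t,y)=\delta\chi(t)\tilde f(y)$, where $\tilde f$ extends a Morse function $f$ on $\Sigma$ to $V$ and $\delta>0$ is small. By the standard Morse--Bott-to-Morse reduction for Floer functionals (cascades, or an implicit function theorem argument near the critical manifold) the critical points of $\mathcal{A}^\alpha_F$ lying in a small action window $(-\delta',\delta')$ are in bijection with $\mathrm{Crit}(f)$, and the essential boundary operator reduces, after a generic choice of almost complex structure, to the Morse differential of $f$. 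This gives $RFH_{-\delta'}^{\delta'}(\Sigma,V)\cong H(\Sigma;\mathbb{Z}_2)$. Since $\mho(\Sigma)=\{0\}$, the transition maps $\pi^b_{a_2,a_1}$ and $\iota^{b_2,b_1}_a$ are isomorphisms whenever both windows contain $(-\delta',\delta')$, so both $\overline{RFH}$ and $\underline{RFH}$ equal $H(\Sigma;\mathbb{Z}_2)$.

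For part (ii), pick a compactly supported Hamiltonian $F$ displacing $\Sigma$. By Lemma~\ref{mu} the perturbed functional $\mathcal{A}^\alpha_F$ has no critical points. For a fixed window $[a,b]$ with $a,b\notin\mho(\Sigma)$, I would run the homotopy stretching argument of section~\ref{ss:existence} not as a contradiction but as a source of a chain homotopy: use the two-parameter family $\mathcal{A}^\alpha_{\beta_R(s)F}$ with cutoffs (\ref{cuto}) to build a continuation chain map $RFH_a^b\to RFH_{a'}^{b'}$ and, together with its time-reversed partner, a chain homotopy to the canonical map $\iota^{b',b}_{a}\circ\pi^{b}_{a',a}$. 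The action bound from Step~1 of the proof of Theorem~\ref{felix} shifts the window by at most $\|F\|$, so taking $a'<a-\|F\|$ and $b'>b+\|F\|$ avoids $\mho(\Sigma)$, while the Lagrange-multiplier bound of Step~3 provides the compactness needed for the essential part of the parametrised moduli space. Since the continuation factors through the chain complex of $\mathcal{A}^\alpha_F$, which is trivial, the canonical map $RFH_a^b\to RFH_{a'}^{b'}$ is zero. As $[a,b]$ was arbitrary, both limits vanish.

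The main obstacle is bookkeeping within the approvable-perturbation framework of section~\ref{ss:approvable}: in (i), the Morse reduction must be carried out so that the perturbed critical points stay inside a small action window while the Lagrange multiplier stays outside the forbidden interval $I(a,b;\lambda)$ throughout; in (ii), one must verify that the continuation moduli spaces with the additional cutoff parameter $R$ remain essentially compact uniformly, so that the chain homotopy descends to filtered Rabinowitz Floer homology. Both reduce to careful application of the Lagrange-multiplier estimates of section~\ref{ss:multiplier} to the parametrised gradient flow equations, with constants uniform along the relevant homotopies.
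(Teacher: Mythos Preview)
Your plan coincides with the paper's: part (i) via the Morse--Bott critical manifold $\Sigma\times\{0\}$ sitting at action $0$, and part (ii) via the stretching argument of Section~\ref{ss:existence}, showing that the canonical window-shift map factors through the critical-point-free functional $\mathcal{A}^\alpha_F$ and hence vanishes. One bookkeeping slip in (ii): the projection $\pi^b_{a',a}$ is only defined for $a\le a'$, and the paper's statement is that $\pi^{b+d}_{a+d,a}\circ\iota^{b+d,b}_a=0$ for $d\ge e(\Sigma)$; so the target window is $[a+d,b+d]$, not $[a',b']$ with $a'<a-\|F\|$ as you wrote.
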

\begin{proof}
We first prove assertion (i). If there are no closed characteristics
on $\Sigma$ which are contractible in $V$, the unperturbed
Rabinowitz action functional $\mathcal{A}^\alpha$ is Morse-Bott with
critical manifold given by $\{(x,0)\}$ where $x \in \Sigma$ is
thought of as a constant loop. Since $\mathcal{A}^\alpha$ is
constant on the critical manifold its homology is just the homology
of the critical manifold, hence the homology of the hypersurface
$\Sigma$.

We next show assertion (ii). A reasoning similar as in
the proof of Theorem~\ref{felix} shows that if
$d \geq e(\Sigma)$, the displacement energy of $\Sigma$, and
$a \leq b$, then
$$\pi^{b+d}_{a+d,a} \circ \iota^{b+d,b}_a=0
\colon RFH_a^b(\Sigma,V) \to RFH^{b+d}_{a+d}(\Sigma,V).$$
This implies the vanishing of the two Rabinowitz Floer
homology groups.
\end{proof}

\subsection{Invariance}\label{ss:invariance}

In this subsection we show that Rabinowitz Floer homology
is invariant under stable tame  homotopies.
\begin{theorem}\label{inva}
Assume that $\mathcal{S}=(\Sigma_\zeta,\lambda_\zeta)$ for
$\zeta \in [0,1]$ is a
stable tame homotopy. Then there are isomorphisms
$$\overline{\Phi}=
\overline{\Phi}_{\mathcal{S}}
\colon \overline{RFH}(\Sigma_0,V) \to
\overline{RFH}(\Sigma_1,V)$$
and
$$\underline{\Phi}=
\underline{\Phi}^{\mathcal{S}}
\colon \underline{RFH}(\Sigma_0,V) \to
\underline{RFH}(\Sigma_1,V)$$
such that the following diagram commutes
\begin{equation}\label{dia}
\begin{xy}
 \xymatrix{
  \overline{RFH}(\Sigma_0,V) \ar[r]^{\overline{\Phi}}
  \ar[d]_\kappa &
  \overline{RFH}(\Sigma_1,V) \ar[d]^\kappa \\
  \underline{RFH}(\Sigma_0,V) \ar[r]^{\underline{\Phi}}         &
  \underline{RFH}(\Sigma_1,V).
 }
\end{xy}
\end{equation}
\end{theorem}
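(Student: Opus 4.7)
The plan is to promote the adiabatic compactness estimate of Theorem~\ref{tdc} to a chain-level continuation construction on filtered Rabinowitz Floer homology, and then pass to the direct/inverse limits defining $\overline{RFH}$ and $\underline{RFH}$. First, given the tame stable homotopy $\mathcal{S}=(\Sigma_\zeta,\lambda_\zeta)$, I would choose a lift to a smooth family of stable quadruples $\mathfrak{S}_\zeta=(\Sigma_\zeta,\lambda_\zeta,\tau_\zeta,I_\zeta)$ (first rescaling the $\lambda_\zeta$ to be small via the scaling behaviour of $\kappa$ and $\varrho$, then using Proposition~\ref{prop:stable-homotopy}(a) to produce a smooth family of large tubular neighbourhoods). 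Fix $\epsilon>0$ and the associated $\Delta(\epsilon)$ from Theorem~\ref{tdc}. By uniform continuity of the length functional $\mathfrak{V}_{\mathfrak{P}}(\zeta)$ along the chosen lift, I can subdivide $[0,1]$ into intervals $0=\zeta_0<\zeta_1<\cdots<\zeta_N=1$ with $\Delta(\Sigma_{\zeta_i},\Sigma_{\zeta_{i+1}})<\Delta(\epsilon)$.

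On each subinterval, Theorem~\ref{tdc} provides admissible quintuples $\alpha^\pm$ at the endpoints, a time-dependent Hamiltonian $H^i\in\mathcal{H}(\alpha^-,\alpha^+)$ and a time-dependent metric, such that any gradient flow line $w=(v,\eta)$ of the time-dependent $\nabla_{m_s}\mathcal{A}^{H^i_s}$ with asymptotic action values in $[a,b]$ satisfies the Lagrange-multiplier bound $\|\eta\|_\infty\leq(5/2+\epsilon)^2(b-a)+2$. Combined with symplectic asphericity and geometric boundedness of $V$, this yields $C^\infty_{\mathrm{loc}}$-compactness (up to breaking) of the essential part of the continuation moduli space, exactly as in Section~\ref{ss:approvable}. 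The tameness of $\mathcal{S}$ enters here decisively: the uniform constant $c$ with $T_{\lambda_\zeta}(v)\leq c|\Omega(v)|$ bounds the Lagrange multiplier of every critical point of $\mathcal{A}^{\alpha^\pm}$ linearly in terms of its $\widehat{\mathcal{A}}$-value, so that continuation trajectories starting with action in $[a,b]$ land in a controlled window $[a',b']$ at the other end. Choosing $(a,b)$-approvable perturbations $F^\pm$ on either side and a smooth interpolation, I would count broken essential continuation trajectories modulo two to produce chain maps inducing homomorphisms
$$\Phi^{i,b}_a:RFH_a^b(\Sigma_{\zeta_i},V)\to RFH_{a'}^{b'}(\Sigma_{\zeta_{i+1}},V).$$
By construction these commute with the projection and inclusion maps $\pi^b_{a_2,a_1}$ and $\iota^{b_2,b_1}_a$ of the bidirect system, so the composition $\Phi^N\circ\cdots\circ\Phi^1$ descends to the double limits yielding $\overline{\Phi}$ and $\underline{\Phi}$.

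To show these are isomorphisms, I would apply the construction to the reversed homotopy $\mathcal{S}^{-1}$ and invoke a standard homotopy-of-homotopies argument (a two-parameter adiabatic version of Theorem~\ref{tdc}, valid because the tameness condition is preserved under homotopies within the tame class) showing that the composition $\Phi_{\mathcal{S}^{-1}}\circ\Phi_{\mathcal{S}}$ is chain-homotopic to the continuation for a constant homotopy, which in turn is the identity. Commutativity of the diagram is automatic: both $\overline{\Phi}$ and $\underline{\Phi}$ are obtained by applying the same functor (limit or colimit) to the same family of filtered maps, and $\kappa$ is itself induced by the interchange homomorphism of the bidirect system, so naturality gives the square. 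The main obstacle will be step two: verifying that the essential/unessential splitting of Section~\ref{ss:approvable} extends to the time-dependent setting, i.e.\ that for the continuation moduli space one can choose the interpolating perturbation small enough so that the Lagrange multiplier either stays strictly below $\gimel(a,b;\lambda)+3$ or strictly above $\gimel(a,b;\lambda)+4$. This requires combining Theorem~\ref{tdc} with the time-dependent analogue of Lemma~\ref{app}, together with a careful interplay of the tameness constant and the bounds from the period-action inequality (Lemma~\ref{abound}) to fix the target window $[a',b']$ consistently.
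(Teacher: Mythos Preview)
Your strategy coincides with the paper's: rescale the $\lambda_\zeta$ to small stabilizing forms, lift to a smooth path of stable quadruples, subdivide into pieces of small pseudodistance, invoke Theorem~\ref{tdc} on each piece for compactness of continuation moduli spaces, define filtered continuation maps, and pass to the double limits. Your identification of where tameness enters is also correct in spirit: it bounds the period $\eta^\pm$ at the asymptotic critical points, hence the auxiliary action $\mathcal{A}_{\alpha^\pm}(w^\pm)=\mathcal{A}^{\alpha^\pm}(w^\pm)-\eta^\pm$, which is precisely the second quantity Theorem~\ref{tdc} requires bounded.

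The genuine gap is in your treatment of the target window $[a',b']$ and the isomorphism step. You leave $[a',b']$ unspecified, but the argument hinges on its precise form. The paper isolates this as a separate halving lemma (Lemma~\ref{acti}): for a path with $\Delta(\mathfrak{P})\leq\min\bigl\{1/(144\max\{1,c\}),\Delta(1/2)\bigr\}$, any continuation trajectory with $a=\mathcal{A}^{\alpha^-}(w^-)$ and $b=\mathcal{A}^{\alpha^+}(w^+)$ satisfies $b\geq a/2$ whenever $a\geq 1/9$, and $a\leq b/2$ whenever $b\leq -1/9$. The proof combines the $\eta$-bound from Theorem~\ref{tdc} (which, after inserting tameness, reads $\|\eta\|_\infty\leq 9\max\{c,1\}|b-a|+2$) with the action-shift estimate from Step~1 of that theorem, $b\geq a-2\Delta(\mathfrak{P})\|\eta\|_\infty$. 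This multiplicative control produces continuation maps of the specific shape
\[
\Phi^b_a\colon RFH_a^{b/2}(\Sigma_0,V)\longrightarrow RFH_{a/2}^{b}(\Sigma_1,V),\qquad a\leq -\tfrac{1}{9},\ b\geq\tfrac{1}{9},
\]
and the homotopy-of-homotopies argument then identifies the round trip as
\[
\Psi_{a/2}^b\circ\Phi_a^{b/2}=\pi^b_{a/4,a}\circ\iota^{b,b/4}_a\colon RFH_a^{b/4}(\Sigma_0,V)\to RFH_{a/4}^b(\Sigma_0,V).
\]
Thus the composition equals a \emph{structure map} of the bidirect system, not the identity; it becomes the identity only after passing to $\overline{RFH}$ or $\underline{RFH}$. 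Your sentence ``chain-homotopic to the continuation for a constant homotopy, which in turn is the identity'' elides this distinction, and without a quantitative window estimate you cannot identify $\Psi\circ\Phi$ with any particular structure map, so the isomorphism argument does not close. The obstacle you single out---extending the essential/unessential decomposition to the time-dependent perturbed setting---is real but secondary; the paper disposes of it in one sentence once Lemma~\ref{acti} is in hand. That lemma, not the splitting, is the actual crux you are missing.
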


\emph{Proof. }
Given a stable homotopy $\mathcal{S}=(\Sigma_\zeta,\lambda_\zeta)$
there exists
a smooth family of positive real numbers $\sigma_\zeta>0$,
such that $\sigma_\zeta \lambda_\zeta$ is small for every
$\zeta \in [0,1]$ and there exists a smooth path
$\tau_\zeta
\in \mathcal{T}_\ell(\Sigma_\zeta,\sigma_\zeta\lambda_\zeta)$
of large tubular neighbourhoods and a smooth path
$I_\zeta \in \mathcal{I}_s(\Sigma_\zeta,\lambda_\zeta)$
of small $\omega_{\Sigma_\zeta}$-compatible complex structures
such that
$$\mathfrak{P}=(\Sigma_\zeta,\sigma_\zeta\lambda_\zeta,\tau_\zeta,
I_\zeta)$$
is a smooth path of stable quadruples. We further remark that
the stable homotopy $(\Sigma_\zeta,\sigma_\zeta\lambda_\zeta)$
is still tame.

Since each path of stable quadruples can be obtained by
concatenating short paths it actually suffices to prove the theorem
only for  short paths of stable quadruples. We need the following
Lemma. We
refer the reader to (\ref{mah}) for the definition
of $\mathcal{H}(\alpha^-,\alpha^+)$ and to Theorem~\ref{tdc}
for the definition of $\Delta\big(\frac{1}{2}\big)$.
\begin{lemma}\label{acti}
Assume that
$\mathfrak{P}=(\Sigma_\zeta,\lambda_\zeta,\tau_\zeta,I_\zeta)$
for $\zeta \in [0,1]$ is a short path of stable quadruples
such that $(\Sigma_\zeta,\lambda_\zeta)$ is tame with
taming constant $c$ and $\mathfrak{P}$ is short in the sense that
\begin{equation}\label{sh1}
\Delta(\mathfrak{P})\leq \min\bigg\{\frac{1}{144\cdot\max\{1,c\}},
\Delta\bigg(\frac{1}{2}\bigg)\bigg\}.
\end{equation}
Then there exist admissible
quintuples $\alpha^-$ for $\Sigma_0$ and
$\alpha^+$ for $\Sigma_1$, a time dependent Hamiltonian
$H \in \mathcal{H}(\alpha^-,\alpha^+)$, and
a time-dependent metric $m=\{m_s\}_{s \in \mathbb{R}}$
on $\mathcal{L}\times \mathbb{R}$ which is constant
for $|s|$ large such that the following holds true.
Suppose that $w=(v,\eta) \in C^\infty(\mathbb{R},\mathcal{L}\times
\mathbb{R})$ is a gradient flow line of the time
dependent gradient $\nabla_{m_s}\mathcal{A}^{H_s}$
which converges asymptotically
$\lim_{s\to \pm \infty}w(s)=w^\pm$ to critical points
of $\mathcal{A}^{\alpha^\pm}$, such that
$a=\mathcal{A}^{\alpha^-}(w^-)$ and
$b=\mathcal{A}^{\alpha^+}(w^+)$. Then the following holds
\begin{description}
 \item[(i)] If $a \geq \frac{1}{9}$, then $b \geq \frac{a}{2}$.
 \item[(ii)] If $b \leq -\frac{1}{9}$, then $a \leq \frac{b}{2}$.
\end{description}
\end{lemma}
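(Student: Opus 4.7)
The plan is to combine two estimates: a drift estimate controlling how the action value can vary along $w$ between its asymptotic values, and an $L^{\infty}$-bound on the Lagrange multiplier obtained via the tameness condition, both extracted from an application of Theorem~\ref{tdc}.

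Given the hypothesis $\Delta(\mathfrak{P})\leq\Delta(1/2)$, I would invoke Theorem~\ref{tdc} with $\epsilon=1/2$ to produce admissible quintuples $\alpha^\pm$ for $\Sigma_{0,1}$, a time-dependent Hamiltonian $H\in\mathcal{H}(\alpha^-,\alpha^+)$ and a time-dependent metric $m$ such that any gradient flow line $w=(v,\eta)$ asymptotic to critical points $w^\pm$ with $\mathcal{A}^{\alpha^\pm}(w^\pm),\mathcal{A}_{\alpha^\pm}(w^\pm)\in[a',b']$ obeys $\|\eta\|_\infty\leq 9(b'-a')+2$. Re-running the computation of Step~1 in the proof of that theorem (integrating $\partial_s H_s$ along $w$) also yields the drift estimate
\[
|b-a|\;\leq\;2\,\|\eta\|_\infty\,\Delta(\mathfrak{P}).
\]
Tameness enters at the endpoints: $w^\pm=(v^\pm,\eta^\pm)$ correspond to closed Reeb orbits on $\Sigma_{0,1}$ with $\eta^\pm=T_{\lambda_{0,1}}(v^\pm)$ and $\mathcal{A}^{\alpha^\pm}(w^\pm)=\Omega(v^\pm)$, hence $|\eta^-|\leq c|a|$ and $|\eta^+|\leq c|b|$; consequently $\mathcal{A}_{\alpha^\pm}(w^\pm)=\mathcal{A}^{\alpha^\pm}(w^\pm)-\eta^\pm$ lies in $[a(1-c),a(1+c)]$, respectively $[b(1-c),b(1+c)]$.

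For claim (i), assume $a\geq 1/9$. I would first rule out $b<0$: in that case a suitable $[a',b']$ satisfies $b'-a'\leq(1+c)(a-b)$, and plugging into the two displayed inequalities gives $(a-b)\bigl[1-18\Delta(\mathfrak{P})(1+c)\bigr]\leq 4\Delta(\mathfrak{P})$; the pseudo-distance bound $\Delta(\mathfrak{P})\leq 1/(144\max\{1,c\})$ then forces $a-b\leq 1/27$, contradicting $a-b>a\geq 1/9$. Once $0\leq b\leq a$ is known, the relevant interval gives $b'-a'\leq a(1+c)-b(1-c)$ when $c\leq 1$, and $b'-a'\leq 2ac$ when $c\geq 1$. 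In either case, substituting into the combined inequality and using $18\Delta(\mathfrak{P})(1+c)\leq 1/4$ and $4\Delta(\mathfrak{P})\leq 1/36$ yields
\[
b\;\geq\;\tfrac{3}{4}a-\tfrac{1}{36},
\]
which is $\geq a/2$ precisely when $a\geq 1/9$. Claim (ii) follows by the symmetric argument with all signs reversed.

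The main obstacle is tracking constants tightly through these estimates: the threshold $a\geq 1/9$ emerges exactly from the identity $a/4\geq 1/36$, which in turn fixes the pseudo-distance bound $1/(144\max\{1,c\})$. The crude $L^{\infty}$-estimate on $\eta$ coming from Theorem~\ref{tdc} is sharp enough only because $[a',b']$ is tailored tightly via tameness; a looser enclosing interval would introduce a spurious additive term of order $c\cdot\max(|a|,|b|)$ that the combined inequality cannot absorb, and the $1/9$ threshold would be lost.
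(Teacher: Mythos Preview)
Your approach is essentially the paper's: combine the drift estimate from Step~1 of Theorem~\ref{tdc} with the $L^\infty$-bound on $\eta$ from that theorem, use tameness at the endpoints to control the enclosing action interval $[a',b']$, and conclude $b\ge\tfrac{3a}{4}-\tfrac{1}{36}$, which gives $b\ge a/2$ exactly for $a\ge 1/9$. The paper packages the middle step into the single inequality $\|\eta\|_\infty\le 9\max\{c,1\}|b-a|+2$, whereas you unpack the interval estimate case by case, but the structure is identical.

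Two imprecisions worth fixing. First, the drift estimate you state is two-sided, $|b-a|\le 2\|\eta\|_\infty\Delta(\mathfrak{P})$, but Step~1 of Theorem~\ref{tdc} only yields $a-b\le 2\|\eta\|_\infty\Delta(\mathfrak{P})$; the reverse inequality would need the energy $E(w)$ to vanish. This is harmless since you only ever invoke the one-sided version. Second, your bound $b'-a'\le(1+c)(a-b)$ in the case $b<0$ fails when $c>1$ (take $a=1$, $b=-0.01$, $c=10$: then $a-\eta^-$ can range over $[-9,11]$, forcing $b'-a'=20$, while $(1+c)(a-b)\approx 11$). The correct bound in that regime is $b'-a'\le 2(1+c)\max(a,|b|)\le 2(1+c)(a-b)$; plugging this in still gives $(a-b)\bigl[1-36\Delta(\mathfrak{P})(1+c)\bigr]\le 4\Delta(\mathfrak{P})$, and the hypothesis $\Delta(\mathfrak{P})\le 1/(144\max\{1,c\})$ has exactly enough slack to force $a-b\le 1/18<1/9\le a$, the desired contradiction.
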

\begin{proof}
We choose $H$ as in Theorem~\ref{tdc}. Combining tameness
with Theorem~\ref{tdc} for $\epsilon=\frac{1}{2}$ and using
(\ref{sh1}) we obtain
\begin{equation}\label{sh2}
||\eta||_\infty \leq 9\max\{c,1\}|b-a|+2.
\end{equation}
Moreover, by letting $\sigma$ tend to $\infty$ in Step~1 of
Theorem~\ref{tdc} we obtain
\begin{equation}\label{sh3}
b \geq a-2\Delta(\mathfrak{P})||\eta||_\infty.
\end{equation}
$\,$From (\ref{sh1})-(\ref{sh3}) we extract
\begin{equation}\label{sh4}
b \geq a-\frac{|b-a|}{8}-\frac{1}{36}.
\end{equation}
To prove assertion (i) we first consider the case
$$|b| \leq a, \quad a \geq \frac{1}{9}.$$
In this case we estimate
$$b \geq a-\frac{|a|}{4}-\frac{1}{36}=\frac{3a}{4}-\frac{1}{36}
\geq \frac{a}{2}.$$
Hence to prove assertion (i) it suffices to exclude the case
\begin{equation}\label{sh5}
-b \geq a \geq \frac{1}{9}.
\end{equation}
But in this case (\ref{sh4}) leads to a contradiction in
the following way
$$b \geq \frac{1}{9}-\frac{|b-a|}{8}-\frac{1}{36}
\geq -\frac{|b-a|}{8} \geq \frac{b}{4}$$
implying that $\frac{3b}{4} \geq 0$ and hence $b \geq 0$
contradicting (\ref{sh5}). This proves the first assertion.

To prove assertion (ii) we set
$$b'=-a, \quad a'=-b.$$
We note that if (\ref{sh4}) holds for $a$ and $b$, it also
holds for $b'$ and $a'$. Hence we get from assertion (i)
the implication
$$-b \geq \frac{1}{9} \quad \Longrightarrow \quad
-a \geq -\frac{b}{2}$$
which is equivalent to assertion (ii). This finishes the
proof of the Lemma.
\end{proof}
\textbf{Proof of Theorem~\ref{inva} continued: }In view
of Lemma~\ref{acti} we obtain for $a \leq -\frac{1}{9}$ and
$b \geq \frac{1}{9}$ homomorphisms
$$\Phi^b_a \colon RFH_a^{\frac{b}{2}}(\Sigma_0,V)
\to RFH_{\frac{a}{2}}^b(\Sigma_1,V)$$ defined by counting gradient
flow lines of the time dependent Rabinowitz action functional. Again
to count gradient flow lines we have to choose small perturbations
which make Rabinowitz action functional Morse and then take the
count (possibly after a further abstract perturbation) of the
essential part of the moduli space of gradient flow lines of the
perturbed time dependent functional. These homomorphisms interchange
the maps $\pi$ and $\iota$ and hence induce homomorphisms
$$\overline{\Phi} \colon \overline{RFH}(\Sigma_0,V)
\to \overline{RFH}(\Sigma_1,V)$$
and
$$\underline{\Phi} \colon \underline{RFH}(\Sigma_0,V)
\to \underline{RFH}(\Sigma_1,V)$$
such that (\ref{dia}) commutes.

It remains to show that $\overline{\Phi}$ and
$\underline{\Phi}$ are isomorphisms.
By using the homotopy backwards
we get homomorphisms
$$\Psi^b_a \colon RFH_a^{\frac{b}{2}}(\Sigma_1,V)
\to RFH_{\frac{a}{2}}^b(\Sigma_0,V).$$
A standard homotopy of homotopy argument shows that
for $a \leq -\frac{2}{9}$ and $b \geq \frac{2}{9}$ the
composition of $\Phi$ and $\Psi$ is given by
$$\Psi_{\frac{a}{2}}^b \circ \Phi_a^{\frac{b}{2}}=
\pi^b_{\frac{a}{4},a} \circ \iota^{b,\frac{b}{4}}_a
\colon RFH_a^{\frac{b}{4}}(\Sigma_0,V) \to
RFH_{\frac{a}{4}}^b(\Sigma_0,V)$$
and analoguously
$$\Phi_{\frac{a}{2}}^b \circ \Psi_a^{\frac{b}{2}}=
\pi^b_{\frac{a}{4},a} \circ \iota^{b,\frac{b}{4}}_a
\colon RFH_a^{\frac{b}{4}}(\Sigma_1,V) \to
RFH_{\frac{a}{4}}^b(\Sigma_1,V).$$
Hence we conclude that
$$\overline{\Psi} \circ \overline{\Phi}=
\mathrm{id}|_{\overline{RFH}(\Sigma_0,V)}, \quad
\overline{\Phi} \circ \overline{\Psi}=
\mathrm{id}|_{\overline{RFH}(\Sigma_1,V)}$$
and
$$\underline{\Psi} \circ \underline{\Phi}=
\mathrm{id}|_{\underline{RFH}(\Sigma_0,V)}, \quad
\underline{\Phi} \circ \underline{\Psi}=
\mathrm{id}|_{\underline{RFH}(\Sigma_1,V)}.$$
This implies that $\overline{\Phi}$ and
$\underline{\Phi}$ are isomorphisms with inverses
$$\overline{\Phi}^{-1}=\overline{\Psi},\quad
\underline{\Phi}^{-1}=\underline{\Psi}.$$
This finishes the proof of the theorem. \hfill $\square$

\section{Ma\~n\'e's critical values}\label{sec:mane}

In this section we summarize the main properties associated
with the various Ma\~n\'e's critical values. We also show that
there are open sets of hypersurfaces with high energy which
are not stable; however these are virtually contact, showing the
need to consider the latter notion.

\subsection{Definition and basic properties}
As in the Introduction we consider the cotangent bundle $\tau\co T^*M\to M$
 of a closed manifold $M$ and the autonomous Hamiltonian system
 defined by a convex Hamiltonian
$$
   H(q,p) = \frac{1}{2}|p|^2 + U(q)
$$
and a twisted symplectic form
$$
   \om_{\sigma}=\om = dp\wedge dq + \tau^*\sigma.
$$
Here $dp\wedge dq$ is the canonical symplectic form in canonical
coordinates $(q,p)$ on $T^*M$, $|p|$ denotes
the dual norm of a Riemannian metric $g$ on $M$, $U\co M\to\R$ is a
smooth potential, and $\sigma$ is a closed 2-form on $M$. This
Hamiltonian system describes the motion of a particle on $M$ subject
to the conservative force $-\nabla U$ and the magnetic field
$\sigma$. In local coordinates $q_1,\dots,q_n$ on $M$ and dual
coordinates $p_1,\dots,p_n$ the Hamiltonian system is given by
\begin{align*}
   \dot q_i &= \frac{\p H}{\p p_i}, \cr
   \dot p_i &= -\frac{\p H}{\p q_i} +
   \sum_{j=1}^n\sigma_{ij}(q)\frac{\p H}{\p p_j},
\end{align*}
where
$$
   \sigma = \frac{1}{2}\sum_{i,j=1}^n\sigma_{ij}(q)dq_i\wedge
   dq_j,\qquad \sigma_{ij}=-\sigma_{ji}.
$$
In particular, the $q$-components of the Hamiltonian vector field
$X_H$ are independent of $\sigma$,
$$
   X_H = (H_p,*),\qquad H_p=\frac{\p H}{\p p}.
$$
Let $\Pi\co \widehat{M}\to M$ be a cover of $M$ and suppose that
$\Pi^*\sigma$ is {\it exact}. The Hamiltonian $H$ lifts to Hamiltonian
$\widehat{H}$ and we define the
{\em Ma\~n\'e critical value} of the cover as:

$$
   c(\widehat{H}) := \inf_{\theta}\sup_{q\in \widehat{M}} \widehat{H}(q,\theta_q),
$$
where the infimum is taken over all 1-forms $\theta$ on $\widehat M$
with $d\theta=\Pi^*\sigma$.

 If $\overline{M}$ is a cover of $\widehat{M}$,
then clearly $c(\overline{H})\leq c(\widehat{H})$ and equality
holds if $\overline{M}$ is a {\it finite} cover of $\widehat{M}$.

The critical value may also be defined in Lagrangian terms.
Consider the Lagrangian on $T\widehat{M}$ given by
\[\widehat{L}(q,v)=\frac{1}{2}|v|^{2}-U(q)+\theta_{q}(v),\]
where $\theta$ is any primitive of $\Pi^*\sigma$.
It is well known that the extremals of $\widehat{L}$, i.e., the solutions of
the Euler-Lagrange equations of $\widehat{L}$,
 $$
 \frac{d}{dt}\frac{\partial \widehat{L}}{\partial v}(q,v) = \frac{\partial \widehat{L}}{\partial q}(q,v)
$$
coincide with the projection to $\widehat{M}$ of the orbits of the Hamiltonian
$\widehat{H}$.
The action of the Lagrangian $\widehat{L}$ on
an absolutely continuous curve $\gamma\co [a,b]\rightarrow \widehat M$ is defined by
\[A_{\widehat{L}}(\gamma)=\int_{a}^{b}\widehat{L}(\gamma(t),\dot{\gamma}(t))\,dt.\]
We define the Ma\~n\'e critical value of the Lagrangian $\widehat{L}$
as (this was Ma\~n\'e's original definition):
\[c(\widehat{L})=\inf\{k\in\R:\;A_{\widehat{L}+k}(\gamma)\geq 0\;\mbox{\rm
for any absolutely continuous closed curve $\gamma$}\]
\[\mbox{\rm defined on any closed interval $[0,T]$ }\}.\]
Note that $c(\widehat{L})$ may depend on the primitive $\theta$
chosen, but it will remain unchanged for all primitives of the form
$\theta+df$, thus it could only change if we consider another primitive
$\theta'$ such that $\theta-\theta'$ determines a non-zero class in
$H^{1}(\widehat{M},\R)$.
The relationship between the Lagrangian and Hamiltonian critical values
is given by (cf. \cite{BurPa,CIPP}):
\[c(\widehat{H})=\inf_{[\varpi]\in H^{1}(\widehat{M},\R)}c(\widehat{L}-\varpi).\]

There are two covers which are distinguished: the {\em universal
  cover} $\pi\co \widetilde{M}\to M$ and the {\em abelian cover}
  $\pi_{0}\co M_{0}\to M$.
The latter is defined as the cover of $M$ whose fundamental group
is the kernel of the Hurewicz homomorphism $\pi_{1}(M)\mapsto
H_{1}(M,\R)$ (we could also take coefficients in $\Z$; this will not alter
the discussion below since critical values are unchanged by finite covers).
We denote the lifts of $H$ to $\widetilde M$ resp.~$M_0$
by $\widetilde H$ resp.~$H_0$ and the corresponding Ma\~n\'e
critical values (when defined) by
$$
   c := c(\widetilde H) = c(\widetilde L),\qquad c_0 := c(H_0) = c(L_0).
$$
The critical value
$c=c(\widetilde{H})=c(\widetilde{L})$ is the one given
in the Introduction. We note here some of its properties:
\begin{enumerate}
\item $c<\infty$ if and only if $\pi^*\sigma$ has a {\it bounded} primitive
(with respect to the lifted Riemannian metric);
\item if $M$ admits a metric of negative curvature, any closed 2-form
$\sigma$ has bounded primitives in $\widetilde{M}$ \cite{Gro};
\item if $[\sigma]\in H^{2}(M,\R)$ is non-zero and $\pi_{1}(M)$
is amenable
\footnote{A group $G$ is amenable e.g.~if it is abelian, solvable or
  nilpotent. If $G$ contains a free subgroup on two generators (which
  is the case e.g.~for the fundamental group of a closed surface of
  genus at least 2) it is not amenable.},
$c=\infty$ (\cite{Gro}, see also \cite[Corollary 5.4]{P}).
\end{enumerate}

\begin{lemma}
For any $k>c$, the hypersurface $\Sigma_k$ is virtually
contact.
\label{virtc}
\end{lemma}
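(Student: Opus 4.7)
The plan is to construct the virtually contact structure on the lift $\widetilde{\Sigma}_k \subset T^*\widetilde{M}$ of $\Sigma_k$ under the universal covering $\pi\co T^*\widetilde{M}\to T^*M$. Since $k>c$, by the definition of the Ma\~n\'e critical value one can choose a 1-form $\theta$ on $\widetilde{M}$ with $d\theta=\pi^*\sigma$ and a constant $\delta>0$ such that
$$
  \widetilde{H}(q,\theta_q)=\tfrac12|\theta_q|^2+U(q)\leq k-\delta,\qquad\forall\,q\in\widetilde{M}.
$$
Using $\pi^*\om=dp\wedge dq+\tilde\tau^*\pi^*\sigma=d(p\,dq+\tilde\tau^*\theta)$, I would set
$$
  \lambda_\theta := p\,dq+\tilde\tau^*\theta
$$
as the candidate primitive on $T^*\widetilde{M}$; then $d\lambda_\theta|_{\widetilde{\Sigma}_k}=\pi^*\om|_{\widetilde{\Sigma}_k}$ by construction.

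The key point is to verify the two quantitative conditions. For the Reeb direction, take $R=X_H$, which spans $\ker(\om|_{\Sigma_k})$. A direct computation using the metric identification $p\mapsto p^\sharp$ gives
$$
  \lambda_\theta(X_H)=|p|^2+\theta_q(p^\sharp)\geq |p|^2-|\theta_q|\,|p|.
$$
From $U(q)\leq k-\delta-\tfrac12|\theta_q|^2$ one obtains $U\leq k-\delta$ on $\tau(\widetilde{\Sigma}_k)$, hence $|p|^2=2(k-U)\geq 2\delta$ and $|\theta_q|^2\leq|p|^2-2\delta$. Writing $t=|p|^2\geq 2\delta$, this yields
$$
  \lambda_\theta(X_H)\geq \sqrt{t}\bigl(\sqrt{t}-\sqrt{t-2\delta}\bigr)=\frac{2\delta\sqrt{t}}{\sqrt{t}+\sqrt{t-2\delta}}\geq\delta,
$$
giving a uniform positive lower bound on $\widetilde{\Sigma}_k$. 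In particular $\lambda_\theta\wedge(d\lambda_\theta)^{n-1}\neq 0$ pointwise, so $\lambda_\theta$ is a contact form on $\widetilde{\Sigma}_k$.

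For the uniform upper bound on $|\lambda_\theta|$, I would equip $\widetilde{\Sigma}_k$ with the pullback of any Riemannian metric on the compact manifold $\Sigma_k$, so that $\pi$ is a local isometry. Under this metric, $p\,dq$ descends to the restriction of the canonical Liouville form on $\Sigma_k$, which is automatically bounded by compactness. For the second summand, $|\tilde\tau^*\theta|_{(q,p)}=|\theta_q|\leq\sqrt{2(k-\min U)}$ is bounded by the same inequality $\tfrac12|\theta_q|^2+U\leq k-\delta$ together with the compactness of $M$. Combining these gives $\sup_{\widetilde{\Sigma}_k}|\lambda_\theta|\leq C<\infty$, and all conditions in the definition of a virtually contact hypersurface are met.

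The main technical point is the quantitative lower bound $\lambda_\theta(R)\geq\delta$: this is precisely where the strict inequality $k>c$ is used, through the existence of a primitive $\theta$ with a uniform spectral gap $\delta$. Without the gap one would only obtain $\lambda_\theta(X_H)\geq 0$, which is insufficient. The remaining work — checking that the pullback metric setup is the right one for the virtually contact definition, and confirming that the bounds are uniform — is routine given the compactness of $M$ and the uniform choice of $\theta$.
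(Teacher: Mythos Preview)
Your proof is correct and follows essentially the same route as the paper's: both use the primitive $\lambda_\theta = p\,dq + \tilde\tau^*\theta$ on $T^*\widetilde{M}$, compute $\lambda_\theta(X_H)=|p|^2+\theta_q(p^\sharp)$, and extract a uniform positive lower bound from the gap $k>c$. The only cosmetic difference is that the paper first rewrites the condition $\widetilde{H}(q,\theta_q)\leq k-\delta$ in the equivalent form $\varepsilon+|\theta_q|\leq\sqrt{2(k-\widetilde U(q))}$ and then bounds $\lambda_\theta(X_H)\geq\varepsilon^2$, whereas you keep the original form and obtain $\lambda_\theta(X_H)\geq\delta$ via the identity $\sqrt{t}(\sqrt{t}-\sqrt{t-2\delta})\geq\delta$; both are the same estimate.
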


\begin{proof}If $k>c$ we may choose $\varepsilon>0$ and a primitive $\theta$ of
$\pi^*\sigma$ such that
\begin{equation}
\varepsilon+|\theta_{q}|\leq \sqrt{2(k-\widetilde{U}(q))}
\label{fromc}
\end{equation}
for all $q\in \widetilde{M}$.

Let $\lambda$ be the Liouville form on $\widetilde{M}$. Then we may write
$\widetilde{\omega}=d(\lambda+\widetilde{\tau}^*\theta).$
Since $X_{\widetilde{H}}=(\widetilde{H}_{p},*)$, on $\widetilde{\Sigma}_{k}$
we have
\begin{equation}
(\lambda+\widetilde{\tau}^*\theta)(X_{\widetilde{H}})=
|p|^2+\theta_{q}(\widetilde{H}_{p})=2(k-\widetilde{U}(q))+\theta_{q}(\widetilde{H}_{p}).
\label{primi}
\end{equation}
But $\theta_{q}(\widetilde{H}_{p})\geq -|\theta_{q}|\sqrt{2(k-\widetilde{U}(q))}$ for all $q\in\widetilde{M}$. It follows from (\ref{fromc}) and (\ref{primi})
that
\begin{align*}
(\lambda+\widetilde{\tau}^*\theta)(X_{\widetilde{H}})&\geq
2(k-\widetilde{U}(q))-|\theta_{q}|\sqrt{2(k-\widetilde{U}(q))}\\
&\geq \sqrt{2(k-\widetilde{U}(q))}\left(\sqrt{2(k-\widetilde{U}(q))}-|\theta_{q}|\right)\\
&\geq \varepsilon^2.
\end{align*}
On the other hand (\ref{fromc}) also implies that there is a constant
$C>0$ such that
$$\sup_{(q,p)\in\widetilde{\Sigma_{k}}}|(\lambda+\widetilde{\tau}^*\theta)_{(q,p)}|<C$$
and thus $\Sigma_k$ is virtually contact.
\end{proof}

\subsection{Hyperbolic spaces}\label{ss:hyp}
The results in this subsection are well known; we include them here for
the reader's convenience (cf. \cite{G1}).
Let $\Gamma$ be a cocompact lattice of $PSL(2,\R)$.
The standard horocycle flow $h_t$ is given by the right action of the
 one-parameter subgroup
\[\left(\begin{array}{cc}
1&t\\
0&1\\
\end{array}\right)   \]
on $\Gamma\backslash PSL(2,\R)$.
The horocycle flow is known to be uniquely ergodic \cite{Fu}.
Morever, it has zero topological entropy
since
\[\phi^0_t\circ h_s=h_{se^{-t}}\circ \phi^0_t\]
for all $s,t\in \R$, where $\phi^0$ is the geodesic flow given by the
one-parameter subgroup
\[\left(\begin{array}{cc}
e^{t/2}&0\\
0&e^{-t/2}\\
\end{array}\right).   \]
(The relation $\phi^0_t\circ h_s=h_{se^{-t}}\circ \phi^0_t$ implies
that for any $t$, the flows $s\mapsto h_{s}$ and $s\mapsto h_{se^{-t}}$
are conjugate and thus they must have the same entropy.
But for $t\neq 0$, this forces the entropy of $h_s$ to be zero.)

In fact, $h_t$ parametrizes the strong stable manifolds of $\phi^0$.
A matrix $X$ in $\mathfrak{sl}(2,\R)$ gives rise to a flow $\phi$ on $\Gamma\backslash PSL(2,\R)$
by setting
\[\phi_{t}(\Gamma g)=\Gamma g e^{tX}.\]
The geodesic and horocycle flows are just particular cases of these algebraic flows.
Consider the following path of matrices in $\mathfrak{sl}(2,\R)$:
\[\R\ni s\mapsto X_{s}:=\left(\begin{array}{cc}
1/2&0\\
0&-1/2\\
\end{array}\right)+s\,\left(\begin{array}{cc}
0&1/2\\
-1/2&0\\
\end{array}\right).  \]
The flows $\phi^{s}$ on $\Gamma\backslash PSL(2,\R)$ associated with the matrices $X_{s}$ can be interpreted as magnetic flows.
Since $PSL(2,\R)$ acts by isometries on the hyperbolic
plane ${\mathbb H}^{2}$, $M:=\Gamma\backslash {\mathbb H}^{2}$ is a compact
hyperbolic surface provided $\Gamma$ acts without fixed points, and the unit sphere bundle $SM$ of $M$ can be identified
with $\Gamma\backslash PSL(2,\R)$.
A simple calculation shows that $\phi^{s}$ is the
Hamiltonian flow of the Hamiltonian
$H(x,v)=\frac{1}{2}|v|^{2}_{x}$ with respect to the twisted symplectic
form on $TM$ given by
\[\omega_{s}=d\alpha-s\,\pi^{*}\sigma,\]
where $\sigma$ is the area form of $M$, $\pi\co TM\to M$ is
the canonical projection and $\alpha$ is the Liouville 1-form that generates
 the geodesic flow of $M$.
For $s=0$, $\phi^{0}$ is the geodesic flow and for $s=1$, $\phi^{1}$ is the flow
induced by the one-parameter subgroup with matrix on $\mathfrak{sl}(2,\R)$ given by
\[X_1=\left(\begin{array}{cc}
1/2&1/2\\
-1/2&-1/2\\
\end{array}\right). \]
Observe that there exists an element $c\in PSL(2,\R)$ such that
\[ c^{-1}X_1 c=\left(\begin{array}{cc}
0&1\\
0& 0\\
\end{array}\right). \]
Explicitly
\[c=\frac{1}{\sqrt{2}}\left(\begin{array}{cc}
1&1\\
-1&1\\
\end{array}\right). \]

Then the map $f\co \Gamma\backslash PSL(2,\R)\to \Gamma\backslash PSL(2,\R)$
given by $f(\Gamma g)=\Gamma gc$ conjugates
$\phi^1$ and $h$, i.e. $f\circ \phi^{1}_{t}=h_{t}\circ f$. (In fact,
any matrix
in $\mathfrak{sl}(2,\R)$ with determinant zero will give rise to a
flow which is conjugate to $h_t$ or $h_{-t}$. So, up to orientation,
there is just one algebraic horocycle flow.)
Note that $\det X_{s}=-\frac{1}{4}(1-s^2)$, so for $|s|<1$,
the flow $\phi^{s}$ is conjugate to the geodesic flow $\phi^0$, up to
a constant time scaling by $\sqrt{1-s^2}$.
Hence the magnetic flows $\phi^s$ are just geodesic
flows, but with entropy $\sqrt{1-s^2}$. Similarly, for $|s|>1$, $\phi^s$
is up to a constant time scaling conjugate to the flow generated by
\[V:=\left(\begin{array}{cc}
0&1/2\\
-1/2&0\\
\end{array}\right),  \]
which is actually the fibrewise circle flow.

In the discussion above we have kept the hypersurface $SM$ ($k=1/2$) fixed
 and changed $\sigma$ by $s\,\sigma$. This was just done for convenience, since
the flow $\phi^s$ on $SM$ is equivalent to the Hamiltonian flow
on $\Sigma_k$ with respect to $\sigma$, where $s^2=1/2k$.
Note that $\omega_{s}$ restricted to $SM$ has a primitive given by
$\psi_{s}:=\alpha-s\gamma$, where $\gamma$ is the unique left-invariant 1-form
which
takes the value one on $V$ and zero on
\[ \left(\begin{array}{cc}
1/2&0\\
0&-1/2\\
\end{array}\right),\;\;\;\;\left(\begin{array}{cc}
0&-1/2\\
-1/2&0\\
\end{array}\right).\]
The 1-form $\gamma$ is also the connection 1-form determined by the
Levi-Civita connection of the hyperbolic metric.
Also note that $\psi_{s}(X_{s})=1-s^2$ and thus $\Sigma_{k}$ is of contact
type for all $k\neq 1/2$ (with opposite orientation for $k<1/2$).

The fact that the Ma\~n\'e critical value is $c=1/2$ can be seen as
follows. The form
$y^{-1}dx$ is a primitive of the area form $y^{-2} dx\wedge dy$
and has norm $1$. This shows that $c\leq 1/2$.
To see that in fact $c=1/2$ one performs a calculation with geodesic
circles exactly as the one in the proof of Lemma \ref{lemma:cc_0}
below.

Summarizing, we have obtained the picture described in the
Introduction:

\begin{itemize}
\item For $k>1/2$, the dynamics is Anosov and conjugate (after rescaling)
to the underlying geodesic flow. The energy levels are contact.
\item At the Ma\~n\'e critical value $k=1/2$ we hit the horocycle
flow. There are no closed orbits and the level is unstable.
\item For $k<1/2$ all orbits are closed and contractible. Energy levels
are contact but with opposite orientation.
\end{itemize}

A very similar picture is obtained if we take compact quotients of complex
hyperbolic space $\mathbb H_{\bf C}^n$ with its K\"ahler form, see
\cite[Appendix]{DP}.
If we normalize the K\"ahler structure $(g,\sigma)$
to have holomorphic sectional curvature $-1$ (so the sectional
curvature $K$ satisfies $-1\leq K\leq -1/4$), then the Ma\~n\'e
critical value is $c=1/2$, $\Sigma_k$ is Anosov
for $k>1/2$, for $k=1/2$ one obtains a unipotent ergodic flow
without closed orbits, and for $k<1/2$ all orbits are closed and contractible.
However, there is an essential difference between the case $n=1$
and $n\geq 2$. As we saw above, for $n=1$ the restriction of the symplectic
form to $\Sigma_k$ is exact. This is no longer the case for $n\geq 2$.
Nevertheless it is easy to see that for $k>1/2$, $\Sigma_k$ is stable.
Indeed, since the flow is algebraic, the strong stable and unstable bundles
are real analytic and together they span a hyperplane bundle $\xi$ in $\Sigma_k$
invariant under the magnetic flow. If we define a 1-form $\lambda$
such that $\lambda(X_{H})=1$ and $\xi=\mbox{\rm ker}\,\lambda$, then
$\lambda$ is a stabilizing 1-form. Observe that in this case
$d\lambda$ and $\omega$ span a 2-dimensional space of flow invariant
2-forms. For $n=1$ this space is just one dimensional.

\subsection{The exact case}\label{ss:exact}

Now we focus on the case that $\sigma$ is exact. We begin with the
case $\sigma=0$.

\begin{lemma}
Suppose $\sigma=0$. Then $c=\max\, U$ and any regular level set
$\Sigma_k$ is of contact type.
\label{lemma:contact}
\end{lemma}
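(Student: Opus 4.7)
The plan is to establish both claims directly: the Ma\~n\'e critical value by a one-line minimization, and contact type by exhibiting an explicit primitive of $\om$ that restricts to a contact form on each regular $\Sigma_k$.

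For $c = \max U$: since $\sigma = 0$ the constraint $d\theta = \pi^*\sigma$ becomes $d\theta = 0$, so every closed 1-form on $\widetilde M$ is admissible. For any such $\theta$ one has $\widetilde H(q,\theta_q) = \tfrac12|\theta_q|^2 + \widetilde U(q) \geq \widetilde U(q)$, hence $\sup_{\widetilde M}\widetilde H(\cdot,\theta) \geq \sup \widetilde U = \max U$; the choice $\theta = 0$ realizes this lower bound, so $c = \max U$.

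For contact type, I would first observe that the standard Liouville form $\lambda_0 = p\,dq$ is a primitive of $\om$ with $\lambda_0(X_H) = |p|^2 = 2(k-U(q))$ on $\Sigma_k$. This already yields a contact form when $k > \max U$. For a regular value $k \leq \max U$, however, $\lambda_0$ vanishes on the characteristic direction at exactly the points $(q_0,0)\in\Sigma_k$ with $U(q_0)=k$, where regularity of $k$ forces $\nabla U(q_0)\neq 0$. To repair this I would pass to the perturbed primitive
\[
\lambda_\eps := p\,dq - \eps\, d\bigl(p(\nabla U)\bigr),\qquad p(\nabla U)(q,p):=\la p,\nabla U(q)\ra,
\]
which still satisfies $d\lambda_\eps = \om$. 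A short coordinate computation, writing $X_H(f) = p^T(\Hess U)p - |\nabla U|^2 + R(q,p)$ where $f=p(\nabla U)$ and the remainder $R$ collects $q$-derivatives of the metric, yields
\[
\lambda_\eps(X_H) \;=\; |p|^2 + \eps\bigl(|\nabla U(q)|^2 - p^T(\Hess U)p\bigr) - \eps R(q,p),
\]
and $|R(q,p)|+|p^T(\Hess U)p|\leq C|p|^2$ for a constant $C$ depending only on $(g,U)$. Choosing $\eps<1/C$ gives the lower bound
\[
\lambda_\eps(X_H) \;\geq\; 2(1-\eps C)(k-U(q)) + \eps|\nabla U(q)|^2,
\]
a sum of nonnegative quantities whose simultaneous vanishing would force $U(q)=k$ and $\nabla U(q)=0$, i.e., that $(q,0)$ is a critical point of $H$ with critical value $k$, contradicting regularity of $k$. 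Hence $\lambda_\eps(X_H)>0$ on $\Sigma_k$, so $\lambda_\eps|_{\Sigma_k}$ is transverse to the characteristic direction and therefore $\Sigma_k$ is of contact type.

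The only nontrivial step is the quadratic-in-$p$ estimate that prevents the perturbation from destroying positivity for large $|p|$; this reduces to bounding $\Hess U$ and the Christoffel contributions of $g$ on the compact manifold $M$, so a single $\eps>0$ depending only on $(g,U)$ works uniformly for all regular values $k$.
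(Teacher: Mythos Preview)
Your argument is correct. The computation of $c$ matches the paper verbatim. For contact type, your approach coincides with the paper's \emph{alternative} argument: perturb the Liouville form to $\lambda+df$ with $df(X_H)>0$ along the bad set $\{p=0\}\cap\Sigma_k$. You make this explicit by choosing $f=-\langle p,\nabla U\rangle$ and carrying out the quadratic estimate to get a single $\eps$ that works globally, whereas the paper only asserts that a suitable small $f$ exists.

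The paper's \emph{primary} argument is different and worth knowing: it observes that $\lambda_0(X_H)=|p|^2\geq 0$ and that the zero set $\{p=0\}\cap\Sigma_k$ contains no invariant set (since $X_H(q,0)\neq 0$ there), so every invariant probability measure satisfies $\int\lambda_0(X_H)\,d\mu>0$; contact type then follows from McDuff's characterization (Theorem~\ref{thm:mcduff}). This route avoids any estimate but invokes a nontrivial result from Sullivan's theory of structure cycles. Your explicit construction is more self-contained and yields the slight bonus of a primitive that works for all regular levels simultaneously.
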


\begin{proof}
If $\sigma=0$ the infimum in the definition of $c=c(\hat H)$ is
attained for $\theta=0$, hence $c=\max U$ and $k=c$ is a singular
value of $H$.
If $k>c=\max\,U$, then $\Sigma_k$ encloses the Lagrangian zero section
and is thus of contact type (with the Liouville form $\lambda$ as
contact form). Suppose then $k<c$ is a regular value.
A quick glance at the proof of the previous lemma shows that
$\lambda(X_{H})=|p|^2\geq 0$. Note that the set
$\{p=0\}\cap\Sigma_{k}$ does not contain any set invariant under the
flow of $X_{H}$ since $X_{H}(q,0)=(0,*)\neq 0$. Therefore, for
any invariant Borel probability measure $\mu$ we have
\[\int_{\Sigma_{k}}\lambda(X_{H})\,d\mu>0\]
and $\Sigma_k$ is of contact type by Theorem~\ref{thm:mcduff}.
Alternatively, the condition $X_{H}(q,0)=(0,*)\neq 0$ allows us to
pick a function $f\co T^*M\to\R$ with $df(X_H)>0$ along $\{p=0\}\cap
\Sigma_k$, hence (for $f$ small) $(\lambda+df)(X_H)>0$ and
$\lambda+df$ is a contact form on $\Sigma_k$.
\end{proof}

More generally, we have

\begin{lemma}\label{lem:exact}
A closed 2-form $\sigma$ on $M$ is exact if and only if
$\pi_0^*\sigma$ has a bounded primitive. The Ma\~n\'e critical value
$c_0$ associated to the abelian cover $\pi_0\co M_0\to M$ is given by the
formula in the Introduction
$$
   c_0 = c(H) = \inf_\theta\sup_{q\in M} H(q,\theta_q),
$$
where the infimum is taken over all 1-forms $\theta$ on $M$ with
$d\theta=\sigma$ (and $c_0=\infty$ if $\sigma$ is not exact). If
$k>c_0$ the hypersurface $\Sigma_k$ is of contact type.
\end{lemma}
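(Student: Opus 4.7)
The plan is to prove the three claims sequentially, since each builds on the one before. First I establish the equivalence between exactness of $\sigma$ and existence of a bounded primitive on $M_0$; then the variational formula for $c_0$ follows from that equivalence plus an averaging estimate; and finally the contact property for $k>c_0$ reduces to a one-line computation of $(\lambda+\tau^*\theta)(X_H)$ on $\Sigma_k$.

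\textbf{Equivalence of exactness and bounded primitive.} The direction ``$\sigma=d\theta$ on $M$ $\Rightarrow$ bounded primitive on $M_0$'' is immediate: $\theta$ is bounded because $M$ is compact, and the lifted metric on $M_0$ is invariant under the deck group, so $\pi_0^*\theta$ is also bounded. For the converse, the key input is that the deck group $G$ of $\pi_0\co M_0\to M$ is $H_1(M;\Z)/\mathrm{torsion}$, an abelian (hence amenable) group acting by isometries on $M_0$. Given a bounded primitive $\wt\theta$ of $\pi_0^*\sigma$, one has $\|g^*\wt\theta\|_\infty=\|\wt\theta\|_\infty$ for every $g\in G$, so the function $g\mapsto (g^*\wt\theta)_{\wt q}(\wt v)$ lies in $L^\infty(G)$ uniformly in $(\wt q,\wt v)$ on compact sets. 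Averaging over a F\o lner sequence in $G$ (or applying an invariant mean) and extracting a weak limit, followed by a standard smoothing, produces a smooth $G$-invariant 1-form $\bar\theta$ on $M_0$ with $d\bar\theta=\pi_0^*\sigma$ satisfying the pointwise bound $|\bar\theta_{\wt q}(\wt v)|\leq\sup_{\wt q'\in G\cdot\wt q}|\wt\theta_{\wt q'}(\wt v)|$. Since $\bar\theta$ is $G$-invariant it descends to a 1-form $\theta$ on $M$ with $d\theta=\sigma$, proving exactness.

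\textbf{The formula for $c_0$.} One inequality is trivial: any primitive $\theta$ of $\sigma$ on $M$ pulls back to a primitive $\pi_0^*\theta$ of $\pi_0^*\sigma$ on $M_0$ with identical pointwise norm, so $\sup_{\wt q\in M_0}\wt H(\wt q,(\pi_0^*\theta)_{\wt q})=\sup_{q\in M}H(q,\theta_q)$, giving $c_0\leq\inf_\theta\sup_q H(q,\theta_q)$. For the reverse inequality, given a bounded primitive $\wt\theta$ on $M_0$, the averaged form $\bar\theta$ constructed above descends to some $\theta$ on $M$ satisfying $|\theta_q|\leq\sup_{\pi_0(\wt q)=q}|\wt\theta_{\wt q}|$, hence $\sup_{q\in M}H(q,\theta_q)\leq\sup_{\wt q\in M_0}\wt H(\wt q,\wt\theta_{\wt q})$; taking the infimum over $\wt\theta$ yields the opposite inequality. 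If $\sigma$ is not exact, both sides are $+\infty$ by the first part together with the observation that $\sup H_0(\wt q,\wt\theta_{\wt q})<\infty$ forces $\wt\theta$ to be bounded.

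\textbf{Contact type for $k>c_0$.} From the formula, $c_0\geq\sup U=\max U$, so $k>\max U$ and $\Sigma_k\to M$ is a sphere bundle. Choose a primitive $\theta$ of $\sigma$ on $M$ with $\sup_qH(q,\theta_q)<k$; compactness of $M$ supplies $\epsilon>0$ with $|\theta_q|\leq\sqrt{2(k-U(q))}-\epsilon$ for every $q$. Since $\om=d(\lambda+\tau^*\theta)$ for $\lambda$ the canonical Liouville 1-form, and since the $q$-component of $X_H$ is $H_p$ (which corresponds to $p$ under the metric, so $|H_p|=|p|=\sqrt{2(k-U)}$ on $\Sigma_k$), we compute
\begin{align*}
(\lambda+\tau^*\theta)(X_H) &= |p|^2+\theta_q(H_p)\\
&\geq |p|^2-|\theta_q|\cdot|p|\\
&= \sqrt{2(k-U)}\bigl(\sqrt{2(k-U)}-|\theta_q|\bigr)\\
&\geq \epsilon\sqrt{2(k-U(q))}\;>\;0.
\end{align*}
Hence $\lambda+\tau^*\theta$ is a primitive of $\om|_{\Sigma_k}$ and a positive contact form, so $\Sigma_k$ is of contact type.

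The principal obstacle is the amenable-averaging step in the first paragraph, since one must turn a pointwise invariant mean into an honest smooth $G$-invariant primitive; the F\o lner averaging combined with the fact that $G$ acts by isometries makes the bound uniform, and after that the remaining two paragraphs are direct consequences.
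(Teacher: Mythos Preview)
Your proof is correct and follows essentially the same approach as the paper: the paper also invokes amenability of the abelian deck group to average a bounded primitive of $\pi_0^*\sigma$ down to a primitive of $\sigma$ on $M$ (citing \cite{Gro} and \cite[Corollary~5.4]{P} for the details you sketch with F\o lner sets), and for the contact-type statement it simply refers back to the computation in Lemma~\ref{virtc}, which is exactly the estimate you carry out. Your write-up is more explicit than the paper's two-line proof, but the underlying argument is the same.
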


\begin{proof}
The first two statements follow from the fact that the deck
transformation group of the cover $\pi_0\co M_0\to M$ is abelian, hence
amenable, so a bounded primitive of $\pi_0^*\sigma$ can be averaged to
a primitive of $\sigma$ on $M$, see~\cite{Gro} and \cite[Corollary 5.4]{P}.
The proof of the last statement is analogous to the proof of
Lemma~\ref{virtc} (but simpler).
\end{proof}

\begin{remark} Note that in the last two lemmas we actually obtain
 {\it restricted} contact type.
\end{remark}

{\em Relation to Mather's $\alpha$-function.}
Suppose now that $\sigma$ is exact and fix a primitive $\theta$ in $M$
and consequently a Lagrangian $L$.
As Ma\~n\'e pointed out \cite{Ma,CDI}, there is a close relationship between
the critical values and Mather's minimizing measures \cite{M}.
Mather's $\alpha$ function is a convex superlinear function
$\alpha\co H^{1}(M,\R)\to \R$
given by
\[\alpha([\varpi])=-\min_{\mu} \int_{TM}(L-\varpi)\,d\mu\]
where $\mu$ runs over all Borel probability measures on $TM$ invariant
under the Euler-Lagrange flow of $L$. It turns out that
\[\alpha([\varpi])=c(L-\varpi)\]
and therefore
\[
   c_0 = c(H)=\min_{[\varpi]\in H^{1}(M,\R)}\alpha([\varpi])
\]
(see~\cite{PP,CIPP}).
The value $c_0=c(H)$ is also called the {\it strict critical
  value}. We now summarize some of the main properties of $c_0$:

\begin{enumerate}
\item $c_{0}=c(L_{0})=c(H_0)$, where $L_0,H_0$ are the lifts
of $L,H$ to the abelian cover $M_0$ \cite{PP};
\item if $M\neq \T^2$ and $c<k\leq c_0$, then $\Sigma_k$ is not
of contact type \cite[Theorem B.1]{Co};
\item there is a characterization of $c_0$ in terms of Symplectic
Topology \cite{PPS}: it is the infimum of the values of $k$ such
that the region bounded by $\Sigma_k$ contains a Lagrangian
submanifold Hamiltonian isotopic to ${\rm gr}(-\theta)$.
(in fact the whole of Mather's $\alpha$ function can be characterized
in a similar way just by considering Lagrangian submanifolds
with a fixed Liouville class);
\item if $\pi_1(M)$ is amenable, $c=c_0$ \cite{FM}.
\end{enumerate}

{\em The gap between $c$ and $c_0$. }
The inequality $c\leq c_0$ could be strict. Examples of this phenomenon
were given for the first time in \cite{PP}:

\begin{theorem}[\cite{PP}]\label{thm:PP}
On a closed oriented surface $M$ of genus $\geq 2$ there exists a
metric of negative curvature and an exact 2-form $\sigma$ such that
$c<c_0$. Moreover, there exists an open interval $I\subset (c,c_0)$
such that all level sets $\Sigma_k$ with $k\in I$ are Anosov. In
particular, these levels are virtually contact but not stable.
\end{theorem}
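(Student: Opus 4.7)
The plan is to build an explicit example and then verify three assertions: the gap $c<c_0$, the existence of an Anosov subinterval, and non-stability on that subinterval. Virtually contact is immediate from Lemma~\ref{virtc}.

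First, I would construct the example. Take a hyperbolic metric $g$ on $M$ and $U\equiv 0$. To achieve the gap, exploit the fact that on the universal cover $\widetilde M=\mathbb{H}^2$ every primitive of $\pi^*\sigma=d(\pi^*\theta)$ has the form $\pi^*\theta+df$ with $f\in C^\infty(\widetilde M)$, whereas on $M$ primitives take the form $\theta+\beta$ with $\beta$ a closed 1-form on $M$. The additional flexibility upstairs lets one choose $\theta$ so that some $\pi^*\theta+df$ has very small $C^0$-norm on $\widetilde M$, yet no primitive of $\sigma$ on $M$ itself has small $C^0$-norm. This produces $c<c_0$. Strictness crucially uses non-amenability of $\pi_1(M)$ (genus $\geq 2$): otherwise one could average bounded primitives from $\widetilde M$ back down to $M$ with the same sup-norm, collapsing the gap.

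Second, I would locate the Anosov interval. For $U\equiv 0$ and $k>c$, Maupertuis's principle reparametrizes the energy-$k$ magnetic flow as the geodesic flow of the Finsler metric $F_k(q,v)=\sqrt{2k}\,|v|_g-\theta_q(v)$ on $M$, well defined precisely above $c$. By fine-tuning $(g,\theta)$ one can arrange $F_k$ to have strictly negative flag curvature for an open range of $k$ overlapping the gap $(c,c_0)$ obtained in the previous step; the Finsler analogue of the classical Anosov theorem then yields the desired interval $I\subset(c,c_0)$ of Anosov levels.

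Third, the main obstacle is non-stability on $I$. Suppose by contradiction that $\Sigma_k$ carries a stabilizing 1-form $\lambda_0$ for some $k\in I$; by Wadsley's Theorem~\ref{thm:wadsley} the magnetic flow is geodesible, and $\lambda_0(R)$ must be a positive function constant along every orbit (hence constant on $\Sigma_k$, by transitivity of Anosov flows). The strategy is to contradict Sullivan's criterion (Theorem~\ref{thm:sullivan}) by producing a foliation cycle approximable by boundaries of singular 2-chains tangent to the flow foliation. Two invariant measures feed the argument. The Liouville measure on $\Sigma_k$ integrates the primitive $\alpha=\lambda+\tau^*\theta'$ (for any primitive $\theta'$ of $\sigma$ on $M$, with $\lambda=p\,dq$ the canonical Liouville form) to the strictly positive value $2k$. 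Simultaneously, $k<c_0$ is the Ma\~n\'e--Mather statement that for every such $\theta'$ there exist closed orbits of the magnetic flow on $\Sigma_k$ with arbitrarily negative $\alpha$-action $\int\gamma^*\alpha$. A weighted combination of Dirac masses on positive- and negative-action orbits yields a nontrivial exact invariant measure of vanishing total $\alpha$-action. Anosov dynamics (density of periodic orbits and the shadowing property) then realizes this measure, up to arbitrarily small error, as the boundary of a thin annular 2-chain tangent to the flow, connecting a positive-action orbit to a negative-action orbit traversed backwards---a Reeb-component-type configuration---contradicting stability by Theorem~\ref{thm:sullivan}. The explicit construction of these annuli with controlled geometry, in such a way that the approximating boundaries converge to the chosen foliation cycle, is the principal difficulty and the technical heart of \cite{PP}.
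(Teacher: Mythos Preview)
Your proposal misidentifies both where the work lies and how it is done. The existence of the example with $c<c_0$ and the Anosov interval $I\subset(c,c_0)$ is \emph{cited} from \cite{PP}; this paper does not reconstruct it. (Incidentally, your Finsler argument in Step~2 cannot work as written: the Randers norm $\sqrt{2k}\,|v|_g-\theta_q(v)$ is positive on $M$ only when $\sup_q|\theta_q|<\sqrt{2k}$, which is essentially $k>c_0$, not $k>c$.) What is actually proved here is only the last sentence---non-stability---and it is \emph{not} the technical heart of \cite{PP}; it is a short argument specific to this paper.

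The paper's proof of non-stability is far simpler than your Sullivan-based scheme and rests on two facts you did not invoke. First, since $\dim M=2$, the hypersurface $\Sigma_k$ is three-dimensional, so any stabilizing $\lambda$ has $d\lambda$ and $\omega|_{\Sigma_k}$ living in a one-dimensional space of $2$-forms on each $2$-plane transverse to the flow; hence $d\lambda=f\,\omega$ for some function $f$. Transitivity of the Anosov flow then forces $f\equiv a$ constant. Second, one rules out both values of $a$: if $a\neq 0$ then $\Sigma_k$ would be of contact type, contradicting Contreras's theorem (\cite[Theorem B.1]{Co}, recorded in Section~\ref{ss:exact} as property (ii)) that levels in $(c,c_0]$ are never contact when $M\neq\mathbb{T}^2$; if $a=0$ then $\lambda$ is a closed $1$-form with $\lambda(R)>0$, which is impossible by Corollary~\ref{cor:liouville}. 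That is the entire argument.

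Your Step~3 has genuine gaps independent of this comparison. The assertion that $k<c_0$ guarantees closed \emph{orbits} on $\Sigma_k$ with arbitrarily negative $\alpha$-action conflates Ma\~n\'e's Lagrangian characterization (negative action over arbitrary closed \emph{curves}) with a statement about invariant measures of the flow. And the passage from ``a suitable invariant measure with zero $\alpha$-action exists'' to ``there is a foliation cycle approximated by boundaries of tangent $2$-chains'' via shadowing is asserted rather than argued; Sullivan's criterion requires more than the existence of such a measure. Even if this route could be made to work, it is substantially harder than the two-line dimension-plus-transitivity argument above, and it bypasses the key external input (Contreras's non-contact theorem in the gap) that makes the paper's proof immediate.
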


\begin{proof}
All statements except the last one are proven in~\cite{PP}. The levels
$k\in I$ are virtually contact by Lemma \ref{virtc}. On the other
hand, suppose a level $k\in I$ is stable and let
$\lambda$ be a stabilizing 1-form. The transitivity of the flow and the
fact that $\dim M=2$ easily imply the existence of a constant
$a$ such that $d\lambda=a\omega$. If $a\neq 0$ we obtain a contradiction
with the fact that these energy levels cannot be contact by property
(ii) above. If $a=0$, we obtain a contradiction with
Corollary~\ref{cor:liouville}.
\end{proof}

Next we describe a large class of manifolds exhibiting a large gap between
$c$ and $c_0$.
Consider a closed manifold $M$ with a closed 2-form $\Omega$ such that
$0\neq [\Omega]\in H^2(M,\Z)$. Suppose moreover that $\Omega$  has a bounded primitive
in $\widetilde{M}$. Consider a circle bundle $p\co P\to M$ with Euler
class $[\Omega]$ and $\psi$
a connection 1-form with $p^*\Omega=-d\psi$. Fix any Riemannian metric
$h$ on $M$ and consider for $\eps>0$
the metric $g_{\varepsilon}$ on $P$ given by
\[g_{\varepsilon}(u,v)=\varepsilon^{-1}h(dp(u),dp(v))+\psi(u)\psi(v).\]

\begin{theorem}For $H=\frac{1}{2}|p|_\eps^2$ on $T^*P$ and
  $\sigma=-d\psi$ the strict Ma\~n\'e critical value satisfies
  $c_{0}(g_{\varepsilon},\sigma)=1/2$ for all $\varepsilon>0$,
but $c(g_{\varepsilon},\sigma)\to 0$ as $\varepsilon\to 0$.
\label{thm:gap}
\end{theorem}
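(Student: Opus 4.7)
The plan is to compute both critical values by picking concrete primitives and exploiting the fiber structure of $\rho\co P\to M$. Let $V$ denote the vertical vector field on $P$ dual to $\psi$, so $\psi(V)=1$ and $V\in\ker d\rho$. Any 1-form $\theta$ on $P$ decomposes as $\theta=\theta_V\psi+\theta_H$ with $\theta_V:=\theta(V)$ and $\theta_H$ annihilating $V$; from the $g_\eps$-orthogonality of horizontal and vertical directions and a direct dualization of $g_\eps=\eps^{-1}\rho^*h+\psi\otimes\psi$ one finds
\[
   |\theta|_{g_\eps^*}^2=\theta_V^2+\eps\,|\theta_H|_{h^*}^2,
\]
where $|\theta_H|_{h^*}$ denotes the norm after identification with a cotangent vector on $M$.

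For the upper bound $c_0\le 1/2$, take $\theta=-\psi$; then $d\theta=\sigma$ and $|\theta|^2\equiv 1$. For the matching lower bound, the Gysin sequence of the circle bundle $\rho\co P\to M$, together with $0\neq[\Omega]=e\in H^2(M;\R)$, shows that $\cup e\co H^0(M;\R)\to H^2(M;\R)$ is injective, hence $\rho^*\co H^1(M;\R)\to H^1(P;\R)$ is an isomorphism. Consequently any primitive of $\sigma$ can be written as $\theta=-\psi+\rho^*\alpha+d\phi$ with $\alpha$ a closed 1-form on $M$ and $\phi\in C^\infty(P)$. Parametrizing a fiber $F_q$ of $g_\eps$-length $L$ at unit speed and integrating $\theta_V$ over it gives
\[
   \int_0^L\theta_V\,dt=-L+\int_{F_q}\rho^*\alpha+\int_{F_q}d\phi=-L,
\]
since $\rho^*\alpha$ restricts to zero on fibers and $\phi|_{F_q}$ is periodic. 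Thus $|\theta_V|\ge 1$ somewhere on $F_q$, so $\sup_P|\theta|^2\ge 1$ and $c_0\ge 1/2$.

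For the second statement, fix a bounded primitive $\beta$ of $\pi_M^*\Omega$ on $\widetilde M$ with $|\beta|_{h^*}\le C$; such a $\beta$ exists by hypothesis. Since $\widetilde P$ is simply connected, the composition $\rho\circ\pi_P\co\widetilde P\to M$ lifts uniquely to a smooth map $\tilde\rho\co\widetilde P\to\widetilde M$ satisfying $\pi_M\circ\tilde\rho=\rho\circ\pi_P$. Hence
\[
   \pi_P^*\sigma=\pi_P^*\rho^*\Omega=\tilde\rho^*\pi_M^*\Omega=d(\tilde\rho^*\beta),
\]
so $\tilde\rho^*\beta$ is a primitive of $\pi_P^*\sigma$ on $\widetilde P$. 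Because vertical directions on $\widetilde P$ lie in $\ker d\tilde\rho$, this primitive is horizontal, and the norm formula above yields $|\tilde\rho^*\beta|_{g_\eps^*}^2=\eps\,|\beta|_{h^*}^2\le\eps C^2$. Consequently $c(g_\eps,\sigma)\le\eps C^2/2\to 0$ as $\eps\to 0$. The only step demanding some care is the existence of the lift $\tilde\rho$, but this is standard given simple connectivity of $\widetilde P$; all other ingredients reduce to the pointwise norm computation and the Gysin argument.
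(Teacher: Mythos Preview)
Your argument is correct and follows the same overall strategy as the paper: the Gysin sequence to control $c_0$, and a bounded primitive pulled back from $\widetilde M$ to control $c$. There are two small differences worth recording. For $c_0\ge 1/2$, the paper invokes the Lagrangian characterisation of $c_0$ and computes the action $A_{L+k}$ on a fiber orbit (using that fibers are null-homologous, again by Gysin); you instead stay on the Hamiltonian side, use Gysin to write every primitive as $-\psi+\rho^*\alpha+d\phi$, and average $\theta_V$ over a fiber. Your route is slightly more self-contained since it avoids the Lagrangian detour. For $c\to 0$, the paper works on the intermediate cover $\pi^*P\to P$ given by the pull-back bundle, whereas you go directly to $\widetilde P$ and lift $\rho\circ\pi_P$ through $\pi_M$. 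These are equivalent: the pull-back bundle is exactly the cover of $P$ through which your lift $\tilde\rho$ factors, and either cover suffices for an upper bound on $c$. Your explicit dual-norm formula $|\theta|_{g_\eps^*}^2=\theta_V^2+\eps|\theta_H|_{h^*}^2$ makes the scaling transparent.
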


\begin{proof} Since $[\Omega]\neq 0$, the Gysin sequence of the circle
  bundle $p\co P\to M$ shows that
$p^*\co H^{1}(M,\R)\to H^{1}(P,\R)$ is an isomorphism. Thus the orbits of
the circle action are all null-homologous in $H_1(P;\R)$.
Let $V$ be the vector field dual to $\psi$ and note that $g_{\varepsilon}(V,V)=1$.
Clearly this implies $c_{0}(g_{\varepsilon},\sigma)\leq 1/2$. To show
that equality holds we consider the Lagrangian on $P$ given by
\[L(q,v)=\frac{1}{2}|v|_{\varepsilon}^{2}-\psi_{q}(v),\]
and we note that
the action $A_{L+k}$ of the orbits of $V$ equals $2\pi(k-1/2)$. Since
the orbits of $V$ are null-homologous in $H_1(P;\R)$ and
this action is negative if $k<1/2$ we must have
$c_{0}(g_{\varepsilon},\sigma)=1/2$.

Let $\pi\co \widetilde{M}\to M$ be the universal cover and
$\hat{p}\co \pi^*P\to \widetilde{M}$, the pull-back bundle. Let
$\hat{\pi}\co \pi^*P\to P$ be the obvious map such that
$p\circ\hat{\pi}=\pi\circ \hat{p}$.
Clearly $\hat{\pi}$ is a covering map of $P$.
By hypothesis, we may write $\pi^*\Omega=d\theta$, where
$|\theta|_{\infty}=\sup_{q\in \widetilde{M}}|\theta_{q}|<\infty$.
Note that
\[d\hat{p}^*\theta=-d\hat{\pi}^*\psi=\hat{\pi}^*\sigma\]
and that
\[|\theta_{q}|_{\varepsilon}=\min_{|v|_{\varepsilon}=1}|\theta_{q}(v)|
=\varepsilon\,\min_{|v|=1}|\theta_{q}(v)|=\varepsilon\,|\theta_{q}|.\]
Thus
\[c(g_{\varepsilon},\sigma)\leq
\frac{\varepsilon|\theta|^2_{\infty}}{2}\longrightarrow 0\;\mbox{\rm as}\;
\varepsilon\to 0.\]
\end{proof}

Later on in Section \ref{sec:ex} we shall see another very explicit
example which together with the preceding result suggests that the gap
between $c$ and $c_0$ is rather frequent on manifolds
with non-amenable fundamental groups.

\subsection{Instability for large energies when $[\sigma]\neq 0$}

The purpose of this subsection is to show that when $[\sigma]\neq 0$, there
can be hypersurfaces $\Sigma_k$ which are not stable for any sufficiently
large $k$. More precisely, we will prove:

\begin{theorem} Let $(M,g)$ be a closed Riemannian manifold of even dimension
different from two whose sectional curvature satisfies the pinching
condition $-4\leq K < -1$.
Let $\sigma$ be a closed 2-form with
$[\sigma]\neq 0$. Then for any $k$ sufficiently large, the hypersurface
$\Sigma_k$ is not stable.
\label{insthe}
\end{theorem}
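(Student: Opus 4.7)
The plan is to assume $\Sigma_k$ is stable for arbitrarily large $k$ and derive a contradiction with Corollary~\ref{cor:liouville}, by combining the Anosov structure of the magnetic flow at high energies with the cohomological hypothesis $[\sigma]\neq 0$ and the strict $1/4$-pinching.

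First I would show that for $k$ sufficiently large the Hamiltonian flow on $\Sigma_k$ is Anosov. After the rescaling $p=\sqrt{2(k-U)}\,v$ identifying $\Sigma_k$ with the unit sphere bundle $SM$ and after the corresponding time change, the flow presents itself as a $C^\infty$-small perturbation (of order $1/\sqrt{k}$) of the geodesic flow of $g$; since negative curvature makes the latter Anosov and this property is $C^1$-open, the magnetic flow is Anosov for $k$ large. Moreover, the strict pinching $-4\le K<-1$ transfers to a strict bunching inequality on the Anosov splitting $T\Sigma_k=E^s\oplus\langle X_H\rangle\oplus E^u$, where $\dim E^{s,u}=n-1$ with $n=\dim M$.

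Now assume for contradiction that $\Sigma_k$ is stable with a smooth stabilizing 1-form $\lambda$. The line bundle $\R\cdot\lambda\subset T^*\Sigma_k$ is flow-invariant because the Reeb vector field $X_H/\lambda(X_H)$ is a reparametrization of $X_H$ and preserves $\lambda$. Since $\lambda(X_H)>0$ everywhere, the Lyapunov exponent of this invariant line under the pullback cocycle vanishes (apply Birkhoff to $\log(\lambda(X_H)\circ\phi_t/\lambda(X_H))$ and use that $\lambda(X_H)$ is bounded above and below). The zero-Lyapunov subbundle of $T^*\Sigma_k$ is one-dimensional and is precisely the annihilator of $E^s\oplus E^u$, so $\ker\lambda=E^s\oplus E^u$ pointwise. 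Next, $d\lambda$ is a smooth, closed, flow-invariant 2-form with $X_H$ in its kernel, and so is $\omega|_{\Sigma_k}$. Exponential contraction on $E^s$ and expansion on $E^u$ force any such 2-form to vanish on $E^s\times E^s$ and on $E^u\times E^u$, so it is determined by a flow-invariant bilinear pairing $E^s\times E^u\to\R$. Invoking an Anosov rigidity theorem for invariant pairings under strict bunching, where the strict $1/4$-pinching in even dimension $\neq 2$ prevents $E^s$ from splitting into distinct flow-invariant sub-Lyapunov components, one concludes that this space of pairings is one-dimensional over $\R$. Hence $d\lambda=c\,\omega|_{\Sigma_k}$ for some constant $c\in\R$.

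To close the argument, pass to de Rham classes: $[d\lambda]=0$, while $[\omega|_{\Sigma_k}]=\tau^*[\sigma]\neq 0$. The latter follows from the Gysin sequence of the sphere bundle $\tau\co SM\to M$: for even $n\geq 4$ the map $\tau^*\co H^2(M;\R)\to H^2(SM;\R)$ is injective, since the Euler class $e(TM)\in H^n(M;\R)$ is nonzero by Chern-Gauss-Bonnet applied to a negatively curved even-dimensional manifold. Consequently $c=0$, so $d\lambda\equiv 0$, and then $\lambda$ is a closed 1-form on $\Sigma_k$ with $\lambda(X_H)>0$, directly contradicting Corollary~\ref{cor:liouville}. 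The main obstacle is the rigidity step of the previous paragraph: the one-dimensionality of smooth flow-invariant closed 2-forms with $X_H$ in kernel is exactly where strict $K<-1$ is essential. In compact quotients of complex hyperbolic space, where $-4\le K\le -1$ is not strict, the splitting $E^s=E^s_\R\oplus E^s_\C$ with distinct Lyapunov exponents produces an extra invariant pairing (the imaginary part of the Hermitian form), rigidity fails, and $\Sigma_k$ is stable at high energies, matching the remark following the theorem.
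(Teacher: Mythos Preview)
Your overall architecture is right and matches the paper's: show the magnetic flow is Anosov with good bunching for large $k$, argue that a stabilizing $\lambda$ must satisfy $d\lambda=c\,\omega|_{\Sigma_k}$ for a constant $c$, and then finish via the Gysin sequence and Corollary~\ref{cor:liouville}. The endgame is identical to the paper's, only with the cases $c=0$ and $c\neq 0$ taken in the opposite order.

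The real gap is the rigidity step. You write ``invoking an Anosov rigidity theorem for invariant pairings under strict bunching\ldots one concludes that this space of pairings is one-dimensional.'' There is no such black box in the literature in this generality, and this is precisely the content the paper has to supply. The paper's argument is quite different from your heuristic (``pinching prevents $E^s$ from splitting into sub-Lyapunov components''). What the paper actually does is: use the $1/2$-pinching to get $C^1$ weak bundles and build the Kanai connection $\nabla$; prove $\nabla(d\lambda)=0$ (this is where $1/2$-pinching is used, via a tensor-decay argument); encode $d\lambda$ by a $\nabla$-parallel bundle map $L\colon E^s\oplus E^u\to E^s\oplus E^u$ with $d\lambda=\omega(L\cdot,\cdot)$; use \emph{even} $n$ so that $\dim E^s=n-1$ is odd, forcing $L^s$ to have a real eigenvalue $\rho$; show the $\rho$-eigenspace is a $\nabla$-parallel, hence $C^1$ and integrable, subbundle of $E^s$; and finally project the resulting foliation to the boundary sphere $B^u$, where Foulon's theorem on $\pi_1(M)$-invariant foliations under North-South dynamics forces it to be trivial. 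Only then does one get $H=E^s$ and $d\lambda=\rho\,\omega$. Your proposal skips all of this machinery, and in particular misidentifies the role of the even-dimension hypothesis: it is not about bunching, but about producing a real eigenvalue of $L^s$.

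A minor point: your Gysin argument invokes Chern--Gauss--Bonnet and the Euler class, but this is unnecessary and slightly off. For the $S^{n-1}$-bundle $\tau\colon\Sigma_k\to M$ with $n\geq 3$, the Gysin sequence gives $H^{2-n}(M)\to H^2(M)\xrightarrow{\tau^*}H^2(\Sigma_k)$, and $H^{2-n}(M)=0$ already makes $\tau^*$ injective on $H^2$; the Euler class does not enter. (In dimension $2$ the Euler class \emph{does} enter, and there $\tau^*$ is the zero map on $H^2$, which is exactly why $n=2$ is excluded.)
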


\begin{remark}
Note that these levels are actually virtually contact by Lemma \ref{virtc}
and the fact that $\sigma$ has bounded primitives on the universal cover
of $M$.
\end{remark}

The ideas for the proof of this theorem come from \cite{FK1,Fe,Ha,Ka}.

We start with a preliminary discussion on Anosov Hamiltonian structures.

Recall that the flow $\phi_t$ of a vector field $F$ on a manifold
$\Sigma$ is {\em Anosov} if there is a splitting $T\Sigma=\R
F\oplus E^s\oplus E^u$ and positive constants $\lambda$ and $C$ such that for
all $x\in \Sigma$
\[|d_{x}\phi_{t}(v)|\leq
C e^{-\lambda t}|v|\;\;\mbox{\rm for}\;v\in E^s\;\;\mbox{\rm and}\;t\geq 0,\]
\[|d_{x}\phi_{-t}(v)|\leq
C e^{-\lambda t}|v|\;\;\mbox{\rm for}\;v\in E^u\;\;\mbox{\rm and}\;t\geq 0.\]
If an Anosov vector field $F$ is rescaled by a positive function its
flow remains Anosov~\cite{AS,Pa}. It will be useful
for us to know how the bundles $E^s$ and $E^u$ change when
we rescale $F$ by a smooth positive function $r\co \Sigma\to\R_{+}$.
Let $\tilde\phi$ be the flow of $rF$ and $\tilde{E}^s$ its stable bundle.
Then (cf. \cite{Pa})
\begin{equation}
\tilde{E}^{s}(x)=\{v+z(x,v)F(x):\;\;\;v\in E^{s}(x)\},
\label{eq:stc}
\end{equation}
where $z(x,v)$ is a continuous 1-form (i.e. linear in $v$ and continuous
in $x$). Moreover, if we let $l=l(t,x)$ be (for fixed $x$) the inverse
of the diffeomorphism
\[t\mapsto\int_{0}^{t}r(\phi_{s}(x))^{-1}\,ds\]
then
\begin{equation}
d\tilde\phi_{t}(v+z(x,v)F(x))=
d\phi_{l}(v)+z(\phi_{l}(v),d\phi_{l}(v))F(\phi_{l}(x)).
\label{eq:tch}
\end{equation}
There is a similar expression for $\tilde{E}^{u}$.
It is clear from the discussion above that the weak bundles $\R F\oplus E^s$
 and
$\R F\oplus E^u$ do not change under rescaling of $F$ (the
strong bundles $E^{s,u}$ are indeed affected by rescaling as we have just seen).

Let $(\Sigma,\omega)$ be a Hamiltonian structure. We say that the structure is
{\em Anosov} if the flow of any vector field $F$ spanning $\ker\om$ is Anosov.

We say that an Anosov Hamiltonian structure satisfies the {\it
  $1/2$-pinching condition} or that it is {\it 1-bunched} \cite{H,H2} if
for any vector field $F$ spanning $\ker\om$ with flow $\phi_t$
there are functions $\mu_{f}, \mu_{s}\co \Sigma\times \R_{+}\to \R_{+}$
such that
\begin{itemize}
\item $\lim_{t\to\infty}\sup_{x\in\Sigma}\frac{\mu_{s}(x,t)^{2}}{\mu_{f}(x,t)}=0$;
\item $\mu_{f}(x,t)|v|\leq |d\phi_{t}(v)|\leq \mu_{s}(x,t)|v|$ for all
$x\in\Sigma$, $t>0$ and $v\in E^{s}(x)$, and $\mu_{f}(x,t)|v|\leq |d\phi_{-t}(v)|\leq \mu_{s}(x,t)|v|$ for all
$x\in\Sigma$, $t>0$ and $v\in E^{u}(\phi_t x)$.
\end{itemize}
We remark that the 1/2-pinching condition is invariant under rescaling.
Indeed, consider the flow $\tilde\phi_{t}$ of $rF$. It is clear from
(\ref{eq:stc}) and (\ref{eq:tch}) that there is a positive constant $\kappa$
such that
\[\frac{1}{\kappa}\mu_{f}(x,l(t,x))|\tilde v|\leq
|d\tilde\phi_{t}(\tilde v)|\leq \kappa \mu_{s}(x,l(t,x))|\tilde v|\]
for $t>0$ and $\tilde v\in \tilde{E}^{s}$ (with a similar expression
for $\tilde{E}^{u}$). We know that given $\varepsilon>0$, there exists
$T>0$ such that
for all $x\in\Sigma$ and all $t>T$ we have
\[\frac{\mu_{s}(x,t)^{2}}{\mu_{f}(x,t)}<\varepsilon.\]
On the other hand, there exists $a>0$ such that $l(t,x)\geq at$ for
all $x\in\Sigma$ and $t>0$. Hence for all $t>T/a$ we have
\[\frac{\mu_{s}(x,l(t,x))^{2}}{\mu_{f}(x,l(t,x))}<\varepsilon\]
for all $x\in\Sigma$.
Therefore
$$\lim_{t\to\infty}\sup_{x\in\Sigma}\frac{\mu_{s}(x,l(t,x))^{2}}{\mu_{f}(x,l(t,x))}=0$$
and thus $\tilde\phi_{t}$ is also $1/2$-pinched.

Hence the Anosov
property as well as the $1/2$-pinching condition are invariant under
rescaling and thus intrinsic properties of the Hamiltonian structure.
One of the main consequences of the $1/2$-pinching condition is that
the weak bundles $\R F\oplus E^s$ and $\R F\oplus E^u$ are of class
$C^1$ \cite[Theorem 5]{H2} (see also \cite{HPS}).

Suppose now $(\Sigma,\om)$ is a {\it stable} Anosov Hamiltonian structure
satisfying the $1/2$-pinching condition and let $\lambda$ be the
stabilizing 1-form and $R$ the Reeb vector field. Invariance under the
flow implies that $\om$ and $\lambda$ both vanish on $E^s$ and $E^u$.
Since the flow $\phi_t$ of $R$ is Anosov and
$E^s\oplus E^u=\ker\lambda$ which is $C^\infty$, it is clear that
$E^s$ and $E^u$ must be $C^1$ since $E^{s,u}=(\R F\oplus E^{s,u})\cap\ker\lambda$. Under these conditions we can introduce
the {\it Kanai connection} \cite{Ka} which is defined as follows.

Let $I$ be the $(1,1)$-tensor on $\Sigma$ given by $I(v)=-v$ for $v\in E^s$,
$I(v)=v$ for $v\in E^u$ and $I(R)=0$. Consider the symmetric non-degenerate
 bilinear form given by
\[h(X,Y):=\om(X,IY)+\lambda\otimes\lambda(X,Y).\]
The pseudo-Riemannian metric $h$ is of class $C^1$ and thus there exists
a unique $C^0$ affine connection $\nabla$ such that:
\begin{enumerate}
\item $h$ is parallel with respect to $\nabla$;
\item $\nabla$ has torsion $\om\otimes R$.
\end{enumerate}
This connection has the following desirable properties \cite{Fe,Ka}:
\begin{itemize}
\item The connection is invariant under $\phi_t$;
\item The Anosov splitting is invariant under $\nabla$: if $X$ is any section
of $E^{s,u}$, $\nabla_{v}X\in E^{s,u}$ for any $v$;
\item the restriction of $\nabla$ to the weak stable and unstable
  manifolds (i.e.~leaves of the weak stable and unstable foliations)
  is flat;
\item parallel transport along curves on weak stable/unstable manifolds
coincide with the holonomy transport determined by the stable/unstable
foliations.
\end{itemize}

The other good consequence of the $1/2$-pinching condition, besides
$C^1$ smoothness of the bundles, is the following lemma (cf. \cite[Lemma 3.2]{Ka}).

\begin{lemma} $\nabla(d\lambda)=0$.
\label{parallel}
\end{lemma}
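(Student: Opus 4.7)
The plan is to show $\nabla(d\lambda)=0$ by case analysis on the splitting $T\Sigma = E^s \oplus E^u \oplus \R R$, exploiting the flow-invariance of $d\lambda$ together with the flatness and holonomy properties of the Kanai connection on weak (un)stable leaves.

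First I would record algebraic facts about $d\lambda$. Stability of $(\Sigma,\om)$ yields $i_R d\lambda = 0$ (since $\lambda$ is stabilizing, $\ker\om\subset\ker d\lambda$), hence $\mathcal{L}_R d\lambda = d\,i_R d\lambda = 0$, so $d\lambda$ is $\phi_t$-invariant. Combined with the exponential contraction/expansion of $E^{s/u}$ and the boundedness of $d\lambda$ on the compact manifold $\Sigma$, this forces $d\lambda|_{E^s\otimes E^s}=d\lambda|_{E^u\otimes E^u}=0$: for $Y,Y'\in E^s_x$, the identity $d\lambda_x(Y,Y')=d\lambda_{\phi_t x}(d\phi_t Y,d\phi_t Y')$ combined with $|d\phi_t Y|\to 0$ forces the left side to vanish; similarly for $E^u$ running $t\to-\infty$. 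Thus $d\lambda$ is supported on the cross-pairing of $E^s$ with $E^u$.

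Next I would prove $\nabla R=0$. Since $\om(R,\cdot)=0$ and $\lambda(R)=1$, the Kanai metric simplifies to $h(R,Y)=\lambda(Y)$. Parallelism of $h$ gives $h(\nabla_X R,Y)=X\lambda(Y)-\lambda(\nabla_X Y)$; for $Y$ a local section of $E^s$ or $E^u$, both terms on the right vanish (using $\nabla$-invariance of the splitting); for $Y=R$, $\nabla h(R,R)=0$ yields $\lambda(\nabla_X R)=0$. Non-degeneracy of $h$ then forces $\nabla_X R=0$. Combined with $\nabla$-invariance of the splitting and the vanishing of $d\lambda$ on degenerate pairs, this reduces $(\nabla_X d\lambda)(Y,Z)=0$ to the case $Y\in E^s,\,Z\in E^u$. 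Moreover, the torsion $T(R,Y)=\om(R,Y)R=0$ together with $\nabla R=0$ gives $\nabla_R=\mathcal{L}_R$ on tensors, so $\nabla_R d\lambda=\mathcal{L}_R d\lambda=0$; hence only $X\in E^s\cup E^u$ remains.

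For $X\in E^s$ (the case $X\in E^u$ being entirely symmetric via $W^{wu}$), I would use the flatness of $\nabla$ on $W^{ws}(x_0)$ to pick local $\nabla$-parallel sections $Y\in\Gamma(E^s)$, $Z\in\Gamma(E^u)$ over $W^{ws}(x_0)$. In such a frame $(\nabla_X d\lambda)(Y,Z)=X\cdot f$ with $f(x):=d\lambda_x(Y(x),Z(x))$, and it suffices to show $f$ is locally constant on $W^{ws}(x_0)$. Along any $R$-orbit in $W^{ws}(x_0)$, parallel transport coincides with flow transport $d\phi_t$ (by flow-invariance of $\nabla$ and $\nabla R=0$), so flow-invariance of $d\lambda$ gives $f\circ\phi_t=f$. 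For $y_1,y_2$ lying on a common strong stable leaf inside $W^{ws}(x_0)$, the orbits $\phi_t y_1,\phi_t y_2$ converge exponentially as $t\to+\infty$; by flow-invariance of $\nabla$, $d\phi_t Y(y_1)$ is the $\nabla$-parallel transport of $d\phi_t Y(y_2)$ along the contracting segment inside $W^{ws}(\phi_t x_0)$ (and similarly for $Z$). Continuity of this parallel transport combined with continuity of $d\lambda$ then forces $|f(\phi_t y_1)-f(\phi_t y_2)|\to 0$, and since $f\circ\phi_t=f$ this yields $f(y_1)=f(y_2)$. As $W^{ws}(x_0)$ is locally swept out by $R$-orbits and strong stable leaves, $f$ is locally constant.

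The main obstacle is the continuity of parallel transport used in this asymptotic comparison: one needs the Kanai connection, and hence the parallel frames $Y,Z$, to vary continuously across distinct weak stable leaves, so that parallel transports at nearby points can be meaningfully compared. This is precisely the role of the $1/2$-pinching hypothesis—by the cited theorem of Hasselblatt it ensures the weak bundles $\R R\oplus E^{s}$ and $\R R\oplus E^{u}$ are of class $C^1$, which in turn makes the Kanai connection continuous on all of $\Sigma$; without this regularity the comparison between $f(\phi_t y_1)$ and $f(\phi_t y_2)$ would not close.
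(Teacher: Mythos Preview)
Your reduction steps are sound: the vanishing of $d\lambda$ on $E^s\otimes E^s$ and $E^u\otimes E^u$, the identity $\nabla R=0$, and $\nabla_R=\mathcal{L}_R$ all hold for the reasons you give, and they do reduce the problem to $(\nabla_X d\lambda)(Y,Z)$ with $X\in E^{s}\cup E^{u}$, $Y\in E^s$, $Z\in E^u$. The gap is in the final asymptotic step. When you compare $f(\phi_t y_1)$ with $f(\phi_t y_2)$, the parallel section $Z$ lies in $E^u$ and hence grows along the flow: $|Z(\phi_t y_i)|=|d\phi_t Z(y_i)|$ can be as large as $\mu_f(y_i,t)^{-1}|Z(y_i)|$. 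So although the segment from $\phi_t y_2$ to $\phi_t y_1$ has length $\epsilon_t\lesssim\mu_s(t)\,d(y_1,y_2)$ and parallel transport along it is $O(\epsilon_t)$-close to the identity, the bound you actually get is
\[
|f(\phi_t y_1)-f(\phi_t y_2)|\;\lesssim\;\epsilon_t\,|Y(\phi_t y_2)|\,|Z(\phi_t y_2)|\;\lesssim\;\frac{\mu_s(t)^2}{\mu_f(t)}.
\]
Mere continuity of parallel transport and of $d\lambda$ does not force this to zero; you need precisely the $1/2$-pinching condition $\mu_s^2/\mu_f\to 0$. You say at the end that pinching is used ``precisely'' for the $C^1$ regularity of the weak bundles, but in fact it enters a second time, here, and without it the argument does not close.

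Once you insert this estimate your approach does work, but it becomes a circuitous version of the paper's proof, which bypasses parallel frames and leafwise holonomy entirely. The paper simply observes that $\tau:=\nabla(d\lambda)$ is a $\phi_t$-invariant $(0,3)$-tensor annihilated by $R$ (invariance of $\nabla$ and $d\lambda$, together with $i_R d\lambda=0$ and $\nabla_R=\mathcal{L}_R$), and then applies the pinching bound directly: for any $(v_1,v_2,v_3)$ in $E^s\cup E^u$, invariance gives $|\tau(v_1,v_2,v_3)|=|\tau(d\phi_t v_1,d\phi_t v_2,d\phi_t v_3)|\leq C\,\mu_s(t)^2/\mu_f(t)\to 0$ after sending $t\to\pm\infty$ according to how many of the $v_i$ lie in $E^s$. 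This is the same estimate you need, applied one level up---to the tensor you want to kill rather than to a function built from auxiliary parallel sections.
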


\begin{proof}
Suppose $\tau$ is any invariant $(0,3)$-tensor annihilated by $R$.
We claim that $\tau$ must vanish. To see this, consider for example
a triple of vectors $(v_1,v_2,v_3)$ where $v_1,v_2\in E^{s}$
but $v_{3}\in E^{u}$. Then there is a constant $C>0$ such that
\begin{align*}
|\tau_{x}(v_{1},v_{2},v_{3})|&=|\tau_{\phi_{t}x}(d\phi_{t}(v_{1}),d\phi_{t}(v_{2}),d\phi_{t}(v_{3}))|\\
&\leq C {\mu_{s}(x,t)}^{2}\mu_{f}(x,t)^{-1}|v_{1}||v_{2}||v_{3}|.
\end{align*}
By the $1/2$-pinching condition the last expression tends to zero
as $t\to\infty$ and therefore $\tau_{x}(v_{1},v_{2},v_{3})=0$.
The same will happen for other possible triples $(v_1,v_2,v_3)$
when we let $t\to\pm\infty$.

Since $d\lambda$ and $\nabla$ are $\phi_t$-invariant, so is
 $\nabla(d\lambda)$. Since $i_{R}d\lambda=0$, $\nabla(d\lambda)$ is
also annihilated by $R$ (to see that $\nabla_{R}(d\lambda)=0$ use
that $d\lambda$ is $\phi_t$-invariant and that $\nabla_{R}=L_{R}$).
Hence by the previous argument applied to $\tau=\nabla(d\lambda)$
we conclude that $\nabla(d\lambda)=0$ as desired.
\end{proof}

Since $\om$ is non-degenerate, there exists a smooth bundle map
$L\co E^s\oplus E^u\to E^s\oplus E^u$ such that for sections $X,Y$
of $E^s\oplus E^u$
\[d\lambda(X,Y)=\om(LX,Y)=\om(X,LY).\]
The map $L$ is invariant under $\phi_t$ and preserves the
 decomposition $E^s\oplus E^u$, i.e.
 $L=L^s+L^u$, where $L^s\co E^s\to E^s$ and $L^u\co E^u\to E^u$. In
 particular, $L$ commutes with $I$.

Suppose now $\dim\Sigma=2n-1$, where $n$ is an even integer.
Since $\dim E^s=n-1$ is odd, $L_{x}^s$ admits a real eigenvalue
$\rho$ (note that by transitivity of $\phi_t$, the characteristic polynomial
of $L_{x}^s$ is independent of $x\in\Sigma$).
Let
\[H(x):=\{v\in E^s(x):\;L_{x}^{s} v=\rho v\}.\]
Since $\nabla(d\lambda)=0$ (Lemma \ref{parallel}), $H(x)$ is invariant
under the parallel transport of $\nabla$ and thus $x\mapsto H(x)$
is a $C^1$ subbundle of $E^s$.

Let $W^s(x)$ be the strong stable manifold through $x$. We note that
the restriction of $H$ to $W^s(x)$ is integrable. Indeed, let $X$ and $Y$
be parallel sections of $E^s$ over $W^s(x)$ (such sections must be $C^1$)
 and observe that since $\nabla$ has zero torsion on $E^s$ we have
$[X,Y]=\nabla_{X}Y-\nabla_{Y}X=0$.

The maximal integral submanifolds of $H$ on $W^s(x)$ define a foliation
of class $C^1$ on $W^s(x)$
and thus a foliation $\mathcal F$ of class $C^1$ on $\Sigma$.

\begin{proof}[Proof of Theorem~\ref{insthe}]
Suppose the sectional curvature $K$ of a Riemannian metric
satisfies $-A^2\leq K\leq -a^2$ for some positive constants
$a$ and $A$. Then, comparison theorems show that
\cite[Theorem 3.2.17]{K} (see also \cite[Proposition 3.2]{Kn})
there is a constant
$C>0$ such that
\begin{equation}
\frac{1}{C}|v|e^{-At}\leq|d_{x}\phi_{t}(v)|\leq C|v|e^{-at}\;\;\mbox{\rm for}\;v\in E^s\;\;\mbox{\rm and}\;t\geq 0,
\label{eq:a1}
\end{equation}
\begin{equation}
\frac{1}{C}|v|e^{-At}\leq|d_{x}\phi_{-t}(v)|\leq C|v|e^{-at}\;\;\mbox{\rm for}\;v\in E^u\;\;\mbox{\rm and}\;t\geq 0,
\label{eq:a2}
\end{equation}
where $\phi_t$ is the geodesic flow of the Riemannian metric.
If we let $\mu_{s}=Ce^{-at}$ and $\mu_{f}= \frac{1}{C}e^{-At}$ we see
that $\phi_t$ is $1/2$-pinched as long as $A<2a$.
Therefore the geodesic flow of a metric whose sectional curvature satisfies
$-4\leq K< -1$ is $1/2$-pinched.
For sufficiently large $k$, $(\Sigma_{k},\om_{\sigma})$
is an Anosov Hamiltonian structure since it can be seen as a perturbation
of the geodesic flow. In fact it is also $1/2$-pinched. This can be seen
as follows. An equivalent claim is that $(\Sigma_{1/2},\om_{s\sigma})$
is $1/2$-pinched for small $s$. An inspection of the proof of (\ref{eq:a1})
and (\ref{eq:a2}) in \cite[Theorem 3.2.17]{K} shows that if we do the same
analysis for the magnetic Jacobi (or Riccati) equation we obtain numbers
$A(s)$ and $a(s)$ for which (\ref{eq:a1}) and (\ref{eq:a2}) hold.
These numbers will be as close as we wish to $A(0)=2$ and
$a(0)=\sqrt{-\max\, K}>1$ if $s$ is small enough and the
$1/2$-pinching condition follows.

We now make some remarks concerning the space of leaves of the weak
foliations.
The weak unstable foliation ${\cal W}^{u}$ of $(\Sigma_k,\om_{\sigma})$ is
{\it transverse} to the fibres of the fibration by $(n-1)$-spheres given by
\[\tau\co \Sigma_k\to M.\]
(This simply follows from the fact that this property is true for the
geodesic flow.)
Let $\widetilde{M}$ denote the universal cover of $M$ and
let $\widetilde{\Sigma}_k$ denote the preimage of $\Sigma_k$ in
$T^*\widetilde{M}$.
We also have a fibration by $(n-1)$-spheres
\[\widetilde{\tau}|_{\widetilde{\Sigma}_k}\co \widetilde{\Sigma}_k\to \widetilde{M}.\]
Let $\widetilde{{\cal W}}^{u}$ be the lifted foliation and note that
the foliation $\widetilde{{\cal W}}^{u}$ is also transverse to the fibration
$\widetilde{\tau}|_{\widetilde{\Sigma}_k}\co \widetilde{\Sigma}_k\to \widetilde{M}$.
Since the fibres are compact a standard result in foliations
\cite[p.91]{CN} implies that for every $p\in \widetilde{\Sigma}_k$ the map
\[\widetilde{\tau}|_{\widetilde{{\cal W}}^{u}(p)}\co \widetilde{{\cal
    W}}^{u}(p)\to \widetilde{M},\]
is a covering map. Since $\widetilde{M}$ is simply connected,
$\widetilde{\tau}|_{\widetilde{{\cal W}}^{u}(p)}$ is in fact a diffeomorphism
and $\widetilde{{\cal W}}^{u}(p)$ is simply connected.
Consequently, $\widetilde{{\cal W}}^{u}(p)$ intersects
each fibre of the fibration
$\widetilde{\tau}|_{\widetilde{\Sigma}_k}\co \widetilde{\Sigma}_k\to \widetilde{M}$
at just one point and therefore the space of leaves
 $B^u:=\widetilde{\Sigma}_k/\widetilde{{\cal W}}^{u}$
 of the weak unstable
foliation can be identified topologically with the $(n-1)$-sphere.
Similarly the space of leaves of the weak stable
foliation is also an $(n-1)$-sphere.
Note that $\pi_{1}(M)$ acts on
$B^{u}$.
Since the characteristic foliation of $(\Sigma_k,\om_{\sigma})$ is topologically
conjugate to that of the geodesic flow, the action of $\pi_1(M)$ on the space
of leaves is topologically like in the geodesic flow case: Every element
in $\pi_1(M)$ acts on $B^u$ as a {\em North-South dynamics}, i.e.~a
homeomorphism of $S^{n-1}$ with exactly two fixed points $P_\pm$ such
that every other point converges to $P_\pm$ under forward
resp.~backward iteration.

Now suppose that $\Sigma_k$ is stable. Our previous discussion produces
a foliation $\mathcal F$ of class $C^1$ on $\Sigma_k$.
This foliation can be lifted to $\widetilde{\Sigma}_k$ and then
projected to $B^u$ ($\mathcal F$ is invariant under holonomy maps)
to produce a $C^1$ foliation of positive dimension
on $B^u$ which is $\pi_1(M)$-invariant. By the result in \cite{Fo},
such a foliation must be trivial, i.e.~consist of just one leaf. This
implies that $H=E^s$, and hence
$$
   d\lambda=\rho\om_{\sigma}
$$
for some constant $\rho$. By Corollary~\ref{cor:liouville},
the constant $\rho$ cannot be zero. It follows then that
$\om_{\sigma}$ must be exact on $\Sigma_k$.
However, the Gysin sequence of the sphere bundle shows
that this is impossible since $[\sigma]\neq 0$ and $n\neq 2$.

Thus $\Sigma_k$ cannot be stable for high energies, which proves
Theorem~\ref{insthe}.
\end{proof}

\begin{remark} The proof above can be considerably improved
following the ideas in \cite{Ha} to show the following result: suppose
$[\sigma]\neq 0$ and $n\geq 3$ (not necessarily even).
If $(\Sigma_k,\om_{\sigma})$ is a $1/2$-pinched Anosov Hamiltonian structure,
then it cannot be stable \cite{MP}.
If we drop the $1/2$-pinching condition, the result is no longer
true since compact quotients of complex hyperbolic space with
the K\"ahler form are stable for high energies (cf. Subsection \ref{ss:hyp})
It is tempting to conjecture that these are the only stable Anosov
Hamiltonian structures with $[\sigma]\neq 0$ and $n\geq 3$.
\end{remark}

\section{Homogeneous examples}\label{sec:ex}

In this section we study magnetic flows for left-invariant metrics on
some compact homogeneous spaces $\Gamma\setminus G$ and verify the
paradigms in the introduction. Here $G$ is always a Lie group and
$\Gamma\subset G$ a cocompact lattice, i.e.~a discrete subgroup such
that the left quotient $\Gamma\setminus G$ is compact. The magnetic field
$\sigma$ will be always a left-invariant 2-form.

Using left translations we identify $T^*G$ with
$G\times \g^*$ so left-invariant smooth functions $f\co T^*G\to\R$ are just
elements in $C^{\infty}(\g^*)$.
As before we set $\omega_{\sigma}=dp\wedge dq+\tau^*\sigma$, where
$dp\wedge dq$ is the canonical symplectic form in $T^*G$.
Let $\{\;,\;\}_{\sigma}$ be the Poisson bracket of $\omega_{\sigma}$
defined as $\{H,F\}_{\sigma}=\omega_{\sigma}(X_{H},X_{F})$.
Since all the objects involved are left-invariant, in the next lemma
we just focus on the identity $e\in G$.

\begin{lemma}Let $\mu\in \g^*$, $(v,\xi),\;(w,\eta)\in
  T_{(e,\mu)}(G\times\g^*)=\g\times\g^*$. We have:
\begin{enumerate}
\item $\omega_{\sigma}(e,\mu)((v,\xi),(w,\eta))=
\xi(w)-\eta(v)-\mu([v,w])+\sigma_{e}(v,w)$;
\item Given $f,g\in C^{\infty}(\g^*)$,
\[\{f,g\}_{\sigma}(\mu)=\mu([d_{\mu}f,d_{\mu}g])-\sigma_{e}(d_{\mu}f,d_{\mu}g)\]where we canonically identify $(\g^{*})^*$ with $\g$;
\item Given $f\in C^{\infty}(\g^*)$, let $X_{f}$ be the Hamiltonian vector
field of $f$ with respect to $\omega_{\sigma}$. Then
\[X_{f}(e,\mu)=(d_{\mu}f,E_{f}(\mu)),\]
where $E_{f}(\mu)\in \g^*$ is given by
\[E_{f}(\mu)(w)=\mu([d_{\mu}f,w])-\sigma_{e}(d_{\mu}f,w).\]
We will call $E_{f}$ the {\em Euler vector field} of $f$.
\end{enumerate}
\label{evf}
\end{lemma}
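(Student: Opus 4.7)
The plan is to derive part (i) from a direct computation of the canonical symplectic form in the left trivialization of $T^*G$, and then obtain (ii) and (iii) as essentially algebraic consequences.

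For (i), I identify $T^*G$ with $G\times\g^*$ via left translations, so that a tangent vector at $(g,\mu)$ is written as $(v,\xi)\in\g\times\g^*$. Under this identification the Liouville 1-form $\theta=p\,dq$ becomes $\theta_{(g,\mu)}(v,\xi)=\mu(v)$. To compute $d\theta$ at the identity, I extend $(v,0)$ and $(w,0)$ to left-invariant vector fields $X_v,X_w$ on $G\times\g^*$ and apply $d\theta(X_v,X_w)=X_v\theta(X_w)-X_w\theta(X_v)-\theta([X_v,X_w])$. Since $\theta(X_w)=\mu(w)$ depends only on the $\g^*$-factor, the first two terms vanish at $(e,\mu)$, while the Maurer--Cartan relation $[X_v,X_w]=X_{[v,w]}$ produces the term $-\mu([v,w])$. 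Combining with the obvious mixed horizontal--vertical pairings $\xi(w)-\eta(v)$ and adding $\tau^*\sigma_{(e,\mu)}((v,\xi),(w,\eta))=\sigma_e(v,w)$ (because $d\tau$ kills the $\g^*$-component) yields the stated formula.

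For (iii), a function pulled back from $\g^*$ depends only on $\mu$, so $df_{(e,\mu)}(v,\xi)=\xi(d_\mu f)$ with $d_\mu f\in\g$ under the canonical identification $(\g^*)^*\cong\g$. Writing $X_f(e,\mu)=(a,A)$ and imposing $i_{X_f}\omega_\sigma=-df$ (the sign convention consistent with the definition $\{f,g\}_\sigma=\omega_\sigma(X_f,X_g)$ stated in the lemma), the identity obtained from (i) has to hold against every $(w,\eta)$. Setting $w=0$ forces $a=d_\mu f$, and setting $\eta=0$ forces $A(w)=\mu([d_\mu f,w])-\sigma_e(d_\mu f,w)$, which is exactly the claimed $E_f(\mu)$.

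Finally, (ii) is obtained by substituting the result of (iii) into $\{f,g\}_\sigma=\omega_\sigma(X_f,X_g)$ and expanding via (i). After using the defining property of $E_f$ and $E_g$ together with antisymmetry of both the bracket and $\sigma_e$, the redundant terms cancel and one is left with $\mu([d_\mu f,d_\mu g])-\sigma_e(d_\mu f,d_\mu g)$. There is no genuine obstacle in the argument; the only delicate point is to fix a consistent sign convention for the Hamiltonian vector field and to handle the order of the Lie bracket carefully in the Maurer--Cartan step, everything else being linear algebra in $\g$ and $\g^*$.
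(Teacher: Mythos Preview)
Your argument is correct and is exactly the standard computation the paper has in mind: the authors simply cite \cite[Section~4.4]{AM} for the case $\sigma=0$ and remark that the twisted case follows immediately from the definition of $\omega_\sigma$, leaving all details to the reader. You have supplied those details faithfully, with the Maurer--Cartan step for the $-\mu([v,w])$ term and the linear-algebraic deduction of (ii) and (iii) from (i), so there is nothing to add.
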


\begin{proof}
The proof of the lemma for $\sigma=0$ can be found in \cite[Section 4.4]{AM}.
When $\sigma\neq 0$ the lemma follows right away from the definition
of $\omega_\sigma$; we leave the details to the reader.
\end{proof}

Suppose $f\in C^{\infty}(\g^*)$ has a compact level set
$\bS_{k}:=f^{-1}(k)\subset \g^*$. Clearly $E_{f}$ is tangent to
$\bS_k$; let $\psi_t$ be its flow. Let $\phi_t$ be the flow
of $X_{f}$ in $\Sigma_k=(\Gamma\setminus G)\times \bS_k$. Let
$\pi_2\co (\Gamma\setminus G)\times \bS_k\to\bS_k$ be the
second factor projection. Clearly $\pi_2\circ\phi_{t}=\psi_{t}\circ\pi_2$.
Thus, if $E_f$ is geodesible on $\bS_k$, then $\Sigma_k$ is a stable
hypersurface. This observation is nothing but a rephrasing of Lemma
\ref{lem:proj-stable} in this context.

\subsection{Tori}
\label{sub:tori}
Consider the torus $\T^n$ ($n\geq 2$) and let $\sigma$ be a
non-zero constant 2-form. Let $\omega_{\sigma}=dp\wedge dq+\tau^*\sigma$ be the
twisted symplectic form on $T^*\T^n$. We shall consider
on $\T^n$ the usual flat metric and we let $\Sigma_{k}:=H^{-1}(k)$,
where $H(q,p)=\frac{1}{2}|p|^2$.
Below we will make use of the following elementary lemma.

\begin{lemma}Consider $\R^k$ with its usual inner product and let
$A\co \R^k\to\R^k$ be an antisymmetric linear map.
Consider the 1-form $\alpha$ in $\R^k$
given by $\alpha_{p}(\xi)=\frac{1}{2}\langle p,A\xi\rangle$.
Then
\[d\alpha_{p}(\xi_{1},\xi_{2})=\langle \xi_1,A\xi_2\rangle.\]
\label{elemd}
\end{lemma}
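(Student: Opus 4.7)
The plan is to verify the identity by a direct two-line computation, using the Cartan formula
\[
d\alpha(X,Y) = X\bigl(\alpha(Y)\bigr) - Y\bigl(\alpha(X)\bigr) - \alpha([X,Y])
\]
applied to the constant vector fields on $\R^k$ obtained by extending $\xi_1$ and $\xi_2$. The antisymmetry of $A$ will be used exactly once, to combine the two derivative terms into a single term with the right coefficient.

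Concretely, let $X$ and $Y$ denote the translation-invariant extensions of $\xi_1,\xi_2$. Since both are constant, $[X,Y]=0$. The function $p\mapsto \alpha_p(Y) = \tfrac12\langle p, A\xi_2\rangle$ is linear in $p$, so its directional derivative in the direction $\xi_1$ is $\tfrac12\langle \xi_1, A\xi_2\rangle$; likewise the directional derivative of $p\mapsto\alpha_p(X)=\tfrac12\langle p,A\xi_1\rangle$ in the direction $\xi_2$ is $\tfrac12\langle\xi_2,A\xi_1\rangle$. By antisymmetry of $A$, the latter equals $-\tfrac12\langle\xi_1,A\xi_2\rangle$. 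Plugging into the Cartan formula gives $d\alpha_p(\xi_1,\xi_2)=\tfrac12\langle\xi_1,A\xi_2\rangle-(-\tfrac12\langle\xi_1,A\xi_2\rangle)=\langle\xi_1,A\xi_2\rangle$.

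Alternatively one can expand in linear coordinates: writing $\alpha = \tfrac12\sum_{i,j}A_{ij}p_i\,dp_j$, one computes $d\alpha=\tfrac12\sum_{i,j}A_{ij}\,dp_i\wedge dp_j$, and evaluating on $(\xi_1,\xi_2)$ together with the antisymmetry $A_{ij}=-A_{ji}$ again produces the factor of $2$ that cancels the $\tfrac12$. There is no real obstacle; the only thing to be careful about is the bookkeeping of the antisymmetry, which is what turns the $\tfrac12$ into a $1$.
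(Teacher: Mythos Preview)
Your proof is correct. The paper omits the proof entirely, simply labeling the lemma as elementary; your Cartan-formula computation with constant vector fields (and the coordinate alternative) is exactly the standard verification one would supply.
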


\begin{proposition}If $\sigma$ is non-zero, then the hypersurface
$\Sigma_k$ is stable, tame and displaceable for any $k>0$.
\label{tcase}
\end{proposition}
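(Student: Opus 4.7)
My strategy is to address the three claims—stability, tameness, and displaceability—separately; the main technical step is an explicit construction of a stabilizing $1$-form that works uniformly, including in the degenerate case.

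Let $A\co\R^n\to\R^n$ be the antisymmetric linear map with $\sigma(v,w)=\langle v,Aw\rangle$, and split $\R^n=V_A\oplus V_0$ orthogonally with $V_0:=\ker A$. Since $\sigma\neq 0$, $V_A$ is nontrivial and $A|_{V_A}$ is an invertible antisymmetric endomorphism. Writing $p=p_A+p_0$ and $dq_0$ for the $V_0$-component of $dq$, I propose the $1$-form on $T^*\T^n$
\[
\lambda := \tfrac{1}{2}\langle p_A,\,(A|_{V_A})^{-1}dp_A\rangle + \langle p_0,\,dq_0\rangle.
\]
Both summands are globally defined in flat coordinates, and Lemma~\ref{elemd} applied to the antisymmetric endomorphism $(A|_{V_A})^{-1}$ on $V_A$ makes the differential of the first summand explicit. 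Using $X_H=(p,Ap)$ and $Ap_0=0$, I compute $\lambda(X_H)=\tfrac{1}{2}|p_A|^2+|p_0|^2=k+\tfrac{1}{2}|p_0|^2\geq k>0$ on $\Sigma_k$, and for $Y\in T\Sigma_k$ (so $\langle p,Y^p\rangle=0$) the two summands contribute $-\langle p_A,Y^p_A\rangle$ and $-\langle p_0,Y^p_0\rangle$ to $i_{X_H}d\lambda(Y)$, which sum to $-\langle p,Y^p\rangle=0$. Hence $\lambda$ stabilizes $\Sigma_k$.

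For tameness, the $V_0$-component of $p$ is conserved (since $Ap_0=0$), so a closed characteristic of period $T$ picks up winding class $[Tp_0]\in H_1(\T^n;\Z)$, forcing $p_0=0$ for contractibility in $T^*\T^n\simeq\T^n$. Along such an orbit $\int v^*\lambda=kT$, while lifting $q$ to the universal cover and applying Stokes' theorem with the primitive $\tfrac{1}{2}\langle\widetilde q,A\,d\widetilde q\rangle$ of $\pi^*\sigma$ (Lemma~\ref{elemd} again) yields $\int_{\bar v}\tau^*\sigma=-\tfrac{1}{2}|p_A|^2 T=-kT$; combining with $\int v^*(p\,dq)=\int_0^T|p|^2\,dt=2kT$ gives $\Omega(v)=kT$. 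Thus $|\int v^*\lambda|=|\Omega(v)|$ for every contractible closed characteristic, giving tameness with constant $c=1$.

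Displaceability I quote from the literature discussed in the introduction: the constructions of Laudenbach--Sikorav and Polterovich, sharpened by Bialy--Polterovich (\cite{BP}), give displaceability of every sublevel set $\{H\leq k\}$ in a twisted cotangent bundle over a solvable manifold with $[\sigma]\neq 0$. As $\T^n$ is abelian (hence solvable) and $\chi(\T^n)=0$, the hypothesis applies, so $\Sigma_k=\partial\{H\leq k\}$ is displaceable for every $k>0$. The principal obstacle throughout is constructing $\lambda$ when $\sigma$ is degenerate: if $V_0\neq 0$ one cannot use $\tfrac{1}{2}\langle p,A^{-1}dp\rangle$. The decomposition $\R^n=V_A\oplus V_0$ circumvents this by using the $(A|_{V_A})^{-1}$-primitive in $p$-coordinates on the magnetic factor $V_A$ and the ordinary Liouville form $\langle p_0,dq_0\rangle$ on the geodesic factor $V_0$ (where $\dot p_0=0$); the two pieces combine so that $i_{X_H}d\lambda$ reduces to $-dH$, which automatically vanishes on $T\Sigma_k$.
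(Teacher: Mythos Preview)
Your proof is correct and essentially identical to the paper's: your stabilizing $1$-form $\lambda$ coincides with the paper's form (written there as $f^*\nu+(P_2\circ\pi)^*\alpha$ with the same kernel/image splitting), and your tameness computation $\Omega(v)=kT=\int v^*\lambda$ matches the paper's exactly. For displaceability the paper gives a direct three-line argument---the linear Hamiltonian $h(q,p)=\langle a,p\rangle$ with $Aa\neq 0$ has flow translating $p$ by $tAa$, displacing every compact set---rather than citing \cite{BP}, which, incidentally, is Butler--Paternain, not Bialy--Polterovich.
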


\begin{proof} Write $T^*\T^n=\T^n\times\R^n$ and let $\pi\co \T^n\times \R^n\to
\R^n$ be the second factor projection. The equations of motion
of any Hamiltonian $H(q,p)$ with respect to $\omega_{\sigma}$ are
given by
\begin{align*}
\dot{q}&=\nabla_pH,\\
\dot{p}&=J\nabla_pH-\nabla_qH,
\end{align*}
where $J\co \R^n\to \R^n$ is the antisymmetric linear map determined
by $\sigma(\cdot,\cdot)=\langle \cdot,J\cdot\rangle$.
In particular,
for the Hamiltonian $H(q,p)=\frac{1}{2}|p|^2$ we obtain
\begin{align*}
\dot{q}&=p,\\
\dot{p}&=Jp
\end{align*}
Split $\R^n$ orthogonally as $\R^n=\mbox{\rm Ker}J\oplus\mbox{\rm Im}J$ and
let $P_{1}\co \R^n\to \mbox{\rm Ker}J$ and $P_{2}\co \R^n\to \mbox{\rm Im}J$ be the
corresponding orthogonal projections. The restriction of $J$ to
$\mbox{\rm Im}J$ is invertible and we let $A:=\left (J|_{\mbox{\rm Im}J}\right)^{-1}$.
With this choice of $A$ we obtain a 1-form $\alpha$ on $\mbox{\rm Im}J$
as in Lemma \ref{elemd}.

Let $f\co \T^n\times\R^n\to \T^n\times\R^n$ be the map $f(q,p)=(q,P_{1}(p))$
and let $\nu$ be the Liouville 1-form in $T^*\T^n$. We claim that
\[\lambda:=f^*\nu+(P_{2}\circ\pi)^{*}\alpha\]
is a stabilizing 1-form on $\Sigma_k$ for any $k>0$.
Note that $df_{(q,p)}(X_{H})=(p,0)$ and
$d\pi_{(q,p)}(X_{H})=Jp.$ Thus
\[\lambda_{(q,p)}(X_{H})=\nu_{(q,P_{1}(p))}(p,0)+\alpha_{P_{2}(p)}(Jp)=|P_{1}(p)|^2+\frac{1}{2}|P_{2}(p)|^2\]
which is always positive on $\Sigma_k$.
On the other hand using Lemma \ref{elemd} we obtain
\[i_{X_{H}}d\lambda_{(q,p)}(x,y)=d\nu_{(q,P_{1}(p))}((p,0),(x,P_{1}(y)))
+\langle Jp,AP_{2}(y)\rangle,\]
where $(x,y)$ denotes a vector in $T_{(q,p)}(\T^n\times\R^n)$.
Using the fact that $d\nu$ is the standard symplectic form on
$T^*\T^n$ we obtain
\[d\nu_{(q,P_{1}(p))}((p,0),(x,P_{1}(y)))=-dH_{(q,P_{1}(p))}(x,P_{1}(y))
=-\langle p,P_{1}(y)\rangle.\]
Using the definition of $A$ the other term is
\[
   \langle Jp,AP_{2}(y)\rangle = -\langle p,JAP_{2}(y)\rangle =
   -\langle p,P_{2}(y)\rangle
\]
and hence
\[i_{X_{H}}d\lambda_{(q,p)}(x,y)=-\langle p,y\rangle=0\]
for $(x,y)\in T_{(q,p)}\Sigma_k$. This proves
$(i_{X_H}d\lambda)|_{\Sigma_k}=0$, so $\lambda$ is a stabilizing
1-form for $\Sigma_k$.

Displaceability of $\Sigma_{k}$ is easy to see (cf.~\cite[Theorem
B]{BP} and the proof of Theorem 3.1 in~\cite{GK}): Pick any $a\in\R^n$
with $Ja\neq 0$ (this exists since $\sigma\neq 0$) and consider the
Hamiltonian $h(q,p)=\la a,p\ra$. The corresponding Hamiltonian flow
defined by
$$
   \dot q=a,\qquad \dot p=Ja
$$
contains a translation in direction $Ja$ and hence displaces every
compact subset of $T^{*}\T^{n}$.

In order to show that $\Sigma_k$ is tame we need to locate the contractible
closed orbits. We can readily find the flow $\phi_t$ of $X_{H}$.
Since $J$ is antisymmetric, the solution to $\dot{p}=Jp$ is
a 1-parameter subgroup $t\mapsto e^{tJ}\in SO(n)$.
We have
\begin{equation}
\phi_{t}(q,p)=\left(\int_{0}^{t}e^{sJ}p\,ds+q,\;e^{tJ}p\right).\
\label{lflow}
\end{equation}
Write $p=p_1+p_2$, where $p_1\in \mbox{\rm Ker}J$
and $p_2\in \mbox{\rm Im}J$. Then $e^{sJ}p=p_1+e^{sJ}p_{2}$ and thus
\begin{equation}
\int_{0}^{t}e^{sJ}p\,ds=p_{1}t+\int_{0}^{t}e^{sJ}p_{2}\,ds.
\label{lflow2}
\end{equation}
Let $\hat{J}:=J|_{\mbox{\rm Im}J}$. Then
$e^{s\hat{J}}=e^{sJ}|_{\mbox{\rm Im}J}$.
But $\hat{J}$ is invertible, so we may write
\[\int_{0}^{t}e^{sJ}p_{2}\,ds=
\int_{0}^{t}(\hat{J})^{-1}\frac{d}{ds}\left(e^{s\hat{J}}p_{2}\right)\,ds=
(\hat{J})^{-1}(e^{t\hat{J}}-I)p_2.\]
Combining this with (\ref{lflow}) and (\ref{lflow2}) we see that
$(q,p)\in \Sigma_{k}$ gives rise to a closed contractible orbit of period $T$ iff
$p_{1}=0$, $e^{TJ}p_{2}=p_{2}$ and $|p_{2}|^2=2k$.

Now, a primitive for $dp\wedge dq+\tau^{*}\sigma$ in $T^*\R^n$ is given by
$pdq+\tau^*\beta$ where $\beta_{q}(a)=\frac{1}{2}\langle q,Ja\rangle$
(we use again Lemma \ref{elemd}). Thus, if $v$ is a closed contractible
orbit in $\Sigma_k$, we have
\[\Omega(v)=\int_{v}(pdq+\tau^*\beta)=2kT+\int_{\tau(v)}\beta.\]
We compute
\[\int_{\tau(v)}\beta=\frac{1}{2}\int_{0}^{T}\langle
(\hat{J})^{-1}(e^{t\hat{J}}-I)p_2,Je^{t\hat{J}}p_{2}\rangle\,dt=-\frac{1}{2}|p_{2}|^2\,T=-kT,\]
and therefore
\[\Omega(v)=kT\]
which clearly shows that $\Sigma_k$ is tame.
\end{proof}

\begin{remark} It is instructive to see what the form $\lambda$ looks
like in the following special cases:
\begin{itemize}
\item when $n=2$ and $\sigma=dq_1\wedge dq_2$, then
\[\lambda=-\frac{1}{2}(p_{1}dp_{2}-p_{2}dp_{1}),\]
\item when $n=3$ and $\sigma=dq_1\wedge dq_2$, then
\[\lambda=p_{3}dq_{3}-\frac{1}{2}(p_{1}dp_{2}-p_{2}dp_{1}).\]
\end{itemize}
\end{remark}

Note that the Ma\~n\'e critical value is infinite because a non-zero
$\sigma$ has no bounded primitives in $\R^n$.

\subsection{The Heisenberg group}
\label{sub:heisenberg}
Let $G$ be the 3-dimensional Heisenberg group of matrices
\[\left(\begin{array}{ccc}

1&x&z\\
0&1&y\\
0&0&1\\

\end{array}\right),\]
where $x,y,z\in \R$. If we identify $G$ with $\R^3$, then the product is
\[(x,y,z)\star(x',y',z')=(x+x',y+y',z+z'+xy').\]
For $\Gamma$ we take the lattice of those matrices with
$x,y,z\in\Z$. However, the following discussion will hold for {\em
  any} cocompact lattice $\Gamma$. In fact, all lattices are isomorphic
to the semidirect product $\mathbb Z^2\ltimes_{A}\mathbb{Z}$ where
\[A=\left(\begin{array}{cc}

1&k\\
0&1\\

\end{array}\right),\]
for some positive integer $k$ \cite{S}.

The 1-forms $\alpha:=dx$, $\beta:=dy$ and $\gamma:=dz-xdy$ are left-invariant
and provide a trivialization of $T^{*}G$ as $G\times\g^{*}$.
We let $(x,y,z,\pa,\pb,\pc)$ be the coordinates induced by this trivialization.
Note that $(\pa,\pb,\pc)$ descend to coordinates on
$T^{*}(\Gamma\setminus G)=(\Gamma\setminus G)\x\g^*$.
The coordinates $(\pa,\pb,\pc)$ are related to the coordinates
$(p_{x},p_{y},p_{z})$ by $\pa=p_{x}$, $\pb=p_{y}+xp_{z}$ and $\pc=p_{z}$.

We consider the following left-invariant Hamiltonian $H$ (dual to a
suitable left-invariant metric on $G$):
\[2H=\pa^2+\pb^2+\pc^2=p_{x}^2+(p_{y}+xp_{z})^2+p_{z}^2.\]
The magnetic field is given by the left-invariant 2-form
\[\sigma:=-dx\wedge dy.\]
Note that $\sigma$ descends to an {\it exact} 2-form on $\Gamma\setminus G$
since $\sigma=d\gamma$. Also observe that $2H(\gamma)\equiv 1$, which
implies that Ma\~n\'e's critical value $c_0$ is $\leq 1/2$. Later on we will
see that $c=c_0=1/2$.

The twisted symplectic form $\omega$ is:
\begin{align*}
   \omega &= dp_{x}\wedge dx+dp_{y}\wedge dy+dp_{z}\wedge dz-dx\wedge
   dy \cr
   &= dp_{\alpha}\wedge dx+dp_{\beta}\wedge dy+dp_{\gamma}\wedge dz -
   x\,dp_\gamma\wedge dy - (1+p_\gamma)dx\wedge dy.
\end{align*}
The Hamiltonian vector field of a function $H$ with respect to
$\omega$ is given by (where $H_x$ etc.~denote partial derivatives)
\begin{equation} \label{eq:XH-gen}
X_H = \left\{ \begin{array}{lclclcl}
\dot{x}   &=& H_\pa,          & \hspace{10mm} & \dot{p}_{\alpha}   &=& -H_x-(1+\pc)H_\pb,\\
\dot{y} &=& H_\pb, &            & \dot{p}_{\beta} &=& -H_y + xH_z + (1+\pc)H_\pa ,\\
\dot{z} &=& x\,H_\pb+H_\pc, &           & \dot{p}_{\gamma} &=&  -H_z.
\end{array}
\right.
\end{equation}
In particular, for the left-invariant Hamiltonian above we obtain
\begin{equation} \label{eq:XH}
X_H = \left\{ \begin{array}{lclclcl}
\dot{x}   &=& \pa,          & \hspace{10mm} & \dot{p}_{\alpha}   &=& -\pb \pc-\pb,\\
\dot{y} &=& \pb, &            & \dot{p}_{\beta} &=& \pa \pc+\pa ,\\
\dot{z} &=& x\pb+\pc, &           & \dot{p}_{\gamma} &=&  0.
\end{array}
\right.
\end{equation}
The non-zero Poisson brackets of the coordinate functions are
($X_f(\bullet) = \{f, \bullet\}$)
\[\{\pa,x\}=\{\pb,y\}=\{\pc,z\}=1,\;\;\{\pb,z\}=x,\;\;\{\pa,\pb\}=1+\pc.\]
Let the Liouville 1-form be $\nu=p_{x}dx+p_{y}dy+p_{z}dz$ and let
\[\psi:=\nu+\tau^{*}\gamma.\]
Clearly $d\psi=\omega$. We compute
\begin{align}
\psi(X_{H})&=\pa^2+\pb(\pb-x\pc)+\pc(x\pb+\pc)+\pc\nonumber\\
&=\pa^2+\pb^2+\pc(\pc+1)=2H+\pc. \label{co}
\end{align}
Let $\Sigma_{k}:=H^{-1}(k)$.

\begin{lemma} The hypersurface $\Sigma_{k}$ is of contact type
if and only if $k>1/2$.
\label{conhei}
\end{lemma}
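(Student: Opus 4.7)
The plan is to use the primitive $\psi = \nu + \tau^*\gamma$ of $\omega$ exhibited in the excerpt. By the computation $\psi(X_H) = 2H + p_\gamma$ from (\ref{co}), its restriction to $\Sigma_k$ is
$$\psi(X_H)|_{\Sigma_k} = 2k + p_\gamma.$$
Since $p_\gamma^2 \leq p_\alpha^2 + p_\beta^2 + p_\gamma^2 = 2k$ on $\Sigma_k$, the function $p_\gamma$ takes values in $[-\sqrt{2k},\sqrt{2k}]$, with the extreme values $\pm\sqrt{2k}$ attained precisely at points where $p_\alpha=p_\beta=0$.

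For the $(\Leftarrow)$ direction, if $k>1/2$ then $\sqrt{2k}>1$, whence
$$\psi(X_H) \geq 2k-\sqrt{2k} = \sqrt{2k}\bigl(\sqrt{2k}-1\bigr) > 0$$
everywhere on $\Sigma_k$. So $\psi|_{\Sigma_k}$ is a positive contact form on $\Sigma_k$ (the transverse distribution $\ker\psi|_{\Sigma_k}$ is symplectic for $d\psi=\omega$), and $\Sigma_k$ is of restricted contact type.

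For the converse, I would argue that $k\leq 1/2$ implies non-contact by applying Theorem~\ref{thm:mcduff} (or Corollary~\ref{cor:mcduff}) to the exact Hamiltonian structure $(\Sigma_k,\omega|_{\Sigma_k}=d\psi|_{\Sigma_k})$. At either extremal point $p_\alpha=p_\beta=0$, $p_\gamma=\pm\sqrt{2k}$, the formulas (\ref{eq:XH}) collapse to $X_H = \pm\sqrt{2k}\,\partial_z$, so these points lie on distinguished closed orbits $v_\pm$ whose projection to $\Gamma\setminus G$ is a single traversal of the $z$-circle. In the lattice $\Gamma$ this loop represents $(0,0,\pm 1) = \pm[(1,0,0),(0,1,0)]$ (a direct commutator computation using the group law $(x,y,z)\star(x',y',z')=(x+x',y+y',z+z'+xy')$), so it lies in $[\Gamma,\Gamma]$ and is therefore null-homologous in $M$. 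The Gysin sequence for the $S^2$-bundle $\tau|_{\Sigma_k}\colon\Sigma_k\to M$ (with $\dim M=3$) gives $H^1(M;\R)\xrightarrow{\sim}H^1(\Sigma_k;\R)$, so $v_\pm$ are null-homologous in $\Sigma_k$ as well.

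Consequently the normalized invariant probability measures $\mu_\pm$ supported on $v_\pm$ are exact as $1$-currents, and since $\psi(X_H)=2k+p_\gamma$ is constant along each $v_\pm$,
$$\langle \mu_\pm,\psi\rangle = 2k\pm\sqrt{2k}.$$
For $k<1/2$ these values have opposite signs, so Corollary~\ref{cor:mcduff} forces $(\Sigma_k,\omega|_{\Sigma_k})$ to be non-contact. For $k=1/2$ we have $\langle\mu_-,\psi\rangle=0$, and $\mu_-$ is a nontrivial exact invariant measure, so Theorem~\ref{thm:mcduff} applies directly (this is the simplest obstruction: a null-homologous closed orbit $v_-$ with $\int_{v_-}\psi=0$). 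The only routine verification is the Gysin identification of $H^1(\Sigma_k;\R)$, which I view as the main (still very minor) technical checkpoint; the existence and precise value of $p_\gamma$ along the two fiber-over-$z$-circle orbits then does all the work.
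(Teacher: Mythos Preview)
Your proof is correct and follows essentially the same approach as the paper's. The only minor differences are: (i) the paper's primary argument pairs the orbit $v_-$ with the Liouville measure (exact by Lemma~\ref{lem:liouville}) rather than with $v_+$, though it explicitly mentions your $v_\pm$ pairing as an alternative; and (ii) to see that the $z$-circle is null-homologous the paper invokes Nomizu's theorem (that $\alpha,\beta$ span $H^1(\Gamma\setminus G;\R)$) rather than your direct commutator computation, but both justifications are equally valid.
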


\begin{proof}
Equation (\ref{co}) tells us right away that if
$k>1/2$, then $\Sigma_{k}$ is of contact type with contact form $\psi$
since in this case
$$
   \psi(X_H) = 2k+\pc\geq 2k-\sqrt{2k}>0
$$
for every point in $\Sigma_{k}$.

An inspection of (\ref{eq:XH}) reveals that
$\pa=\pb=y=0$, $\pc=-\sqrt{2k}$, $x=x_0$, $z(t)=z_0-\sqrt{2k}t$ give
orbits of $X_{H}$ with energy $k$ which project to closed orbits
in $T^*(\Gamma\setminus G)$. These closed orbits are in fact
null-homologous since $\alpha$ and $\beta$ integrate to zero
along them and these two 1-forms span $H^{1}(\Gamma\setminus G,\R)$.
(This is because by a theorem of K. Nomizu \cite{N} the de Rham cohomology
ring of $\Gamma\setminus G$ is isomorphic to the Lie algebra cohomology
of $\g$ and the isomorphism is induced by the natural inclusion of
left-invariant forms.)
Let $\delta$ be one of these closed orbits. Using (\ref{co}) we see that
for $k\leq 1/2$
\begin{equation}
\int_{\delta}\psi= (2k-\sqrt{2k})\tau\leq 0,\label{neg}
\end{equation}
where $\tau$ is the period of $\delta$.
Now let $\mu$ be the Liouville measure on $\Sigma_k$. Again, using (\ref{co})
we obtain
\begin{equation}
\int_{\Sigma_{k}}\psi(X_{H})\,d\mu=\int_{\Sigma_{k}}(2k+\pc)\,d\mu=2k>0.\label{pos}
\end{equation}
By Lemma~\ref{lem:liouville} the Liouville measure is exact as a
current, so (\ref{neg}), (\ref{pos}) and Corollary~\ref{cor:mcduff}
ensure that $\Sigma_{k}$ cannot be of contact type.
Alternatively, instead of the Liouville measure one may also take the distinguished null-homologous closed orbits with $\pa=\pb=y=0$, $\pc=\sqrt{2k}$, $x=x_0$ and $z(t)=z_0+\sqrt{2k}t$.
The integral of $\psi$ along them has value $(2k+\sqrt{2k})\tau>0$.
\end{proof}

\begin{lemma}The hypersurface $\Sigma_{k}$ is stable for $k<1/2$.
\end{lemma}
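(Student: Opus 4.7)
My plan is to construct an explicit stabilizing 1-form $\lambda$ on $\Sigma_k$ satisfying $\lambda(X_H)>0$ and $i_{X_H}d\lambda|_{T\Sigma_k}=0$, imitating the strategy of Proposition~\ref{tcase} and adapting it to accommodate the non-trivial Heisenberg geometry. The key dynamical observation is furnished by \eqref{eq:XH}: $p_\gamma$ is a first integral, $X_H$ rotates $(p_\alpha,p_\beta)$ at angular speed $1+p_\gamma$, and for $k<1/2$ this rate is strictly positive on $\Sigma_k$ because $|p_\gamma|\leq\sqrt{2k}<1$. Moreover, the central $S^1$-subgroup of $G$ generated by $\partial_z$ acts on $T^*(\Gamma\setminus G)$ as Hamiltonian symmetries with moment map $p_\gamma$, and commutes with $X_H$.

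My first step would be to perform symplectic reduction by this $S^1$-action at each regular moment value $c\in(-\sqrt{2k},\sqrt{2k})$. After the fibrewise shift $\tilde p_y=p_y+cx$, the reduced space becomes $T^*\mathbb T^2$ with twisted symplectic form $d\tilde p\wedge dq-(1+c)\,dx\wedge dy$ and the standard kinetic Hamiltonian, so Proposition~\ref{tcase} provides an explicit stabilizing 1-form on each reduced level, depending smoothly on $c$. Assembling these yields the ``horizontal'' candidate
\[
\lambda_{\mathrm{hor}} = \frac{1}{2(1+p_\gamma)}\bigl(p_\alpha\,dp_\beta-p_\beta\,dp_\alpha\bigr)
\]
on $\Sigma_k$, which is globally smooth since $1+p_\gamma>0$, and satisfies $\lambda_{\mathrm{hor}}(X_H)=\tfrac12 r^2$ with $r^2=p_\alpha^2+p_\beta^2=2k-p_\gamma^2$. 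This is positive except on the vertical locus $\{r=0\}=\{p_\alpha=p_\beta=0,\,p_\gamma=\pm\sqrt{2k}\}$, where a transverse correction is required.

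To restore positivity I would add the most general $G$- and $S^1$-invariant correction of the form $C(p_\gamma)\tau^*\gamma+f(p_\gamma)(p_\alpha\tau^*\alpha+p_\beta\tau^*\beta)$ and determine $C,f$ so that $i_{X_H}d\lambda|_{T\Sigma_k}=0$. Using the Maurer--Cartan identity $d\tau^*\gamma=-\tau^*\alpha\wedge\tau^*\beta$ together with $\dot p_\alpha=-(1+p_\gamma)p_\beta$, $\dot p_\beta=(1+p_\gamma)p_\alpha$, $\dot p_\gamma=0$, a direct computation shows that vanishing of the $(\tau^*\alpha,\tau^*\beta)$-component of $i_{X_H}d\lambda$ forces $f(p_\gamma)=C(p_\gamma)/(1+p_\gamma)$, and the vanishing of the $dp_\gamma$-component then reduces to a first-order linear ODE for $C$. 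Positivity at the vertical orbits is enforced by demanding $C(\pm\sqrt{2k})$ to have the same sign as $\pm\sqrt{2k}$, so that on $\{r=0\}$ one has $\lambda(X_H)=Cp_\gamma>0$.

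The hard part will be arranging a globally smooth solution of the ODE. A straightforward integration of the ODE in the ansatz above produces a logarithmic singularity at $p_\gamma=-2k$ — precisely the value where the natural primitive $\psi=\nu+\tau^*\gamma$ degenerates (recall $\psi(X_H)=2k+p_\gamma$), and which lies in the interior of $[-\sqrt{2k},\sqrt{2k}]$ exactly when $k<1/2$. I would resolve this either by enlarging the ansatz with an additional term $h(p_\gamma)(p_\alpha\,dp_\beta-p_\beta\,dp_\alpha)$, giving the extra freedom needed to absorb the singularity, or by using Lemma~\ref{lem:HZ-stable}(b) to instead construct a transverse vector field $Y$ on a neighbourhood of $\Sigma_k$ with $\ker(\omega|_{\Sigma_k})\subset\ker(L_Y\omega|_{\Sigma_k})$, built as the Liouville dilation plus a compensating vertical component tuned along the $p_\gamma=-2k$ locus. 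Once the stabilizing form is in hand, the tameness condition \eqref{hlam} follows readily from the period-action formula, exploiting the fact that $p_\gamma$ is constant along orbits to bound $\int v^*\lambda$ in terms of $\Omega(v)$.
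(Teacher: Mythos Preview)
Your approach and the paper's converge to the same ansatz, though you arrive at it via symplectic reduction whereas the paper writes it down directly. Once you impose your relation $f=C/(1+p_\gamma)$, your correction $C\,\tau^*\gamma+f\,(p_\alpha\tau^*\alpha+p_\beta\tau^*\beta)$ collapses to $\tfrac{C}{1+p_\gamma}\,\psi$ (with $\psi=\nu+\tau^*\gamma$), and your angular piece $(p_\alpha\,dp_\beta-p_\beta\,dp_\alpha)$ is $(2k-p_\gamma^2)\,\varphi$ for the closed angular form $\varphi=\tfrac{p_\alpha\,dp_\beta-p_\beta\,dp_\alpha}{p_\alpha^2+p_\beta^2}$. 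So both proofs search for stabilizing forms of the shape $\lambda=F(p_\gamma)\psi+G(p_\gamma)\varphi$, subject to $G(\pm\sqrt{2k})=0$ (for smoothness at the poles), the positivity $(2k+p_\gamma)F+(1+p_\gamma)G>0$, and the ODE $(2k+p_\gamma)F'+(1+p_\gamma)G'=0$.

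The difference lies in how the ODE is handled. By first fixing $G$ (inherited from $\lambda_{\mathrm{hor}}$) and then solving for $F$, you are forced to divide by $2k+p_\gamma$, which produces the logarithmic singularity at $p_\gamma=-2k$ that you flag as the ``hard part''. The paper simply reverses the roles: it takes $r:=F'$ as the free datum (a non-negative bump supported away from the endpoints, with $r(-2k)>0$) and sets $G':=-\tfrac{2k+p_\gamma}{1+p_\gamma}\,r$, which is smooth because $1+p_\gamma>0$ for $k<1/2$. One integral condition on $r$ forces $G$ to vanish at both endpoints, and the resulting sign pattern ($F$ changing sign only at $-2k$, $G\geq 0$ with $G(-2k)>0$) yields positivity everywhere. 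No enlargement of the ansatz and no transverse-vector-field construction is needed. Your proposed fix of adjoining $h(p_\gamma)(p_\alpha\,dp_\beta-p_\beta\,dp_\alpha)$ is precisely the freedom the paper exploits from the start, so your plan would succeed once carried out---but the detour through reduction over-constrains the problem initially and hides this very short resolution. (Incidentally, tameness is not part of this lemma; it is established separately via the explicit period--action computation in Lemma~\ref{cco}.)
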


\begin{proof} Consider the following form on $\Sigma_k$
\[\varphi:=\frac{\pa d\pb-\pb d\pa}{\pa^2+\pb^2}.\]
This form is smooth away from $\pc=\pm\sqrt{2k}$ and $d\varphi=0$.
We look for stabilizing 1-forms $\lambda$ of the form
\[\lambda=f(\pc)\psi+g(\pc)\varphi,\]
where $f$ and $g$ are smooth functions in $[-\sqrt{2k},\sqrt{2k}]$
and $g$ vanishes in small neighbourhoods of the end points. Using
$\psi(X_H)=2k+\pc$ and $\varphi(X_H)=1+\pc$, the condition
$\lambda(X_{H})>0$ is equivalent to
\begin{equation}\label{con1}
(2k+\pc)f(\pc)+(1+\pc)g(\pc)>0,
\end{equation}
and the condition $i_{X_{H}}d\lambda=0$ is equivalent to
\begin{equation}\label{con2}
(2k+\pc)f'(\pc)+(1+\pc)g'(\pc)=0.
\end{equation}
To see that we can always choose $f$ and $g$ satisfying (\ref{con1})
and (\ref{con2}) provided that $2k<1$, take a non-negative
smooth function
$r(\pc)$ which vanishes near the end points of $[-\sqrt{2k},\sqrt{2k}]$,
$r(-2k)>0$ (note that $-\sqrt{2k}<-2k$) and
\[\int_{-\sqrt{2k}}^{\sqrt{2k}}\frac{2k+\pc}{1+\pc}r(\pc)\,d\pc=0.\]
Note that this is always possible. Indeed as $r$ we could take
$(1+\pc)q(2k+\pc)$ where $q(t)$ is a smooth non-negative even bump function
with small support containing the origin.
Now let
\[f(\pc):=\int_{-2k}^{\pc}r(t)\,dt\]
and
\[g(\pc):=-\int_{-\sqrt{2k}}^{\pc}\frac{2k+t}{1+t}r(t)\,dt.\]
Clearly (\ref{con2}) holds.
With these choices, $f$ vanishes only at $-2k$, is negative at the left
of $-2k$ and positive at the right of $-2k$. Also $g\geq 0$, it vanishes
near the end points and $g(-2k)>0$, thus (\ref{con1}) also holds.
\end{proof}

\begin{lemma} The hypersurface $\Sigma_{k}$ is displaceable for $k<1/2$.
\label{disp}
\end{lemma}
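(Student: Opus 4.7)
The plan is to exhibit an explicit compactly supported Hamiltonian $F$ on $T^{*}(\Gamma\setminus G)$ whose time-one flow displaces $\Sigma_{k}$. The crucial point, which fails precisely at $k=1/2$, is that for $k<1/2$ the factor $1+\pc$ appearing in the equations of motion~(\ref{eq:XH-gen}) is bounded away from zero on $\Sigma_{k}$, because $|\pc|\le\sqrt{2k}<1$ there.

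First I would take $F$ to be a left-invariant function of the momenta alone, of the product form $F(q,p)=\rho(\pc)\,\chi(\pa,\pb)$, where $\rho\in C^{\infty}_{c}(\R,[0,1])$ satisfies $\rho\equiv 1$ on $[-\sqrt{2k},\sqrt{2k}]$ and $\chi\in C^{\infty}_{c}(\R^{2})$ is to be chosen. Since $F$ depends only on the left-invariant coordinates $(\pa,\pb,\pc)$, it descends from $T^{*}G$ to $T^{*}(\Gamma\setminus G)$; compactness of the base together with compact support in $\g^{*}$ makes it compactly supported there.

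Next I would compute the flow of $F$ with respect to $\omega_{\sigma}$. Applying~(\ref{eq:XH-gen}) with all $q$-derivatives of $F$ vanishing, the flow preserves $\pc$, and on the region $\{\rho(\pc)=1\}$ the $(\pa,\pb)$-dynamics is
\[
\dot\pa=-(1+\pc)\chi_{\pb},\qquad \dot\pb=(1+\pc)\chi_{\pa},
\]
which is the standard planar Hamiltonian flow of $\chi$ on $(\R^{2},d\pa\wedge d\pb)$ reparametrised by the positive constant $1+\pc$. Crucially, the $p$-component of the flow depends only on the initial $p$ (not on $q$), so displacing $\Sigma_{k}$ reduces to displacing the sphere $\bS_{k}=\{|p|^{2}=2k\}\subset\g^{*}$ by this $p$-component. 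I would then pick $\chi$ so that its time-$s$ planar flow displaces the closed disk $\overline D(0,\sqrt{2k})$ from itself for every $s$ in some interval $[s_{0},s_{1}]\subset(0,\infty)$ with $s_{1}/s_{0}\ge(1+\sqrt{2k})/(1-\sqrt{2k})$; for example, take $\chi$ equal to $\pb$ on a sufficiently large ball and smoothly cut off outside, which generates translation in $\pa$ and displaces $\overline D(0,\sqrt{2k})$ for all $s\in(2\sqrt{2k},s_{1}]$ with $s_{1}$ as large as desired. Then pick $t^{*}>0$ with $(1+\pc)t^{*}\in[s_{0},s_{1}]$ for every $|\pc|\le\sqrt{2k}$. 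For any $(q,p)\in\Sigma_{k}$ one has $\pa^{2}+\pb^{2}=2k-\pc^{2}\le 2k$, so at time $t^{*}$ the new $(\pa,\pb)$ lies outside $\overline D(0,\sqrt{2k})$, giving $|p(t^{*})|^{2}>2k$ and hence $\phi_{t^{*}}^{F}(q,p)\notin\Sigma_{k}$. Rescaling $F$ by $t^{*}$ converts this into a time-one displacement.

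The hard part will be the uniform choice of $t^{*}$ as $\pc$ ranges over $[-\sqrt{2k},\sqrt{2k}]$: the effective time scaling is $1+\pc$, so one needs the planar flow of $\chi$ to displace $\overline D(0,\sqrt{2k})$ over a range of times wide enough to accommodate the ratio $(1+\sqrt{2k})/(1-\sqrt{2k})$. That ratio blows up as $k\to 1/2$, which is consistent with the non-stability of $\Sigma_{1/2}$ asserted in Theorem~\ref{thm:nil}.
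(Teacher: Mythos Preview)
Your approach is correct and follows essentially the same idea as the paper: the paper simply takes the Hamiltonian $f=\pa$, notes that along its flow $\pc$ is conserved and $\dot\pb=\{\pa,\pb\}=1+\pc\ge 1-\sqrt{2k}>0$ on $\Sigma_k$, so $\pb$ drifts monotonically off the sphere (the cutoff to compact support is left implicit). Your explicit cutoff and the timing analysis via the ratio $(1+\sqrt{2k})/(1-\sqrt{2k})$ are correct but unnecessary refinements---since the translation is monotone, the disk stays displaced for all sufficiently large planar times, and one need only make the cutoff region large enough to contain the relevant trajectories.
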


\begin{proof} Consider the Hamiltonian in $T^{*}(\Gamma\setminus G)$
given by $f=\pa$. Clearly $\pc$ does not change along
the Hamiltonian flow of $f$ and $\pb$ changes according to
\[\dot{\pb}=\{\pa,\pb\}=1+\pc.\]
Thus if $k<1/2$ and we flow a point in $\Sigma_{k}$ along the
Hamiltonian flow of $f$, we have $\dot{\pb}\geq 1-\sqrt{2k}>0$.
Thus the flow of $f$ will displace $\Sigma_k$ as desired.
\end{proof}

\begin{lemma} The hypersurface $\Sigma_{k}$ has closed contractible
orbits if and only if $k<1/2$. If $k<1/2$ and $v\in X(\Sigma_{k})$
then $\Omega(v)$ is a positive integer multiple of $2\pi(1-\sqrt{1-2k})>0$
and $\Sigma_{k}$ is tame.

\label{cco}
\end{lemma}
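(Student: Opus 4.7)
The plan is to integrate the Hamiltonian system \eqref{eq:XH} explicitly. Since $\dot{p}_\gamma=0$, the value $c:=p_\gamma$ is conserved, and setting $\omega_0:=1+c$ the pair $(p_\alpha,p_\beta)$ satisfies $\ddot{p}_\alpha+\omega_0^2 p_\alpha=0$. Writing $p_\alpha+ip_\beta=A e^{i(\omega_0 t+\phi)}$ with $A:=\sqrt{2k-c^2}\geq 0$ (constrained by $|c|\leq\sqrt{2k}$), I integrate $\dot x=p_\alpha,\,\dot y=p_\beta$ to see that, when $\omega_0\neq 0$, the curve $(x(t),y(t))$ is a circle of radius $A/|\omega_0|$ closing up after time $T=2\pi/|\omega_0|$. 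Because $G\cong\mathbb{R}^3$ is simply connected and $T^*(\Gamma\setminus G)$ deformation-retracts onto $\Gamma\setminus G$, a periodic orbit is contractible in $V$ iff its lift to $G$ is closed, which in turn is equivalent to $(x,y,z)$ returning to its initial value after one period. The $(x,y)$-part already does so, so the only nontrivial condition is $\Delta z:=z(T)-z(0)=0$.

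Using $\dot z=x\,p_\beta+c$ and either direct computation or Stokes applied to the disk spanned by the $(x,y)$-circle, one gets
\[
\Delta z \;=\; \mathrm{sgn}(\omega_0)\,\frac{\pi A^2}{\omega_0^2}\;+\;\frac{2\pi c}{|\omega_0|}.
\]
Substituting $A^2=2k-c^2$ and clearing denominators reduces $\Delta z=0$ to the quadratic $c^2+2c+2k=0$, with real roots $c=-1\pm\sqrt{1-2k}$ exactly when $k\leq 1/2$. The elementary inequality $1-\sqrt{1-2k}\leq 2k$ on $[0,1/2]$ shows that only $c^+:=-1+\sqrt{1-2k}$ meets the kinematic constraint $c^2\leq 2k$, and strictly only for $k<1/2$. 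The boundary case $k=1/2$ forces $c=-1$ so that $\omega_0=A=0$; the orbit degenerates to the vertical line $\dot z=-1$, which is \emph{not} contractible in $G$. For $k>1/2$ the quadratic has no real root, and the remaining case $\omega_0=0$ forces $c=-1$ with $A=\sqrt{2k-1}>0$, giving straight-line trajectories in $(x,y)$ that do not close. This proves the first assertion.

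For $k<1/2$ the prime period is $T=2\pi/\sqrt{1-2k}$, and formula~\eqref{co} gives $\psi(X_H)=2k+p_\gamma=2k+c^+$ as a constant along the orbit, so
\[
\Omega(v)=\int_0^T \psi(X_H)\,dt=(2k+c^+)T=2\pi\bigl(1-\sqrt{1-2k}\bigr)>0.
\]
Every $v\in X(\Sigma_k)$ is an $n$-fold iterate of such a prime orbit (since $c^+$ is uniquely determined by $k$), whence $\Omega(v)=2\pi n(1-\sqrt{1-2k})$. For tameness, take the stabilizing 1-form $\lambda=f(p_\gamma)\psi+g(p_\gamma)\varphi$ from the preceding lemma: along any closed characteristic $\lambda(X_H)\equiv f(c^+)(2k+c^+)+g(c^+)(1+c^+)$ is constant, so the ratio $T_\lambda(v)/\Omega(v)=f(c^+)+g(c^+)(1+c^+)/(2k+c^+)$ is the same for every $v\in X(\Sigma_k)$, giving a uniform taming constant. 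The only delicate point in the whole argument is keeping straight which root of $c^2+2c+2k=0$ is kinematically admissible and handling the degenerate cases $\omega_0=0$ separately at $k=1/2$ and $k>1/2$; once this bookkeeping is done the explicit formulas deliver everything.
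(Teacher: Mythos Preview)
Your proof is correct and follows essentially the same approach as the paper: you integrate the Euler equations explicitly, reduce the contractibility condition to the closing of the $z$-coordinate, obtain the same quadratic $c^2+2c+2k=0$, select the admissible root, and compute $\Omega(v)$ via $\psi(X_H)=2k+p_\gamma$. The only cosmetic differences are that you compute $\Delta z$ via the signed area of the $(x,y)$-circle (the paper does it by direct integration) and that you spell out the tameness ratio $T_\lambda/\Omega$ explicitly using the stabilizing form from the preceding lemma, whereas the paper simply observes that $\Omega(v)$ and the period are both fixed positive multiples of the iterate number.
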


\begin{proof} The equations in (\ref{eq:XH}) show that $\pc$ is a first integral
(in fact it is the Casimir of $\g^*$) and that if $\pc+1\neq 0$, then
$\pa$ and $\pb$ are trigonometric functions with period $T=2\pi/(1+\pc)$
and $\pa^2+\pb^2=2k-\pc^2$.
Since $\pa$ and $\pb$ have zero integral along the period,
$x$ and $y$ are also $T$-periodic functions.
Hence we get a closed contractible orbit iff $z$ is periodic of
period $T$ (or an integer multiple). To analyze this condition,
abbreviate $\mu:=1+\pc\neq 0$ and
$a:=\sqrt{2k-p_\gamma^2}\geq 0$. After time shift, the solutions of
Hamilton's equations~\eqref{eq:XH} are given by
$$
   \pa(t)=a\cos\mu t,\qquad \pb(t)=a\sin\mu t,\qquad \pc={\rm const},
   \qquad
   x(t) = x_0+\frac{a\sin\mu t}{\mu}.
$$
Again looking at (\ref{eq:XH}), we see that periodicity of $z$ is
equivalent to
$$
  \int_{0}^{2\pi/(1+\pc)}(x(t)\pb(t)+\pc)\,dt = 0.
$$
Evaluating the integral, we obtain the condition
\begin{equation}\label{period}
   2\pc+\frac{2k-\pc^2}{1+\pc}=0.
\end{equation}
If $k<1/2$, then $1+\pc>0$ and there is a unique solution
to (\ref{period}) given by $\pc=\sqrt{1-2k}-1\in(-\sqrt{2k},0)$, so we
have contractible closed orbits.

If $k\geq 1/2$ it is easy to see that there are no such solutions.
Indeed, this is clear if $1+\pc\neq 0$ because equation~\eqref{period}
has no real solutions for $k>1/2$ and only the solution $\pc=-1$ for
$k=1/2$. It remains to analyse the case $\pc=-1$. It gives rise to a
circle $\pa^2+\pb^2=2k-1$ of critical points for the Euler vector
field (which degenerates to a point when $k=1/2$). Thus along
solutions $\pa,\pb,\pc$ are constant, and from (\ref{eq:XH}) we see
that
$$
   x(t)=x_0+\pa t,\qquad y(t)=y_0+\pb t
$$
are periodic iff $\pa=\pb=0$ (so in particular $k=1/2$). But then
$z(t)=z_0+\pc t$ is not periodic since $\pc\neq 0$, so there are no
contractible closed orbits for $k\geq 1/2$.

Since $d\psi=\omega$, to compute $\Omega(v)$ we just need to integrate
$\psi$ along one of the closed orbits of $X_{H}$ described above.
Since $\psi(X_{H})=2k+\pc$ we deduce
\[\Omega(v)=T(2k+\pc)=2\pi(1-\sqrt{1-2k})>0.\]
This also shows that $\Sigma_k$ is tame.
\end{proof}

\begin{remark} It is not hard to show that for $k<1/2$, the closed contractible
orbits form a Morse-Bott 4-dimensional submanifold of $\Sigma_{k}$
diffeomorphic to $(\Gamma\setminus G)\times S^{1}$.
\end{remark}

\begin{lemma}
The Ma\~n\'e critical values equal $c=c_0=1/2$.
\end{lemma}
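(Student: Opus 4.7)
The plan has three ingredients: an easy upper bound, a reduction $c=c_0$ via amenability, and a lower bound via an explicit action computation along a $z$-direction loop.

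First, the upper bound $c_0\le 1/2$ is immediate: $\gamma=dz-xdy$ is a globally defined smooth primitive of $\sigma=d\gamma$ on $M$, and the orthonormality of $\{\alpha,\beta,\gamma\}$ gives $2H(q,\gamma_q)=|\gamma_q|^2=1$ at every $q\in M$. By definition of $c_0$ as an infimum of $\sup_M H(\cdot,\theta)$ over smooth primitives $\theta$ of $\sigma$ on $M$, we get $c_0\le 1/2$. Next, the lattice $\Gamma$ sits in the nilpotent (hence amenable) group $G$, so $\pi_1(M)=\Gamma$ is amenable, and property (iv) of $c_0$ from Subsection~\ref{ss:exact} (the Fathi--Maderna theorem) yields $c=c_0$. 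So everything reduces to proving $c_0\ge 1/2$.

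For the lower bound I would use the Lagrangian characterization
\[
c_0 \;=\; \inf_{[\varpi]\in H^1(M;\R)} c(L_\gamma-\varpi)
\;=\; \inf_{\vartheta\text{ primitive of }\sigma}c(L_\vartheta),
\]
with $L_\vartheta(q,v)=\tfrac12|v|^2+\vartheta_q(v)$, and show $c(L_\vartheta)\ge 1/2$ for every smooth primitive $\vartheta$. For $0<k<1/2$ consider the $z$-direction loop $\delta(t)=(0,0,-\sqrt{2k}\,t)$, $t\in[0,1/\sqrt{2k}]$; it closes in $M$ because the unit $z$-translation lies in $\Gamma$. Its velocity satisfies $\alpha(\dot\delta)=\beta(\dot\delta)=0$, $\gamma(\dot\delta)=-\sqrt{2k}$, hence $|\dot\delta|^2=2k$ and $\int_\delta\gamma=-1$. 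A direct integration with $\vartheta=\gamma$ gives
\[
A_{L_\gamma+k}(\delta)=\int_0^{1/\sqrt{2k}}\bigl(k-\sqrt{2k}+k\bigr)\,dt \;=\; \sqrt{2k}-1 \;<\;0.
\]

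The key observation is that this computation is independent of the chosen primitive. By Nomizu's theorem, $H^1(M;\R)$ is spanned by the classes of the left-invariant forms $\alpha$ and $\beta$, so the $z$-loop $\delta$ is null-homologous in $M$; equivalently, $\partial_z$ is a Lie bracket in $\g$, so $\delta$ represents a commutator in $\pi_1(M)$. Consequently $\int_\delta\varpi=0$ for every closed $1$-form $\varpi$, so replacing $\gamma$ by any other primitive $\vartheta=\gamma+\varpi$ leaves $\int_\delta\vartheta$ unchanged and the same computation gives $A_{L_\vartheta+k}(\delta)=\sqrt{2k}-1<0$ for every $k<1/2$. Thus $c(L_\vartheta)\ge 1/2$ for every primitive $\vartheta$, so $c_0\ge 1/2$, and combined with the two preceding steps we conclude $c=c_0=1/2$. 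No genuine obstacle is expected; the only subtle point is checking uniform independence of the primitive, and this is handled by the null-homology of~$\delta$.
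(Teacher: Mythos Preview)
Your proof is correct. The upper bound $c_0\le 1/2$ and the amenability step $c=c_0$ are exactly as in the paper. For the lower bound $c_0\ge 1/2$, however, you take a different route.

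The paper argues indirectly: Lemma~\ref{conhei} shows $\Sigma_k$ is of contact type only for $k>1/2$, while Lemma~\ref{lem:exact} guarantees contact type for every $k>c_0$; combining these forces $c_0\ge 1/2$. Your argument is instead a direct Lagrangian action computation on the $z$-direction loop $\delta$, together with the observation that $\delta$ is null-homologous (via Nomizu's theorem, exactly as used in the proof of Lemma~\ref{conhei}) so that the negative action persists for \emph{every} primitive $\vartheta$ of $\sigma$. In effect, you are extracting from the proof of Lemma~\ref{conhei} precisely the ingredient needed --- the null-homologous orbit with $\int_\delta\psi=(2k-\sqrt{2k})\tau<0$ --- and feeding it straight into the Lagrangian definition of $c_0$, bypassing McDuff's criterion and the Liouville-measure argument entirely. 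Your approach is more elementary and self-contained; the paper's is shorter only because Lemma~\ref{conhei} has already been established. One minor remark: your loop $\delta$ assumes the unit $z$-translation lies in $\Gamma$, which holds for the integer lattice; for a general cocompact lattice you would replace the endpoint by whatever central element $(0,0,n)\in\Gamma$ is available, and the same computation gives $A_{L_\vartheta+k}(\delta)=n(\sqrt{2k}-1)<0$.
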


\begin{proof}
As noted above, $2H(\gamma)\equiv 1$ implies $c_0\leq 1/2$. On the
other hand, Lemma~\ref{conhei} and Lemma~\ref{lem:exact} imply
$c_0\geq 1/2$, hence $c_0=1/2$. Since $\Gamma$ is nilpotent, it is
amenable and $c=c_0$ by property (iv) in Section~\ref{ss:exact}.
\end{proof}

\begin{lemma}
The hypersurface $\Sigma_{1/2}$ is not stable.
\end{lemma}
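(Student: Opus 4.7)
The plan is a direct application of Theorem~\ref{thm:crit} to the situation at hand. That theorem says a Ma\~n\'e critical level is non-stable as soon as two conditions are met: (a) the level $\Sigma_c$ carries no contractible periodic orbits, and (b) every level $\Sigma_k$ below $c$ is displaceable. Both ingredients have already been established in the preceding lemmas for this example, with $c=c_0=1/2$.

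First I would invoke Lemma~\ref{cco}, which explicitly computes that contractible closed orbits of $X_H$ on $\Sigma_k$ exist precisely for $k<1/2$; in particular $\Sigma_{1/2}$ has no contractible closed characteristic. Next I would invoke Lemma~\ref{disp}, which shows that the flow of the left-invariant Hamiltonian $f=p_\alpha$ moves $p_\beta$ at rate $\dot p_\beta = 1+p_\gamma \geq 1-\sqrt{2k} > 0$ on $\Sigma_k$ for $k<1/2$, and hence displaces $\Sigma_k$ for every such $k$. Together with the identification $c=c_0=1/2$ proved in the lemma immediately above, these are exactly hypotheses (a) and (b) of Theorem~\ref{thm:crit} at the critical value $c=1/2$.

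The conclusion of Theorem~\ref{thm:crit} is then that $\Sigma_{1/2}$ cannot be stable, which is the desired statement. There is really no separate obstacle to overcome here: all the work has been done in the previous lemmas (identifying $c$, locating the contractible periodic orbits, and producing an explicit displacing Hamiltonian); this final lemma is simply the place where those pieces are assembled via Theorem~\ref{thm:crit}. If one wanted to avoid citing Theorem~\ref{thm:crit} and give a self-contained argument, one would instead argue by contradiction: a stable structure on $\Sigma_{1/2}$ would give a stable tubular neighbourhood, hence uniform displaceability of nearby levels with uniformly bounded Hofer norm, so Schlenk's theorem (Theorem~\ref{felix}) would force a contractible Reeb orbit on $\Sigma_{1/2}$, contradicting Lemma~\ref{cco}.
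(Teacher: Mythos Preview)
Your proposal is correct and follows exactly the paper's own proof: the paper also simply invokes Lemma~\ref{cco} (no contractible closed orbits on $\Sigma_{1/2}$) and Lemma~\ref{disp} (displaceability for $k<1/2$), and then applies Theorem~\ref{thm:crit}. Your additional remark unpacking how one would argue directly via Schlenk's theorem is essentially the content of the proof of Theorem~\ref{thm:crit} itself.
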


\begin{proof}
By Lemma \ref{cco}, $\Sigma_{1/2}$ has no closed contractible
orbits. By Lemma \ref{disp}, $\Sigma_{k}$ is displaceable for any $k<1/2$,
hence Theorem \ref{thm:crit} implies that $\Sigma_{1/2}$ is not stable.
\end{proof}

\subsection{$PSL(2,\R)$}\label{ss:PSL}
The magnetic flow discussed here also
appears in \cite{Bu}.
It will be convenient for us to identify $PSL(2,\R)$ with
$S\BH^2$, the unit sphere bundle of the upper half plane
with its usual metric of curvature $-1$
\[\frac{dx^2+dy^2}{y^{2}}.\]
We recall that the standard identification $\phi\co S\BH^2\to PSL(2,\R)$ is
given as follows: $\phi(x,y,v)$ is the unique M\"obius transformation $T$ (with real coefficients)
such that $T(i)=x+iy$ and whose derivative at $i$ takes the tangent vector
$(0,1)$ at $i$ to the tangent vector $v$ at $(x,y)$.
In $S\BH^2$ we consider coordinates $(x,y,\theta)$ and the
following vector fields
\begin{align*}
X&=y\cos\theta\,\frac{\partial}{\partial x}+y\sin\theta\,\frac{\partial}{\partial y}-\cos\theta\,\frac{\partial}{\partial\theta},\\
Y&=-y\sin\theta\,\frac{\partial}{\partial x}+y\cos\theta\,\frac{\partial}{\partial y}+\sin\theta\,\frac{\partial}{\partial\theta},\\
V&=\frac{\partial}{\partial\theta}.
\end{align*}
It is easy to check that these vector fields satisfy the bracket
relations:
\[[X,Y]=-V,\;\;\;[V,X]=Y,\;\;\;[V,Y]=-X,\]
so they form the basis of a Lie algebra isomorphic
to $\mathfrak{sl}(2,\R)$. In fact under the map $\phi$ the vector fields
$X$, $Y$ and $V$ correspond to the left invariant vector fields
on $PSL(2,\R)$ whose values at the Lie algebra $\mathfrak{sl}(2,\R)$ are:
\[X=\left(\begin{array}{cc}
1/2&0\\
0&-1/2\\
\end{array}\right),\;\;Y=\left(\begin{array}{cc}
0&-1/2\\
-1/2&0\\
\end{array}\right),\;\;V=\left(\begin{array}{cc}
0&1/2\\
-1/2&0\\
\end{array}\right).
\]
The dual coframe of left-invariant 1-forms $\{\alpha,\beta,\gamma\}$
is given by
\begin{align*}
\alpha&=\frac{\cos\theta\,dx+\sin\theta\,dy}{y},\\
\beta&=\frac{-\sin\theta\,dx+\cos\theta\,dy}{y},\\
\gamma&=\frac{dx}{y}+d\theta.
\end{align*}
Using this coframe we identify $T^{*}S\BH^2$ with $S\BH^2\times \mathfrak{sl}(2,\R)^{*}$ and we obtain coordinates $(x,y,\theta,\pa,\pb,\pc)$.
The left-invariant coordinates $(\pa,\pb,\pc)$ are related to $(p_{x},p_{y},p_{\theta})$ by the formulas
\begin{align*}
\pa&=(yp_{x}-p_{\theta})\cos\theta+yp_{y}\sin\theta,\\
\pb&=-(yp_{x}-p_{\theta})\sin\theta+yp_{y}\cos\theta,\\
\pc&=p_{\theta}.
\end{align*}

We will endow $S\BH^2$ with its Sasaki metric. In this case this means
the unique metric that makes the basis $\{X,Y,V\}$ an orthonormal frame
at every point. Explicitly
\[ds^{2}=\frac{1}{y^{2}}\left[\dot{x}^2+\dot{y}^2+(\dot{\theta}y+\dot{x})^{2}\right].\]
The Hamiltonian is
\[2H=(yp_{x}-p_{\theta})^2+(yp_{y})^2+p_{\theta}^2=\pa^2+\pb^2+\pc^2.\]
The magnetic field is given by the left-invariant exact 2-form
\[\sigma:=d\gamma=\frac{dx\wedge dy}{y^{2}}.\]
The Hamiltonian vector field of $H$ with respect to the twisted symplectic form $\omega$ is given by
\begin{equation} \label{eq:XHsl1}
X_H = \left\{ \begin{array}{lclclcl}
\dot{x}   &=& y(yp_{x}-p_{\theta}),          & \hspace{10mm} & \dot{p}_{x}   &=& p_{y},\\
\dot{y} &=& y^{2}p_{y}, &            & \dot{p}_{y} &=&(-yp_{x}+p_{\theta})(p_{x}+1/y)-yp_{y}^2 ,\\
\dot{\theta} &=& 2p_{\theta}-yp_{x}, &           & \dot{p}_{\theta} &=&  0.
\end{array}
\right.
\end{equation}
In terms of the left-invariant coordinates we get
\begin{equation} \label{eq:XHsl2}
X_H = \left\{ \begin{array}{lclclcl}
\dot{x}   &=& y(\pa\cos\theta-\pb\sin\theta),          & \hspace{10mm} & \dot{p}_{\alpha}   &=& 2\pb \pc+\pb,\\
\dot{y} &=& y(\pa\sin\theta+\pb\cos\theta), &            & \dot{p}_{\beta} &=&-2\pa\pc-\pa ,\\
\dot{\theta} &=&\pc-\pa\cos\theta+\pb\sin\theta, &           & \dot{p}_{\gamma} &=&  0.
\end{array}
\right.
\end{equation}
The Poisson brackets of the left-invariant coordinate functions are
\[\{\pa,\pb\}=-\pc-1,\;\;\;\{\pa,\pc\}=-\pb,\;\;\;\{\pb,\pc\}=\pa.\]
Let the Liouville 1-form be $\nu=p_{x}dx+p_{y}dy+p_{\theta}d\theta$ and let
\[\psi:=\nu+\tau^{*}\gamma.\]
Clearly $d\psi=\omega$. We compute
\begin{align}
\psi(X_{H})&=p_{x}y(yp_{x}-p_{\theta})+y^{2}p_{y}^{2}+(2p_{\theta}-yp_{x})(p_{\theta}+1)+(yp_{x}-p_{\theta})\nonumber\\
&=2H+p_{\theta}=2H+\pc. \label{cosl}
\end{align}

Let $\Gamma\subset PSL(2,\R)$ be a discrete cocompact lattice which
acts without fixed points on $\BH^2$. We denote the quantities on the
quotient $T^*(\Gamma\setminus PSL(2,\R))$ by the same letters
$H,\dots$. Let $\Sigma_{k}:=H^{-1}(k)$ ($k>0$).

\begin{lemma} The hypersurface $\Sigma_{k}$ is of contact type
if and only if $k>1/2$.
\label{consl}
\end{lemma}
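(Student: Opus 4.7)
The plan is to mimic the structure of Lemma~\ref{conhei} using the identity $\psi(X_H)=2H+\pc$ from~\eqref{cosl} together with McDuff's theorem (Theorem~\ref{thm:mcduff} and Corollary~\ref{cor:mcduff}).

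For the ``if'' direction, suppose $k>1/2$. From $H=\frac12(\pa^2+\pb^2+\pc^2)$ we have $|\pc|\leq\sqrt{2k}$ on $\Sigma_k$, hence by~\eqref{cosl}
\[
   \psi(X_H)=2k+\pc\geq 2k-\sqrt{2k}>0
\]
everywhere on $\Sigma_k$. Since $d\psi=\omega$, this shows $\psi$ is a contact form and $\Sigma_k$ is of (restricted) contact type.

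For the ``only if'' direction, suppose $k\leq 1/2$. I would look for distinguished closed orbits as in the Heisenberg case. Inspection of~\eqref{eq:XHsl2} shows that $\pa=\pb=0$, $\pc=\pm\sqrt{2k}$, $x=x_0$, $y=y_0$, $\theta(t)=\theta_0+\pc\, t$ is a solution, and it descends to a closed orbit $\delta_\pm$ on $T^*(\Gamma\setminus PSL(2,\R))$ of period $T=2\pi/\sqrt{2k}$; its projection to $\Gamma\setminus PSL(2,\R)$ is a fibre of the circle bundle $SM\to M$ where $M=\Gamma\setminus\mathbb H^2$. Using~\eqref{cosl},
\[
   \int_{\delta_\pm}\psi=(2k\pm\sqrt{2k})\,T=2\pi\bigl(\sqrt{2k}\pm1\bigr),
\]
so $\int_{\delta_-}\psi\leq 0$ and $\int_{\delta_+}\psi>0$, with strict inequality in the first one when $k<1/2$.

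The key point is that $\delta_\pm$ are null-homologous in $\Sigma_k$ with real coefficients. Since $\Sigma_k\to \Gamma\setminus PSL(2,\R)$ is a 2-sphere bundle, $H_1(\Sigma_k;\R)=H_1(\Gamma\setminus PSL(2,\R);\R)$, and it suffices to check that the circle fibres of $SM\to M$ are null-homologous in $SM$. This follows from the Gysin sequence of this circle bundle: because $\chi(M)=2-2g\neq 0$, the Euler class is non-zero, so the map $H^1(SM;\R)\to H^0(M;\R)$ (integration along the fibre) vanishes and therefore every closed 1-form on $SM$ has zero integral over a fibre, i.e.~the fibre vanishes in $H_1(SM;\R)$. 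Thus $\delta_\pm$ give rise to nontrivial exact invariant measures in the sense of Theorem~\ref{thm:mcduff}.

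To conclude, for $k<1/2$ the two measures associated to $\delta_+$ and $\delta_-$ have opposite signs of $\langle\mu,\psi\rangle$, so Corollary~\ref{cor:mcduff} produces the required exact invariant measure and $\Sigma_k$ fails to be of contact type. For $k=1/2$ one has $\int_{\delta_-}\psi=0$, and $\delta_-$ alone satisfies the hypotheses of Theorem~\ref{thm:mcduff}. (Alternatively, as in Lemma~\ref{conhei}, one may replace $\delta_+$ by the Liouville measure on $\Sigma_k$, which is exact as a current by Lemma~\ref{lem:liouville} since $\dim(\Gamma\setminus PSL(2,\R))=3$, and satisfies $\int_{\Sigma_k}\psi(X_H)\,d\mu=2k\operatorname{vol}(\Sigma_k)>0$ once one knows the integral of $\pc$ against the Liouville volume vanishes by $\pc\mapsto -\pc$ symmetry.)

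The only real technical point to verify carefully is the null-homology claim for the fibre orbits; everything else is a direct transcription of the Heisenberg argument. This is straightforward but worth stating explicitly because the total space $\Gamma\setminus PSL(2,\R)$ is no longer nilpotent, so we cannot invoke Nomizu, and one must run the Gysin sequence for the $S^1$-bundle $SM\to M$ directly.
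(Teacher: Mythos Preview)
Your proposal is correct and follows essentially the same approach as the paper: the ``if'' direction is identical, and for the ``only if'' direction the paper likewise uses the fibre orbits $\delta_\pm$ at $\pc=\pm\sqrt{2k}$, checks they are null-homologous via the Gysin sequence of the circle bundle $S(\Gamma\setminus\BH^2)\to\Gamma\setminus\BH^2$, and concludes via Corollary~\ref{cor:mcduff}. The only cosmetic difference is that the paper presents the pair ($\delta_-$, Liouville measure) as the primary argument and $\delta_+$ as the alternative, whereas you do the reverse; both variants appear in the paper's text.
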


\begin{proof}
Equation (\ref{cosl}) tells us right away that if
$k>1/2$, then $\Sigma_{k}$ is of contact type with contact form $\psi$
since in this case
$$
   \psi(X_H) = 2k+\pc\geq 2k-\sqrt{2k}>0
$$
for every point in $\Sigma_{k}$.
An inspection of (\ref{eq:XHsl2}) reveals that
$\pa=\pb=0$, $\pc=-\sqrt{2k}$, $x=x_0$, $y=y_{0}$, $\theta(t)=\theta_0-\sqrt{2k}t$ give
orbits of $X_{H}$ with energy $k$ which project to closed orbits
in $T^*(\Gamma\setminus G)$. These closed orbits are in fact
null-homologous. To see this note that
$\Gamma\setminus PSL(2,\R)$ can be identified with the unit circle bundle
of the closed Riemann surface $\Gamma\setminus \BH^2$. The closed orbits
of $X_{H}$ under consideration, when projected to $\Gamma\setminus PSL(2,\R)$,
are precisely the unit circles in $S(\Gamma\setminus \BH^2)$.
But these circles are null-homologous. Indeed, it is well known (and it follows from the Gysin sequence of the circle bundle), that the foot-point projection
map $S(\Gamma\setminus\BH^2)\mapsto \Gamma\setminus\BH^2$ induces an isomorphism
$H^{1}(\Gamma\setminus\BH^2,\R)\cong H^{1}(S(\Gamma\setminus\BH^2),\R)$.

Let $\delta$ be one of these distinguished null-homologous closed orbits of $X_{H}$ and let $\tau$ be its period. Using (\ref{cosl}) we see that
for $k\leq 1/2$
\begin{equation}
\int_{\delta}\psi= (2k-\sqrt{2k})\tau\leq 0.\label{negl}
\end{equation}
Now let $\mu$ be the Liouville measure on $\Sigma_k$. Again, using (\ref{cosl})
we obtain
\begin{equation}
\int_{\Sigma_{k}}\psi(X_{H})\,d\mu=\int_{\Sigma_{k}}(2k+\pc)\,d\mu=2k>0.\label{posl}
\end{equation}
By Lemma~\ref{lem:liouville} the Liouville measure is exact as a
current, so (\ref{negl}), (\ref{posl}) and Corollary~\ref{cor:mcduff}
ensure that $\Sigma_{k}$ cannot be of contact type.
Alternatively, instead of the Liouville measure one may also take the distinguished null-homologous closed orbits with $\pa=\pb=0$, $\pc=\sqrt{2k}$, $x=x_0$, $y=y_0$ and $\theta(t)=\theta_0+\sqrt{2k}t$.
The integral of $\psi$ along them has value $(2k+\sqrt{2k})\tau>0$.
\end{proof}

We now prove that in this example we have a gap between $c$ and $c_0$
of size $1/4$.

\begin{lemma} The strict Ma\~n\'e critical value equals $c_0=1/2$, but
the Ma\~n\'e critical value equals $c=1/4$.
\label{lemma:cc_0}
\end{lemma}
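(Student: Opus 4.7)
The plan splits naturally into the two equalities. First I would dispatch $c_0=1/2$, which is the easier half. The upper bound $c_0\leq 1/2$ is immediate from the left-invariant 1-form $\gamma=dx/y+d\theta$ on $M$: it is a primitive of $\sigma$, and since $\{\alpha,\beta,\gamma\}$ is an orthonormal coframe one has $|\gamma|^2\equiv 1$, so $\sup_q 2H(q,\gamma_q)=1$. For the reverse inequality $c_0\geq 1/2$, combine Lemma~\ref{consl} (which says $\Sigma_k$ is of contact type iff $k>1/2$) with Lemma~\ref{lem:exact} (which says every level $k>c_0$ is of contact type): if $c_0<1/2$ there would be some $k$ with $c_0<k\leq 1/2$ which is simultaneously of contact type and not, a contradiction.

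For the upper bound $c\leq 1/4$ I would exploit the fact that on $\widetilde{M}=\widetilde{PSL(2,\R)}\cong\BH^2\times\R$ the coordinate $\theta$ is globally defined, so $d\theta$ is exact and can be added to any primitive to reduce its pointwise norm. The ansatz
\[
   \theta_\ast := dx/y+\tfrac12\,d\theta=\tfrac12(dx/y)+\tfrac12\gamma
\]
is a primitive of $\pi^\ast\sigma$. Using $dx/y=\cos\theta\,\alpha-\sin\theta\,\beta$ in the orthonormal coframe one reads off
\[
   \theta_\ast=\tfrac12\cos\theta\,\alpha-\tfrac12\sin\theta\,\beta+\tfrac12\,\gamma,\qquad |\theta_\ast|^2\equiv \tfrac12,
\]
and hence $\sup_q H(q,(\theta_\ast)_q)=1/4$.

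For the harder lower bound $c\geq 1/4$ I would pass to the Lagrangian picture. With $\widetilde L(q,v)=\tfrac12|v|^2+\theta(v)$ for any primitive $\theta$ of $\pi^\ast\sigma$, minimising the Tonelli action $A_{\widetilde L+k}(\gamma)=\int(\tfrac12|\dot\gamma|^2+k)dt+\int_\gamma\theta$ first over constant-speed reparametrisations and then over total time $T$ yields the standard characterisation
\[
   c=\tfrac12\sup_{\gamma}\Bigl(\tfrac{|\int_\gamma\theta|}{\ell(\gamma)}\Bigr)^2,
\]
where the supremum runs over closed loops $\gamma\subset\widetilde M$ (equivalently, null-homotopic loops in $M$); the ratio is independent of the chosen primitive since $\widetilde M$ is simply connected. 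As test loops I would take $\gamma_R$ to be the lift of a hyperbolic circle of radius $R$ in $\BH^2$, obtained by holding the $\theta$-coordinate fixed. Stokes' theorem gives $|\int_{\gamma_R}(dx/y)|=2\pi(\cosh R-1)$, the hyperbolic area enclosed. For the length in the Sasaki metric I would use the pointwise identity $|\dot\gamma|^2=|\dot\gamma^{\mathrm{base}}|^2_{\BH^2}+(\dot x/y)^2$ together with $|\dot x/y|\leq|\dot\gamma^{\mathrm{base}}|_{\BH^2}$ to obtain
\[
   \ell(\gamma_R)\leq\sqrt{2}\,\ell_{\BH^2}(\gamma_R^{\mathrm{base}})=2\sqrt{2}\,\pi\sinh R.
\]
Dividing, the ratio satisfies $|\int_{\gamma_R}\theta|/\ell(\gamma_R)\geq (\cosh R-1)/(\sqrt{2}\sinh R)\to 1/\sqrt{2}$ as $R\to\infty$, so $\sqrt{2c}\geq 1/\sqrt{2}$ and $c\geq 1/4$.

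The main obstacle will be the lower bound. The upper bound reduces to a one-line linear-algebra computation in $\mathfrak{sl}(2,\R)^\ast$, but the lower bound needs genuine test curves. A naive attempt via contractible closed Hamiltonian orbits at energies just below $1/4$ is unappealing (the dynamics in that range is hard to control directly); the isoperimetric trick above circumvents this by producing long, ``efficient'' loops whose hyperbolic area-to-length ratio tends to $1$, which after the $\sqrt 2$ cost of the lift to $\widetilde M$ yields exactly the pinching constant $1/\sqrt 2$ matching the primitive $\theta_\ast$.
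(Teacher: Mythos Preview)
Your argument is correct and follows essentially the same route as the paper. The treatment of $c_0=1/2$ is identical (the primitive $\gamma$ for the upper bound, Lemma~\ref{lem:exact} combined with Lemma~\ref{consl} for the lower), and for $c\leq 1/4$ you use the same primitive $\delta=dx/y+\tfrac12 d\theta$ as the paper. For $c\geq 1/4$ both you and the paper use large hyperbolic circles with $\theta$ held constant as test loops; the only difference is cosmetic. The paper rescales the $x$-coordinate by $1/\sqrt{2}$ so that the resulting curve $B_r$ has \emph{constant} Sasaki speed $\sqrt{2k}$, which lets one evaluate $A_{L+k}(B_r)$ exactly as $2\pi(\sqrt{2k}\sinh r-(\cosh r-1)/\sqrt{2})$. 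You instead keep the undeformed circle, repackage the Lagrangian criterion as the isoperimetric formula $c=\tfrac12\sup_\gamma(|\int_\gamma\theta|/\ell(\gamma))^2$, and bound the Sasaki length by $\sqrt{2}$ times the hyperbolic length. Both computations land on the same ratio $(\cosh R-1)/(\sqrt{2}\sinh R)\to 1/\sqrt{2}$.
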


\begin{proof} Note that $2H(\gamma)=1$ and thus $c_0\leq 1/2$.
If $c_0<1/2$, then by Lemma~\ref{lem:exact} we would have energy
levels $\Sigma_k$ with $k<1/2$ which are of contact type, which is
impossible by Lemma \ref{consl}. Thus $c_0=1/2$.

Now consider the 1-form
$$\delta:=\frac{dx}{y}+\frac{d\theta}{2}.$$
Clearly $d\delta=d\gamma$ in $S\BH^2$ and $2H(\delta)=1/2$.
(Note that $\delta$ is {\it not} left-invariant.)
Thus $c\leq 1/4$. The fact that $c=1/4$ is most easily seen as follows.
Consider the Lagrangian $L$ in $S\BH^2$ determined by
$\delta$, namely
\[L=\frac{1}{2y^{2}}\left[\dot{x}^2+\dot{y}^2+(\dot{\theta}y+\dot{x})^{2}\right]
+\frac{\dot{x}}{y}+\frac{\dot{\theta}}{2}.\]
Let $C_r$ be the curve in $\BH^2$ given by the boundary of a geodesic disk
$D_r$ of radius $r$. We parametrize $C_r$ so that it has hyperbolic speed
equal to $\sqrt{2k}$ and it is oriented clockwise. Let
$\ell_r=2\pi\sinh r$ be its hyperbolic length.
If we let $(x(t),y(t))$ be the chosen parametrization for $C_r$,
let $B_r$ be the closed curve given by $[0,\ell_r/\sqrt{2k}]\ni t\mapsto (x(t)/\sqrt{2},y(t))$.
By taking $\theta$ constant, we may see $B_r$ as a closed contractible curve
in $S\BH^2$. We will compute the action $A_{L+k}(B_{r})$ and show that
for any $k<1/4$, there is $r$ such that $A_{L+k}(B_{r})<0$. This implies
$c=1/4$ (cf. Section \ref{sec:mane}). Indeed we have
\begin{align*}
A_{L+k}(B_{r})&=\sqrt{2k}\ell_r+\frac{1}{\sqrt{2}}\int_{0}^{\ell_r/\sqrt{2k}}\frac{\dot{x}}{y}\,dt\\
&=\sqrt{2k}\ell_r-\frac{1}{\sqrt{2}}\int_{D_r}\frac{dx\wedge dy}{y^2}\\
&=2\pi\left(\sqrt{2k}\sinh r-\frac{\cosh r-1}{\sqrt{2}}\right).
\end{align*}
The last expression shows that if $k<1/4$, then
$A_{L+k}(B_{r})\to -\infty$ as $r\to\infty$.

\end{proof}

\begin{lemma}The hypersurface $\Sigma_{k}$ is stable for $k<1/4$ and $k\in (1/4,1/2)$.
\end{lemma}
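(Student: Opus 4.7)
The plan is to imitate the Heisenberg construction, seeking a stabilizing 1-form of the shape $\lambda = f(\pc)\psi + g(\pc)\varphi$, where
$$\varphi := \frac{\pa\, d\pb - \pb\, d\pa}{\pa^2+\pb^2}$$
is the closed 1-form on $\Sigma_k$ defined away from $\pc = \pm\sqrt{2k}$, and where $f,g \in C^\infty([-\sqrt{2k},\sqrt{2k}])$ with $g$ identically zero near the endpoints so that $g\varphi$ extends smoothly to all of $\Sigma_k$. A direct calculation from (\ref{eq:XHsl2}) gives $\varphi(X_H) = -(2\pc+1)$, while (\ref{cosl}) gives $\psi(X_H) = 2k+\pc$ on $\Sigma_k$. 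Since $X_H(\pc)=0$ and $i_{X_H}\omega|_{\Sigma_k}=0$, the two stability conditions $\lambda(X_H)>0$ and $i_{X_H}d\lambda=0$ collapse to
$$(2k+\pc)f - (2\pc+1)g > 0 \qquad \text{and} \qquad (2k+\pc)f' - (2\pc+1)g' = 0.$$

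The ODE will be solved without ever dividing by $2\pc+1$ (which would be hazardous since $2\pc+1$ vanishes at $\pc = -1/2$, and this point lies in the interval as soon as $k > 1/8$), using instead the ansatz $f'(\pc) = (2\pc+1)q(\pc)$ and $g'(\pc) = (2k+\pc)q(\pc)$ for an auxiliary smooth function $q$ with compact support $[\pc_1,\pc_2] \subset (-\sqrt{2k},\sqrt{2k})$ containing $-2k$ in its interior, integrated from $-\sqrt{2k}$ with $g(-\sqrt{2k})=0$ and free constant $f_0 := f(-\sqrt{2k})$. The requirement that $g$ vanish beyond $\pc_2$ becomes the single integral condition $\int_{\pc_1}^{\pc_2}(2k+t)q(t)\,dt=0$. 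Writing $P := (2k+\pc)f - (2\pc+1)g$, a short computation yields the useful identities $P' = f-2g$, $P'' = (1-4k)q$, and $f_1 - f_0 = (1-4k)\int q\,dt$, where $f_1$ denotes the constant value of $f$ to the right of $[\pc_1,\pc_2]$.

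The positivity analysis then splits along the two cases. For $k \in (0,1/4)$ I would take $q \geq 0$ (so $P$ is convex, since $1-4k>0$) and for $k \in (1/4,1/2)$ I would take $q \leq 0$ (now $P$ is convex because $1-4k<0$); in both cases the identity above yields $f_1 > f_0$, so one can arrange $f_0 < 0 < f_1$ by suitable choice of $f_0$. On the two linear regions outside $[\pc_1,\pc_2]$, $g \equiv 0$ and $P = (2k+\pc)f_i$ is strictly positive since $\pc_1 < -2k < \pc_2$ matches the signs of the two factors. Inside $[\pc_1,\pc_2]$, convexity of $P$ reduces positivity to checking boundary points (same sign analysis) and any interior critical point $\pc^*$, where $P' = f-2g = 0$ forces $P(\pc^*) = (4k-1)g(\pc^*)$. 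Using $g' = (2k+\pc)q$ together with $g(\pc_1) = g(\pc_2) = 0$, one sees that $g$ is monotone-decreasing-then-increasing when $q \geq 0$ and monotone-increasing-then-decreasing when $q \leq 0$; consequently $g \leq 0$ in the first case and $g \geq 0$ in the second (with strict sign in the interior provided $q$ has positive mass on both sides of $-2k$), yielding $(4k-1)g(\pc^*) > 0$ in both cases.

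The main obstacle is precisely the choice of parametrization: the naive Heisenberg-style ansatz $g' = \tfrac{2k+\pc}{2\pc+1}\,r$ becomes singular at $\pc=-1/2$ once that point enters the interval (i.e.\ for $k \geq 1/8$), whereas the $q$-parametrization above is globally smooth and never inverts the problematic factor. It is also worth noting that the argument necessarily breaks down at $k=1/4$: there the two linear factors $2k+\pc$ and $2\pc+1$ share a common zero at $\pc = -2k = -1/2$, so $P$ is forced to vanish at that point regardless of the choice of $f$ and $g$, consistently with the exclusion of $k=1/4$ from the statement.
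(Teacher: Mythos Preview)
Your proof is correct and follows the same overall strategy as the paper: the ansatz $\lambda = f(\pc)\psi + g(\pc)\varphi$, reduction of the two stability conditions to a first-order ODE and a positivity inequality for $(f,g)$, and construction of an explicit solution. The execution, however, differs in two points worth noting. First, the paper keeps the Heisenberg-style parametrization $f'=r$, $g'=\pm\tfrac{2k+\pc}{1+2\pc}\,r$ and handles the potential singularity at $\pc=-1/2$ by localizing the support of $r$ in a small neighbourhood of $-2k$ that excludes $-1/2$ (which is possible in both regimes since $-2k\neq-1/2$). Your reparametrization $f'=(2\pc+1)q$, $g'=(2k+\pc)q$ bypasses the division entirely and imposes no constraint on the support of $q$ beyond containing $-2k$; this is cleaner and makes the exclusion of $k=1/4$ transparent. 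Second, the paper verifies positivity by a direct sign check on $f$ and $g$ separately (using that the support is small and on one side of $-1/2$), whereas your argument via $P''=(1-4k)q$, convexity, and the identity $P(\pc^*)=(4k-1)g(\pc^*)$ at a critical point is a more structural replacement for that case analysis. Both arguments are complete; yours is slightly more flexible, the paper's slightly more explicit. (A cosmetic remark: your $\varphi$ carries the opposite sign from the paper's, which only swaps the sign of $g$ throughout.)
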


\begin{proof} Consider the following form on $\Sigma_k$
\[\varphi:=\frac{-\pa d\pb+\pb d\pa}{\pa^2+\pb^2}.\]
This form is smooth away from $\pc=\pm\sqrt{2k}$ and $d\varphi=0$.
As in the case of the Heisenberg group, we look for stabilizing 1-forms
$\lambda$ of the form
\[\lambda=f(\pc)\psi+g(\pc)\varphi,\]
where $f$ and $g$ are smooth functions in $[-\sqrt{2k},\sqrt{2k}]$
and $g$ vanishes in neighbourhoods of the end points. Using
$\psi(X_H)=2k+\pc$ and $\varphi(X_H)=1+2\pc$, the condition
$\lambda(X_{H})>0$ is equivalent to
\begin{equation}\label{con1sl}
(2k+\pc)f(\pc)+(1+2\pc)g(\pc)>0,
\end{equation}
and the condition $i_{X_{H}}d\lambda=0$ is equivalent to
\begin{equation}\label{con2sl}
(2k+\pc)f'(\pc)+(1+2\pc)g'(\pc)=0.
\end{equation}
To see that we can always choose $f$ and $g$ satisfying (\ref{con1sl})
and (\ref{con2sl}) provided that $k<1/4$ or $k\in (1/4,1/2)$ we proceed as in the case of the
Heisenberg group, but we will need some minor adjustments.

{\em Case $k<1/4$.} Here $-\sqrt{2k}<-1/2<-2k$.

Take a non-negative
smooth function
$r(\pc)$ which vanishes outside a small neighbourhood of $-2k$ which
{\it excludes} $-1/2$ such that $r(-2k)>0$ and
\[\int_{-\sqrt{2k}}^{\sqrt{2k}}\frac{2k+\pc}{1+2\pc}r(\pc)\,d\pc=0.\]
Note that this is always possible. Indeed as $r$ we could take
$(1+2\pc)q(2k+\pc)$ where $q(t)$ is a smooth non-negative even bump function
with small support containing the origin.
Now let
\[f(\pc):=\int_{-2k}^{\pc}r(t)\,dt\]
and
\[g(\pc):=-\int_{-\sqrt{2k}}^{\pc}\frac{2k+t}{1+2t}r(t)\,dt.\]
Clearly (\ref{con2sl}) holds.
With these choices, $f$ vanishes only at $-2k$, is negative at the left
of $-2k$ and positive at the right of $-2k$. Also $g\geq 0$, it vanishes
outside a small neighbourhood of $-2k$ and $g(-2k)>0$, thus (\ref{con1sl}) also holds.

{\em Case $k\in (1/4,1/2)$.} Here $-\sqrt{2k}<-2k<-1/2$. As in the previous case, take a
non-negative smooth function
$r(\pc)$ which vanishes outside a small neighbourhood of $-2k$ which
{\it excludes} $-1/2$ such that $r(-2k)>0$ and
\[\int_{-\sqrt{2k}}^{\sqrt{2k}}\frac{2k+\pc}{1+2\pc}r(\pc)\,d\pc=0.\]
Also let
\[f(\pc):=\int_{-2k}^{\pc}r(t)\,dt\]
and
\[g(\pc):=-\int_{-\sqrt{2k}}^{\pc}\frac{2k+t}{1+2t}r(t)\,dt.\]
Clearly (\ref{con2sl}) holds.
With these choices, $f$ vanishes only at $-2k$, is negative at the left
of $-2k$ and positive at the right of $-2k$. But now $g\leq 0$, it vanishes
outside a small neighbourhood of $-2k$ and $g(-2k)<0$, thus (\ref{con1sl}) also holds
since now $1+2\pc$ is negative in the small neighbourhood of $-2k$.

\end{proof}

\begin{lemma} The hypersurface $\Sigma_{k}$ is displaceable for $k<1/4$.
\label{dispsl}
\end{lemma}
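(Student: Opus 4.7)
The plan is to mimic the Heisenberg argument in Lemma~\ref{disp} using the Hamiltonian $f=p_\alpha$, but now the Poisson brackets are not as favorable because $p_\gamma$ is no longer a Casimir. From the brackets $\{p_\alpha,p_\beta\}=-p_\gamma-1$ and $\{p_\alpha,p_\gamma\}=-p_\beta$, the Hamiltonian flow of $f$ on $\mathfrak{g}^*$ is
\[
 \dot p_\alpha=0,\qquad \dot p_\beta=-(1+p_\gamma),\qquad \dot p_\gamma=-p_\beta,
\]
so $p_\alpha$ is preserved but $p_\beta,p_\gamma$ evolve in a linear system with a hyperbolic equilibrium at $(p_\beta,p_\gamma)=(0,-1)$. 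A direct computation shows the quantity
\[
 Q:=(1+p_\gamma)^2-p_\beta^2
\]
is a first integral: in the coordinates $u=(1+p_\gamma)+p_\beta$, $v=(1+p_\gamma)-p_\beta$ one finds $\dot u=-u$, $\dot v=v$, hence $uv=Q$.

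The key analytic step is to verify that $Q>0$ everywhere on $\Sigma_k$ when $k<1/4$. On $\Sigma_k$ we have $p_\beta^2\leq 2k-p_\gamma^2$, so
\[
 Q=(1+p_\gamma)^2-p_\beta^2\ \geq\ 2p_\gamma^2+2p_\gamma+(1-2k),
\]
and the discriminant of the right hand side (as a quadratic in $p_\gamma$) is $16k-4$, which is strictly negative precisely when $k<1/4$. Thus the quadratic is strictly positive and $Q>0$ on $\Sigma_k$. Since $|p_\gamma|\leq\sqrt{2k}<1$, also $1+p_\gamma>0$, so $u_0>0$ and $v_0>0$ at every point of $\Sigma_k$; compactness of $\Sigma_k$ gives a uniform lower bound $v_0\geq v_{\min}>0$.

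Solving the ODE gives $u(t)=u_0e^{-t}$, $v(t)=v_0e^t$, so $p_\gamma(t)=\tfrac12(u(t)+v(t))-1\geq \tfrac12 v_{\min}e^t-1$, which exceeds $\sqrt{2k}$ for all sufficiently large $t=T$ and all initial conditions on $\Sigma_k$. In other words, the time-$T$ flow of $f=p_\alpha$ sends every point of $\Sigma_k$ into the open region $\{p_\gamma>\sqrt{2k}\}$, which is disjoint from $\Sigma_k$. Finally, since the trajectories emanating from the compact set $\Sigma_k$ stay in a bounded region of $(\Gamma\backslash G)\times\mathfrak{g}^*$ on the time interval $[0,T]$, one can multiply $f$ by a compactly supported cut-off function that equals $1$ on this bounded region and then rescale time, producing a compactly supported Hamiltonian whose time-one map displaces $\Sigma_k$. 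The main subtlety, which is the reason the numerical bound $1/4$ appears rather than $1/2$ (as one might naively expect from the Heisenberg analogy), is the positivity of $Q$ on $\Sigma_k$; no such issue is present in Lemma~\ref{disp} because there $p_\gamma$ is a flow-invariant.
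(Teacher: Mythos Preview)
Your proof is correct and follows essentially the same strategy as the paper: use the Hamiltonian flow of a linear function on $\mathfrak{g}^*$ (the paper takes $f=p_\beta$ rather than your $f=p_\alpha$, but by symmetry this yields the same hyperbolic linear system in the remaining two momentum coordinates) and show that for $k<1/4$ the coefficient of the growing exponential is uniformly positive on $\Sigma_k$. The paper's positivity check is slightly more direct---it simply notes $p_\alpha(0)+p_\gamma(0)+1\ge 1-2\sqrt{k}>0$ by Cauchy--Schwarz, rather than going through the invariant $Q$ and its discriminant---but the content is the same.
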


\begin{proof}
 Consider the Hamiltonian in $T^{*}(\Gamma\setminus G)$
given by $f=\pb$. Clearly $\pb$ does not change along
the Hamiltonian flow of $f$ and $\pa$ and $\pc$ change according to
\begin{align*}
\dot{\pa}&=\{\pb,\pa\}=\pc+1,\\
\dot{\pc}&=\{\pb,\pc\}=\pa.
\end{align*}
These equations are easy to solve and one finds that
\[2\pa(t)=(\pa(0)+\pc(0)+1)e^{t}+(\pa(0)-\pc(0)-1)e^{-t}.\]
But if $k<1/4$ and $[\pa(0)]^2+[\pb(0)]^2+[\pc(0)]^2=2k$,
then
$$\pa(0)+\pc(0)+1\geq 1-2\sqrt{k}>0,$$
and the flow of $f$ will displace $\Sigma_k$ as desired.

\end{proof}

{\em Closed contractible orbits and entropy.} We now will study
dynamical properties of the magnetic flow. For this purpose
it is better to revert to $PSL(2,\R)$ and use the group
structure.

Equation (\ref{eq:XHsl2}) gives the Euler equations in $\mathfrak{sl}(2,\R)^*$.
These are easily solved and after a time shift one obtains:
\[\pa=A\cos\mu t,\;\;\;\;\;\pb=A\sin\mu t,\;\;\;\;\;\pc=C,\]
where $\mu:=-1-2C$, $A^2+C^2=2k$ and $k$ is the value of the energy.
This curve $b(t)$ in $\mathfrak{sl}(2,\R)^*$ gives rise to a curve $a(t)$ in
$\mathfrak{sl}(2,\R)$ via the Legendre transform of $H$:
\begin{align*}
a(t)&=A\cos\mu t X+A\sin\mu t Y+C\,V\\
&=A\cos\mu t\left(\begin{array}{cc}
1/2&0\\
0&-1/2\\
\end{array}\right)+A\sin\mu t\left(\begin{array}{cc}
0&-1/2\\
-1/2&0\\
\end{array}\right)+C\left(\begin{array}{cc}
0&1/2\\
-1/2&0\\
\end{array}\right)\\
&=\frac{A}{2}\left(\begin{array}{cc}
\cos\mu t&-\sin\mu t\\
-\sin\mu t&-\cos\mu t\\
\end{array}\right)+\frac{C}{2}\left(\begin{array}{cc}
0&1\\
-1&0\\
\end{array}\right).
\end{align*}
Let us consider the loop of matrices in $SO(2)$ given by
\[Q(t):=\left(\begin{array}{cc}
\cos(\mu t/2)&\sin(\mu t/2)\\
-\sin(\mu t/2)&\cos(\mu t/2)\\
\end{array}\right).\]
Clearly $Q(t)=e^{tR}$ where
\[R=\left(\begin{array}{cc}
0&\mu/2\\
-\mu/2&0\\
\end{array}\right).\]
Then we may write $a(t)$ as
\begin{equation}
a(t)=Q(t)\left(\begin{array}{cc}
A/2&0\\
0&-A/2\\
\end{array}\right)Q^{*}(t)+\frac{C}{2}\left(\begin{array}{cc}
0&1\\
-1&0\\
\end{array}\right),
\label{eqna}
\end{equation}
where $Q^*(t)$ denotes the transpose of $Q(t)$. Since our system is left-invariant,
 in order to find the solution of $X_{H}$ through a point $(q,p)\in PSL(2,\R)\times \mathfrak{sl}(2,\R)^*$ with $H(p)=k$ and $p=(A,0,C)$ we may proceed as follows (cf. \cite[Appendix 2]{Ar} or \cite[Section 4.4]{AM}). We compute $a(t)$ and then we
solve the matrix differential equation in $PSL(2,\R)$ given by
\[\dot{g}(t)=g(t)a(t),\;\;\;\;g(0)=I.\]
The desired solution curve is $(q\,g(t),b(t))$. To solve the matrix
differential equation we make the change of variables
\[h(t):=g(t)Q(t).\]
A calculation using (\ref{eqna}) shows that $h$ satisfies
\[\dot{h}(t)=h(t)\,d,\]
where $d$ is the constant matrix
\[d:=\frac{1}{2}\left(\begin{array}{cc}
A&C+\mu\\
-(C+\mu)&-A\\
\end{array}\right)=\frac{1}{2}\left(\begin{array}{cc}
A&-1-C\\
1+C&-A\\
\end{array}\right).\]
Therefore we have obtained the following explicit formula
for our magnetic flow $\phi_t$:
\begin{equation}
\phi_{t}(q,p)=(q\,e^{td}Q^*(t),b(t)).
\label{eqf}
\end{equation}
$\,$From this equation we shall derive various dynamical consequences.
We will say that the matrix $d\in \mathfrak{sl}(2,\R)$ is elliptic
if $(1+C)^2>A^2$, parabolic if
$(1+C)^2=A^2$ and hyperbolic if $(1+C)^2<A^2$.

Recall from Section~\ref{sec:tame} the definition of tameness and
the $\om$-energy
$$
   \Om\co X(\Sigma_k)\to\R,\qquad v\mapsto\int\bar v^*\om.
$$

\begin{lemma} The hypersurface $\Sigma_k$ has closed contractible
orbits if and only if $k<1/4$.
If $k<1/4$ and $v\in X(\Sigma_{k})$
then $\Omega(v)$ is a positive integer multiple of $\pi(1-\sqrt{1-4k})>0$
and $\Sigma_{k}$ is tame.
\label{ccosl}
\end{lemma}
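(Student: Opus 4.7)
The plan is to exploit the explicit integrated flow $\phi_t(q,p)=(q\,e^{td}Q^*(t),b(t))$ of~(\ref{eqf}), which reduces the question of closed contractible orbits to an essentially algebraic one. An orbit is parametrised by the conserved Casimir value $C=\pc$ and by $A\geq 0$ with $A^2+C^2=2k$, and closure of the $\mathfrak{sl}(2,\R)^*$-component $b(t)=(A\cos\mu t,A\sin\mu t,C)$ with $\mu=-(1+2C)$ forces either $A=0$ or $T=2\pi n/|\mu|$ for a positive integer $n$. In the degenerate case $A=0$ the trajectory wraps a circle in a fiber of the bundle $\Gamma\setminus PSL(2,\R)\to\Gamma\setminus\mathbb H^2$, which is never null-homotopic because this bundle has nonzero Euler class $2-2g$; the parallel subcase $\mu=0$ (i.e.\ $C=-1/2$) is ruled out by the fact that $\Gamma$ is torsion-free and contains only hyperbolic elements. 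I may therefore restrict to the nondegenerate case, where contractibility in $V=T^*(\Gamma\setminus PSL(2,\R))$ amounts to the identity $e^{Td}Q^*(T)=I$ in $PSL(2,\R)$ together with triviality of the resulting loop in $\pi_1(PSL(2,\R))\cong\mathbb Z$.

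Of the three types of $d\in\mathfrak{sl}(2,\R)$ classified right before the statement, only the elliptic type $(1+C)^2>A^2$ allows $e^{Td}$ to return to $\pm I$, since in the hyperbolic and nonzero parabolic cases $e^{td}$ sits in a non-compact one-parameter subgroup. Writing $\lambda=\sqrt{(1+C)^2-A^2}$, the condition $e^{Td}=\pm I$ becomes $T=2\pi k'/\lambda$ for some positive integer $k'$, and combined with $T=2\pi n/|\mu|$ this yields the rational resonance $k'|\mu|=n\lambda$. A winding computation in $\pi_1(PSL(2,\R))\cong\mathbb Z$—decomposing $[e^{td}Q^*(t)]$ pointwise as the sum of the classes of the two elliptic one-parameter subgroups generated by $d$ and by $R$, whose directions are determined respectively by the signs of $1+C$ and of $-\mu$—would then force $k'=n$ with cancelling sign contributions, i.e.\ the primitive resonance $\lambda=|\mu|$. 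On $\Sigma_k$ (where $A^2=2k-C^2$) this reduces to the quadratic
\[
C^2+C+k=0,
\]
with real roots precisely when $k<1/4$; the root compatible with the sign condition is $C_\ast=\tfrac12(\sqrt{1-4k}-1)$, yielding a primitive orbit of period $T_\ast=2\pi/\sqrt{1-4k}$ whose $n$-fold iterates exhaust $X(\Sigma_k)$. For $k\geq 1/4$ the quadratic has no real solution, and since the degenerate cases above yield no contractible orbits either, the hypersurface carries none.

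Using $\om=d\psi$ and the identity $\psi(X_H)=2k+\pc$ of~(\ref{cosl}), the action along the primitive orbit is
\[
\Omega(v)=\int_0^{T_\ast}(2k+C_\ast)\,dt=\pi(1-\sqrt{1-4k})>0,
\]
and the $n$-fold iterates contribute exactly $n\pi(1-\sqrt{1-4k})$. Tameness is then immediate: for any stabilizing $1$-form produced by the preceding stability lemma, $T_\lambda(v)$ is proportional to the period $T=nT_\ast$, and hence to $|\Omega(v)|$, with a constant depending only on $k$. I expect the main obstacle to be the $\pi_1$-winding argument used to single out the contractible root, since unlike the Heisenberg case $PSL(2,\R)$ is not simply connected: $\pi_1(\Gamma\setminus PSL(2,\R))$ is a nontrivial central extension of $\Gamma$ by $\mathbb Z$ coming from the nonzero Euler class of the unit tangent bundle, so one must lift $t\mapsto e^{td}Q^*(t)$ to the universal cover $\widetilde{PSL(2,\R)}$ and carefully track its angular (Iwasawa) component to distinguish genuinely contractible orbits from those that close up in $PSL(2,\R)$ while winding around the fiber of $S\mathbb H^2\to\mathbb H^2$.
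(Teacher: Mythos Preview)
Your proposal is correct and follows essentially the same route as the paper: both arguments rest on the integrated flow formula~(\ref{eqf}), the observation that closure forces $d$ elliptic with a resonance between the period of $d$ and that of $Q(t)$, and a winding computation in $\pi_1(PSL(2,\R))\cong\mathbb Z$ to single out the contractible root $C_\ast=\tfrac12(\sqrt{1-4k}-1)$; the $\Omega$--computation via $\psi(X_H)=2k+\pc$ and the tameness conclusion are likewise identical. The only organisational difference is that you isolate the degenerate cases $A=0$ and $\mu=0$ up front (using the nonzero Euler class of the fibre and the absence of non-hyperbolic elements in a cocompact $\Gamma$), whereas the paper treats $\mu=0$ at the end and absorbs $A=0$ into the main discussion; the paper is in turn more explicit about the key step you flag, writing the homotopy class of $t\mapsto e^{td}Q^*(t)$ as $m[d]-l[R]$ and reading off directly that contractibility forces $m=l$ together with $[d]=[R]$, which is exactly the ``sign condition'' you invoke to discard the second root of $C^2+C+k=0$.
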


\begin{proof} Before embarking into the proof we make some preliminary remarks
about $PSL(2,\R)$. A matrix $d\in \mathfrak{sl}(2,\R)$ satisfies
$e^{d}=\pm I$ if and only if $\det d>0$ (i.e. $d$ is elliptic)
and $\sqrt{\det d}/\pi$ is a positive integer. Let us call
$\tau:=\pi/\sqrt{\det d}$ the {\it period} of $d$. It has the property
of being the smallest positive real number $t$ for which
$e^{td}=\pm I$ (and in fact equal to $-I$).
The fundamental group of $PSL(2,\R)=S\mathbb H^2$ is of course $\Z$
and is generated by the loop $[0,\tau]\ni t\mapsto e^{td}$ of any elliptic
element $d$. Any two such loops determined by elements $d_1$ and $d_2$
will be homotopic iff $d_1$ and $d_2$ belong to the same side
(i.e. connected component) of the solid cone $\det d>0$ in the Lie algebra
 $\mathfrak{sl}(2,\R)$. If they belong to opposite sides of the cone,
 they will have opposite homotopy classes.

Suppose now that we look for a closed contractible orbit
 on $\Sigma_k$
with $\mu\neq 0$.
For $b(t)$ to be periodic it must have period $T:=2\pi\,l/|\mu|$ where
 $l$ is a positive integer.
Using (\ref{eqf}) we see that
\[\phi_{T}(q,p)=(q\,e^{Td},p)=(q,p)\]
and this happens if and only if the matrix $d$ is elliptic
and $e^{Td}=\pm I$ (recall that we are working in
$PSL(2,\R)$). The last equality forces $T=m\tau$, where $m$ is a positive
integer and $\tau$ is the period of $d$ as defined above.
In order for this closed orbit to be contractible in $\Sigma_k$
we need the loop
\[[0,T]\ni t\mapsto e^{td}e^{-tR}\]
to be contractible in $PSL(2,\R)$.
But the homotopy class of this loop is $m[d]-l[R]$, where
$[d]$ and $[R]$ denote the homotopy classes associated to
$d$ and $R$ respectively. Since $m$ and $l$ are positive
and $[d]=\pm [R]$, the loop will be contractible iff
$m=l$ and $[d]=[R]$ (i.e. $d$ and $R$ must belong to the same
side of the cone $\det d>0$).
Hence $2 |\mu|=1/\sqrt{\det d}$ which translates into
$2C^2+2C=-2k$. Since $d$ is elliptic we must have
$(1+C)^2>A^2=2k-C^2$ which implies $k<1/4$ as desired.

The quadratic equation $2C^2+2C=-2k$ gives the following values
for $C$ and $\mu$:
\[2C=-1\pm\sqrt{1-4k},\;\;\;\;\;\mu=\mp\sqrt{1-4k}.\]
However, the values $2C=-1-\sqrt{1-4k}$, $\mu=\sqrt{1-4k}$ give rise
to elements $d$ and $R$ such that $[d]=-[R]$. Hence the unique
values of $C$ and $\mu$ for which we obtain closed contractible orbits
are:
\[2C=-1+\sqrt{1-4k},\;\;\;\;\;\mu=-\sqrt{1-4k}.\]


Finally, note that if we are at a $p$ with $\mu=0$, then the flow
is just $\phi_{t}(q,p)=(qe^{td},p)$ and even though the orbits are
all closed, they are clearly {\it not} contractible.

Since $d\psi=\omega$, to compute $\Omega(v)$ for a contractible closed
orbit $v$ on $\Sigma_k$ we just need to integrate
$\psi$ along one of the closed orbits of $\phi_t$ described above.
Since $\psi(X_{H})=2k+C$ we deduce
\[\Omega(v)=T(2k+C)=\pi l(1-\sqrt{1-4k})>0.\]
This also shows that $\Sigma_k$ is tame.
\end{proof}

\begin{lemma}The topological entropy of $\phi_t$ on $\Sigma_k$
is zero if and only if $k\leq 1/4$.
\end{lemma}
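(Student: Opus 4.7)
The plan is to exploit the explicit formula $\phi_t(q,p)=(q\cdot e^{td}Q^*(t),b(t))$ from the preceding lemma, together with the first integral $\pc$ (immediate from \eqref{eq:XHsl2}), to reduce the entropy computation to known facts about right-translation flows on $\Gamma\backslash PSL(2,\R)$. Since $\pc$ is conserved, $\Sigma_k$ decomposes into the compact flow-invariant leaves $L_C := \Sigma_k\cap\{\pc=C\}\cong\Gamma\backslash PSL(2,\R)\times S^1$ for $C\in[-\sqrt{2k},\sqrt{2k}]$. A direct computation gives
\[\det d \;=\; \frac{(1+C)^2-A^2}{4} \;=\; \frac{(C+1/2)^2+1/4-k}{2}\]
on the sphere $A^2+C^2=2k$, so $d$ is elliptic, parabolic, or hyperbolic according as $k$ is less than, equal to, or greater than $(C+1/2)^2+1/4$. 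In particular, for $k\leq 1/4$ the matrix $d$ is elliptic for every $C$ (parabolic only at the isolated critical leaf $k=1/4$, $C=-1/2$), while for $k>1/4$ the nonempty open interval $C\in(-1/2-\sqrt{k-1/4},\,-1/2+\sqrt{k-1/4})$ yields hyperbolic $d$.

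For $k>1/4$, pick $C$ with $d$ hyperbolic. Then $\{e^{td}\}\subset PSL(2,\R)$ is a hyperbolic one-parameter subgroup, conjugate (after the linear time change $t\mapsto t/(2\sqrt{|\det d|})$) to the diagonal geodesic subgroup, so its right-action on $\Gamma\backslash PSL(2,\R)$ is smoothly conjugate to a time-change of the geodesic flow of the compact hyperbolic surface $\Gamma\backslash\mathbb{H}^2$, an Anosov flow with positive topological entropy. The remaining factors coming from $Q^*(t)$ (a compact rotation in $PSL(2,\R)$) and from the rotation in the $(\pa,\pb)$-circle with angular velocity $\mu=-1-2C$ are isometric and contribute zero entropy, so the standard skew-product entropy lower bound gives $h_{\mathrm{top}}(\phi_t|_{L_C})\geq h_{\mathrm{top}}(\text{geodesic flow on }\Gamma\backslash\mathbb{H}^2)>0$, hence $h_{\mathrm{top}}(\phi_t|_{\Sigma_k})>0$ by monotonicity under restriction to closed invariant subsets.

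For $k\leq 1/4$, I would argue via the variational principle. Conservation of $\pc$ forces every ergodic $\phi_t$-invariant probability measure on $\Sigma_k$ to be supported on some $L_C$, so it suffices to show $h_{\mathrm{top}}(\phi_t|_{L_C})=0$ for each $C$. On $L_C$ the flow commutes with the Hamiltonian $S^1$-action generated by $G:=\tfrac12(\pa^2+\pb^2)$ (an easy check using the Poisson brackets on $\mathfrak{sl}(2,\R)^*$ yields $\{H,G\}=0$), and the induced flow on the quotient $L_C/S^1$ is, up to a smooth time-change, the right-action of the non-hyperbolic one-parameter subgroup $\{e^{td}\}$ on $\Gamma\backslash PSL(2,\R)$. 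Since this subgroup is either compact (elliptic case, isometric action of zero entropy) or unipotent (parabolic case on the unique exceptional leaf, the classical zero-entropy horocycle flow), and the $S^1$-extension is compact and isometric, Abramov's formula for compact group extensions yields $h_{\mathrm{top}}(\phi_t|_{L_C})=0$, hence $h_{\mathrm{top}}(\phi_t|_{\Sigma_k})=0$.

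The main technical obstacle is the last step: rigorously identifying $\phi_t|_{L_C}$ as an algebraic extension of a non-hyperbolic right-action on $\Gamma\backslash PSL(2,\R)$. The formula $\phi_t(q,p)=(q\cdot e^{td}Q^*(t),b(t))$ is a genuine skew-product whose ``fiber'' term $e^{td}Q^*(t)$ is not literally a one-parameter subgroup (since $d$ and $R$ do not commute), and its dependence on the initial phase of $(\pa,\pb)$ requires care. I would handle this by using the commuting $G$-flow to construct a smooth conjugacy of $\phi_t|_{L_C}$ with a straight extension of a fixed right-multiplication flow on $\Gamma\backslash PSL(2,\R)$, allowing a direct application of standard entropy estimates for algebraic flows.
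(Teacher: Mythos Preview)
Your strategy—reducing to algebraic right-translation flows on $\Gamma\backslash PSL(2,\R)$ via the conserved quantity $\pc$ and a commuting circle action—is sound and different from the paper's. The paper argues directly through Lyapunov exponents: from $\phi_t(q,p)=(qe^{td}Q^*(t),b(t))$ the tangent cocycle in a left-invariant frame is, up to the bounded factors coming from $Q^*(t)$ and $b(t)$, given by $\mathrm{Ad}_{e^{-td}}$; hence all Lyapunov exponents vanish precisely when every $d$ is elliptic or parabolic (i.e.\ $2C^2+2C+1\ge 2k$ for all $|C|\le\sqrt{2k}$, equivalently $k\le 1/4$), and Ruelle's inequality together with Pesin's formula gives both directions in a few lines. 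This bypasses the skew-product difficulties you flag in your final paragraph.

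Your reduction argument contains one genuine slip that you should fix. The Hamiltonian $G=\tfrac12(\pa^2+\pb^2)$ does \emph{not} generate an $S^1$-action on $L_C$: since $G=H-\tfrac12\pc^2$, on $L_C$ one has $X_G=X_H-CX_{\pc}$, and its $q$-component evolves as $q\mapsto qe^{td}e^{(1+C)tV}$. Because $[d,V]=\tfrac{A}{2}\bigl(\begin{smallmatrix}0&1\\1&0\end{smallmatrix}\bigr)\neq 0$ for $A>0$, this product is not a one-parameter subgroup and is generically non-periodic. The correct commuting $S^1$-action is the one generated by $\pc$, namely the $V^*$-action $(q,p)\mapsto(qe^{sV},\mathrm{Ad}^*_{e^{sV}}p)$ of \eqref{formV}; the reduced flow on $L_C/S^1\cong\Gamma\backslash PSL(2,\R)$ is then exactly right multiplication by $e^{td}$ (this is spelled out later in the proof of Lemma~\ref{lem:unstable}). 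With that correction your $k\le 1/4$ argument goes through via Bowen's theorem on isometric extensions, and your $k>1/4$ argument is most cleanly completed by observing that for $C$ with $d$ hyperbolic and $\mu\neq 0$ the time-$T$ map with $T=2\pi/|\mu|$ preserves each slice $\{p\}\times\Gamma\backslash PSL(2,\R)$ and acts there as right multiplication by a conjugate of the hyperbolic element $e^{Td}$.
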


\begin{proof} Recall that by the variational principle for topological entropy
and Ruelle's inequality, the topological entropy vanishes if
 all the Lyapunov exponents are zero. Recall also that by Pesin's formula, the topological entropy is at least the integral of the sum of the positive Lyapunov exponents with respect to the Liouville measure \cite{KH}.
Inspection of (\ref{eqf}) shows that all the Lyapunov exponents are zero if
if all possible $d's$ are elliptic or parabolic. It also shows that as soon
as there are $d's$ which are hyperbolic, there will be a positive measure
set (with respect to the Liouville measure) with positive Lyapunov exponents.
Hence the topological entropy of the flow is zero if and only
if all possible $d's$ are elliptic or parabolic.
This will happen
if and only if $2C^2+2C+1\geq 2k$ for all $C\in [-\sqrt{2k},\sqrt{2k}]$.
It is easy to see that this is equivalent to saying that $k\leq 1/4$.
\end{proof}

\begin{lemma}\label{lem:unstable}
The hypersurfaces $\Sigma_{1/2}$ and $\Sigma_{1/4}$ are not stable.
\end{lemma}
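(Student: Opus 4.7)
The proof splits naturally according to the two critical values, and the two levels behave quite differently.

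The case $\Sigma_{1/4}$ is a direct application of Theorem~\ref{thm:crit}. Lemma~\ref{ccosl} shows that for every $k\geq 1/4$, and in particular for $k=1/4$, the hypersurface $\Sigma_k$ carries no closed characteristic contractible in $T^*M$; Lemma~\ref{dispsl} shows that every $\Sigma_k$ with $k<1/4$ is displaceable. Theorem~\ref{thm:crit} then immediately yields that $\Sigma_{1/4}$ is not stable.

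The case $\Sigma_{1/2}$ is harder because Theorem~\ref{thm:crit} is no longer available: the levels $\Sigma_k$ with $1/4<k<1/2$ are not displaceable (indeed Theorem~\ref{thm:PSL} asserts $RFH(\Sigma_k)\neq 0$ there). The plan is instead to exploit the explicit integration~\eqref{eqf} of the flow to produce a Sullivan-type obstruction. The relevant data is the $2$-parameter family of closed characteristics $v_{q_0}$ at $\pa=\pb=0$, $\pc=-1$: each is a fiber circle $t\mapsto (q_0 Q^*(t),(0,0,-1))$ of period $2\pi$, corresponding to a fiber of the Seifert fibration $M\to\Gamma\backslash\BH^2$. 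These orbits are null-homologous in $\Sigma_{1/2}$: the Euler number $2-2g\neq 0$ of the unit tangent bundle $M\to\Gamma\backslash\BH^2$ forces the fiber class to vanish in $H_1(M;\R)$ via the Gysin sequence, and since $M$ is parallelizable the bundle $\Sigma_{1/2}\to M$ is the trivial $S^2$-bundle $M\times S^2$, so $H_1(\Sigma_{1/2};\R)\cong H_1(M;\R)$ and $[v_{q_0}]=0$. Moreover the identity $\psi(X_H)=2k+\pc$ from~\eqref{cosl} gives $\int_{v_{q_0}}\psi=0$.

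Suppose for contradiction that $\lambda$ is a stabilizing 1-form. Then $\lambda(X_H)>0$ forces $\int_{v_{q_0}}\lambda>0$, and the 1-form $\eta:=\lambda-\psi$ satisfies $i_{X_H}d\eta=0$ (since $d\psi=\omega$) together with $\int_{v_{q_0}}\eta=\int_{v_{q_0}}\lambda>0$. The observation driving the proof is that $d\eta$ vanishes identically on any 2-plane containing $X_H$; hence if $v_{q_0}$ bounds a singular 2-chain $D$ tangent to the characteristic foliation, Stokes gives $\int_{v_{q_0}}\eta=\int_D d\eta=0$, a contradiction. Equivalently, via Sullivan's Theorem~\ref{thm:sullivan} (or Wadsley's Theorem~\ref{thm:wadsley}), it suffices to realize the foliation cycle supported on $v_{q_0}$ as a weak limit of boundaries of tangent 2-chains.

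The construction of such 2-chains is the main obstacle. Inside the invariant 3-manifold $\Sigma^-_{1/2}:=\{\pc=-1\}\cap\Sigma_{1/2}$, every tangent 2-chain is a Seifert cylinder $\pi_0^{-1}(\gamma)$ over a path $\gamma\subset\Gamma\backslash\BH^2$, whose boundary is the difference $v_{q_1}-v_{q_0}$ of two fiber circles; this only yields that the positive quantity $\int_{v_q}\lambda$ is independent of $q$, not the desired contradiction. To close the chain, one must enter the slab $-1<\pc<-1+\varepsilon$, where by the explicit formulae of~\eqref{eqf} the matrix $d$ is hyperbolic with eigenvalues $\pm\sqrt{-\det d}=\pm\sqrt{\varepsilon(1-\varepsilon)/2}$ while the $b$-component has period $2\pi/(1-2\varepsilon)$. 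Orbits there return to a transversal slice in time close to $2\pi$ with an $M$-drift of size $O(\sqrt{\varepsilon})$. Sweeping a transverse 1-parameter family of such almost-returning orbits produces tangent 2-chains $D_\varepsilon$ whose boundary consists of two nearly-coinciding orbit arcs plus transversal edges of length $O(\sqrt{\varepsilon})$; letting $\varepsilon\to 0$ collapses the transversal edges and exhibits $v_{q_0}$ as a weak limit of $\partial D_\varepsilon$ in the current topology relevant to Sullivan's criterion. Making this limit precise is the principal technical step and completes the proof.
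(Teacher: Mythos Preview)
Your treatment of $\Sigma_{1/4}$ is correct and identical to the paper's.

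For $\Sigma_{1/2}$ there is a real gap. The setup is fine: the fibre orbits $v_{q_0}$ at $\pc=-1$ are null-homologous in $\Sigma_{1/2}$, $\psi(X_H)=0$ there, and for any stabilizing $\lambda$ the form $\eta=\lambda-\psi$ satisfies $i_{X_H}d\eta=0$, so $d\eta$ vanishes on every tangent $2$-simplex. The problem is the construction of the tangent $2$-chains. A flow box $D_\varepsilon$ swept out by a transverse $1$-parameter family of orbit arcs has boundary
\[
\partial D_\varepsilon=(\text{orbit arc at }s=1)-(\text{orbit arc at }s=0)+(\text{two short transversal edges}).
\]
If the transversal edges have length $O(\sqrt{\varepsilon})$ then the two orbit arcs are $O(\sqrt{\varepsilon})$-close to each other, and since they enter $\partial D_\varepsilon$ with opposite signs the boundary current tends to \emph{zero}, not to $v_{q_0}$. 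Varying $\varepsilon$ (i.e.\ sweeping in the $\pc$-direction from $-1$ to $-1+\varepsilon_0$) does not help either: one boundary component is then $v_{q_0}$ itself, but the other is an open orbit arc at level $-1+\varepsilon_0$ which does not cancel against the transversal edges. No arrangement of flow boxes of the type you describe produces $v_{q_0}$ as a limit of tangent boundaries, and you offer no alternative mechanism. The sentence ``making this limit precise is the principal technical step'' is precisely where the proof is missing.

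The paper proceeds quite differently. It first averages any hypothetical stabilizing form $\lambda$ over the $S^1$-action generated by $V^*$ (the lift of the fibre rotation), so that $\lambda$ may be taken $S^1$-invariant. On each momentum slice $\Sigma_{1/2,C}=\{\pc=C\}$ with $C\in(-1,0)$ both $d\lambda$ and $d\psi$ are then annihilated by $X_H$ and by $V^*$, hence $d\lambda=f\,d\psi$ for some function $f$. The reduced flow on $\Sigma_{1/2,C}/S^1\cong\Gamma\backslash PSL(2,\R)$ is the right action of $e^{td}$, which is Anosov for $-1<C<0$; ergodicity forces $f=f(C)$ to be constant on each slice. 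Letting $C\to-1$ gives $d\lambda=a\,d\psi$ on $\Sigma_{1/2,-1}$ for some constant $a$, so $\lambda-a\psi$ is \emph{closed} there. Since $\psi(X_H)=0$ on $\Sigma_{1/2,-1}$ and the fibre orbits are null-homologous, one gets $\int_{v_{q_0}}\lambda=\int_{v_{q_0}}(\lambda-a\psi)=0$, contradicting $\lambda(X_H)>0$. The essential ingredients you are missing are the averaging step (which makes $d\lambda$ drop to the reduced space) and the rigidity input from the Anosov dynamics on nearby momentum levels.
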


\begin{proof}
By Lemma \ref{ccosl}, $\Sigma_{1/4}$ has no closed contractible
orbits. By Lemma \ref{dispsl}, $\Sigma_{k}$ is displaceable for any $k<1/4$,
hence Theorem \ref{thm:crit} implies that $\Sigma_{1/4}$ is not stable.
Alternatively, we could say that $\Sigma_{1/4}$ contains a copy of the horocycle flow
(arising when $\mu=0$) and thus it cannot be stable.

The proof that $\Sigma_{1/2}$ is not stable is a bit more involved and we need
some preliminary observations.

\begin{lemma} Let $N$ be a closed manifold with a non-zero geodesible
vector field $F$.
Suppose $G$ is a compact Lie group acting on $N$ and leaving $F$ invariant.
Then there exists a $G$-invariant 1-form which stabilizes $F$.
\label{average}
\end{lemma}

\begin{proof} Let $\mu$ be the Haar probability measure and $\varphi_g$
the diffeomorphism on $N$ determined by $g\in G$.
Let $\lambda$ be a stabilizing 1-form for $F$ and set
\[\tilde{\lambda}:=\int_{G}\varphi_{g}^{*}\lambda\,d\mu(g).\]
It is straightforward to check that $\tilde{\lambda}$ is $G$-invariant
and that it stabilizes $F$.
\end{proof}

Consider the flow of $V$ on $\Gamma\setminus PSL(2,\R)$; it is simply given by
the right action of $e^{tV}$. Its lift to
$T^*(\Gamma\setminus PSL(2,\R))=\Gamma\setminus PSL(2,\R)\times \mathfrak{sl}(2,\R)^*$ is given by
\begin{equation}
(q,p)\mapsto (qe^{tV},\mbox{\rm Ad}^{*}_{e^{tV}}(p)).
\label{formV}
\end{equation}
This gives an action of $S^1$ that leaves $H$, $\tau^*\gamma$ and $d\psi$
invariant, so
$S^1$ acts on $(T^*(\Gamma\setminus PSL(2,\R)),\omega)$
by Hamiltonian transformations and it has moment map $p_{\gamma}$.
We denote the vector field of this $S^1$ action by $V^*$.

Let $\Sigma_{1/2,C}\subset \Sigma_{1/2}$ be given by $p_{\gamma}=C$
and $C$ is constant (energy-momentum reduction).
Clearly $S^1$ leaves $\Sigma_{1/2,C}$ invariant and, $X_H$ and $V^*$
are linearly independent except when $C=\pm 1$.

Suppose $\Sigma_{1/2}$ is stable. By Lemma \ref{average} we may take
an $S^1$-invariant stabilizing 1-form $\lambda$.
Consider the restriction of both $\lambda$ and $\psi$ to the
4-manifold $\Sigma_{1/2,C}$ for $C>-1$ close to $-1$.
The forms $d\lambda$ and $d\psi=\om$ are both annihilated by
$X_H$. The form $d\psi$ is annihilated by $V^*$ and we claim that
$d\lambda$ is also annihilated by $V^*$. To see this, it suffices
to show that $\lambda(V^*)$ is a constant function since $\lambda$
is $S^1$-invariant. Observe that the function $\lambda(V^*)$ is invariant
under both $S^1$ and the flow of $X_H$ so it
descends to a function on $\Sigma_{1/2,C}/S^{1}=\Gamma\setminus PSL(2,\R)$
which is invariant under the reduced flow of $X_H$. However, this reduced
flow is given by the right action of $e^{td}$. This follows from
(\ref{eqf}) and (\ref{formV}) (note that $b(t)=\mbox{\rm Ad}^{*}_{e^{-t\mu V}}(p)$). Since $d$ is hyperbolic for $-1<C<0$,
the right action of $e^{td}$ is Anosov and thus $\lambda(V^*)$ must be
 a constant.

Hence, the forms $d\lambda$ and $d\psi=\om$ are both annhilated by
$X_H$ and $V^*$.
Since $d\psi$ is non-degenerate in a transverse
plane to the span of $X_H$ and $V^*$ we deduce that there exists
a smooth function $f\co \Sigma_{1/2,C}\to\R$ such that on $\Sigma_{1/2,C}$ we have
\[d\lambda=f\,d\psi.\]
The function $f$ is invariant under both $S^1$ and the flow of $X_H$ so it
also descends to a function on $\Sigma_{1/2,C}/S^{1}=\Gamma\setminus PSL(2,\R)$
which is invariant under the reduced flow of $X_H$ and as above, we deduce that $f$ must be
 a constant. If we now let $C$ vary close to $-1$ we get $f=f(C)$ and
$d\lambda=f(C)d\psi$. We wish to derive a contradiction from this.

We proceed as follows. On $\Sigma_{1/2}=\Gamma\setminus PSL(2,\R)\times S^2$
we let $\tilde{X}:=(X,0)$ and $\tilde{Y}:=(Y,0)$. These vectors are of course
tangent to $\Sigma_{1/2,C}$ for every $C$.
A simple calculation shows that
$d\psi(\tilde{X},\tilde{Y})=d\nu(\tilde{X},\tilde{Y}) +
\tau^{*}d\gamma(\tilde{X},\tilde{Y})=0+1=1$
and thus
$f(C)=d\lambda(\tilde{X},\tilde{Y})$. We now let $C\to -1$ and we may
suppose that $f(C)\to a$ for some $a\in\R$. On
$\Sigma_{1/2,-1}=\Gamma\setminus PSL(2,\R)\times \{(0,0,-1)\}$ we have
\[d\lambda=a\,d\psi.\]
The 1-form $\lambda-a\,\psi$ is closed and
$(\lambda-a\,\psi)(X_{H})=\lambda(X_{H})>0$ since $\psi(X_{H})=0$ on
$\Sigma_{1/2,-1}$.
But this is impossible since the Hamiltonian flow on $\Sigma_{1/2,-1}$
is given by the circle action
$$
   \phi_t(q,p)=(qQ^*(t),p),
$$
which is completely periodic with all closed orbits homologous to
zero. Thus $\Sigma_{1/2}$ cannot be stable and
Lemma~\ref{lem:unstable} is proved.
\end{proof}

\subsection{${\bf Sol}$-manifolds}\label{ss:sol}
Let $G={\bf Sol}$ be the semidirect product of $\R^2$ with $\R$, with
coordinates $(y_0,y_1,u)$ and multiplication
\begin{equation*} \label{eq:sm}
(y_0,y_1,u)\star(y_0',y_1',u')=(y_0+e^{u}y_0', y_1+e^{-u}y_1',u+u').
\end{equation*}
The map $(y_0,y_1,u) \mapsto u$ is the epimorphism ${\bf Sol} \to \R$
whose kernel is the normal subgroup $\R^2$.
The group ${\bf Sol}$ is isomorphic to the matrix group
\[\left(\begin{array}{ccc}

e^u&0&y_0\\
0&e^{-u}&y_1\\
0&0&1\\

\end{array}\right).\]

It is not difficult to see that ${\bf Sol}$ admits cocompact lattices.
Let $A\in SL(2,\mathbb Z)$ be such that there is $P\in GL(2,\mathbb R)$ with
\[PAP^{-1}=\left(\begin{array}{cc}
\lambda &0\\
0&1/\lambda\\
\end{array}\right)\]
and $\lambda>1$.
There is an injective homomorphism
$$\mathbb Z^2\ltimes_{A}\mathbb{Z}\hookrightarrow \mathbf{Sol}$$
given by $(m,n,l)\mapsto (P(m,n),(\log\lambda)\,l)$ which defines
a cocompact lattice $\Gamma$ in $\mathbf{Sol}$.
The closed 3-manifold $\Gamma\setminus {\bf Sol}$ is a 2-torus
bundle over the circle with hyperbolic gluing map $A$.

If we denote by $p_{u}$, $p_{y_0}$ and $p_{y_1}$ the momenta that
are canonically conjugate to $u$, $y_0$ and $y_1$ respectively,
then the functions
\begin{equation*} \label{eq:mom}
\begin{array}{lcl}
\alpha_0 &=& e^{u} p_{y_0}, \\
\alpha_1 &=& e^{-u} p_{y_1}, \\
\nu &=& p_{u}
\end{array}
\end{equation*}
are left-invariant functions on $T^*{\bf Sol}$. The closed $2$-form
\begin{equation} \label{eq:magnetic}
\sigma = -d y_0 \wedge d y_1
\end{equation}
is also left-invariant, and we consider the twisted symplectic form

\[\omega_{\sigma} = d p_{u} \wedge du + d p_{y_0} \wedge dy_0 +
d p_{y_1} \wedge dy_1 -dy_0 \wedge dy_1 .\]

The Casimir of $\g^*$ is $f=\nu+\alpha_{0}\alpha_{1}$
and the left-invariant Hamiltonian is given by
$2H:=\alpha_{0}^2+\alpha_{1}^2+\nu^2$.
An easy calculation shows that the vector field of the magnetic flow is:
\[X_{H}=(e^u\alpha_{0},\,e^{-u}\alpha_{1},\,\nu,\,-\alpha_1+\nu\alpha_0,\,
\alpha_0-\nu\alpha_1,\,\alpha_{1}^2-\alpha_{0}^2).\]

The dynamics of the magnetic flow is very interesting and was investigated
in \cite{BP}. It turns out that all energy levels have positive
Liouville entropy and they all carry closed contractible orbits.
The magnetic field is non-exact and is a generator of $H^{2}(\Gamma\setminus {\bf Sol},\R)$. Since the lattices are solvable there are no bounded
primitives for $\sigma$ in ${\bf Sol}$ and the Ma\~n\'e critical value
is $\infty$.

\begin{proposition} The hypersurface $\Sigma_{k}$ is stable and displaceable
for any $k>0$.
\end{proposition}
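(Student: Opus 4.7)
The plan is to establish the two assertions separately.

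For \textbf{displaceability}, I would use the left-invariant Hamiltonian $h=\alpha_0=e^u p_{y_0}$, which descends to $T^*(\Gamma\setminus{\bf Sol})$. A direct computation with $\omega_\sigma$ gives its Hamiltonian vector field
\[
X_h=e^u\partial_{y_0}-\alpha_0\,\partial_{p_u}+e^u\partial_{p_{y_1}},
\]
whose flow in left-invariant momenta reads $\alpha_0(t)=\alpha_0$, $\alpha_1(t)=\alpha_1+t$, $\nu(t)=\nu-\alpha_0 t$. Consequently
\[
2H(\phi_h^t(q,p))-2k=2t(\alpha_1-\nu\alpha_0)+t^2(1+\alpha_0^2).
\]
The quadratic term in $t$ is bounded below by $t^2$, while the linear coefficient is bounded on the compact level $\Sigma_k$, so $H\circ\phi_h^t>k$ uniformly on $\Sigma_k$ for $t$ sufficiently large. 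Multiplying $h$ by a cutoff in $|p|$ supported on a ball large enough to contain $\phi_h^s(\Sigma_k)$ for $s\in[0,t]$ gives a compactly supported Hamiltonian that displaces $\Sigma_k$; alternatively, this is a special case of \cite{BP}.

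For \textbf{stability}, I would follow the scheme pointed out just before Subsection~\ref{sub:tori}. Under the product decomposition $\Sigma_k=(\Gamma\setminus{\bf Sol})\times\bS_k$ the characteristic field $X_H$ projects onto the Euler vector field $E_H=(-\alpha_1+\nu\alpha_0,\,\alpha_0-\nu\alpha_1,\,\alpha_1^2-\alpha_0^2)$ on $\bS_k\subset\g^*$, which preserves the Casimir $f=\nu+\alpha_0\alpha_1$ of the $\sigma$-twisted Lie--Poisson bracket. I would therefore seek a stabilizing 1-form on $\Sigma_k$ of the form
\[
\tilde\lambda=A(f)\,\theta+B(f)\,\omega^3+C(f,\alpha_0^2+\alpha_1^2)\bigl(\alpha_0\,d\alpha_1-\alpha_1\,d\alpha_0\bigr),
\]
where $\theta=\alpha_0\omega^1+\alpha_1\omega^2+\nu\omega^3$ is the Liouville 1-form and $A,B,C$ are smooth functions. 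Since $A(f)$, $B(f)$ and $C(f,\cdot)$ are annihilated by $E_H$, the closure condition $\iota_{X_H}d\tilde\lambda\big|_{T\Sigma_k}=0$ reduces to algebraic identities together with a first-order ODE along the level curves $\{f=\text{const}\}\cap\bS_k$; the Casimir structure makes these solvable in the spirit of the Heisenberg construction in Subsection~\ref{sub:heisenberg}. Positivity of
\[
\tilde\lambda(X_H)=2kA(f)+\nu B(f)+C\bigl(2k+\nu^2-2\nu f\bigr)
\]
is then arranged by a suitable choice of $A$.

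The main obstacle arises for $k>1/2$. For $k\le 1/2$ the Casimir $f$ on $\bS_k\cong S^2$ has only the two poles $\alpha_0=\alpha_1=0$, $\nu=\pm\sqrt{2k}$ as critical points, both elliptic (one a maximum, the other a minimum), and every other orbit of $E_H$ is a closed circle lying in a connected level set of $f$, so the construction above is essentially immediate. For $k>1/2$, however, the poles turn into saddles of $f$ and four new elliptic critical points appear at $\nu=\pm 1$, $\alpha_0=\pm\alpha_1=\pm\sqrt{k-1/2}$, with extreme values $\pm(k+\tfrac12)$; the level sets of $f$ then contain heteroclinic separatrices joining the saddles through the poles. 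The delicate step is to choose the cutoff functions $A,B,C$ so that $\tilde\lambda$ remains smooth across these separatrices while $\tilde\lambda(X_H)$ is kept strictly positive uniformly in $k$; this is what I expect to be the main work in the proof.
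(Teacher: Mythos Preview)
Your displaceability argument is correct and is exactly the computation from~\cite{BP} that the paper cites.

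For stability, your ansatz is on the right track but you are overcomplicating matters and, as written, you have not actually produced a stabilizing form. The paper's proof is a one-line special case of your own ansatz: take $A\equiv 0$, $B(f)=f$, $C\equiv\tfrac12$, i.e.
\[
\lambda \;=\; f\,du \;+\; \tfrac12\bigl(\alpha_0\,d\alpha_1-\alpha_1\,d\alpha_0\bigr).
\]
Since $f=\nu+\alpha_0\alpha_1$ is the Casimir, $df(X_H)=0$, and a direct computation gives
\[
i_{X_H}d\lambda \;=\; -\nu\,df + \dot\alpha_0\,d\alpha_1 - \dot\alpha_1\,d\alpha_0
\;=\; -\bigl(\nu\,d\nu+\alpha_0\,d\alpha_0+\alpha_1\,d\alpha_1\bigr)\;=\;-dH,
\]
which vanishes on $T\Sigma_k$. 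Your own formula for $\tilde\lambda(X_H)$ then reads
\[
\lambda(X_H)=\nu f + \tfrac12(2k+\nu^2-2\nu f)=k+\tfrac{\nu^2}{2}>0
\]
for every $k>0$, with no case distinction.

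The ``main obstacle'' you anticipate for $k>1/2$ is illusory. Your worry stems from imitating the Heisenberg and $PSL(2,\R)$ constructions, where the stabilizing form had to be assembled level-set by level-set in the conserved momentum and smoothed near the poles. Here the Casimir itself (not a cutoff function of it) appears as a coefficient, so $\lambda$ is globally polynomial in the left-invariant momenta and manifestly smooth; the topology of the level sets of $f$ on $\bS_k$ never enters.
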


\begin{proof}

Let
\[\lambda:=fdu+\frac{1}{2}(\alpha_{0}d\alpha_{1}-\alpha_{1}d\alpha_{0}).\]
Since all objects involved in the definition of $\lambda$ are left-invariant, $\lambda$ descends to compact quotients of ${\bf Sol}$.
A calculation using that $df(X_{H})=0$ shows that
$i_{X_{H}}d\lambda=0$. One also sees that
\[\lambda(X_{H})=k+\frac{\nu^2}{2}>0,\]
and thus $\lambda$ stabilizes $\Sigma_{k}$. The proof that
$\Sigma_k$ is displaceable can be found in \cite{BP} and is similar
to the proof of Lemma \ref{disp}.

\end{proof}

\subsection{The case of $\sigma$ symplectic}\label{ss:symp}
In this subsection we study the special case in which the closed left-invariant
2-form $\sigma$ is symplectic and we show that low energy levels are always stable.

\begin{proposition}\label{prop:symp}
 Assume that $\sigma$ is symplectic and
let $H(\mu)=|\mu|^2/2$ for some fixed positive definite inner product in $\g^*$.
Then there exists $k_0>0$ such that for all $k<k_0$ the
hypersurface $\Sigma_k$ is stable.
\end{proposition}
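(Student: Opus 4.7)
The plan is to exploit the fact that when $\sigma$ is symplectic, the reduced Poisson bracket on $\g^*$ from Lemma~\ref{evf}(ii) becomes non-degenerate near $\mu=0$, so that $\g^*$ itself carries a local symplectic structure in which $E_H$ is genuinely a Hamiltonian vector field. A primitive of this local symplectic form will then pull back via the projection $\pi_2\colon\Sigma_k\to\bS_k$ to a stabilizing 1-form on $\Sigma_k$ for all sufficiently small $k$.

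In detail, I would first compute the Poisson bivector in linear coordinates $x^i$ on $\g^*$ via Lemma~\ref{evf}(ii): one finds $\{x^i,x^j\}_\sigma(\mu)=\sum_k c_{ij}^k x^k - \sigma_{ij}$, where $c_{ij}^k$ are the structure constants of $\g$ and $\sigma_{ij}=\sigma_e(e_i,e_j)$. At $\mu=0$ the bivector equals $-\sigma_{ij}$, which is non-degenerate by hypothesis; hence the bracket is symplectic on an open ball $U\subset\g^*$ centred at the origin, with associated symplectic form $\Omega_\sigma$. By Lemma~\ref{evf}(iii), $E_H$ is the Hamiltonian vector field of $H$ for $\Omega_\sigma$, i.e.\ $i_{E_H}\Omega_\sigma=\pm dH$ on $U$. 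Since $U$ is contractible, $\Omega_\sigma=d\alpha_\sigma$ for a primitive $\alpha_\sigma$ (obtained for instance by the Poincar\'e homotopy with radial contraction). Choose $k_0>0$ so that $\bS_k=H^{-1}(k)\subset U$ for every $k\leq k_0$, and define $\lambda:=\pi_2^*\alpha_\sigma$ on $\Sigma_k=(\Gamma\setminus G)\times\bS_k$.

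The claim is that $\lambda$ stabilizes $(\Sigma_k,\omega|_{\Sigma_k})$ for $k\leq k_0$. Because the equations of motion give $d\pi_2(X_H)=E_H$, and $d\pi_2$ sends $T\Sigma_k$ into $T\bS_k=\ker dH$, the kernel condition is automatic: for any $v\in T\Sigma_k$, $(i_{X_H}d\lambda)(v)=\Omega_\sigma(E_H,d\pi_2 v)=\pm dH(d\pi_2 v)=0$. For the positivity $\lambda(X_H)=\alpha_\sigma(E_H)>0$, I would arrange via the Poincar\'e construction that the linear-in-$\mu$ part of $\alpha_\sigma$ equals the standard linear primitive $\lambda_0=\tfrac{1}{2}\sum\tilde\sigma_{ij}x^i\,dx^j$ of the constant symplectic form $\Omega_\sigma(0)$, where $\tilde\sigma$ is the matrix inverse of $\sigma$. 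A direct computation using $\tilde\sigma\sigma=I$ and the antisymmetry of $\sigma$ gives $\lambda_0(\tilde A\mu)=H(\mu)$, where $\tilde A$ is the linearization of $E_H$ at $0$. Since the higher-order contributions to both $\alpha_\sigma$ and $E_H$ are $O(|\mu|^2)$, on $\bS_k$ one gets $\lambda(X_H)=\alpha_\sigma(E_H)=k+O(k^{3/2})$, positive for $k$ sufficiently small. Separation of $\Sigma_k$ in $T^*M$ is clear since $H$ is proper and $\dim M\geq 2$.

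The heart of the plan is the observation that non-degeneracy of $\sigma_e$ makes the reduced bracket $\{\cdot,\cdot\}_\sigma$ genuinely symplectic near the origin, so that $\bS_k$ becomes a contact-type hypersurface in this auxiliary symplectic structure on $\g^*$; once that is in hand, stability of $\Sigma_k$ is inherited by pullback along $\pi_2$. The only delicate bookkeeping is fixing signs so that $\alpha_\sigma(E_H)$ comes out positive rather than negative on $\bS_k$, which one can also arrange simply by replacing $\lambda$ with $-\lambda$ if need be.
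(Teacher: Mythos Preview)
Your proposal is correct and follows essentially the same route as the paper: both invert the non-degenerate Poisson bivector $\{\cdot,\cdot\}_\sigma$ near $\mu=0$ to obtain a local symplectic form on $\g^*$, take a primitive whose linear part is the standard primitive of the constant term $-\hat\sigma$, verify that this primitive evaluated on $E_H$ equals $k+O(k^{3/2})$ (so $E_H$ is geodesible, in fact contact, on $\bS_k$ for small $k$), and then pull back along $\pi_2$ to stabilize $\Sigma_k$. The paper packages the last step as an appeal to Lemma~\ref{lem:proj-stable} and writes the primitive explicitly as $-\alpha+\tau$ with $\alpha(\mu)(\eta)=\tfrac{1}{2}\langle\mu,J^{-1}\eta\rangle$ and $\tau=O(k)$, whereas you invoke the Poincar\'e homotopy abstractly; the content is the same.
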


\begin{proof}
We will show that there exists $k_0>0$ such that for all $k<k_0$
the Euler vector field $E$ is geodesible in $\bS_k$. By
Lemma~\ref{lem:proj-stable} this implies the stability of $\Sigma_k$.

Since $\sigma$ is symplectic, inspection of the
bracket given in (ii) of Lemma \ref{evf} shows that
for all $k$ sufficiently small, the Poisson bracket
$\{\;,\;\}_{\sigma}$ is non-degenerate on the set
$H^{-1}([0,k])\subset\g^*$.
In other words, for all $\mu\in H^{-1}([0,k])$ the linear map
$B_{\mu}\co \g\to \g^*$ given by
\[B_{\mu}(X)(w):=\mu([X,w])-\sigma_{e}(X,w)\]
is invertible.
Let $A\co \g\to \g^*$ be $A(X)(w):=\sigma_{e}(X,w)$.
We may write
\[E(\mu)=E^0(\mu)-J\mu\]
where $J\co \g^*\to\g^*$ is defined by $J\mu(w):=\sigma(d_\mu H,w)$ and
$E^0$ is the Euler vector field for $\sigma=0$. A
short computation shows that $J$ is uniquely determined by
$$
   \hat{\sigma}(\xi,\eta) := \sigma(A^{-1}\xi,A^{-1}\eta)=
   \langle \xi,J^{-1}\eta\rangle.
$$
For $k$ small the Poisson structure $\{\;,\;\}_{\sigma}$ is induced by
the symplectic form $\varpi$ in $H^{-1}([0,k])$ given by
\[\varpi(\mu)(B_{\mu}(v),B_{\mu}(w)) := \mu([v,w])-\sigma(v,w).\]
Inverting $B_{\mu}$ we get
\[\varpi=-\hat{\sigma}+d\tau\]
where $\tau$ is a 1-form such that
$|\tau(\mu)|=O(k)$ for $\mu\in H^{-1}([0,k])$. (The 2-form
$\varpi+\hat{\sigma}$ is
$O(k^{1/2})$, so we can choose a primitive which is $O(k)$.)
Let $\alpha$ be the 1-form in $\g^*$ given by
$\alpha(\mu)(\eta):=\langle\mu,J^{-1}\eta\rangle/2$. Then
\[\varpi=d(-\alpha+\tau).\]
On $\bS_k$ we have $i_{E}\varpi=0$ and
\begin{align*}
(-\alpha+\tau)(\mu)(E(\mu))&=\alpha(\mu)(J\mu)-\alpha(\mu)(E^0(\mu))+\tau(\mu)(E^0(\mu))
-\tau(\mu)(J\mu)\\
&=k+O(k^{3/2}),
\end{align*}
because $\alpha(J\mu)=|\mu|^2/2=k$ and $E^0(\mu)$ is quadratic in
$\mu$. Thus for $k$ small $(-\alpha+\tau)(E)>0$ on $\bS_k$
and $E$ is geodesible (in fact contact).
\end{proof}

Suppose now that $G$ admits a cocompact lattice $\Gamma\subset G$.
Then we have a symplectic manifold $(\Gamma\setminus G, \sigma)$.
Any left invariant metric will give rise to a magnetic flow with
stable low energy levels. We now look in detail at a concrete example.

\begin{example}
Consider the 4-dimensional nilpotent Lie algebra
$\g$ with basis $\{X_1,X_2,X_3,X_4\}$ whose only non-zero bracket is
$[X_1,X_2]=X_3$. If $\{e_1,e_2,e_3,e_4\}$ is the dual basis of $\g^*$, then
$de_1=de_2=de_4=0$ and $de_3=-e_1\wedge e_2$. The symplectic 2-form is
\[\sigma=-e_1\wedge e_3-e_2\wedge e_4.\]
If $x_i$ denote the coordinates of $\mu\in\g^*$ in the given basis we
easily find that the non-zero Poisson brackets of $\{\;,\;\}_{\sigma}$ are:
\[\{x_1,x_2\}=x_3,\;\;\;\;\{x_1,x_3\}=\{x_2,x_4\}=1.\]
We see right away that $\{\;,\;\}$ is non-degenerate
at every point $\mu\in\g^*$.
In this case it is easy to invert the operator $B_\mu$ of the previous
proposition and we find $\varpi$ to be
\begin{align*}
\varpi&=-\sigma-x_{3}\,dx_{3}\wedge dx_{4}\\
&=dx_1\wedge dx_3+dx_2\wedge dx_4-x_{3}\,dx_{3}\wedge dx_{4}.
\end{align*}
If we let
\[\theta:=\frac{x_1 dx_3-x_3 dx_1+x_2 dx_4-x_4dx_2}{2}
-\frac{x^{2}_{3}dx_4}{2}\]
then
\[\varpi=d\theta.\]

Now take $H(\mu)=|\mu|^2/2$, where the inner product is defined so that
$\{e_1,e_2,e_3,e_4\}$ is an orthonormal basis. The Euler vector
field $E_{H}$ is easily computed using Lemma \ref{evf}. One finds
\begin{align*}
E_{H}(\mu)&=(-x_{2}x_{3}-x_{3},x_{1}x_{3}-x_4,x_{1},x_{2})\\
&=E_{H}^0(\mu)-J\mu\\
&=(-x_{2}x_{3},x_{1}x_{3},0,0)-
(x_{3},x_4,-x_{1},-x_{2}).
\end{align*}
Next we compute $\theta(E_{H})$ and we find:
\[\theta(E_{H})=\frac{x_1^2+x_2^2+x_3^2+x_4^2}{2}-\frac{x_1x_3x_4}{2}.\]Using that
\[\frac{x_1x_3x_4}{2}=-\frac{d(x_2x_4)(E_H)}{2}+\frac{x_4^2-x_2^2}{2}\]
we obtain
\[\left(\theta+\frac{d(x_2x_4)}{2}\right)(E_H)=\frac{x_1^2+2x_2^2+x_3^2}{2}.\]
Then
\[\varphi:=\theta+\frac{d(x_2x_4)}{2}\]
is a primitive of $\varpi$ such that on $\bS_k$:
\[\varphi(E_{H})\geq 0\]
with equality if and only if $x_1=x_2=x_3=0$ and $x_4=\pm \sqrt{2k}$.
Since this set is not invariant under the flow of $E_{H}$ we conclude
that for any invariant Borel probability measure $\nu$ on $\bS_k$ we have
\[\int_{\bS_k}\varphi(E_{H})\,d\nu> 0\]
and thus $E_{H}$ is in fact of contact type by Theorem \ref{thm:mcduff}.

Summarizing, we have shown that $E_H$ is geodesible on $\bS_k$ for any $k>0$.
Let $G$ be the simply connected Lie group with Lie algebra $\g$.
This group certainly admits cocompact lattices $\Gamma$.
The manifold $(\Gamma\setminus G,\sigma)$ is symplectic (with first Betti
number 3 in fact) and with the left invariant metric considered above
we obtain that $\Sigma_k$ is stable for any $k>0$. Every compact set here
is displaceable by \cite[Theorem B]{BP} and the Ma\~n\'e critical value is
$\infty$ since $\Gamma$ is nilpotent.

A realisation of $G$ is $\mathcal H\times \R$ where $\mathcal H$ is the
3-dimensional Heisenberg group. The metric is just the product metric.
It is easy to see that $\mathcal H$ has no contractible closed geodesics, hence
the same is true for $G$.
\end{example}

\section{Proofs of the theorems}\label{sec:proofs}

\subsection{Proof of Theorem \ref{thm:RFH}}
For the case of tame stable hypersurfaces and tame stable homotopies
the theorem is a consequence of Theorems \ref{thm:rfh=0} and \ref{inva}.
Note that we could take as definition of $RFH(\Sigma)$ either
$\overline{RFH}(\Sigma,V)$ or $\underline{RFH}(\Sigma,V)$.
The claims for the virtually contact case are straightforward extensions
of the contact case treated in \cite{CF}.
Indeed, the virtually contact condition implies that
the period-action inequality for almost Reeb orbits of Proposition 3.2
in \cite{CF} continues to hold with constants now depending on the constants
appearing in the virtually contact condition. Here we use that
$\pi_1(\Sigma)$ injects into $\pi_1(M)$.
Similarly, for the
time-dependent case Proposition 3.4 in \cite{CF}
continues to hold for a virtually contact homotopy. Having established these
two Propositions the forthcoming arguments can be repeated just word by
word.

\begin{remark} In the virtually contact case, one can replace
the condition that $\pi_1(\Sigma)$ injects into $\pi_1(V)$ by a weaker
condition which we now explain.

Consider a Hamiltonian structure $\omega$ on a closed odd dimensional
manifold $\Sigma$. We call the pair $(\Sigma,\omega)$
\emph{virtually contact}, if on a cover $\pi \colon
\widetilde{\Sigma}\to \Sigma$ there exists a primitive $\lambda \in
\Omega^1(\widetilde{\Sigma})$ of $\pi^*\omega$ with the property
that for one and hence every Riemannian metric $g$ on $\Sigma$ there
exists a constant $c=c(g)>0$ such that
$$||\lambda||_{\pi^* g} \leq c, \qquad ||\xi||_{\pi^* g}
\leq c|\lambda(\xi)|,\quad \forall\,\, \xi \in \mathrm{ker}(\pi^*
\omega).$$ We refer to the one-form $\lambda$ as a \emph{bounded
primitive}.
A closed hypersurface $\Sigma$ in a symplectic manifold
$(V,\omega)$ is called \emph{of virtual restricted contact type}, if
there exists a cover $\pi \colon \widetilde{V} \to V$ and a
primitive $\lambda$ of $\pi^* \omega$ such that
$\lambda|_{\pi^{-1}(\Sigma)}$ is a bounded primitive for
$(\Sigma, \omega|_{\Sigma})$.

\begin{lemma}
Assume that $(V,\omega)$ is symplectically aspherical,
i.e.\,\,$\omega$ vanishes on $\pi_2(V)$, and $\Sigma \subset V$ is a
closed hypersurface with the property that
$(\Sigma,\omega|_{\Sigma})$ is virtually contact and the inclusion
homomorphism $i_* \colon \pi_1(\Sigma) \to \pi_1(V)$ is injective.
Then $\Sigma$ is of virtual restricted contact type.
\end{lemma}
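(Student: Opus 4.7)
The plan is to take $\pi\co \widehat V\to V$ to be the universal cover. First, symplectic asphericity implies that $\pi^*\omega$ is exact: since $\widehat V$ is simply connected, Hurewicz gives $H_2(\widehat V;\mathbb{Z})\cong \pi_2(\widehat V)=\pi_2(V)$, on which $\omega$ vanishes by hypothesis, hence $[\pi^*\omega]=0$ in $H^2(\widehat V;\mathbb{R})$. Fix any primitive $\Lambda_0\in\Omega^1(\widehat V)$ of $\pi^*\omega$.

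Next I would analyze $\pi^{-1}(\Sigma)$. By covering space theory the connected component of $\pi^{-1}(\Sigma)$ through a chosen basepoint is the cover of $\Sigma$ corresponding to the subgroup $i_*^{-1}(\{1\})=\ker(i_*)\subset \pi_1(\Sigma)$, and injectivity of $i_*$ forces this subgroup to be trivial. Hence every connected component $\widehat\Sigma_\alpha$ of $\pi^{-1}(\Sigma)$ is diffeomorphic to the universal cover of $\Sigma$. The given bounded primitive $\lambda$ on $\widetilde\Sigma$ can then be pulled back along the further covering $\widehat\Sigma_\alpha\to\widetilde\Sigma$ to a primitive $\widehat\lambda_\alpha$ of $\pi^*\omega|_{\widehat\Sigma_\alpha}$; both the norm bound and the Reeb lower bound in the definition of bounded primitive are preserved under pullback to a cover, since the metric, the symplectic form and the characteristic direction all lift tautologically.

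On each simply connected $\widehat\Sigma_\alpha$ the closed $1$-form $\Lambda_0|_{\widehat\Sigma_\alpha}-\widehat\lambda_\alpha$ is exact, say equal to $df_\alpha$ for some $f_\alpha\in C^\infty(\widehat\Sigma_\alpha)$. The remaining task is to assemble the $f_\alpha$ into a single smooth function $f$ on $\widehat V$ with $f|_{\widehat\Sigma_\alpha}=f_\alpha$ for all $\alpha$; then $\Lambda:=\Lambda_0-df$ is a primitive of $\pi^*\omega$ whose restriction to each $\widehat\Sigma_\alpha$ is $\widehat\lambda_\alpha$, which is exactly the bounded primitive required to certify virtual restricted contact type. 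For the extension I would pick a tubular neighbourhood $U=\bigsqcup_\alpha U_\alpha$ of $\pi^{-1}(\Sigma)$ in $\widehat V$ (the components $U_\alpha$ are automatically disjoint since the $\widehat\Sigma_\alpha$ are), define $f$ on $U_\alpha$ to be $f_\alpha$ pulled back via the tubular retraction $U_\alpha\to\widehat\Sigma_\alpha$, and multiply by a smooth bump function supported in $U$ and identically $1$ on a smaller neighbourhood of $\pi^{-1}(\Sigma)$.

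The main obstacle is conceptual rather than technical: both the exactness of $\pi^*\omega$ and the triviality of the components' fundamental groups hinge on the two hypotheses (symplectic asphericity and injectivity of $i_*$), and the argument must exploit them simultaneously on one and the same cover of $V$. Once these are in place, the pullback of the bounded primitive and the gluing of the local modifications $f_\alpha$ are entirely routine.
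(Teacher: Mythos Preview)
Your argument is correct and follows essentially the same route as the paper: pass to the universal cover of $V$, use symplectic asphericity plus Hurewicz to get a global primitive $\Lambda_0$, use injectivity of $i_*$ to see that each component of $\pi^{-1}(\Sigma)$ is simply connected, correct $\Lambda_0$ by an exact form to match the (lifted) bounded primitive on $\pi^{-1}(\Sigma)$, and extend the correcting function smoothly to $\widehat V$. Your treatment of the covering-space step and of the extension via a tubular neighbourhood is somewhat more explicit than the paper's, but the underlying proof is the same.
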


\begin{proof} Let $\pi \colon \widetilde{V} \to V$ be the
universal cover of $V$. Since $\widetilde{V}$ is simply connected we
have by Hurewicz theorem that
$$H_2(\widetilde{V};\mathbb{Z})=\pi_2(\widetilde{V})=\pi_2(V).$$
Since $(V,\omega)$ is symplectically aspherical we deduce that
$\pi^*\omega$ vanishes on $H_2(\widetilde{V};\mathbb{Z})$ and hence
$$[\pi^*\omega]=0 \in H^2_{dR}(\widetilde{V}).$$
We conclude that there exists $\lambda_0 \in
\Omega^1(\widetilde{V})$ such that
$$d\lambda_0=\pi^* \omega.$$
Because $(\Sigma,\omega|_{\Sigma})$ is virtually contact, there
exists a bounded primitive $\lambda_1$ on a cover of $\Sigma$. Since
$\lambda_{1}$ is bounded, there exists a
bounded lift $\widetilde{\lambda}_1$ to $\pi^{-1}(\Sigma) \subset
\widetilde{V}$. Note that
$$d\lambda_0|_{\pi^{-1}(\Sigma)}-d\widetilde{\lambda}_1=
\pi^*\omega|_{\pi^{-1}(\Sigma)}-\pi^*\omega|_{\pi^{-1}(\Sigma)}=0.$$
We conclude that
$\lambda_0|_{\pi^{-1}(\Sigma)}-\widetilde{\lambda}_1$ defines a
class
$$\big[\lambda_0|_{\pi^{-1}(\Sigma)}-\widetilde{\lambda}_1\big]
\in H^1_{dR}\big(\pi^{-1}(\Sigma)\big).$$ However, since
$i_* \colon \pi_1(\Sigma) \to \pi_1(V)$ is injective,
each connected
component of $\pi^{-1}(\Sigma)$ is simply connected. Therefore
$H^1_{dR}\big(\pi^{-1}(\Sigma)\big)=\{0\}$ and consequently there
exists $f \in C^\infty\big(\pi^{-1}(\Sigma),\mathbb{R}\big)$ such
that
$$df=\lambda_0|_{\pi^{-1}(\Sigma)}-\widetilde{\lambda}_1.$$
Extend $f$ to a smooth map $\widetilde{f} \in
C^\infty(\widetilde{V},\mathbb{R})$ and set
$$\lambda=\lambda_0-d\widetilde{f}.$$
Noting that
$$\lambda|_{\pi^{-1}(\Sigma)}=\lambda_0|_{\pi^{-1}(\Sigma)}-df=\widetilde{\lambda
}_1$$
we conclude that $\lambda$ is a primitive of $\pi^*\omega$ whose
restriction to $\pi^{-1}(\Sigma)$ is bounded. This finishes the
proof of the lemma.
\end{proof}

Theorem \ref{thm:RFH} continues to hold with the same proof if we assume
that $\Sigma$ is of virtual restricted contact type.
The hypersurfaces of Lemma \ref{virtc} are of virtual restricted contact type,
but when $\dim M=2$, $\pi_{1}(\Sigma)$ does not inject into $\pi_{1}(T^*M)$.

\end{remark}

\subsection{Proof of Theorem \ref{thm:sigma=0}}
Let $\sigma=0$. By Lemma~\ref{lemma:contact}, the Ma\~n\'e critical
value is given by $c=\max\, U$ and all regular level sets $\Sigma_k$
are of restricted contact type, so $RFH(\Sigma_k)$ is defined.

For $k>c$ each $\Sigma_k$ is regular. By the invariance of
Rabinowitz Floer homology under contact homotopies, we can compute
$RFH(\Sigma_k)$ for zero potential.
In this case it is non-zero by Corollary 1.12 in~\cite{CFO}.
In particular, $\Sigma_k$ is non-displaceable for $k\geq
c$. The dynamics on $\Sigma_k$, $k>c$, is given by the geodesic flow
of the Jacobi metric $g/(k-U)$. The level set $\Sigma_c$ is singular.

For $k<c$, $\Sigma_k$ is displaceable because it misses one fibre.

\subsection{Proof of Theorem \ref{thm:non-displaceable}}
For $k>c$, $\Sigma_k$ is virtually contact by Lemma \ref{virtc} (in fact it
 is of virtual restricted contact type) thus
$RFH(\Sigma_k)$ is defined and invariant under virtually contact homotopies.
In particular the isomorphism type of $RFH(\Sigma_{k})$ is independent
of $k$.

Consider a path of Riemannian metrics $[0,1]\ni t\mapsto g_{t}$
such that $g_0=g$ and $g_1$ is a metric of negative curvature.
By taking $k>\max_{t\in [0,1]}c_{t}$ and using that the isomorphism
type of $RFH(\Sigma_{k,t})$ is independent of $t$ as well, we may suppose
that $g$ itself is negatively curved. But it is well
known that the geodesic flow of a negatively curved metric
is Anosov and does not have non-trivial
closed contractible orbits. By structural stability the same is
true for the orbits of $\Sigma_{k}$ provided $k$ is sufficiently large.
It follows from Theorem \ref{thm:RFH} that $RFH(\Sigma_{k})$ does not
vanish and thus $\Sigma_k$ is non-displaceable.


\subsection{Proof of Theorem~\ref{thm:pinched}}
This is just Theorem~\ref{insthe}.

\subsection{Proof of Theorem \ref{thm:crit}}
We argue by contradiction. Suppose the energy level $\Sigma_{c}$
is stable. We will show that $\Sigma_{c}$ must contain a contractible
periodic orbit.
By Lemma \ref{lem:HZ-stable} there exists a tubular neighborhood
$(-\varepsilon,\varepsilon)\times \Sigma_{c}$ of $\Sigma_{c}$
such that the characteristic foliations on $\{r\}\times \Sigma_{c}$
are conjugate for all $r\in (-\varepsilon,\varepsilon)$.
But by hypothesis, every compact set contained
in the set $H^{-1}(-\infty,c)$ is displaceable, hence we may apply
the main result of Schlenk \cite{Sch}, to conclude that
for almost every $r$, with $r<0$, $\{r\}\times \Sigma_{c}$ carries
a closed contractible orbit. By stability, $\Sigma_{c}$ must
also carry a closed contractible orbit.

\subsection{Proof of Theorems \ref{thm:nil}, \ref{thm:PSL} and
  \ref{thm:torus}}
Theorem \ref{thm:nil} follows from the lemmas in
Section \ref{sub:heisenberg} and Theorem \ref{thm:RFH}.
Theorem \ref{thm:PSL} follows from the lemmas in
Section \ref{ss:PSL} and Theorem \ref{thm:RFH}.
Theorem \ref{thm:torus} follows from the results
in Section \ref{sub:tori} and Theorem \ref{thm:RFH}.


\bigskip

Kai Cieliebak, Ludwig-Maximilians-Universit\"at, 80333 M\"unchen, Germany\\
E-mail: kai@math.lmu.de
\smallskip

Urs Frauenfelder, Department of Mathematics and Research Institute of
Mathematics, Seoul National University\\
E-mail: frauenf@snu.ac.kr
\smallskip

Gabriel P.~Paternain, University of Cambridge, Cambridge CB3 0WB, UK\\
E-mail: g.p.paternain@dpmms.cam.ac.uk

\end{document}